\documentclass{amsart} 

\usepackage{amssymb,amscd}
\usepackage[dvipsnames]{xcolor}
\usepackage{amsxtra}
\usepackage{soul}
\usepackage[mathscr]{eucal}
\usepackage{graphics}
\usepackage[final]{ifdraft}
\ifoptionfinal{}{\pagecolor[HTML]{FDF6E3}}
\usepackage{hyperref}

\usepackage[final]{showkeys}
\definecolor{labelkey}{rgb}{0.4,0.75,0.75}

\numberwithin{equation}{section}

\newtheorem{theorem}{Theorem}[section]
\newtheorem{proposition}[theorem]{Proposition}
\newtheorem{lemma}[theorem]{Lemma}
\newtheorem{corollary}[theorem]{Corollary}

\theoremstyle{definition}

\newtheorem{definition}[theorem]{Definition}
\newtheorem{example}[theorem]{Example}

\renewcommand{\AA}{\mathbb A} 
\newcommand{\absolute}[1]{{\left | {#1} \right |}}
\newcommand{\norm}[1]{{\left \| {#1} \right \|}}

\newcommand{\h}{\textsf{\textup{h}}}
\newcommand{\Hh}{\textsf{\textup{H}}}

\newcommand{\od}{\operatorname{d}}
\newcommand{\Id}{\operatorname{Id}}

\newcommand{\GL}{\operatorname{GL}}
\newcommand{\vol}{\operatorname{vol}}

\newcommand{\diag}{\mathrm{diag}}
\newcommand{\uni}{\mathrm{uni}}
\newcommand{\base}{\mathrm{base}}
\newcommand{\irred}{{\mathrm{irred}}}
\newcommand{\posfact}{{\mathrm{pos}}}
\newcommand{\peqref}[1]{(\ensuremath{\ref{#1}'})}
\newcommand{\ptag}[1]{\tag{$\ref{#1}'$}}

\newcommand{\be}{\mathbf{e}}

\newcommand{\bm}{\mathbf{m}}
\newcommand{\bn}{\mathbf{n}}

\newcommand{\n}{\mathbf{n}}

\newcommand{\CC}{\mathbb{C}}
\newcommand{\RR}{\mathbb{R}}

\newcommand{\KK}{\mathbb{K}}

\newcommand{\LL}{\mathbb{L}}
\newcommand{\NN}{\mathbb{N}}
\newcommand{\QQ}{\mathbb Q}
\newcommand{\TT}{\mathbb{T}}
\newcommand{\ZZ}{\mathbb{Z}}
\newcommand{\zd}{\ZZ^{d}}
\newcommand{\Zd}{\ZZ^{d}}

\newcommand{\cA}{\mathcal{A}}
\newcommand{\cB}{\mathcal{B}}
\newcommand{\cC}{\mathcal{C}}

\newcommand{\cP}{\mathcal{P}}

\hyphenation{homeo-morphism}

\setcounter{tocdepth}{1}
\makeatletter
\@namedef{subjclassname@2020}{%
  \textup{2020} Mathematics Subject Classification}
\makeatother

\begin{document}

\title[Rigidity properties for commuting automorphisms]{Rigidity properties
for commuting automorphisms on tori and solenoids}

\author{Manfred Einsiedler and Elon Lindenstrauss}

\address{Manfred Einsiedler, Department of Mathematics, ETH Z\"urich,
R\"amistrasse 101, CH-8092 Z\"urich, Switzerland}

\address{Elon Lindenstrauss, Einstein Institute of Mathematics, The Hebrew University of Jerusalem, Jerusalem 91904, Israel}

\thanks{M.E.~acknolewdges the support by SNF (grant no.~200020-178958) E.L.~supported by ERC 2020 grant HomDyn (grant no.~833423). Both authors thank the Hausdorff Research Institute for
Mathematics at the Universit\"at Bonn for its hospitality during the trimester program ``Dynamics: Topology and Numbers''.}


\keywords{Entropy, invariant measures, invariant
$\sigma$-algebras, measurable factors, joinings, toral
automorphisms, solenoid automorphism}

\subjclass[2020]{Primary: 37A17, 37A35; Secondary: 37A44}

\begin{abstract}
Assuming positive entropy we prove a measure rigidity
theorem for higher rank actions on tori and solenoids
by commuting automorphisms.
We also apply this result to obtain a complete classification of disjointness and
measurable factors for these actions.
\end{abstract}

\dedicatory{To the memory of Anatole Katok}

\maketitle
\tableofcontents

\section{Introduction and main results}\label{sec: Intro}

The map $T_p:x \mapsto px$ on $\TT=\RR/\ZZ$ has many closed
invariant sets and many invariant measures. Furstenberg initiated
the study of jointly invariant sets in his seminal paper
\cite{Furstenberg-disjointness-1967}. A set $A \subseteq \TT$ is
called {\em jointly invariant} under $T_p$ and $T_q$ if $T_p
(A)\subseteq A$ and $T_q(A)\subseteq A$. Furstenberg proved that if
$p$ and $q$ are multiplicatively independent integers then any
closed jointly invariant set is either finite or all of $\TT$.

Furstenberg also raised the question what the jointly invariant
measures are: which probability measures $\mu$ on $\TT$ satisfy
$(T_p)_*\mu=(T_q)_*\mu=\mu$. The obvious ones are the Lebesgue
measure, atomic measures supported on finite invariant sets, and
(non-ergodic) convex combinations of these.

\label{defn: irreducibility}
In the following a {\em solenoid} $X$ is a compact, connected,
abelian group whose Pontryagin dual $\widehat{X}$ can be embedded
into a finite-dimensional vector space over $\QQ$. The simplest
example is a finite-dimensional torus. A $\zd$-action $\alpha$ by
automorphisms of a solenoid $X$ is called {\em irreducible} if
there is no proper infinite closed subgroup which is invariant
under $\alpha$, and {\em totally irreducible} if there is no
finite index subgroup $\Lambda \subseteq\zd$ and no proper infinite
closed subgroup $Y \subseteq X$ which is invariant under the induced
action $\alpha_\Lambda$. A $\zd$-action is {\em virtually cyclic}
if there exists $\n \in\zd$ such that for every element $\bm \in
\Lambda$ of a finite index subgroup $\Lambda \subseteq\zd$ there
exists some $k \in \ZZ$ with $\alpha^\bm=\alpha^{k\n}$.

We briefly summarize the history of this problem. The topological
generalization of Furstenberg's result to higher dimensions was
given by Berend
\cite{Berend-invariant-tori,Berend-invariant-groups}: An action on
a torus or solenoid has no proper, infinite, closed, and invariant
subsets if and only if it is totally irreducible, not
virtually-cyclic, and contains a hyperbolic element. 

The first partial result for the measure problem on $\TT$ was
given by Lyons \cite{Lyons-2-and-3} under a strong additional
assumption. Rudolph \cite{Rudolph-2-and-3} weakened this
assumption considerably, and proved the following theorem.

\begin{theorem}\cite[Thm.~4.9]{Rudolph-2-and-3}   \label{thm: Rudolph}
Let $p, q \geq 2$ be relatively prime positive integers, and let
$\mu$ be a $T_p$, $T_q$-invariant and ergodic measure on $\TT$. Then either
$\mu=m_\TT$ is the Lebesgue measure on $\TT$, or the entropy of $T_p$ and
$T_q$ is zero.
\end{theorem}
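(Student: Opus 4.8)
The plan is to prove the equivalent assertion that $h_\mu(T_p)>0$ forces $\mu=m_\TT$; the fact that $h_\mu(T_p)=0$ if and only if $h_\mu(T_q)=0$, needed to match the statement, will fall out along the way. Since $T_p$ and $T_q$ generate (by unique factorisation) a faithful $\NN^2$-semigroup action on $\TT$, I would first replace $\TT$ by its natural extension: with $R=\ZZ[1/(pq)]$, the solenoid $X=\widehat{R}\cong(\RR\times\QQ_p\times\QQ_q)/R$ carries commuting automorphisms $\alpha,\beta$ lifting $T_p,T_q$ together with a factor map $\pi\colon X\to\TT$, and $\mu$ lifts to an ergodic $\alpha,\beta$-invariant measure $\tilde\mu$ with $\pi_*\tilde\mu=\mu$, $h_{\tilde\mu}(\alpha)=h_\mu(T_p)$, $h_{\tilde\mu}(\beta)=h_\mu(T_q)$. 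The gain is a transparent Lyapunov structure: $X$ contains the one-parameter subgroup $\mathcal{W}\cong\RR$ (the archimedean fibre), both $\alpha$ and $\beta$ normalise $\mathcal{W}$ and act on it by multiplication by $p$ and by $q$ respectively, while on the remaining ($p$-adic and $q$-adic) directions neither map expands. Hence $\mathcal{W}$ is the \emph{entire} unstable foliation both of $\alpha$ and of $\beta$.

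\emph{The entropy relation.} By ergodicity the leafwise measures $\tilde\mu^{\mathcal{W}}_x$ of $\tilde\mu$ along $\mathcal{W}$ have an almost surely constant local dimension $\delta\in[0,1]$, and since $\mathcal{W}$ is one-dimensional and carries all of the unstable part, a Ledrappier--Young / Shannon--McMillan--Breiman computation along $\mathcal{W}$ gives $h_{\tilde\mu}(\alpha)=\delta\log p$ and $h_{\tilde\mu}(\beta)=\delta\log q$ with the \emph{same} $\delta$. In particular $h_\mu(T_p)/\log p=h_\mu(T_q)/\log q$, so the two entropies vanish together, and $h_\mu(T_p)>0$ is equivalent to $\delta>0$, i.e.\ to the $\tilde\mu^{\mathcal{W}}_x$ being non-atomic.

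\emph{Leafwise measures are Haar.} The crux is to upgrade ``$\tilde\mu^{\mathcal{W}}_x$ non-atomic'' to ``$\tilde\mu^{\mathcal{W}}_x$ is a Haar (Lebesgue) measure on the leaf $\mathcal{W}\cong\RR$, for $\tilde\mu$-a.e.\ $x$''. Here one uses the equivariance of leafwise measures: up to normalisation $\alpha$ carries $\tilde\mu^{\mathcal{W}}_x$ to the $p$-dilate of $\tilde\mu^{\mathcal{W}}_{\alpha x}$, and $\beta$ to the $q$-dilate of $\tilde\mu^{\mathcal{W}}_{\beta x}$. Combining this with Poincar\'e recurrence of the $\alpha$- and $\beta$-actions and the multiplicative independence of $p$ and $q$ --- so that $\{p^nq^m: n,m\in\ZZ\}$ is dense in $\RR_{>0}$ --- Rudolph's argument shows that $\tilde\mu^{\mathcal{W}}_x$ is, up to scale, invariant under a dense group of dilations of $\RR$, and a non-trivial Radon measure on $\RR$ with that property is proportional to Lebesgue. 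I expect this to be the only genuinely hard step: the difficulty is to control the normalising cocycle and to pass from a statement that holds along $\alpha$- and $\beta$-orbits to one that holds on a positive-measure --- hence, by ergodicity, conull --- set, and it is precisely here that positivity of the entropy is consumed. (Host's proof via equidistribution of $q^nx\bmod 1$, or Rudolph's original argument with refining partitions of the fibres of $X\to\TT$, could be substituted for this step.)

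\emph{Conclusion.} If $\tilde\mu^{\mathcal{W}}_x$ is Lebesgue for $\tilde\mu$-a.e.\ $x$, then $\tilde\mu$ is invariant under translation by the subgroup $\mathcal{W}\subseteq X$, hence $\mu=\pi_*\tilde\mu$ is invariant under translation by $\pi(\mathcal{W})$, a non-trivial connected --- therefore dense --- subgroup of $\TT$. A standard Fourier-coefficient argument then forces $\mu=m_\TT$, as required.
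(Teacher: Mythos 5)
Your overall architecture — pass to the solenoid $X=(\RR\times\QQ_p\times\QQ_q)/\ZZ[1/pq]$, study the leafwise measures along the one-dimensional Archimedean leaf $\mathcal{W}\cong\RR$ (the common unstable direction of $\alpha,\beta$), show they are nontrivial iff entropy is positive, upgrade nontriviality to Haar, and conclude translation-invariance of $\mu$ — is exactly the modern reformulation of Rudolph's argument and matches the strategy this paper uses for the general Theorem~\ref{thm: main}. The entropy relation $h_\mu(T_p)/\log p=h_\mu(T_q)/\log q$ is indeed elementary in the rank-one circle case (one can interleave scales $p^{-n}$ and $q^{-m}$ and compare entropies of interval partitions; the paper remarks it ``follows from the particularly simple geometry''), and your Ledrappier--Young framing is a reasonable way to package this.

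The genuine gap is in your ``Leafwise measures are Haar'' step. You assert that $\widetilde\mu^{\mathcal{W}}_x$ is projectively invariant under a dense group of dilations of $\RR$ and that a nontrivial Radon measure with this property must be Lebesgue. The second claim is false: for any $s>-1$, $s\neq 0$, the measure $|t|^s\,dt$ on $\RR$ is Radon, nonatomic, has $0$ in its support, and satisfies $c_*\nu\propto\nu$ for \emph{every} dilation $c>0$. So projective dilation-invariance alone does not pin down Lebesgue measure, and positive entropy only tells you the leafwise measures are nontrivial, which is compatible with such power-law measures a priori. What the arguments of Rudolph/Host and of this paper actually establish is \emph{translation}-invariance of $\widetilde\mu^{\mathcal{W}}_x$ (which, combined with the compatibility relation \eqref{eq:shifting} and the a priori growth bound on leafwise measures, does force them to be Haar). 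Getting translation-invariance is precisely where the delicacy lies: in this paper it comes from applying the ``shape'' identity of Theorem~\ref{thm: shape} a \emph{second} time, conditioned on the factor $\Omega$ recording the proportionality class $[\mu_x^W]$ of the leafwise measure itself (\S\ref{sec: rigidity implies invariance}), which shows the conditional leafwise measure still has positive entropy contribution and hence gives zero mass to $\{0\}$; Poincar\'e recurrence and the shifting formula then produce actual translations fixing $\mu_x^W$ up to proportionality. Your sketch omits this conditioning device, and the mechanism you substitute for it (pure dilation self-similarity) does not suffice.
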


Johnson \cite{Johnson-invariant-measures} lifted the relative
primality assumption, showing it is enough to assume that $p$ and
$q$ are multiplicatively independent. 
Feldman \cite{Feldman-generalization}, Parry \cite{Parry-2-3}, and Host
\cite{Host-normal-numbers} have found different proofs of this
theorem, but positive entropy remains a crucial assumption.

Anatole Katok and Spatzier
\cite{Katok-Spatzier,Katok-Spatzier-corrections} obtained the
first analogous results for actions on higher dimensional tori and
homogeneous spaces. However, their method required either an
additional ergodicity assumption on the measure (satisfied for
example if every one parameter subgroup of the suspension acts
ergodically), or that the action is totally non-symplectic (TNS).
A careful and readable account of these results has been
written by Kalinin and Anatole Katok \cite{Kalinin-Katok-Seattle}, which
also fixed some minor inaccuracies. The following 
theorem (already proven in the announcement \cite{EL-ERA03})
gives a full generalization of the result of Rudolph and Johnson
to actions on higher-dimensional solenoids.

\begin{theorem}\cite[Thm.~1.1]{EL-ERA03}\label{thm: easy-case}
Let $\alpha$ be a totally irreducible, not virtually-cyclic
$\zd$-action by automorphisms of a solenoid $X$.
Let $\mu$ be an $\alpha$-invariant and ergodic probability 
measure. Then either $\mu=m_X$
is the Haar measure of $X$, or the entropy $\h_\mu(\alpha^\n)=0$
vanishes for all $\n \in\zd$.
\end{theorem}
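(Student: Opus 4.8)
The plan is to put the action into a concrete $S$-adic normal form and then combine the entropy hypothesis with Poincar\'e recurrence, working with the leafwise measures of $\mu$ along the coarse Lyapunov foliations. Passing to Pontryagin duals, $M=\widehat X$ is a torsion-free $\ZZ[\zd]$-module of finite rank on which $\zd$ acts by the adjoint automorphisms, and irreducibility of $\alpha$ (implied by total irreducibility) is equivalent to $M\otimes\QQ$ being a simple $\QQ[\zd]$-module; such a module is a number field $\KK$ on which $\zd$ acts by multiplication by a finitely generated subgroup $\Xi=\langle\xi_1,\dots,\xi_d\rangle\le\KK^\times$, so that $\alpha^\n$ is multiplication by $\xi^\n:=\prod_i\xi_i^{n_i}$. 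Letting $S$ be the finite set of places of $\KK$ consisting of all archimedean places together with the finite places at which $M$ fails to be locally the ring of integers, one has that $M$ sits diagonally as a cocompact lattice in $\KK_S:=\prod_{v\in S}\KK_v$, the $\xi_i$ are $S$-units, and $X\cong\KK_S/M$ with $\alpha^\n$ induced by multiplication by $\xi^\n$; here ``not virtually cyclic'' means exactly $\operatorname{rank}\Xi\ge2$. Since the conclusion is a dichotomy I may assume $\h_\mu(\alpha^{\n_0})>0$ for some $\n_0$ and must show $\mu=m_X$.

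For $v\in S$ put $\lambda_v(\n):=\log\|\xi^\n\|_v$, which are linear functionals on $\RR^d$ satisfying $\sum_{v\in S}\lambda_v\equiv0$ by the product formula. Let $\iota_v\colon\KK_v\to X$ be the composite of the inclusion of the $v$-th factor $\KK_v\hookrightarrow\KK_S$ with the quotient map; it is injective, and $\overline{\iota_v(\KK_v)}$ is an infinite closed $\alpha$-invariant subgroup of $X$, whence $\overline{\iota_v(\KK_v)}=X$ by irreducibility, so the orbits $W_v(x)=x+\iota_v(\KK_v)$ are dense ``coarse Lyapunov leaves''. The leafwise measures $\mu^{W_v}_x$, Radon measures on $\KK_v$ defined up to a positive scalar, are then available; they are $\alpha$-equivariant, with $\alpha^\n$ acting on $W_v\cong\KK_v$ by multiplication by $\xi^\n$, and the Ledrappier--Young entropy formula in its form for solenoid automorphisms gives $\h_\mu(\alpha^\n)=\sum_{v\in S}\lambda_v(\n)^{+}D_v$, where $D_v\ge0$ depends only on $\mu$ and is strictly positive exactly when $\mu^{W_v}_x$ is non-trivial for a.e.\ $x$. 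Hence $\h_\mu(\alpha^{\n_0})>0$ produces a place $v_0$ with $\lambda_{v_0}(\n_0)>0$ and $D_{v_0}>0$; in particular $\lambda_{v_0}\not\equiv0$ and $\mu^{W_{v_0}}_x$ is non-trivial almost everywhere.

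It remains to upgrade this non-triviality to Haar. I claim it is enough to find a single non-trivial closed subgroup $H\le\KK_{v_0}$ with $\mu^{W_{v_0}}_x$ invariant under translation by $H$ for a.e.\ $x$: granting this, the ``invariance from leafwise measures'' principle makes $\mu$ invariant under translation by $\iota_{v_0}(H)$, and since $\alpha^\n$ conjugates translation by $\iota_{v_0}(t)$ to translation by $\iota_{v_0}(\xi^\n t)$ while fixing $\mu$, the group of translations of $X$ preserving $\mu$ is a closed subgroup containing $\iota_{v_0}(\Xi H)$ and hence $\overline{\iota_{v_0}(A)}$, where $A\le\KK_{v_0}$ is the $\Xi$-invariant additive subgroup generated by $\Xi H$; as $A$ is infinite, $\overline{\iota_{v_0}(A)}$ is an infinite closed $\alpha$-invariant subgroup of $X$, hence all of $X$, so $\mu=m_X$. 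Producing such an $H$ is the crux and the step I expect to be the main obstacle, since for a single expanding direction the assertion is simply false — that is the rigidity phenomenon itself — so one must exploit $\operatorname{rank}\Xi\ge2$. Here either $\lambda_{v_0}$ has non-cyclic image on $\zd$, in which case $v_0$ is archimedean and the scalings $\{\|\xi^\n\|_{v_0}\}$ are dense in $\RR_{>0}$, or $\lambda_{v_0}$ has cyclic image and then, because $\operatorname{rank}\Xi\ge2$, there is $\bm\in\zd$ with $\lambda_{v_0}(\bm)=0$ yet $\xi^{\bm}$ of infinite order; in either case suitable elements of the $\zd$-action act on $W_{v_0}\cong\KK_{v_0}$ essentially as isometries while moving the complementary coordinates non-trivially. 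One then has to show — by a Poincar\'e-recurrence and martingale-convergence argument in the spirit of Rudolph's proof and of the low-entropy method — that this forces $\mu^{W_{v_0}}_x$ to absorb a non-trivial translation, uniformly in $x$ in the sense needed above. Verifying that the translations so obtained are genuinely non-trivial, and organizing the recurrence so that the measures returned to are $\mu^{W_{v_0}}_x$ itself and not some distortion of it, is where essentially all of the work lies; everything preceding it is bookkeeping.
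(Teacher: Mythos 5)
Your setup is correct and agrees with the paper's framing in spirit: the reduction of the Pontryagin dual to a number field $\KK$ with the action by multiplication by $S$-units $\xi_i$, the product formula $\sum_v\lambda_v\equiv 0$, the identification of the coarse Lyapunov leaves $W_v$ with the completions $\KK_v$, and the linearity of the entropy contributions in $\n$ (giving $\h_\mu(\alpha^\n)=\sum_v\lambda_v(\n)^+D_v$ via the product structure of leafwise measures) are all available and match what the paper does in \S2 and \S5--6.1. The reduction ``non-trivial translation-invariance subgroup $H\le\KK_{v_0}$ $\Rightarrow$ $\mu=m_X$'' is also correct given total irreducibility.

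However, there is a genuine gap, and it is exactly where you flag it. You acknowledge that ``producing such an $H$ is the crux,'' and you propose to do it by a Poincar\'e-recurrence/martingale argument along an element $\bm$ with $\lambda_{v_0}(\bm)=0$ acting isometrically on $W_{v_0}$. But you never actually close this, and the closing step is precisely where the history of the problem sits: a direct isometric-direction recurrence argument is the Katok--Spatzier strategy, which is known to require an extra ergodicity (or TNS) hypothesis that totally irreducible actions need not satisfy. The difficulty you identify --- that recurrence returns you to a \emph{distortion} of $\mu^{W_{v_0}}_x$, not to $\mu^{W_{v_0}}_x$ itself, because multiplication by $\xi^{k\bm}$ rotates the leafwise measure even though it is an isometry --- is exactly the obstruction, and you have no mechanism to remove it.

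The paper takes a genuinely different route precisely to avoid this. The decisive ingredient is the \emph{entropy rigidity} of Theorem~\ref{thm: shape}: the entropy contribution $\h_{\widetilde\mu}(\widetilde\alpha^\n,W^{[\chi]}\mid X_\base\times\Omega)$ is a \emph{single} constant $\kappa_{\widetilde\mu,\Omega}$ times the Haar contribution, with $\kappa$ independent of the coarse Lyapunov weight. This comes from the arithmetic entropy bound of Theorem~\ref{thm:bound-contribution}, which is proved by a genuinely arithmetic argument using the product formula on $\KK$ --- a step with no analogue in your sketch. One then applies this rigidity not to $X$ alone but to the augmented system $X\times\Omega$, where $\Omega$ encodes the proportionality class $[\mu_x^W]$ of the leafwise measure, and shows (Proposition~\ref{nexttolast}, via the product structure in a \emph{second} linearly independent Lyapunov direction furnished by Lemma~\ref{two weights Lemma}) that $\kappa_{\widetilde\mu,\Omega}=\kappa_\mu>0$. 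This forces positive entropy contribution along $W$ even conditioned on the leafwise-measure data, which directly yields recurrence to points with the \emph{same} leafwise proportionality class (Corollary~\ref{reallylast}); that is what circumvents the ``distortion'' problem. Your proposal has none of this machinery, so the crux remains unproved, and the route you propose to fill it is not known to work without additional hypotheses.
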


\subsection{The general positive entropy measure rigidity theorem}

Without total
irreducibility the Haar measure of the group is no longer the only
measure with positive entropy. Thus our main theorem below is
(necessarily) longer in its formulation than
Theorem \ref{thm: easy-case}. It strengthens e.g.\ 
\cite[Thm.~3.1]{Kalinin-Katok-Seattle} which has a similar conclusion but stronger assumptions.

\begin{theorem}[Positive entropy rigidity theorem]\label{thm: main}
Let $\alpha$ be a $\Zd$-action ($d \geq 2$) by automorphisms of
a solenoid $X$.
Suppose $\alpha$ has no virtually cyclic factors, and let
$\mu$ be an $\alpha$-invariant and ergodic probability measure on $X$.
Then there exists a subgroup $\Lambda \subseteq\Zd$ of finite index and a decomposition
$\mu=\frac{1}{J}(\mu_1+\ldots+\mu_J)$ of $\mu$ into mutually singular measures
with the following properties for every $j=1,\ldots,J$.
\begin{enumerate}
\item The measure $\mu_j$ is $\alpha_\Lambda$-ergodic, where
$\alpha_\Lambda$ is the restriction of $\alpha$ to $\Lambda$.
\item There exists an $\alpha_\Lambda$-invariant closed subgroup
$G_j$ such that $\mu_j$ is invariant under translation with
elements in $G_j$, i.e.\ $\mu_j(B)=\mu_j(B+g)$ for all $g \in G_j$
and every measurable set $B\subseteq X$.
\item For $\n \in\zd$ 
$\alpha^\n_*\mu_j=\mu_k$ for some $k\in\{1,\ldots,J\}$ and $\alpha^\n(G_j)=G_k$.
\item The measure $\mu_j$ induces a measure on the factor $X/G_j$
with $\h_{\mu_j}(\alpha^{\n}_{X/G_j})=0$ for any $\n \in \Lambda$.
(Here $\alpha_{X/G_j}$ denotes the action induced on $X/G_j$).
\end{enumerate}
\end{theorem}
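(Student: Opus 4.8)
The plan is to run the standard entropy machinery for algebraic $\zd$-actions --- leafwise (conditional) measures along coarse Lyapunov subgroups together with the Margulis--Ruelle and Ledrappier--Young entropy formulas --- and to obtain the translation invariance from a dichotomy for these leafwise measures proved by the high and low entropy methods; the absence of virtually cyclic factors enters precisely in the low entropy case.

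\emph{Set-up and ergodic decomposition.} Passing to the suspension realizes $X$ as an $S$-adic homogeneous space and $\alpha$ as part of an $\RR^d$-action, so that at each place one has the Lyapunov weights of $\alpha$ and the associated coarse Lyapunov subgroups $V^{[\lambda]}\subseteq X$, one closed subgroup for each class $[\lambda]$ of weights up to positive scaling, with $\alpha$ permuting the $V^{[\lambda]}$. Choose a finite-index $\Lambda\subseteq\zd$ fixing every coarse Lyapunov class and for which the $\alpha_\Lambda$-ergodic decomposition of $\mu$ is finite; since $\mu$ is $\alpha$-ergodic, $\alpha$ permutes these ergodic components transitively and with equal weights, giving $\mu=\frac1J(\mu_1+\cdots+\mu_J)$ with the $\mu_j$ mutually singular and $\alpha_\Lambda$-ergodic. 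This yields (1) and the permutation statements in (3); it remains to produce $G_j$, and from now on $\mu_j$ denotes a generic such component.

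\emph{The dichotomy for leafwise measures (the crux).} The key proposition is: for every coarse Lyapunov class $[\lambda]$ the leafwise measure $(\mu_j)_x^{V^{[\lambda]}}$ is, for a.e.\ $x$, the Haar measure of an a.e.\ constant closed subgroup $\Gamma^{[\lambda]}\subseteq V^{[\lambda]}$, and these subgroups transform equivariantly ($\alpha^\bm(\Gamma^{[\lambda]})=\Gamma^{[\lambda]}$ for $\bm\in\Lambda$, and $\alpha^\n$ carries the $\Gamma^{[\lambda]}$ of $\mu_j$ to that of $\alpha^\n_*\mu_j$). A non-trivial leafwise measure forces positive entropy in the direction $[\lambda]$; if two non-proportional classes carry positive entropy one invokes the high entropy method of Einsiedler--Katok, whose bracket relations among the coarse Lyapunov subgroups force the leafwise measures to be translation invariant and identify the invariance subgroup, while a Rudolph--Johnson-type recurrence/mixing argument upgrades ``non-trivial and translation invariant'' to ``Haar on a closed subgroup''. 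The remaining case, in which all positive entropy is concentrated in a single class $[\lambda]$ and its opposite, is treated by the low entropy method: here $d\ge2$ and the absence of virtually cyclic factors supply a further commuting element $\alpha^\bm$ whose action along $V^{[\lambda]}$ is neither trivial nor a pure scaling, and the low entropy argument shows that a leafwise measure that were not Haar on a subgroup would display self-similar structure at arbitrarily fine scales incompatible with Poincar\'e recurrence. I expect this proposition --- in particular the low entropy case and the exact way the no-virtually-cyclic-factor hypothesis is used --- to be the main obstacle.

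\emph{Construction of $G_j$ and verification of (2)--(3).} Put $G_j=\overline{\sum_{[\lambda]}\Gamma^{[\lambda]}}$, a closed subgroup of $X$; by the equivariance above it is $\alpha_\Lambda$-invariant, and $\alpha^\n(G_j)=G_k$ whenever $\alpha^\n_*\mu_j=\mu_k$, completing (3). For (2) one shows $\mu_j$ is $G_j$-invariant by the standard ``invariance from leafwise invariance'' argument: for $\n\in\Lambda$ the stable horospherical subgroup of $\alpha^\n$ is the product of the $V^{[\lambda]}$ with $[\lambda](\n)<0$, and by compatibility of leafwise measures with this product decomposition the $\mu_j$-leafwise measure along it is invariant under $\sum_{[\lambda](\n)<0}\Gamma^{[\lambda]}$; combining with the analogous statement on the unstable side and noting that the directions carrying trivial leafwise measure contribute nothing, the conditional-measure/Abramov--Rokhlin machinery forces $\mu_j$ to be invariant under the group generated by all the $\Gamma^{[\lambda]}$, namely $G_j$.

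\emph{Zero entropy of the factor (4).} Finally, on the factor $X/G_j$ with induced measure $\bar\mu_j$, the leafwise measure along the image of $V^{[\lambda]}$ is the push-forward under $X\to X/G_j$ of the Haar measure on $\Gamma^{[\lambda]}\subseteq G_j$, hence a point mass. Thus every coarse Lyapunov direction has trivial leafwise measure for $\bar\mu_j$, and the Margulis--Ruelle/Ledrappier--Young formula for the affine action gives $\h_{\bar\mu_j}(\alpha^\n_{X/G_j})=0$ for every $\n\in\Lambda$, which is (4) and finishes the proof.
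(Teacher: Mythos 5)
Your overall architecture --- ergodic decomposition after restricting to a finite-index $\Lambda$, analysis of leafwise measures along coarse Lyapunov subgroups, construction of $G_j$ from the resulting invariance subgroups, and zero entropy of $X/G_j$ from triviality of the residual leafwise measures --- matches the shape of the paper's argument. But the crux step is wrong, and this is a genuine gap, not a stylistic difference.

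You propose to establish the dichotomy via the high entropy method ``whose bracket relations among the coarse Lyapunov subgroups force the leafwise measures to be translation invariant''. This cannot work here: $X=\AA^m/\QQ^m$ is \emph{abelian}, so all coarse Lyapunov subgroups $W^{[\chi]}\subseteq\AA^m$ commute and every Lie bracket is trivial. The high entropy method of Einsiedler--Katok produces invariance precisely by exploiting nontrivial commutators between horospherical directions in a semisimple group; in a solenoid there is nothing to exploit, and that method gives no information. (What \emph{is} imported from Einsiedler--Katok is the product structure of leafwise measures along coarse Lyapunov directions, Theorem~\ref{thm:prod}, but that is a decomposition result, not an invariance result.) Similarly, the low entropy method is a tool designed for diagonalizable actions on quotients of semisimple groups and has no analogue in the abelian setting; moreover, the ``all positive entropy in one class and its opposite'' case does not even arise here, because Lemma~\ref{two weights Lemma} (using the no-virtually-cyclic hypothesis and Dirichlet's unit theorem) already guarantees two linearly independent Lyapunov weights, and Theorem~\ref{thm: shape} then forces every coarse Lyapunov class of $Y_\irred$ to carry positive entropy contribution simultaneously.

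The actual engine of the paper is entirely different and entirely arithmetic: the product formula $\prod_\sigma|a|_\sigma^{\delta(\sigma)}=1$ on the adelic completion of a number field (Proposition~\ref{prop: localprod}) yields a sharp upper bound on the entropy contribution of any invariant subspace (Theorem~\ref{thm:bound-contribution}); combined with the product structure this gives the rigidity identity $\h_{\widetilde\mu}(\widetilde\alpha^\n,W^{[\chi]}\mid X_\base\times\Omega)=\kappa_{\widetilde\mu,\Omega}\,\h_\lambda(\alpha_{Y_\irred}^\n,W^{[\chi]})$ of Theorem~\ref{thm: shape} --- the higher-rank analogue of Rudolph's $\h_\mu(T_p)/\log p=\h_\mu(T_q)/\log q$. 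Translation invariance is then extracted not by showing leafwise measures are Haar on fixed subgroups, but by running Theorem~\ref{thm: shape} a second time with $\Omega$ taken to be the space of proportionality classes of leafwise measures themselves, and then using recurrence and the countability of adelic subspaces to constantify the invariance group. Your proposal omits this entire circle of ideas (there is no mention of the adelic formalism, the product formula, or the entropy identity), and the dichotomy it substitutes in their place does not hold for the reasons above.
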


We remark that in the topological category there is a big gap between our understanding of the totally irreducible case and the general case of $\ZZ ^ d$-actions by automorphisms on a solenoid. In the totally irreducible case Berend \cite{Berend-invariant-groups} gave an if-and-only-if condition for a $\ZZ ^ d$-action to have the property that every orbit is either finite or dense, and the same methods could be pushed further to give a complete classification of closed invariant subsets for a totally irreducible $\ZZ ^ d$-action on the solenoid; for $\ZZ ^ d$-action on tori this is due to Z. Wang \cite[Thm 1.10]{Wang-nonhyperbolic} and his proof certainly works also for solenoids though this does not seem to have been written (a special case, with a very nice application, can be found in Manner's paper \cite{Manners-pyjama}). In the non-irreducible case, orbit closures, and closed invariant sets are much less understood. We refer to \cite{Lindenstrauss-Wang} by Z. Wang and the second named author for some results in this direction and more details.

The proofs of Theorem \ref{thm: easy-case} and Theorem \ref{thm:
main} follow the outline of Rudolph's proof of Theorem \ref{thm:
Rudolph}. One of the main ingredients there was the observation
that $\h_\mu(T_p)/ \log p = \h_\mu(T_q)/\log q$ 
(and a relativized version of this equality).
This follows from the particularly simple geometry of this system 
where both $T_p$ and $T_q$ expand the
one-dimensional space $\TT$ with fixed factors. There is no simple geometrical 
reason why such an equality should be true for more complicated $\ZZ ^ d$-actions 
on solenoids, and indeed is easily seen to fail in the reducible case.
However, somewhat surprisingly, such an equality {\em is} true for 
irreducible $\ZZ ^ d$-actions, even though this is true from subtler reasons (see Theorem~\ref{thm: shape} below).

In the following two subsections we also apply Theorem \ref{thm: main} to obtain new
information about the measurable structure, with respect to the
Haar measure, of algebraic $\zd$-actions on tori and
solenoids.

\subsection{Characterization of disjointness}

Let $\alpha_1$ and $\alpha_2$ be two measure-preserving
$\zd$-actions on the probability spaces $(X_1,\cB_{X_1},\mu_1)$
and $(X_2,\cB_{X_2},\mu_2)$. A {\em joining} between $\alpha_1$
and $\alpha_2$ is an $\alpha_1\times\alpha_2$-invariant
probability measure $\nu$ on $X_1\times X_2$, which projects to
$\mu_1$ and $\mu_2$ under the projection maps $\pi_1$ and $\pi_2$. 
In other words we require $\nu(\alpha_1^{\n}\times\alpha_2^{\n}(C))=\nu(C)$
for $\n\in\ZZ^d$ and $C\in\cB_{X_1\times X_2}$,
$\nu(A\times X_2)=\mu_1(A)$ for $A\in\cB_{X_1}$,
and also $\nu(X_1\times B)=\mu_2(B)$ for $B\in\cB_{X_2}$. The product measure
$\mu_1\times\mu_2$ is always a joining, called the {\em trivial
joining}. If the trivial joining is the only joining, the two
actions are {\em disjoint}. This implies 
that the two actions are measurably non-isomorphic. In fact if
they are disjoint there is no nontrivial common factor of the two
systems, see for instance Section \ref{sec: inv_algebra} where we
recall the construction of the relatively independent joining over
a common factor.

Let now $\alpha_j$ be measure preserving $\zd$-actions on
$(X_j,\cB_{X_j},\mu_j)$ for $j=1,\ldots,r$. A {\em joining between
$\alpha_j$ for $j=1,\ldots,r$} is a measure $\nu$ on
$\prod_{j=1}^rX_j$ which projects to $\mu_j$ under the coordinate projections $\pi_j$ for
$j=1,\ldots,r$, and is invariant under the $\zd$-action
$\alpha_1\times\cdots\times\alpha_r$. The product measure is the
{\em trivial joining}, and the $\zd$-actions are {\em mutually
disjoint} if the trivial joining is the only joining.

Suppose now $\alpha_1$ and $\alpha_2$ are actions by automorphisms
on solenoids $X_1$ and $X_2$ respectively. We will classify disjointness
with respect to the Haar measures $m_{X_j}$
on the group $X_j$ for $j=1,2$.
If $\varphi:X_1\rightarrow X_2$ is a surjective continuous
homomorphism and satisfies
$\alpha_2^\n\circ\varphi=\varphi\circ\alpha_1^\n$ for all
$\n\in\zd$, we say $\varphi$ is an {\em algebraic factor map}. If
$\alpha_1$ and $\alpha_2$ are both finite-to-one factors of each
other by algebraic factor maps, we say they are {\em algebraically
weakly isomorphic}. Equivalently, $\alpha_1$ and $\alpha_2$ are algebraically weakly isomorphic
if they have a common finite-to-one algebraic factor.

The following generalizes a theorem of Kalinin and Anatole Katok
\cite[Thm.~3.1]{Kalinin-Katok} resp.\ Kalinin and Spatzier
\cite[Thm.~4.7]{Kalinin-Spatzier}, where the main difference
is that we do not assume that the actions are totally
non-symplectic or hyperbolic.

\begin{corollary}[Classification of disjointness]\label{thm: joining}
 If $\alpha_1$ and $\alpha_2$ are totally irreducible and not virtually cyclic,
 then they are not disjoint (with respect to the Haar measures)
 if and only if there exists a finite index subgroup
 $\Lambda\subseteq\zd$ for which $\alpha_{1,\Lambda}$ and $\alpha_{2,\Lambda}$ are
 algebraically weakly isomorphic.

 More generally, let $\alpha_j$ be $\zd$-actions on solenoids (not necessarily irreducible)
 without virtually-cyclic factors for $j=1,\ldots,r$. Then they
 are not mutually disjoint if and only if there exist indices
 $i, j\in\{1,\ldots,r\}$ with $i\neq j$,
 a finite index subgroup $\Lambda\subseteq\zd$, and
 a nontrivial $\Lambda$-action $\beta$ on a 
 solenoid $Y$ which is an algebraic factor of
 $\alpha_{i,\Lambda}$ and $\alpha_{j,\Lambda}$.
\end{corollary}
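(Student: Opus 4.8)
The plan is to reduce both directions to the positive entropy measure rigidity theorem (Theorem \ref{thm: main}), combined with the well-known relationship between common factors and joinings.

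First, the \emph{easy} direction: if there is a common algebraic factor (after passing to a finite index subgroup $\Lambda$), then the relatively independent joining over that common factor is a nontrivial self-joining of the $\Lambda$-actions, and one then induces this $\Lambda$-joining back up to a $\zd$-joining (taking the ``sum over coset representatives'' construction, as recalled in Section \ref{sec: inv_algebra}). The only point to check is that this induced joining is genuinely nontrivial as a joining of the original $\zd$-actions, which follows because a nontrivial common factor for $\alpha_{i,\Lambda}$ and $\alpha_{j,\Lambda}$ forces the two systems to fail to be measurably disjoint already at the level of $\Lambda$, and disjointness for $\zd$ implies disjointness for $\Lambda$.

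For the \emph{hard} direction, suppose $\nu$ is a nontrivial joining of $\alpha_1,\dots,\alpha_r$ on $\prod_j X_j$. The key observation is that $\prod_j X_j$ is again a solenoid and $\prod_j\alpha_j$ is again a $\zd$-action by automorphisms with no virtually cyclic factors (a virtually cyclic factor of the product would induce one on some $X_j$ — here one uses that an algebraic factor of a product of solenoid automorphism actions splits appropriately). Apply Theorem \ref{thm: main} to an ergodic component of $\nu$: after passing to a finite index $\Lambda$, we get an ergodic $\mu_1$ with a translation-invariance subgroup $G_1\subseteq\prod_j X_j$ such that the induced measure on the quotient has zero entropy for all of $\Lambda$. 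The point now is to compare this with the fact that each $\mu_{X_j}$ is Haar, hence has positive entropy for some (in fact many) elements of $\zd$ — otherwise $\alpha_j$ would have a zero entropy, hence by Berend/the structure theory a virtually cyclic, behavior contradicting the hypothesis; more carefully, one shows each marginal being Haar forces $G_1$ to project \emph{onto} each factor $X_j$ (else the joining would be supported on a proper subgroup with a nontrivial projection, again contradicting that the marginals are the full Haar measures), so $G_1$ is the graph-type subgroup of a correspondence between the $X_j$. Since $G_1$ is a closed $\alpha_\Lambda$-invariant subgroup projecting onto each coordinate but $\nu\ne\prod\mu_{X_j}$ forces $G_1\ne\prod_j X_j$, the subgroup $G_1$ must have nontrivial ``off-diagonal'' relations: there exist $i\ne j$ such that the projection of $G_1$ to $X_i\times X_j$ is a proper subgroup surjecting onto each factor. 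This proper subgroup, being $\alpha_{i,\Lambda}\times\alpha_{j,\Lambda}$-invariant, cuts out a common nontrivial algebraic factor $Y$ of $\alpha_{i,\Lambda}$ and $\alpha_{j,\Lambda}$ (the factor by the kernel of one projection). In the totally irreducible, not virtually cyclic case, $Y$ is then automatically a finite-to-one factor of each (any proper closed invariant subgroup would violate total irreducibility for a further finite index subgroup), which upgrades ``common factor'' to ``algebraically weakly isomorphic''; here one also needs to check that one can choose $\Lambda$ uniformly so that $\alpha_{i,\Lambda}$ and $\alpha_{j,\Lambda}$ themselves are weakly isomorphic, using that the totally irreducible hypothesis persists under finite index restriction.

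The main obstacle is the middle step: extracting from the translation-invariance subgroup $G_1$ and the zero-entropy-on-the-quotient conclusion of Theorem \ref{thm: main} the statement that $G_1$ must surject onto each $X_j$ and that its off-diagonal projection is proper but still surjective onto the relevant coordinates. This requires combining (a) the fact that the marginals are \emph{full} Haar measures, so no entropy can be ``lost'' to the quotient on the $X_j$-coordinates, with (b) an entropy/dimension count showing that if $G_1=\prod_j X_j$ then $\nu$ would have to be the Haar measure of $\prod_j X_j$, i.e.\ the trivial joining — contradicting nontriviality. Making the entropy bookkeeping precise (and handling the induced-measure-on-quotient zero entropy clause together with the ergodic decomposition coming from the finite index restriction) is the technical heart of the argument; the rest is algebra of solenoid automorphisms and the standard factor/joining dictionary.
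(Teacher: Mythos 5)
Your overall strategy is aligned with the paper's: both directions reduce to Theorem~\ref{thm: main}, the easy direction via a graph/relatively independent joining averaged over cosets of a finite index subgroup, the hard direction by applying Theorem~\ref{thm: main} to an ergodic component of the joining and extracting algebraic structure from the translation-invariance subgroup $G_1$. However, there are two genuine gaps in the hard direction.

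First, your argument that $G_1$ must project onto each factor $X_j$ is not correct as stated. You claim that if $\pi_j(G_1)\subsetneq X_j$ then ``the joining would be supported on a proper subgroup,'' but invariance of $\mu_1$ under translation by $G_1$ does not force $\mu_1$ to live on a single coset of $G_1$, so nothing forces it onto a proper subgroup of $X_1\times\cdots\times X_r$. The paper's actual argument (Proposition~\ref{pro: joining}) is different and essential: one first uses the absence of virtually cyclic factors to show that each $\alpha_{j,\Lambda}$ acts ergodically with respect to $\lambda_{X_j}$, hence that each ergodic-component marginal $(\pi_j)_*\mu_i$ equals $\lambda_{X_j}$; then one passes to the factor $Y=X_j/\pi_j(G_1)$, observes that the induced measure on $Y$ is Haar while $Y$ is a further factor of $X/G_1$ and therefore has zero entropy for all $\n\in\Lambda$ by the conclusion of Theorem~\ref{thm: main}; finally, the no-virtually-cyclic-factors hypothesis forces every nontrivial algebraic factor of $X_j$ to carry positive Haar-entropy for some element, so $Y=\{0\}$. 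Your phrase ``no entropy can be lost to the quotient'' gestures at this but without the ergodicity-of-marginals step (which is needed because $\mu_1$ is only $\alpha_\Lambda$-ergodic, not $\alpha$-ergodic) and without the precise entropy contradiction, the claim is not established.

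Second, and more seriously, the step where you assert that there must exist $i\ne j$ such that the projection of $G_1$ to $X_i\times X_j$ is a \emph{proper} subgroup surjecting onto both coordinates is false for $r\ge 3$. For example, one can have a subgroup of $X_1\times X_2\times X_3$ cut out by a single relation of the form $\varphi_1(x_1)+\varphi_2(x_2)+\varphi_3(x_3)=0$ whose projection to every coordinate pair is everything. The paper avoids this by a different combinatorial argument: it first replaces each $X_i$ by $X_i'=X_i/H_i$ where $H_i=\pi_i\bigl(G\cap(\{0\}^{i-1}\times X_i\times\{0\}^{r-i})\bigr)$, then looks at the image $G'$ in $\prod_i X_i'$ and finds the minimal index $i$ for which $G'\cap(X_1'\times\cdots\times X_i'\times\{0\}^{r-i})$ is infinite; one then extracts a common irreducible factor of $\alpha_{i,\Lambda}$ and some $\alpha_{j,\Lambda}$ with $j<i$ using that the relevant intersection is finite-to-one onto its image. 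Your pairwise-projection shortcut does not produce the required common factor in the general case. The totally irreducible case is handled more cleanly (as in the paper's simpler-case argument, the image groups $X_1'$, $X_2'$ are nontrivial and $G'$ is the graph of an isomorphism, and irreducibility forces finite kernels, giving weak algebraic isomorphism), and there your sketch is essentially right.
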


\subsection{Algebraicity of factors}
Anatole Katok, Svetlana Katok and Schmidt
\cite[Thm.~5.6]{Katok-Katok-Schmidt} studied measurable factor
maps between $\zd$-actions by automorphisms of tori. Our second
application gives an extension of this by characterizing 
the structure of measurable factors resp.\ invariant
$\sigma$-algebras. We start by giving two algebraic constructions
that give invariant $\sigma$-algebras.
\begin{itemize}
 \item If $X'\subseteq X $ is a closed $\alpha $-invariant subgroup and
$\pi:X\rightarrow X/X' $ denotes the canonical projection map,
then the preimage $\cA=\pi^{-1}\cB_{X/X'}$ of the Borel $\sigma
$-algebra $\cB_{X/X'} $ of $X/X'$ is $\alpha $-invariant.
 \item If $\Gamma $ is a finite group of affine automorphisms that is
normalized by $\alpha $, then the $\sigma $-algebra $\cB_X^\Gamma$
of $\Gamma$-invariant Borel subsets of $X$ is $\alpha $-invariant.
\end{itemize}

\begin{corollary}[Algebraicity of measurable factors]\label{thm: algebra}
 Let $\alpha$ be a $\zd$-action by automorphisms of the
 solenoid $X$ without virtually cyclic factors,
 and let $\cA\subseteq\cB_X$ be an invariant $\sigma$-algebra.
 Then there exists a closed $\alpha$-invariant subgroup $X'\subseteq
 X$ and a finite group $\Gamma$ of affine automorphisms of $X/X'$
 that is normalized by the action $\alpha_{X/X'}$ induced by $\alpha$ on $X/X'$
 such that
 \[
  \cA=\pi^{-1}\left(\cB_{X/X'}^\Gamma\right)\ \operatorname{modulo}m_X.
 \]
\end{corollary}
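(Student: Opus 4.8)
Let me think about this carefully. We have a $\mathbb{Z}^d$-action $\alpha$ by automorphisms of a solenoid $X$ without virtually cyclic factors, and an invariant $\sigma$-algebra $\mathcal{A} \subseteq \mathcal{B}_X$ (invariant meaning $\alpha^{-\mathbf{n}}\mathcal{A} = \mathcal{A}$ for all $\mathbf{n}$, modulo $m_X$). We want to show $\mathcal{A} = \pi^{-1}(\mathcal{B}_{X/X'}^\Gamma)$ for some closed invariant subgroup $X'$ and finite group $\Gamma$ of affine automorphisms of $X/X'$ normalized by $\alpha_{X/X'}$.

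The standard approach: an invariant $\sigma$-algebra $\mathcal{A}$ corresponds to a measurable factor. The trick for relating this to algebraic data is to use the **relatively independent self-joining over $\mathcal{A}$**. That is, form the measure $\mu = m_X \times_{\mathcal{A}} m_X$ on $X \times X$: this is the measure such that $\mathbb{E}(f \otimes g \mid \mathcal{A} \otimes \mathcal{A}) = \mathbb{E}(f \mid \mathcal{A})\mathbb{E}(g\mid\mathcal{A})$, or concretely $\int f\otimes g \, d\mu = \int \mathbb{E}(f\mid\mathcal{A})\mathbb{E}(g\mid\mathcal{A})\, dm_X$. This $\mu$ is a joining of $(X, m_X, \alpha)$ with itself, invariant under the diagonal action $\alpha\times\alpha$. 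It projects to $m_X$ on each coordinate, and the key point is that $\mathcal{A}$ is recovered as the $\sigma$-algebra of the first coordinate intersected with (essentially) the symmetric structure — more precisely, $\mathcal{A}$ (pulled back to the first coordinate) equals $\{B \times X\} \cap$ (first-coordinate $\sigma$-algebra) matched against the second coordinate, and two points have the same conditional distribution iff they lie in the same $\mathcal{A}$-atom.

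So the strategy is:

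First I would form the relatively independent self-joining $\mu = m_X \times_{\mathcal{A}} m_X$ and apply the main measure rigidity theorem, Theorem~\ref{thm: main}, to the $\mathbb{Z}^d$-action $\alpha\times\alpha$ on $X \times X$ with the measure $\mu$. Since $\alpha$ has no virtually cyclic factors, neither does $\alpha\times\alpha$ (a virtually cyclic factor of $\alpha\times\alpha$ would, by projecting, give one of $\alpha$ — or one needs a small argument here). Theorem~\ref{thm: main} gives us a finite-index $\Lambda$, a decomposition $\mu = \frac{1}{J}(\mu_1 + \cdots + \mu_J)$, subgroups $G_j \subseteq X\times X$, etc. The homogeneity of $m_X$ under the full group $X$ translating the first coordinate (and the symmetry swapping coordinates) should force the groups $G_j$ and the measures $\mu_j$ to have a rigid structure: each $\mu_j$ is the Haar measure on a coset of a closed subgroup $H_j \leq X \times X$, because the zero-entropy factor $\mu_j$ on $(X\times X)/G_j$ must actually be a point mass (or Haar on a finite quotient) — this is where one uses that the first-coordinate projection of $\mu$ is $m_X$, which has full entropy, combined with the structure theory to rule out genuine zero-entropy behavior on a positive-dimensional piece; more carefully, the "base" factor with zero entropy must be finite because any infinite zero-entropy solenoid factor would contradict the way $\mu$ sits over $m_X$. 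The upshot is that $\mu$ is supported on finitely many cosets of a closed subgroup $H \leq X \times X$ and is the uniform/Haar measure on this union of cosets.

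Once $\mu$ is algebraic in this way, I would decode the subgroup $H \leq X \times X$: since $\mu$ projects to $m_X$ on each factor, $H$ surjects onto $X$ under each coordinate projection; let $X' = \{x : (0,x) \in H_0\}$ where $H_0$ is the connected-component-of-identity-type piece, essentially $X' = p_2(\ker p_1|_H)$. Then $H$ (together with the finitely many cosets, which encode a finite group) is the graph of a relation on $X/X'$ that is "finite-to-finite" and affine; the finitely many cosets give a finite group $\Gamma$ of affine automorphisms of $X/X'$, and $\mathcal{A} = \pi^{-1}(\mathcal{B}_{X/X'}^\Gamma)$ modulo $m_X$ falls out by translating the statement "$(x,y) \in \operatorname{supp}\mu$ iff $x, y$ are in the same $\mathcal{A}$-atom" into "$x, y$ have the same image in $(X/X')/\Gamma$." The invariance under $\alpha\times\alpha$ of $\mu$ (hence of $H$ and of the coset structure) translates exactly into $X'$ being $\alpha_\Lambda$-invariant and $\Gamma$ being normalized by $\alpha_{\Lambda, X/X'}$; to upgrade from $\Lambda$-invariance to $\alpha$-invariance of $X'$ one takes the intersection $\bigcap_{\mathbf{n}} \alpha^{\mathbf{n}}(X')$ over coset representatives of $\Lambda$ in $\mathbb{Z}^d$ and argues the $\sigma$-algebra is unchanged (the original $\mathcal{A}$ is genuinely $\mathbb{Z}^d$-invariant, which constrains things), absorbing any discrepancy into $\Gamma$.

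The main obstacle is the middle step: extracting from the conclusion of Theorem~\ref{thm: main} — which only says each $\mu_j$ is translation-invariant under some $G_j$ with a zero-entropy quotient — the much stronger statement that $\mu$ is \emph{exactly} Haar measure on finitely many cosets of a subgroup. The zero-entropy quotient $\mu_j$ on $(X\times X)/G_j$ need not a priori be atomic; ruling out a nontrivial zero-entropy component requires using the special feature of our joining, namely that it lives over $m_X$ on the first coordinate. One clean way: the relatively independent self-joining $\mu = m_X \times_{\mathcal A} m_X$ has the property that the first-coordinate $\sigma$-algebra and the second-coordinate $\sigma$-algebra are relatively independent over $\mathcal{A}\otimes\mathcal{A}$, and the conditional measures of $\mu$ over the first coordinate are, for $m_X$-a.e.\ $x$, the conditional measures of $m_X$ given $\mathcal A$ — these are themselves Haar measures on cosets of a fixed closed subgroup $X_{\mathcal A} \leq X$ (this last fact, that the fibers of a measurable factor of $(X, m_X)$ under an algebraic-enough action are cosets, is itself essentially what we're proving, so one must be careful not to argue in a circle; the honest route is that Theorem~\ref{thm:main} applied to $\mu$ forces it, and the first-coordinate marginal being the \emph{full} Haar measure $m_X$ — maximal entropy — prevents the zero-entropy base from being infinite, since an infinite solenoid factor on which $\alpha_\Lambda$ acts with zero entropy would have to be "seen" in the entropy of the first coordinate, contradiction via the Abramov–Rokhlin / Ledrappier–Walters addition formula for entropy of a joining relative to a factor). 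I expect assembling this entropy argument cleanly, and then the bookkeeping to pass from $\Lambda$ back to $\mathbb{Z}^d$ and to identify $\Gamma$ as an honest finite group of affine maps, to be where most of the work lies.
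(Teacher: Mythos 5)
Your overall strategy is the paper's: form the relatively independent self-joining $\nu=m_X\times_{\mathcal A}m_X$, apply Theorem~\ref{thm: main} to its ergodic components, decode the resulting groups, and pass from $\Lambda$ to $\ZZ^d$. But the argument you propose for the central step does not work as stated, and this is exactly where the paper has to be careful.

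You claim that because the first-coordinate marginal of $\nu$ is $m_X$, which has ``maximal'' entropy, the zero-entropy base $(X\times X)/G_j$ must be finite --- that an infinite zero-entropy solenoid factor ``would have to be seen in the entropy of the first coordinate.'' This is false. Consider $G$ the graph of an automorphism $\phi$ of $X$ and $\mu$ the Haar measure on the graph: then $(X\times X)/G\cong X$ is an infinite solenoid on which the induced action has positive entropy with respect to Haar, yet the image of $\mu$ there is the point mass $\delta_0$, which of course has zero entropy. Nothing in the Abramov--Rokhlin addition formula prevents this: the projection to the first coordinate and the projection to $(X\times X)/G_j$ are different maps, and the entropy of the latter with respect to the pushforward of $\mu$ carries no information about the entropy of the former. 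What the entropy argument \emph{does} give (this is Proposition~\ref{pro: joining}) is only that each $G_j$ projects onto $X_1$ and onto $X_2$; otherwise $X_i/\pi_i(G_j)$ would be a nontrivial solenoid factor with Haar measure and zero entropy, contradicting the absence of virtually cyclic factors.

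The missing input is precisely the structure of $\nu$ as a joining \emph{over} $\mathcal A$: after first quotienting by the closed $\alpha$-invariant subgroup $X'$ of translations that preserve almost every $\mathcal A$-atom (so that one may assume $X'=\{0\}$), and using that each ergodic component gives full mass to $D_Y=\{(x_1,x_2):\psi(x_1)=\psi(x_2)\}$, one shows (this is Lemma~\ref{second invariance}): if $\mu$ is invariant under translation by a subgroup $G$ projecting onto both coordinates and $\mu(D_Y)=1$, then the normalization forces $G\cap(X\times\{0\})=\{0\}$ (and symmetrically), so $G$ is the graph of a group isomorphism; and a Fubini argument using $\mu(D_Y)=1$ then pins $\mu$ down as the Haar measure of a single affine graph $\Phi=\phi+w$, with $w$ uniquely determined. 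That is, concentration on one coset comes from the $D_Y$-constraint plus the normalization, not from entropy. You gesture at the $D_Y$ constraint and at killing translations, so you have the ingredients, but you attribute the conclusion to the wrong mechanism. A second gap: you simply assert ``the finitely many cosets give a finite group $\Gamma$.'' A priori there could be countably many affine graphs; the paper shows there are finitely many \emph{with equal weights} by computing the conditional measures $(\lambda_X)^{\mathcal A}_x=\sum_\gamma a_\gamma\delta_{\gamma(x)}$ and using the $\Gamma$-invariance of these conditionals to force all $a_\gamma$ equal, hence $\Gamma$ finite. This step deserves to be made explicit in your argument.
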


 In other words, the corollary states that
 every measurable factor of $\alpha$ arises by a 
 combination of the
 two algebraic constructions given above.

In the irreducible case the theorem
gives that every nontrivial measurable factor of $\alpha$ is a 
quotient of $X$ by the action of a finite affine group.
The simplest examples of such groups are
finite translation groups. However, more complicated examples are also 
possible; for example, let $w \in X$ be any
$\alpha$-fixed point. Then the action of $G=\{ \Id,-\Id+w \}$ on $X$
commutes with $\alpha$.

The proof of Corollary \ref{thm: algebra} uses the relatively
independent joining of the Haar measure with itself over the
factor $\cA$, which gives an invariant measure on $X \times X$
analyzable by Theorem \ref{thm: main}. This is similar to the
proof of isomorphism rigidity in \cite{Katok-Katok-Schmidt}, which
followed a suggestion by Thouvenot.

We will discuss further corollaries
towards factors in Section \ref{sec: inv_algebra}.

\subsection{Remarks and acknowledgements}
The results of this paper were obtained in 2002 and announced in \cite{EL-ERA03}; indeed this was the first result we worked on together. Since then there was always another newer result that we wanted to write, and we never seemed to have the time to finally write down the general case of the results announced in \cite{EL-ERA03}. One important ingredient in this work is the product structure for coarse Lyapunov foliations developed around that time by Anatole Katok and the first author.

The ideas behind the proof of Theorem~\ref{thm: main} were used by Zhiren Wang to prove his strong measure classification result for invariant measures on nilmanifolds \cite{Wang-nilmanifolds}. 
Actions by automorphisms on nilmanifolds generalize actions on tori which are covered by the results of this paper; solenoids are more general than tori, but more importantly in that paper Zhiren Wang does not allow for zero entropy factors, as we do here. Hence the results of this paper are (to the best of our knowledge) ``new'' in the sense that they have not appeared in print before.
We thank Zhiren for encouraging us to write down the complete proof of \cite{EL-ERA03} and for his willingness to help us do so.
We also would like to thank the anonymous referee
and Manuel Luethi for their comments.

\section{Actions on adelic solenoids}\label{sec: solenoids}

\subsection{Adeles, local and global fields}
We review some basic facts and definitions regarding local fields, global fields, and the adeles. A general reference to these topics is Weil's classical book \cite[Ch. I-IV]{Weil}; note that Weil calls what is now commonly referred to as global fields $\AA$-fields. Throughout this paper, the term {\em local field} will denote a
locally compact field of characteristic zero\footnote{The terminology of global and local fields was introduced to incorporate both the positive and zero characteristic cases on an equal footing, but dynamically there are rather fundamental differences (see e.g. \cite{Kitchens-Schmidt, Einsiedler-bad-measures}) and we restrict ourselves in this paper to the zero characteristic case.}; these include $\RR$
and $\CC$ as well as finite extensions of the field of
$p$-adic numbers $\QQ_p$. Let $\KK$ be a local field and let
$\lambda_\KK$ be the Haar measure on $\KK$. 
We define $\delta(\KK)$ as the degree of the field
extension $\KK$ over the closure of $\QQ$ in $\KK$, which can be isomorphic to either $\RR$ if $\KK$ is Archimedean or $\QQ_p$ for some prime $p$ otherwise (to make the notations more consistent, we will also write $\QQ_\infty$ for $\RR$). 
Local fields come equipped with an absolute value $|\cdot|_\KK$, which we will always normalize to coincide with the usual absolute value on $\RR$ or $\QQ_p$.
We note that in any of these cases we have
\begin{equation}\label{eq: norm_norm}
\lambda_\KK(aC)=|a|_\KK^{\delta(\KK)}\lambda_\KK(C)
\end{equation}
for any measurable set $C \subseteq\KK$. 

We recall that a {\em global field} $\KK$ is a finite field extension of $\QQ$. 
We will denote the completions of $\KK$ by $\KK_\sigma$,
where $\sigma$ stands for the (Archimedean or non-Archimedean) \emph{place} --- i.e.\ an equivalence class of absolute values. We choose the representative to coincide with either $\absolute\cdot_\infty$ of $\absolute\cdot_p$ on $\QQ$. 
We recall that $\KK_\sigma$ is a local field and will use the abbreviation  $\absolute\cdot_\sigma=\absolute\cdot_{\KK_\sigma}$ 
for the norms satisfying \eqref{eq: norm_norm} 
on $\KK_\sigma$.
If $\absolute{\cdot}_\sigma$ coincides with $\absolute{\cdot}_p$ on $\QQ$ then we say that $\sigma$ lies over $p$; if $\absolute{\cdot}_\sigma$ is Archimedean we say that $ \sigma$ is an infinite place of $\KK$.

For a global field $\KK$ the ring of adeles $\AA_\KK$ over $\KK$
is defined as the restricted direct product of all completions of $\KK$
with respect to the maximal compact subrings for all non-Archimedean completions, in other words $(t_\sigma)_\sigma \in \AA_\KK$ if $t_\sigma \in \KK_\sigma$ for all places $\sigma$ of $\KK$ and except for finitely many $\sigma$ (an exceptional set that is assumed to include all infinite places) we have that in fact $t_\sigma$ lies in the maximal compact subring $\mathcal{O}_{\KK,\sigma} < \KK_\sigma$.
In the special case $\KK=\QQ$ this takes the form 
\[
\AA=\textstyle{\RR\times\prod_p'\QQ_p}=\RR\times\bigcup_S\left(\prod_{p\in S}\QQ_p\times\prod_{p\notin S}\ZZ_p\right),
\]
where the union runs over all finite subsets $S$ of the primes. 
The general case of the ring of adeles $\AA_\KK$
over a global field $\KK$ is defined similarly,
but can also be obtained via
\begin{equation}\label{eq:adelictensor}
 \AA_\KK=\AA\otimes_\QQ\KK.
\end{equation}
We shall identify $\KK_\sigma$ with the corresponding subring in
$\AA_\KK$.
Using a basis of $\KK$ over $\QQ$ we obtain an additive group
isomorphism (indeed, an isomorphism of vector spaces over $\QQ$) 
\begin{equation}\label{eq:adelicisomorphism}
    \AA_\KK=\AA\otimes_\QQ\KK\cong \AA^{[\KK:\QQ]}.
\end{equation}

We recall moreover that $\QQ$ diagonally embedded into $\AA$ is discrete and
cocompact and that the Pontryagin dual $\widehat{\AA}$ of $\AA$ 
can be identified with $\AA$
itself. Finally the isomorphism between 
$\widehat{\AA}$  and $\AA$ can be chosen 
so that the annihilator of $\QQ$ is $\QQ$ itself, which implies that
the Pontryagin dual of $\QQ$ can be identified with $\AA/\QQ$.
This extends similarly to global fields, see e.g. \cite[pp.~64--69]{Weil}. 

\subsection{Adelic actions}\label{adelicaction}
For us the adelic setup gives a concrete language to discuss actions on general solenoids. 
We note however, that for automorphisms on tori it suffices to consider all Archimedean
places of $\KK$ and for irreducible actions it would suffice to consider only
finitely many places (see also \cite{EL-ERA03} for the latter). 

Indeed, let us fix a dimension $m\geq 1$, a rank $d\geq 1$, and $d$ commuting
matrices $A_1,\ldots,A_d\in\GL_m(\QQ)$. We
use them to define a linear representation 
$\widetilde{\alpha}$ of $\ZZ^d$
on $\QQ^m$. Using the matrices in the same way 
as within vector spaces this extends to
an action of $\ZZ^d$ by group automorphisms on $\AA^m$, which we will also denote by $\widetilde{\alpha}$. Finally we take the quotient
by the discrete cocompact invariant subgroup $\QQ^m$ and obtain 
an action $\alpha$ of $\ZZ^d$
by automorphisms on the solenoid
\[
X_m=\AA^m/\QQ^m.
\]
We will refer
to $X_m$ as an {\em adelic solenoid}
and to this action as the {\em adelic action} on $X_m$ defined
by the matrices (or equivalently the linear maps) $A_1,\ldots,A_d$. 

Since every group automorphism of $\QQ^m$
is in fact $\QQ$-linear and defined by an invertible matrix in $\GL_m(\QQ)$,
it follows from Pontryagin duality that every action of $\ZZ^d$ by automorphisms on $X_m$
can be defined this way. We will explain this step in a more general form 
in Section \ref{ssec:extension}.

We say that a closed subgroup $Y<X_m$ of an adelic 
solenoid $X_m=\widehat{\QQ}^m$ 
is \emph{adelic} if it is a linear subspace over $\QQ$ 
(i.e.\ $\QQ Y\subseteq Y$). Since this notion will be useful
for us we wish to study it briefly in the following lemma.

\begin{lemma}\label{lem:adelic}
 Let $m\geq 1$ and let $Y\leq X_m$
 be a closed subgroup. Then the following
 conditions are equivalent.
 \begin{enumerate}
     \item $Y\leq X_m$ is an adelic subgroup.
     \item The annihilator $Y^\perp\leq\QQ^m$ is a $\QQ$-linear subspace.
     \item There exists a $\QQ$-linear subspace $V\leq\QQ^m$
     so that $Y$ is the image of $\AA\otimes_\QQ V\leq\AA^m$
     modulo $\QQ^m$.
 \end{enumerate}
\end{lemma}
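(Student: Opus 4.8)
The plan is to prove the three conditions equivalent by going around the cycle $(3)\Rightarrow(1)\Rightarrow(2)\Rightarrow(3)$, using Pontryagin duality to translate between subgroups of $X_m = \widehat{\QQ^m} = \AA^m/\QQ^m$ and subgroups of its dual $\QQ^m$. Recall that the annihilator operation $Y\mapsto Y^\perp$ is an inclusion-reversing bijection between closed subgroups of $X_m$ and (automatically closed, since $\QQ^m$ is discrete) subgroups of $\QQ^m$, and that it intertwines the scalar multiplication actions: for $q\in\QQ$, multiplication by $q$ on $\AA^m$ (hence on $X_m$) is dual to multiplication by $q$ on $\QQ^m$. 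I would record this compatibility explicitly at the start, since it makes $(1)\Leftrightarrow(2)$ almost immediate.

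For $(3)\Rightarrow(1)$: if $V\le\QQ^m$ is a $\QQ$-subspace and $Y$ is the image of $\AA\otimes_\QQ V$ in $X_m$, then since $\QQ\cdot(\AA\otimes_\QQ V)\subseteq \AA\otimes_\QQ V$ (scalars in $\QQ\subseteq\AA$ act within the second tensor factor), the image $Y$ satisfies $\QQ Y\subseteq Y$, so $Y$ is adelic — one only needs to note that $\AA\otimes_\QQ V$ is closed in $\AA^m$, which holds because choosing a $\QQ$-basis of $V$ and extending to one of $\QQ^m$ identifies $\AA\otimes_\QQ V$ with a coordinate subspace $\AA^{\dim V}\subseteq\AA^m$ via the isomorphism in \eqref{eq:adelicisomorphism}. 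For $(1)\Rightarrow(2)$: suppose $\QQ Y\subseteq Y$. Then for $q\in\QQ^\times$ we have $qY\subseteq Y$ and also $q^{-1}Y\subseteq Y$, so $qY=Y$; dualizing, $qY^\perp = Y^\perp$ for all $q\in\QQ^\times$, hence $Y^\perp$ is closed under multiplication by every nonzero rational and (being a subgroup, so closed under addition and containing $0$) is therefore a $\QQ$-linear subspace of $\QQ^m$. For $(2)\Rightarrow(3)$: given that $W:=Y^\perp\le\QQ^m$ is a $\QQ$-subspace, set $V:=W^{\perp}$ computed inside the natural pairing of $\QQ^m$ with itself — more precisely, pick $V\le\QQ^m$ to be a complementary-type subspace so that the annihilator of $W$ inside $\AA^m/\QQ^m$ is exactly the image of $\AA\otimes_\QQ V$; concretely, choose a $\QQ$-basis adapted to $W$, use \eqref{eq:adelicisomorphism} to split $\AA^m \cong \AA^{k}\times\AA^{m-k}$ with $\QQ^m$ splitting correspondingly and $W=\QQ^k\times 0$, observe that the annihilator of $\QQ^k\times 0$ in $\widehat{\QQ^m}=X_m$ is $0\times(\AA^{m-k}/\QQ^{m-k})$, which is the image of $\AA\otimes_\QQ V$ for $V=0\times\QQ^{m-k}$. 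Then $Y = (Y^\perp)^\perp = W^\perp$ is this image, giving (3).

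The main point requiring care — and the step I would write most carefully — is the precise form of the duality between $X_m=\AA^m/\QQ^m$ and $\QQ^m$, including why multiplication by $q\in\QQ$ on one side corresponds to multiplication by $q$ (not $q^{-1}$, up to the convention fixed) on the other, and why annihilators of coordinate subspaces under the adelic self-duality $\widehat{\AA}\cong\AA$ (with the annihilator of $\QQ$ being $\QQ$, as recalled before \eqref{eq:adelicisomorphism}) come out as the ``complementary'' coordinate subspaces mod $\QQ^m$. None of this is deep, but getting the bookkeeping right across the isomorphisms \eqref{eq:adelictensor} and \eqref{eq:adelicisomorphism} is where an error could creep in; everything else is formal manipulation with annihilators and the observation that a subgroup of a $\QQ$-vector space stable under all nonzero rational scalars is a subspace.
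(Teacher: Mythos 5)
Your proposal is correct and follows essentially the same route as the paper: both rely on Pontryagin duality, the observation that $\QQ Y\subseteq Y$ forces $qY=Y$ for all $q\in\QQ^\times$ (hence $qY^\perp=Y^\perp$ via $(qY)^\perp = q^{-1}Y^\perp$), and a reduction of $(2)\Rightarrow(3)$ to coordinate subspaces by applying a rational change of basis and using the self-duality $\widehat{\AA}\cong\AA$ with $\QQ^\perp=\QQ$. The only cosmetic difference is the cycle order $(3)\Rightarrow(1)\Rightarrow(2)\Rightarrow(3)$ versus the paper's $(1)\Leftrightarrow(2)$, $(3)\Rightarrow(1)$, $(2)\Rightarrow(3)$.
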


\begin{proof}
 The equivalence of (1) and (2) follows from Pontryagin
 duality. Indeed $aY=Y$ for $a\in\ZZ\setminus\{0\}$ (and then also $a\in\QQ\setminus\{0\}$)
 is equivalent to $a(Y^\perp)=Y^\perp$ (since $(aY)^\perp=a^{-1}Y^\perp$). 
 
 Suppose now $V<\QQ^m$ is a linear subspace as in (3).
 Then $\AA\otimes_\QQ V$ is clearly invariant under $\QQ$
 and hence defines modulo $\QQ^m$ an adelic subgroup.
 
 Finally assume that $Y$ is adelic as in (1) (and 
 equivalently (2)). Let $W=Y^\perp<\QQ^m$ so that $W$
 is a linear subspace and $Y=W^\perp$ 
 by Pontryagin duality. By \cite[Ch.~IV]{Weil} there exists a 
 character $\chi_0\in\widehat{\AA}$
 so that the isomorphism $\widehat{\AA}\cong\AA$ is induced
 by the definition $\langle a,b\rangle=\chi_0(ab)$ for all $a,b\in\AA$
 and with this isomorphism we have $\QQ^\perp=\QQ$.
 Moreover, this also gives $\widehat{\AA^m}\cong\AA^m$ using the pairing
 \[
  \bigl\langle (a_1,\ldots,a_m),(b_1,\ldots,b_m)\bigr\rangle
  = \chi_0\bigl(a_1b_1+\cdots+a_mb_m\bigr)
 \]
 for all $(a_1,\ldots,a_m),(b_1,\ldots,b_m)\in\AA^m$.
 Since $W<\QQ^m$ is a linear subspace we may apply a linear isomorphism 
 $A\in\operatorname{GL}_m(\QQ)$ so that $W_1=A(W)$ is precisely the span of the first $k$
 standard basis vectors. Applying the inverse of the dual (transpose) linear automorphism
 to $Y$ this shows that $Y_1=(A^t)^{-1}(Y)$ satisfies that $Y_1^\perp=W_1$. 
 Now let $(a_1,\ldots,a_m)\in Y_1$. Hence we have $\chi_0(a_jb)=1$ for all 
 $b\in\QQ$ and $j=1,\ldots,k$. However, this gives by the properties of $\chi_0$
 that $a_j\in\QQ$ for $j=1,\ldots,k$. It follows that $Y_1=\QQ^m+\AA\otimes_\QQ V_1$,
 where $V_1$ is the linear hull of the last $m-k$ basis vectors. Applying $A^t$
 to this claim gives the description of $Y$ as in (3).
\end{proof}

\subsection{Irreducible adelic actions}\label{irreducible section}

We say that an adelic action on $X_m$ is {\em $\AA$-irreducible} 
if the associated linear representation
of $\ZZ^d$ on $\QQ^m$ is irreducible over $\QQ$, i.e.\ if there does not exist
a rational nontrivial proper invariant subspace. Note, however, that $\AA$-irreduciblity does not coincide with the notion of irreducibility defined on~p.~\pageref{defn: irreducibility}. In fact
an adelic action is never irreducible, but it will be convenient to 
study $\AA$-irreducible adelic actions as basic building blocks
of other adelic actions.

We note that given a global field $\KK$ 
and $d\geq 1$ elements $\zeta_1,\ldots,\zeta_d\in\KK$ 
we may consider multiplication by these elements
as a $\QQ$-linear map on the vector space $\KK$ over $\QQ$
to define an adelic action of $\ZZ^d$ on $\AA_\KK/\KK$.
Using a fixed basis of $\KK$ over $\QQ$
we may identify $\KK$ with $\QQ^m$
and multiplication by $\zeta_1,\ldots,\zeta_d$
with certain matrices $A_1,\ldots,A_d$. 
In this way our discussions of Section \ref{adelicaction}
also apply to the multiplication maps
by $\zeta_1,\ldots,\zeta_d$ on $\KK$.
The point of the following proposition
is that every $\AA$-irreducible action of $\ZZ^d$
arises in this way from a global number field
and $d$ of its elements.

\begin{proposition}[Diagonalization of $\AA$-irreducible action]\label{adelicirred}
 Let $m,d\geq 1$ and let $\alpha$ be an  $\AA$-irreducible adelic action of $\ZZ^d$ on $X_m=(\AA/\QQ)^m$. 
 Then there exists a global field $\KK$ of degree $m$ over $\QQ$ and 
 $d$ nonzero elements $\zeta_1,\ldots,\zeta_d\in\KK^\times$
 so that $\alpha$ is isomorphic to the action on $\AA_\KK/\KK$ 
 generated by the maps $a\in\KK\mapsto\zeta_j a\in\KK$ for $j=1,\ldots,d$. 
More explicitly, this action on $\AA_\KK/\KK$ (which as an additive topological group is isomorphic to $X_m$)
can be given as follows:
 \[
   \widetilde{\alpha}^\n: (v_\sigma)_\sigma\in\AA_\KK\mapsto 
      \bigl(\underbrace{\zeta_{1,\sigma}^{n_1}\cdots \zeta_{d,\sigma}^{n_d}}_{=\zeta_{\n,\sigma}}v_\sigma\bigr)_\sigma,
 \]
 where $\zeta_{1,\sigma},\ldots,\zeta_{d,\sigma}\in\KK_\sigma$ and $\zeta_{\n,\sigma}$
 denote the image of $\zeta_1,\ldots,\zeta_d$ respectively of $\zeta_\n=\zeta_1^{n_1}\cdots\zeta_d^{n_d}$
 in the completion $\KK_\sigma$.
\end{proposition}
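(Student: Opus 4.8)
The plan is to reduce the statement to the classical fact that a commutative $\QQ$-algebra acting irreducibly on a finite-dimensional $\QQ$-vector space is a field, and then to identify the adelic completion of that field with $\AA_\KK$ via \eqref{eq:adelictensor}. Concretely, let $R\subseteq\operatorname{End}_\QQ(\QQ^m)$ be the (commutative, unital) $\QQ$-subalgebra generated by the commuting matrices $A_1,\ldots,A_d$ defining $\widetilde\alpha$. Since the $\ZZ^d$-representation on $\QQ^m$ is $\AA$-irreducible, i.e.\ has no proper nonzero rational invariant subspace, $\QQ^m$ is a simple $R$-module; by Schur's lemma $\operatorname{End}_R(\QQ^m)$ is a division ring, and because $R$ is commutative it embeds into this division ring (acting by multiplication on the faithful module $\QQ^m$), so $R$ is an integral domain. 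Being finite-dimensional over $\QQ$, $R$ is in fact a field $\KK=R$, and $[\KK:\QQ]=\dim_\QQ R$. A simple module over a field $\KK$ is one-dimensional, so $\QQ^m$ is a $1$-dimensional $\KK$-vector space; comparing $\QQ$-dimensions gives $[\KK:\QQ]=m$, and fixing a $\KK$-basis vector of $\QQ^m$ yields a $\QQ$-linear isomorphism $\QQ^m\cong\KK$ intertwining the action of each $A_j$ with multiplication by the element $\zeta_j\in\KK^\times$ corresponding to $A_j$ under $R=\KK$. Each $\zeta_j$ is invertible since $A_j\in\GL_m(\QQ)$.

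Next I would upgrade this $\QQ$-linear isomorphism $\QQ^m\cong\KK$ to an isomorphism of adelic solenoids. Tensoring with $\AA$ over $\QQ$ gives an isomorphism of topological groups $\AA^m=\AA\otimes_\QQ\QQ^m\cong\AA\otimes_\QQ\KK=\AA_\KK$, using \eqref{eq:adelictensor}; this is a homeomorphism because, after choosing $\QQ$-bases, it is given by an invertible matrix over $\QQ$ acting on $\AA^m$, and such maps are automatically homeomorphisms of $\AA^m$. Under this identification the discrete cocompact subgroup $\QQ^m$ is carried onto $\KK\subseteq\AA_\KK$, so passing to quotients we get a topological group isomorphism $X_m=\AA^m/\QQ^m\cong\AA_\KK/\KK$. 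Since the isomorphism $\QQ^m\cong\KK$ intertwines $A_j$ with multiplication by $\zeta_j$ on $\KK$, and tensoring with $\AA$ is functorial, the induced map on $\AA_\KK$ intertwines $\widetilde\alpha^\n$ with multiplication by $\zeta_\n=\zeta_1^{n_1}\cdots\zeta_d^{n_d}\in\KK\subseteq\AA_\KK$; this multiplication acts on $\AA_\KK=\prod'_\sigma\KK_\sigma$ componentwise as multiplication by the image $\zeta_{\n,\sigma}$ of $\zeta_\n$ in $\KK_\sigma$, which is exactly the displayed formula. Hence $\alpha$ is isomorphic to the stated action on $\AA_\KK/\KK$.

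The main point requiring care — though it is more bookkeeping than a genuine obstacle — is matching the adelic structures: one must check that the restricted-product topology on $\AA\otimes_\QQ\KK$ built from the completions $\KK_\sigma$ agrees with the topology transported from $\AA^m$ under a rational change of basis, i.e.\ that \eqref{eq:adelictensor}–\eqref{eq:adelicisomorphism} are topological (not merely algebraic) isomorphisms, and that under these identifications the diagonal $\KK\hookrightarrow\AA_\KK$ corresponds to $\QQ^m\hookrightarrow\AA^m$. This is standard (it is the content of the references to \cite[Ch.~IV]{Weil} and to \eqref{eq:adelictensor} already recorded in the text): the place decomposition of $\AA_\KK$ mirrors the factorization of $p$ in $\KK$, and a $\QQ$-linear automorphism of $\QQ^m$ extends to a topological automorphism of $\AA^m$ because it is, place by place, a linear automorphism of $\QQ_\sigma^m$ for each place $\sigma$ of $\QQ$ (the finitely many bad primes, where it fails to preserve $\ZZ_p^m$, do not affect the restricted-product topology). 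I would therefore state these compatibilities, cite \cite{Weil} for the adelic facts, and spend the bulk of the written proof on the purely algebraic identification $R=\KK$ and the intertwining of the actions, which is the substantive content.
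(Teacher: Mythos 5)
Your proposal is correct and takes essentially the same approach as the paper: both show that the commutative $\QQ$-algebra generated by the commuting matrices is a number field $\KK$ of degree $m$, identify $\QQ^m$ with $\KK$ so the action becomes multiplication by $\zeta_j$, and tensor with $\AA$ to obtain $\AA_\KK$. The only real difference is stylistic — you get the integral-domain property via Schur's lemma (the algebra embeds in the division ring $\operatorname{End}_R(\QQ^m)$), whereas the paper argues directly that a nonzero polynomial expression in the matrices must be invertible since its kernel would be a proper invariant subspace; the paper also spells out the ring isomorphisms $\KK\otimes_\QQ\QQ_\sigma\cong\prod_{\tau\mid\sigma}\KK_\tau$ a bit more explicitly than you do, but you correctly identify this as the remaining bookkeeping and cite Weil for it.
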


\begin{proof}
 Let $\zeta_j=\widetilde{\alpha}^{\mathbf{e}_j}\in\GL_m(\QQ)$ for
 $j=1,\ldots,d$ be the matrices that define the action $\widetilde{\alpha}$
 on $\QQ^m$ and $\AA^m$ associated to $\alpha$. 
 
 We define $\KK=\QQ[\zeta_1,\ldots,\zeta_d]\subseteq\GL_m(\QQ)$ to
 be the ring of polynomial expressions $f$ in the
 matrices $\zeta_1,\ldots,\zeta_d$ and with rational coefficients. 
 We note that Lemma~\ref{lem:adelic} implies that
 $\QQ^m$ has no proper rational subspaces invariant under $\KK$. 
 Since $\zeta_1,\ldots,\zeta_d$ commute,
 it follows that any such polynomial expression $f\in\KK$ is either zero or is
 invertible (as an element of $\GL_m(\QQ)$). 
 In particular we have that $\KK$ is an integral domain. 
 As it is also a finite dimensional algebra over $\QQ$ it follows
 that $\KK$ is field extension of $\QQ$. Once more
 because $\QQ^m$ has no proper invariant subspaces it also follows that
 $\varphi: a\in\KK\mapsto a(\mathbf{e}_1)\in\QQ^m$ must be surjective. By 
 definition the kernel $\ker(\varphi)$ is an ideal, which implies
 that $\varphi$ is injective since $\KK$ is a field. 
 It follows that $\varphi$ is a linear isomorphism. 
 
 To summarize we have found a global field $\KK$ and 
 elements $\zeta_1,\ldots,\zeta_d\in\KK^\times$ so that up to a linear isomorphism
 our linear representation $\widetilde{\alpha}^\n$ on $\QQ^m$ is defined 
 for every $\n\in\Zd$ by 
 multiplication by $\zeta_\n=\zeta_1^{n_1}\cdots \zeta_d^{n_d}$ on the vector space $\KK$. 
 
 To obtain the adelic action we tensorize with $\AA$. On one hand 
 for the action on $\QQ^m$ this gives
 the action of $\ZZ^d$ on $\AA^m$ we started with. On the other hand we may 
 tensorize the linear isomorphism between $\QQ^m$ and $\KK$
 with $\AA$ to obtain the group isomorphism
 \[
  \AA^m=\QQ^m\otimes_\QQ\AA\cong\KK\otimes_\QQ\AA\cong
  \bigl(\KK\otimes_\QQ\RR\bigr)\times\textstyle{\prod'_p\bigl(\KK\otimes_\QQ\QQ_p\bigr)}.
 \]
 Now notice that we can identify $\KK$ with the quotient $\QQ[x]/(p(x))$ for
 some irreducible polynomial $p(x)\in\QQ[x]$, which implies that 
 $\KK\otimes\RR$ is isomorphic $\RR[x]/(p(x))$. Since the irreducible factors
 of $p(x)\in\RR[x]$ correspond precisely to the roots of $p(x)$ (all appearing
 with multiplicity one) and hence also 
 to the Galois embeddings of $\KK$ into $\CC$ it follows that $\KK\otimes\RR$
 is as a ring isomorphic to $\prod_{\sigma|\infty}\KK_\sigma$, where the product runs over all places of $\KK$ lying above $\infty$, i.e. over
 all Archimedean completions of $\KK$.
 
 This argument applies similarly for the tensor product with $\QQ_p$ 
 so that $\KK\otimes_\QQ\QQ_p$ is isomorphic as a ring to the product
 $\prod_{\sigma|p}\KK_\sigma$ and $\sigma$ denotes here all places of $\KK$ above $p$, see also \cite[p.~56]{Weil}. Applying this argument at all places of $\QQ$
 we obtain that $\AA^m$ is isomorphic to $\AA_\KK$. 

 Application of $\widetilde{\alpha}^{\mathbf{e}_j}$ corresponds under this
 isomorphism from $\AA^m$ to $\AA_\KK=\prod_{\sigma}'\KK_\sigma$ to multiplication
 by the image of $\zeta_j$ in the factor $\KK_\sigma$ for every place $\sigma$ of $\KK$.
 This gives the proposition.
\end{proof}

Let us write $\delta(\sigma)=\delta(\KK_\sigma)\in\NN$ for any place $\sigma$
of $\KK$. The following product formula is a crucial ingredient in our proof.

\begin{proposition}
 [Product formula]\label{prop: localprod}
Let $\alpha$ be an $\AA$-irreducible adelic $\zd$-action as in Proposition \ref{adelicirred}. 
Then we have
\begin{equation}\label{eq: prodform}
\prod_{\sigma}|a|_{\sigma}^{\delta(\sigma)}= 1\mbox{ for
every }a \in \KK\setminus\{0\}
\end{equation}
and this applies in particular to $a=\zeta_\n$ for every $\n\in\ZZ^d$.
\end{proposition}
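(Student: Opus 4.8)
The plan is to deduce the product formula for $\KK$ from the classical product formula over $\QQ$, via the norm (= determinant of the multiplication map) of $a$. Recall first the latter: a nonzero rational number $r$ factors as $r=\pm\prod_p p^{v_p(r)}$, so that $|r|_p=p^{-v_p(r)}$ while $|r|_\infty=\prod_p p^{v_p(r)}$, whence $\prod_\sigma|r|_\sigma=|r|_\infty\prod_p|r|_p=1$. Since every place of $\QQ$ has $\delta=1$, this is precisely \eqref{eq: prodform} for $\KK=\QQ$.

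The key local computation is as follows. Let $\sigma$ be a place of $\KK$ lying above a place $p\leq\infty$ of $\QQ$ (writing $\QQ_\infty=\RR$), so that $\KK_\sigma/\QQ_p$ is a finite extension of degree $\delta(\sigma)=[\KK_\sigma:\QQ_p]$. Let $N_{\KK_\sigma/\QQ_p}(x)\in\QQ_p$ be the determinant of multiplication by $x\in\KK_\sigma$ on the $\QQ_p$-vector space $\KK_\sigma$. Then, because $|\cdot|_\sigma$ was normalized so as to extend $|\cdot|_p$, one has $|x|_\sigma^{\delta(\sigma)}=|N_{\KK_\sigma/\QQ_p}(x)|_p$ for all $x\in\KK_\sigma$. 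In the non-Archimedean case this is the standard description of the unique extension of a $p$-adic absolute value to a finite extension; in the Archimedean case it reduces to the trivial identity for $\KK_\sigma=\RR$ and to $|z|^2=|z\overline z|_\infty=|N_{\CC/\RR}(z)|_\infty$ for $\KK_\sigma=\CC$.

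Finally I would globalize. As already used in the proof of Proposition \ref{adelicirred}, for each place $p\leq\infty$ of $\QQ$ there is a ring isomorphism $\KK\otimes_\QQ\QQ_p\cong\prod_{\sigma\mid p}\KK_\sigma$, and under it multiplication by $a\in\KK^\times$ corresponds to the diagonal action of $a$ on the factors; comparing determinants gives $N_{\KK/\QQ}(a)=\prod_{\sigma\mid p}N_{\KK_\sigma/\QQ_p}(a)$ in $\QQ_p$, where $N_{\KK/\QQ}(a)\in\QQ^\times$ is the determinant of multiplication by $a$ on the $\QQ$-vector space $\KK$. Taking $p$-adic (resp.\ real) absolute values and using the local computation, $\prod_{\sigma\mid p}|a|_\sigma^{\delta(\sigma)}=|N_{\KK/\QQ}(a)|_p$ for every $p\leq\infty$; multiplying over all $p$ and invoking the product formula over $\QQ$ for the nonzero rational $N_{\KK/\QQ}(a)$ yields $\prod_\sigma|a|_\sigma^{\delta(\sigma)}=\prod_{p\leq\infty}|N_{\KK/\QQ}(a)|_p=1$, which is \eqref{eq: prodform}. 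The case $a=\zeta_\n$ is immediate since $\zeta_\n\in\KK^\times$. There is no genuine obstacle here — the statement is classical (see \cite[Ch.~IV]{Weil}) — the only point demanding care is the matching of normalizations in the local identity $|x|_\sigma^{\delta(\sigma)}=|N_{\KK_\sigma/\QQ_p}(x)|_p$; granting that, the remainder is bookkeeping.
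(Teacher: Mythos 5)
Your proof is correct and complete, but it follows a genuinely different route from the one the paper sketches. The paper does not write out a proof of Proposition~\ref{prop: localprod} at all; it remarks that the product $\prod_\sigma|a|_\sigma^{\delta(\sigma)}$ is exactly the modulus of the automorphism of the compact group $\AA_\KK/\KK$ given by multiplication by $a$ (this is where \eqref{eq: norm_norm} enters, since the modulus of multiplication by $a$ on each factor $\KK_\sigma$ is $|a|_\sigma^{\delta(\sigma)}$ and $\KK$ is discrete), and since a topological automorphism of a compact group preserves Haar measure, the modulus is $1$; it then cites Weil for the details. You instead reduce to the classical product formula on $\QQ$ by passing through the norm $N_{\KK/\QQ}$: the local identity $|x|_\sigma^{\delta(\sigma)}=|N_{\KK_\sigma/\QQ_p}(x)|_p$ together with the decomposition $\KK\otimes_\QQ\QQ_p\cong\prod_{\sigma\mid p}\KK_\sigma$ gives $\prod_{\sigma\mid p}|a|_\sigma^{\delta(\sigma)}=|N_{\KK/\QQ}(a)|_p$, and multiplying over $p\leq\infty$ collapses to the $\QQ$-product formula applied to the nonzero rational $N_{\KK/\QQ}(a)$. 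Both arguments are classical and correct. The paper's route is shorter once one has the adelic machinery in place and dovetails with the way \eqref{eq: norm_norm} is used elsewhere (notably in the entropy computations of Theorem~\ref{thm:bound-contribution}); your route is more elementary and self-contained, requiring only the local norm identity and the structure of $\KK\otimes_\QQ\QQ_p$, which the paper has in any case already set up in the proof of Proposition~\ref{adelicirred}. One small point worth flagging for completeness: the identity $N_{\KK/\QQ}(a)=\prod_{\sigma\mid p}N_{\KK_\sigma/\QQ_p}(a_\sigma)$ uses that the isomorphism $\KK\otimes_\QQ\QQ_p\cong\prod_{\sigma\mid p}\KK_\sigma$ is an isomorphism of $\QQ_p$-algebras (not merely of vector spaces), so that multiplication by $a$ really does act diagonally; you state this, and it is indeed established in the proof of Proposition~\ref{adelicirred}, so there is no gap.
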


We note that one way to obtain this result is precisely to interpret
the product as the modular character for the automorphism
defined by multiplication by $a$ on the compact
group $\widehat{\KK}=\AA_\KK/\KK$
(cf.\ \eqref{eq: norm_norm}). 
We refer to \cite[p.~75]{Weil} for a proof along these lines.

\subsection{A filtration by $\AA$-irreducible adelic actions}

The following lemma reveals an advantage of adelic actions 
by connecting structural questions concerning $\alpha$ 
to linear algebra on the dual.

\begin{lemma}[Decomposition into $\AA$-irreducible factors]\label{factorlist}
 Let $m,d\geq 1$ and let $\alpha$ be an adelic $\ZZ^d$-action on $X_m$.
 Then there exist closed $\alpha$-invariant adelic subgroups
 \[
   Y_0=\{0\}<Y_1<\cdots< Y_r=X_m
 \]
 such that the action induced by $\alpha$ on $Y_{j}/Y_{j-1}$
 is an $\AA$-irreducible adelic $\ZZ^d$-action for all $j=1,\ldots,r$. 
\end{lemma}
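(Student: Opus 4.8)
The plan is to dualize and work entirely on the Pontryagin dual $\widehat{X_m}=\QQ^m$, where the adelic $\ZZ^d$-action $\alpha$ corresponds to the $\QQ$-linear representation $\widetilde\alpha$ of $\ZZ^d$ given by the commuting matrices $A_1,\dots,A_d\in\GL_m(\QQ)$. By Lemma~\ref{lem:adelic}, closed $\alpha$-invariant adelic subgroups $Y\leq X_m$ correspond bijectively (and order-reversingly) to $\QQ$-linear subspaces $W=Y^\perp\leq\QQ^m$ that are invariant under all $A_i$, i.e.\ to $\widetilde\alpha$-invariant (equivalently, $\QQ[\zeta_1,\dots,\zeta_d]$-invariant) subspaces of $\QQ^m$. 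Under this correspondence, a chain $Y_0=\{0\}<Y_1<\cdots<Y_r=X_m$ of invariant adelic subgroups corresponds to a chain $\QQ^m=W_0>W_1>\cdots>W_r=\{0\}$ of invariant subspaces, and the induced action of $\alpha$ on the subquotient $Y_j/Y_{j-1}$ is dual to the induced linear action of $\ZZ^d$ on the subquotient $W_{j-1}/W_j$. The statement that this induced action is $\AA$-irreducible is, by definition, exactly the statement that $W_{j-1}/W_j$ is an irreducible $\QQ[\zeta_1,\dots,\zeta_d]$-module, i.e.\ that $W_j$ is a \emph{maximal} proper invariant subspace of $W_{j-1}$.

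So the problem reduces to a standard linear-algebra fact: any finite-dimensional vector space with a collection of commuting operators admits a composition series, i.e.\ a chain of invariant subspaces with irreducible successive quotients. First I would invoke this: since $\dim_\QQ\QQ^m=m<\infty$, we can build the chain $W_0\supsetneq W_1\supsetneq\cdots\supsetneq W_r=\{0\}$ by repeatedly choosing, given $W_{j-1}\neq\{0\}$, a maximal proper $\QQ[\zeta_1,\dots,\zeta_d]$-invariant subspace $W_j\subsetneq W_{j-1}$ (such exists because any descending chain of proper invariant subspaces has bounded length, being bounded by dimension); then $W_{j-1}/W_j$ has no proper nonzero invariant subspace by maximality, hence is irreducible, and the process terminates after $r\leq m$ steps. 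Then I would translate back via Lemma~\ref{lem:adelic}: set $Y_j=W_j^\perp$ (annihilator taken inside $X_m=\widehat{\QQ^m}$), which by part (2)$\Leftrightarrow$(1) of that lemma is a closed adelic subgroup, and $\alpha$-invariance of $W_j$ gives $\alpha$-invariance of $Y_j$. The inclusions reverse, so $Y_0=\{0\}<Y_1<\cdots<Y_r=X_m$, and the induced action on $Y_j/Y_{j-1}$ is the dual of the induced action on $W_{j-1}/W_j$; since duality preserves irreducibility of the underlying linear representation (an invariant subspace of one gives an invariant quotient, hence an invariant subspace, of the other), the action on $Y_j/Y_{j-1}$ is $\AA$-irreducible.

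I would also note the one point requiring a sentence of care: I should confirm that the induced action on the subquotient $Y_j/Y_{j-1}$ is genuinely an \emph{adelic} action on an adelic solenoid of the expected form, i.e.\ that $Y_j/Y_{j-1}\cong X_{m_j}$ with $m_j=\dim_\QQ(W_{j-1}/W_j)$ and the induced $\ZZ^d$-action is the adelic action attached to the matrices of $\zeta_i$ acting on $W_{j-1}/W_j$. This follows from part (3) of Lemma~\ref{lem:adelic}: writing $Y_j$ as the image of $\AA\otimes_\QQ V_j$ modulo $\QQ^m$ for the complementary subspace $V_j$ (so that $V_j^\perp=W_j$ in the appropriate sense), the quotient $Y_j/Y_{j-1}$ is identified with $\AA\otimes_\QQ(V_j/V_{j-1})$ modulo the image of $\QQ^m$, which is an adelic solenoid of dimension $m_j$, and the $\widetilde\alpha$-action descends to multiplication by the (commuting) induced operators — exactly the adelic action in the sense of Section~\ref{adelicaction}.

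There is no serious obstacle here; the content is entirely the translation of the dimension-induction for composition series through Pontryagin/annihilator duality as set up in Lemma~\ref{lem:adelic}. The only mildly delicate bookkeeping is keeping the annihilators and the reversal of inclusions straight, and making sure ``$\AA$-irreducible'' on the subquotient is read off correctly from irreducibility of the dual linear piece — both of which are handled cleanly by the equivalences (1)--(3) of Lemma~\ref{lem:adelic} and the definition of $\AA$-irreducibility in terms of the associated $\QQ$-linear representation.
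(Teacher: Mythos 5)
Your proposal is essentially the paper's argument: both pass to the Pontryagin dual $\QQ^m$, build a composition series of $\widehat\alpha$-invariant rational subspaces (the paper constructs it bottom-up from a minimal nonzero invariant subspace, you construct it top-down via maximal proper invariant subspaces, but these are the same standard construction), and translate back through annihilator duality using Lemma~\ref{lem:adelic}. Your extra paragraph verifying that the subquotients $Y_j/Y_{j-1}$ carry the expected adelic-solenoid structure is a reasonable piece of bookkeeping that the paper leaves implicit.
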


We will refer to the $\AA$-irreducible adelic actions appearing in Lemma \ref{factorlist}
as the $\AA$-irreducible factors associated to $\alpha$. 

\begin{proof}
 By Pontryagin duality we may consider instead of $\alpha$ the linear 
 representation $\widehat{\alpha}$ on $\QQ^m$. Let $V_1\subseteq\QQ^m$
 be a nontrivial subspace that is invariant 
 under $\widehat{\alpha}$ and of minimal dimension.
 Note that this implies that the restriction of $\widehat{\alpha}$ to $V_1$
 is irreducible over $\QQ$. If $V_1\neq \QQ^m$ we let $V_2$
 be a subspace that is invariant under $\widehat{\alpha}$, strictly contains $V_1$
 and is among these of minimal dimension. Once more this implies that $V_2/V_1$
 is irreducible over $\QQ$ (for the representation induced by $\widehat{\alpha}$).
 
 Continuing like this we obtain a partial flag 
 \begin{equation}\label{eq:partialflag}
  V_0=\{0\}<V_1<V_2<\cdots<V_r=\QQ^m
 \end{equation}
 consisting of $\widehat{\alpha}$-invariant subspaces so that $V_{j}/V_{j-1}$
 is irreducible over $\QQ$. Applying Pontryagin duality (and reversing
 the indexing) this gives the lemma.
\end{proof}

\section{Leafwise measures, invariant foliations, and entropy}\label{sec: leaf-entropy} 

We briefly recall the main properties of leafwise measures. These have been
introduced in the context of higher rank rigidity theorems
(under the name of conditional measures for foliations) by Anatole Katok and Spatzier 
in \cite{Katok-Spatzier} 
and have since become an essential tool for all
of the theorems in the area. Implicitly leafwise measures appear already in the proof of Rudolph's Theorem in \cite{Rudolph-2-and-3}. A~general reference for this section is \cite[\S6-7]{EL-Pisa}.

\subsection{Leafwise measures}\label{sec:leafwiseXtilde}
Given a quotient $X=G/\Gamma$ of a locally compact abelian group $G$ by a lattice
$\Gamma<G$ and a closed subgroup $V<G$ with $V\cap\Gamma=\{0\}$
we consider the foliation of $X$
into $V$-orbits. Let $\pi_X$ denote the natural projection $G\to X$.
As we will reduce our main theorem to the adelic case (Theorem~\ref{thm:main2}) we consider the case that $G$ is  $\AA^m$ (though everything we say below is equally valid for $G=\RR^m$, or a finite product of local fields).  We note that the metric on $\AA^m$
is chosen so that the balls $B_r^V=B_r(0)\cap V$
have compact closures for all $r>0$.

For our purposes it will be important to work with an extension of $X$ ---
a product $\widetilde{X}=X\times\Omega$ 
of $X$ with an arbitrary 
compact metric space $\Omega$. 
We let $V$ act on $\widetilde{X}$ 
by translation on the first coordinate and trivially on the second coordinate, obtaining in this way a foliation of $\widetilde{X}$ into $V$ orbits.
This foliation of $\widetilde X$ into $V$-orbits does not admit in general a Borel cross section, and typically one cannot find a countably generated $\sigma$-algebra on $\widetilde X$ whose atoms coincide a.e. with $V$-orbits. Given a probability measure $\mu$ on $\widetilde X$, the foliation into $V$-orbits gives rise to a system of \emph{leafwise measures} on $\widetilde X$: a Borel measurable map $ x \mapsto \mu _ x ^ V$ from a subset of full measure $\widetilde X  ' \subset \widetilde X$ to locally finite (possibly infinite) measures on $V$.
We say that a leafwise measure $\mu_x^V$ is \emph{trivial} if
it is a multiple of the Dirac measure at the identity; we say that the system of leafwise measures is trivial if it is trivial at a.e.\ point. 
We also note that almost surely $0$ belongs to the support of $\mu_x^V$.

The system of leafwise measure satisfies the following \emph{compatibility condition}: for any $v\in V$ and $x \in \widetilde{X}'$ so that $x+v$ is also in ${\widetilde{X}}'$ 
\begin{equation}\label{eq:shifting}
 \bigl(\mu_{x+v}^V+v\bigr)\propto\mu_x^V.   
\end{equation}
Here and in the following we write $\nu\propto\nu'$ for two measures
$\nu,\nu'$ if there exists $c>0$ with $\nu=c\nu'$.

One way to characterize the leafwise measures is through the notion of subordinate $\sigma$-algebras:

\begin{definition}[Subordinate $\sigma$-algebras]\label{def: subordinate}
A $\sigma$-algebra $\cA$ of Borel subsets of $\widetilde X$ is {\em subordinate to
$V$} if $\cA$ is countably generated, for every $x \in \widetilde X$
the atom $[x]_\cA$ of $x$ with respect to $\cA$ is contained
in the leaf $x+V$, and for a.e.\ $x$
\[
 x+B_\epsilon^V\subseteq [x]_\cA\subseteq x+B_\rho^V\mbox{ for some }
\epsilon>0\mbox{ and }\rho>0.
\]
For these $x$ we define
the \emph{shape} $S_x$ of the atom $[x]_{\cA}$ (from the point of view 
of $x$) as the subset of $V$ satisfying $x+S_x=[x]_{\cA}$. 
\end{definition}

Let $\mathcal{A}$ be a countably generated $\sigma$-algebra 
on $\widetilde{X}$ that is subordinate to $V$.
Then the leafwise measures for $\mu$ with respect to $V$ 
and the conditional measures of $\mu$ 
with respect to $\cA$ satisfy that for 
a.e.\ $x\in \widetilde{X}$ the conditional measure  
$\mu_x^{\cA}$ at $x$ with respect to $\cA$ equals the normalized
push forward $x+\bigl(\mu_x^V|_{S_x}\bigr)$ of the 
restriction $\mu_x^V|_{S_x}$
under the addition map $v \mapsto x+v$ from $V \to \widetilde X$.

A slightly subtler but important feature of the system of leafwise measures is that while they may (and typically are) infinite measures, they have certain apriori restrictions on how fast they grow:
There exists a concrete functions $f_V$ 
on $V$ (only depending on $V$) that is integrable with respect to $\mu_x^V$ for every $x$ in a set of full measure (that we may as well assume already contains the conull set $\widetilde{X}'$), see \cite[Thm.~6.30]{EL-Pisa}. In fact,
$f_V$ can be chosen with very mild (polynomial-like) decay properties.
 
In particular, this implies that if $x \in {\widetilde{X}}'$ and $v \in V$ satisfies
\begin{equation}\label{eq:affineinvariance1}
\bigl(\mu_x^V+v\bigr)\propto\mu_x^V
\end{equation}
this in fact implies the formally stronger conclusion that $\mu_x^V$ is translation invariant by the same $v$, i.e.
\begin{equation}\label{eq:affineinvariance2}
\bigl(\mu_x^V+v\bigr)=\mu_x^V,
\end{equation}
for otherwise $\mu_x^V$ would have exponential growth,
which would contradict the poly\-nomial-like growth condition.
Finally we recall the following:

\begin{proposition}[{\cite[Lem.~3.2]{Einsiedler-Lindenstrauss-low-entropy}, \cite[\S3]{Lindenstrauss-Quantum}}]\label{prop: subspace}
 Let $X=G/\Gamma$ and let $W <V <G$ be closed subgroups. Let $\mu$ be a probability measure on $X$. Suppose that for $\mu$-a.e. $x$, the measure $\mu_x^V$ is supported on $W$. Then, identifying locally finite measures on $W$ with locally finite measures on $V$ supported on $W$ in the obvious way, we have that for $\mu$-a.e. $x$, \ $\mu_x^W \propto \mu_x^V$.  
\end{proposition}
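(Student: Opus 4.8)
The plan is to prove Proposition~\ref{prop: subspace} by relating the leafwise measures for the two subgroups through a carefully chosen subordinate $\sigma$-algebra, and then using the compatibility relation \eqref{eq:shifting} together with the hypothesis that $\mu_x^V$ is supported on $W$. First I would pick a countably generated $\sigma$-algebra $\cA$ on $\widetilde X$ that is subordinate to $V$ in the sense of Definition~\ref{def: subordinate}; such a $\sigma$-algebra exists for any closed subgroup, and for $\mu$-a.e.\ $x$ the conditional measure $\mu_x^{\cA}$ equals the normalized push-forward $x+(\mu_x^V|_{S_x})$, where $S_x\subseteq V$ is the shape of the atom $[x]_{\cA}$. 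Since by hypothesis $\mu_x^V$ is supported on $W$ for a.e.\ $x$, the restriction $\mu_x^V|_{S_x}$ is supported on $S_x\cap W$, so effectively $\mu_x^{\cA}$ only sees the part of the atom lying in $x+W$. The idea is then to also view $\cA$ (or a suitable modification of it) as being ``morally'' subordinate to $W$ and to read off $\mu_x^W$ from the same conditional measures.

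The cleaner route, which I would follow, is to intersect with $W$: given a $\sigma$-algebra $\cA$ subordinate to $V$ with atoms $[x]_{\cA}=x+S_x$, consider the atoms $x+(S_x\cap W)$. One checks that this still defines (the atoms of) a countably generated $\sigma$-algebra $\cA'$, that its atoms are contained in $x+W$, and that $x+B_\epsilon^W\subseteq[x]_{\cA'}\subseteq x+B_\rho^W$ for a.e.\ $x$ — because $S_x$ already satisfies the two-sided inclusion $B_\epsilon^V\subseteq S_x\subseteq B_\rho^V$ and intersecting with $W$ preserves this, using $B_r^V\cap W=B_r^W$ with our choice of metric on $\AA^m$. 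Hence $\cA'$ is subordinate to $W$, so the leafwise-measure/conditional-measure identity gives $\mu_x^{\cA'}=x+(\mu_x^W|_{S_x\cap W})$ for a.e.\ $x$. On the other hand, since $\mu_x^V$ is supported on $W$, its restriction to $S_x$ equals its restriction to $S_x\cap W$, and restricting further within the $\sigma$-algebra one obtains $\mu_x^{\cA'}\propto \mu_x^{\cA}=x+(\mu_x^V|_{S_x})$ up to normalization on each atom. Comparing the two expressions for $\mu_x^{\cA'}$ yields $\mu_x^W|_{S_x\cap W}\propto\mu_x^V|_{S_x\cap W}$ for a.e.\ $x$.

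It remains to upgrade this local (restricted) proportionality to proportionality of the full locally finite measures $\mu_x^W$ and $\mu_x^V$ on $W$. For this I would use the compatibility relation \eqref{eq:shifting} for both systems of leafwise measures: for $w\in W$, $(\mu_{x+w}^W+w)\propto\mu_x^W$ and $(\mu_{x+w}^V+w)\propto\mu_x^V$. Since the shapes $S_x\cap W$ contain a fixed-radius ball $B_\epsilon^W$ for a.e.\ $x$, and since a.e.\ $0$ lies in the support, the equivalence classes of $\mu_x^W$ and $\mu_x^V$ (as measures on $W$, up to scalar) are determined by their restrictions to small balls around enough translates $x+w$; covering $W$ by countably many such translates and invoking the compatibility relation to move between base points, the proportionality constant can be taken uniform over $W$. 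This gives $\mu_x^W\propto\mu_x^V$ for $\mu$-a.e.\ $x$, as desired.

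I expect the main obstacle to be the bookkeeping in the last paragraph: making the passage from ``proportional on each atom $x+(S_x\cap W)$'' to ``globally proportional on $W$'' fully rigorous requires carefully tracking the (a.e.-defined, $x$-dependent) normalization constants and using \eqref{eq:shifting} to glue along a countable family of translations, while staying within the conull set on which all the leafwise-measure identities hold simultaneously. The construction of $\cA'$ and the verification that it is subordinate to $W$ are essentially routine once one notes $B_r^V\cap W=B_r^W$, but one should be slightly careful that intersecting atoms with $W$ genuinely comes from a countably generated $\sigma$-algebra (e.g.\ by intersecting $\cA$ with the $\sigma$-algebra generated by a countable base of $W$-plaques), and that the null sets involved are handled uniformly.
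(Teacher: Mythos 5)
The paper gives no proof of Proposition~\ref{prop: subspace}; it simply cites \cite[Lem.~3.2]{Einsiedler-Lindenstrauss-low-entropy} and \cite[\S3]{Lindenstrauss-Quantum}. So there is no in-paper argument to compare against, and your attempt has to be judged on its own merits.

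Your central move is the right one and matches the cited references: starting from a $V$-subordinate $\sigma$-algebra $\cA$, refine it to a $W$-subordinate $\sigma$-algebra $\cA'$ with atoms $[x]_{\cA'}=[x]_{\cA}\cap(x+W)$ (for instance by running the construction of \S\ref{sec:increasing sigma algebra} with $W$ in place of $V$; this $\cA'$ automatically refines $\cA$), and then compare the two expressions for $\mu_x^{\cA'}$ to get $\mu_x^V|_{S_x\cap W}\propto\mu_x^W|_{S_x\cap W}$ a.e. (A small slip: you want to \emph{join} $\cA$ with the extra $\sigma$-algebra, not ``intersect'' it; joining is what shrinks atoms.)

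The place where you should be more careful is the final gluing. For a \emph{fixed} $\cA'$, the translated shapes $w+S'_{x+w}$, as $w$ ranges over the leaf, are exactly the atoms of $\cA'$ along $x+W$: they \emph{tile} $W$ rather than cover it with overlaps. The proportionality constants you get on different pieces can only be matched across overlaps of positive $\mu_x^W$-mass, and the tiling supplies none; the compatibility relation \eqref{eq:shifting} by itself does not force the constants on disjoint atoms to agree (if the support of $\mu_x^W$ is, say, totally disconnected, ``locally constant'' is genuinely weaker than ``constant''). The standard way to close this gap is to run the argument along a nested decreasing family $\cA_1\supseteq\cA_2\supseteq\cdots$ of $V$-subordinate $\sigma$-algebras whose shapes $S_{x,k}$ increase to exhaust $V$ modulo $\mu_x^V$ (in the dynamical setting, $\widetilde T^{-k}\cC_V$; in general, the construction of \cite[\S3]{Lindenstrauss-Quantum} or \cite[\S6]{EL-Pisa}). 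The corresponding $\cA_k'$ have shapes $S'_{x,k}$ increasing to exhaust $W$, the constants $c_k(x)$ are forced to agree by nesting together with $\mu_x^W(S'_{x,1})>0$, and letting $k\to\infty$ gives global proportionality. Equivalently, once you have the conditional-measure identity for one $\cA'$, you can note that $x\mapsto\mu_x^V$, viewed as locally finite measures on $W$, is Borel, $W$-compatible, and produces the correct conditional measures for $\cA'$, and then invoke the uniqueness of the leafwise-measure system.
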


\subsection{Entropy and leafwise measures}\label{sec:entropyleafwise}
Suppose now $T:G\to G$ is a group automorphism 
preserving $\Gamma$ and $V$. 
Recall that we have restricted ourselves without loss of generality to the case  where $X$ is the Pontryagin dual to   a $m$-dimensional vector space $L$ over $\QQ $. 
Fixing a choice of basis in $L$ we can restrict our attention to the case of $G=\AA^m$, $\Gamma=\QQ^m$, and hence the automorphism $T$ is defined by a rational matrix $\mathbf T \in \GL_m(\QQ)$. We note however that some of the definitions below, e.g.\ the ``sufficiently fine'' condition in~\eqref{eq: sufficiently fine}, depend on the choice of basis used to give the isomorphism $L\cong\QQ^m$ (which induces an isomorphism $\Gamma\cong\QQ^m$).

We denote the resulting automorphism of $X=G/\Gamma$ also by $T$, and consider an extension $\widetilde{T}=T\times T_\Omega:\widetilde{X}\to\widetilde{X}$ 
with $T_\Omega:\Omega\to\Omega$ measurable.
Furthermore, suppose the probability measure $\mu$
on $\widetilde{X}$ is invariant under $\widetilde{T}$. 
Then the characterizing properties of the leafwise measures $\mu_x^V$
imply the \emph{equivariance formula}
\begin{equation}\label{eq:conjugacy}
    \mu_{\widetilde{T}x}^V\propto T_*\bigl(\mu_x^V\bigr)
\end{equation}
for a.e.\ $x\in  {\widetilde{X}}'$, see e.g.~\cite[Lemma 7.16]{EL-Pisa}. 

We say that a closed subgroup $V<\AA^m$
is \emph{$S$-linear} where $S$ is a finite set of places of 
$\QQ$ (i.e.\ a set of prime numbers or infinity) if for 
each $\sigma\in S$ there is a subspace $V_\sigma<\QQ_\sigma^m$ so that 
$V$ is the direct product of the $V_\sigma$
for $\sigma \in S$. 
Below we will frequently use \emph{stable horospherical subgroup}
\[
U_T^-=\bigl\{a \in \AA^m: T^n a \to 0 \text{ as $n \to \infty$}\bigr\}
\]
for $T$ and the \emph{unstable horospherical subgroup} $U_T^+=U_{T^{-1}}^-$. 
If 
\begin{equation}\label{eq:define S}
    S = \bigl\{ p: \mathbf T \not\in \GL (m, \ZZ _ p) \bigr\} \cup \left\{ \infty \right\}
\end{equation}
then the horospherical subgroups $U_T^-$ and $U_T^+$
are $S$-linear for this~$S$.

\subsection{Increasing subordinate $\sigma$-algebras and entropy}\label{sec:increasing sigma algebra}
We continue with the notations of \S\ref{sec:entropyleafwise}. 
We say that a countably generated $\sigma$-algebra $\cA$ on $\widetilde{X}$
is {\em increasing} with respect to $\widetilde T$ if
$\cA \subseteq\widetilde T\cA$ (modulo $\mu$), i.e.\ the atom $[\widetilde T x]_{\cA}$ a.s.\ contains $\widetilde T ([x]_{\cA})$.

Let $\cP$ be a finite partition of $\widetilde X$, which we identify 
with the corresponding finite algebra of sets.
For any $\epsilon > 0$ let 
\[
 \partial ^ V _ \epsilon \cP = 
\left\{ \widetilde x \in \widetilde X: \widetilde x + B _ \epsilon ^ V 
\not \subseteq [\widetilde x] _\cP \right\}.
\]
The following lemma follows quickly from monotonicity
of the function $r\in[0,\infty)\mapsto\mu(B_r(x))$ for $x\in\widetilde{X}$
(and its almost sure differentiability) together
with compactness of $\widetilde{X}$.

\begin{lemma}[{\cite[Lemma~7.27]{EL-Pisa}}]\label{lemma:thinboundary}
For any probability measure $\widetilde \mu$ on $\widetilde X$,
there exists a finite
partition $\cP$ of $ \widetilde X$ into arbitrarily small sets such that for some fixed $C$ and  for
every $\epsilon>0$
\begin{equation}\label{eq: bound-on-boundary}
\mu\bigl(\partial _ \epsilon ^ V\cP\bigr)<C \epsilon
\end{equation}
\end{lemma}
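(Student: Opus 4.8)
\textbf{Proof proposal for Lemma~\ref{lemma:thinboundary}.}
The plan is to build $\cP$ by taking a finite cover of $\widetilde X$ by small open balls and then cutting it into a partition whose cell boundaries are unions of finitely many sphere-like sets of small $\mu$-mass. The starting observation is that, since $V$-translation moves a point of $\partial_\epsilon^V\cP$ a distance at most the diameter of the relevant $B_\epsilon^V$-ball out of its cell, a point of $\partial_\epsilon^V\cP$ must lie within distance $O(\epsilon)$ of the topological boundary of some cell of $\cP$; so it suffices to produce a partition into small sets whose cells have boundaries lying in a finite union of sets $\Sigma_i$ for which the $C\epsilon$-neighbourhood $N_{C\epsilon}(\Sigma_i)$ has $\mu$-mass $O(\epsilon)$ uniformly in $\epsilon$.

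First I would fix a countable basis of small balls and, using compactness of $\widetilde X$, reduce to finitely many balls $B(x_i,r_i)$ with $r_i$ as small as we wish that cover $\widetilde X$; the partition $\cP$ is then the common refinement of the two-set partitions $\{B(x_i,r_i),\widetilde X\setminus B(x_i,r_i)\}$, whose cells are contained in the chosen balls and hence arbitrarily small. Each cell boundary is contained in $\bigcup_i S(x_i,r_i)$, where $S(x_i,r_i)=\partial B(x_i,r_i)$. The key point is to choose the radii $r_i$ so that the spheres are \emph{thin}: for a fixed centre $x$, the function $r\mapsto \mu(B(x,r))$ is monotone and hence differentiable for a.e.\ $r$, and moreover $\int_0^{R}\bigl(\mu(B(x,r+\delta))-\mu(B(x,r-\delta))\bigr)\,\mathrm{d}r\le 2\delta$ for every $\delta$; so for a.e.\ $r$ one has $\mu(N_\delta(S(x,r)))=\mu(B(x,r+\delta))-\mu(B(x,r-\delta))\le C_x\delta$ for all small $\delta$, with $C_x$ the local Lipschitz/derivative constant. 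Choosing each $r_i$ (within the allowed small range) to be such a good radius for its centre $x_i$, and absorbing the finitely many constants $C_{x_i}$ together with the implied constant from the first paragraph into a single $C$, gives $\mu(\partial_\epsilon^V\cP)\le\mu\bigl(\bigcup_i N_{C'\epsilon}(S(x_i,r_i))\bigr)\le C\epsilon$ for all $\epsilon>0$.

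The main obstacle is the uniformity in $\epsilon$: a naive argument gives, for each $i$ and each $\epsilon$, a bound of the form $o_\epsilon(1)$ or $C_{i,\epsilon}\epsilon$ with constants that could blow up as $\epsilon\to0$. The way around it is precisely the elementary measure-theoretic fact quoted above — that a monotone function of one real variable has, at a.e.\ point, a genuine one-sided linear upper bound valid for \emph{all} sufficiently small increments, not merely an infinitesimal derivative — which lets us pick, for each of the finitely many centres, a single radius that works simultaneously for every scale $\epsilon$. Since there are only finitely many cells, taking the maximum of the finitely many constants yields the single $C$ in \eqref{eq: bound-on-boundary}. This is exactly the argument of \cite[Lemma~7.27]{EL-Pisa}, to which we may also simply refer.
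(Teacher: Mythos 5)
Your argument is correct and follows the same route the paper indicates: choose a finite cover by balls, pass to the common refinement, and use monotonicity and a.e.\ differentiability of $r\mapsto\widetilde\mu(B_r(x))$ (together with compactness to reduce to finitely many centres) to pick ``thin'' radii, exactly as in \cite[Lemma~7.27]{EL-Pisa} to which the paper defers. One small tidy-up: the linear bound on $\widetilde\mu\bigl(N_\delta(S(x_i,r_i))\bigr)$ from a finite derivative holds only for $\delta$ below some $\delta_0(i)$, so you should note that for $\delta\geq\min_i\delta_0(i)$ the trivial bound $\widetilde\mu(\cdot)\leq 1$ already gives $\leq(\min_i\delta_0(i))^{-1}\delta$, and that the $r_i$ must be chosen in a range bounded below so that the shrunken balls still cover $\widetilde X$; also the integral identity you record is a heuristic and is not actually used, the a.e.\ finiteness of the upper derivative being what carries the argument.
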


For more details see \cite[\S7]{EL-Pisa}.
A partition $\cP$ satisfying the conclusion of the above lemma will be said to have \emph{thin boundaries}.
We will assume throughout that any finite partition $\cP $ of $X$ we will consider below is \emph{sufficiently fine} in the sense that 
\begin{equation}\label{eq: sufficiently fine}
P-P \subset \pi_X \left (\prod_ {v \in S} B _ {\QQ _ v ^ m} (r _ v) \times \prod_ {v \not \in S} \ZZ _ v ^ m \right ) \qquad\text{for every $P \in \cP$}
,\end{equation}
with $r _ v = 0.1 \max(\norm {\mathbf T}_v,\norm {\mathbf T^{-1}}_v)^{-1}$ (with respect to the operator norm on $\GL (\QQ _ v)$). 

\medskip

For any $\sigma$-algebra $\cA$ and $-\infty \leq k_0<k_1 \leq \infty$ set
\[
\cA^{k_0}=\widetilde T^{{-k_0}}\cA\mbox{ and
}\cA^{(k_0,k_1)}=\bigvee_{k_0 \leq i \leq k_  1} \widetilde T^{{-i}}\cA
\]
(for $k_0$ or $k_1 = \pm \infty$ strict inequality instead of $\leq$ should be used).

An easy Borel-Cantelli argument gives that if $\mathcal P$ is a sufficiently fine finite partition of $X$ with small boundaries in the sense of \eqref{eq:  bound-on-boundary} and \eqref{eq: sufficiently fine} then
\begin{equation*}
\mathcal{C} _ {\mathcal{P}} = \mathcal P ^{(0,\infty)}
\end{equation*}
is a countably generated $\sigma$-algebra satisfying one of the two conditions\footnote {With a bit more care, using a countable partition $\mathcal{P}$ with finite entropy, one can get that $\mathcal{P} ^ {({0,\infty})}$ is actually $U _ T ^ -$-subordinate; we achieve a similar goal by a cruder approach below.} required by Definition~\ref{def: subordinate} for $V = U _ T ^ -$, namely for a.e.~$x$  it holds that there is an $\epsilon > 0$ so that $x + B _ {U _ T ^ -} (\epsilon) \subset [x]_{\mathcal{P}}$.

A modified version of this increasing $\sigma$-algebra $\mathcal{P} ^ {(0,{\infty})}$ can be used to  construct for any  $S$-linear $T$-normalized subgroup $V<U_T^-$ of $\AA^m$ an increasing $V$ subordinate $\sigma$-algebra $\mathcal{C} _ V$ on $\widetilde{X}=X\times\Omega$
so that moreover $ \mathcal{C} _ V = \tilde T ^{-1} \mathcal{C} _ V \vee \mathcal P$. Indeed, first we construct starting from a (sufficiently fine, with small boundaries) finite partition $\mathcal P$ on $X$ a $\sigma$-algebra $\mathcal{P} _V$ on $\widetilde{X}$
as follows\label{page of P_V}: for  each $P \in \mathcal{P}$, lift it to a subset $\widetilde P \subset \AA ^ m$ contained in a translate of $\prod_ {v \in S} B _ {\QQ _ v ^ m} (r _ v) \times \prod_ {v \not \in S} \ZZ _ v ^ m$ (up to translation by an element of $\QQ ^ m$ this lift is uniquely defined).
Now take the countably generated $\sigma$-ring $\widetilde{ \mathcal{Q}} _ P$ of subsets $C$ of $\widetilde P$ with the property that if $x \in C$ then $(x +V) \cap \widetilde P \subseteq C$. Since $\pi _ X$ is a bijection from $\widetilde P$ to $P$ the image $\mathcal{Q} _ P$ of $ \widetilde {\mathcal{Q}} _ P$ in $X$ is a countably generated $\sigma$-ring, and now we define $\mathcal{P} _ V$ to be the $\sigma$-algebra of subsets of $X$ generated by the $\sigma$-rings $\mathcal{Q} _ P\times\mathcal{B}_\Omega$ for all $P \in \mathcal P$. Then
$\cC_V=\mathcal{P} _ V ^ {({0,\infty})}$ is a $T$-increasing, countably generated $\sigma$-algebra subordinate to $V$ satisfying $\cC_V= \widetilde T^{-1} \cC_V \vee \cP$. Note also that by the way $\mathcal{C} _ V$ is defined, there is a fixed $\rho > 0$ so that $[x] _ {\mathcal C _ V} \subseteq x + B _ \rho ^ V$ for \emph{all} $x \in \widetilde{X}$.
For more details the reader is referred again to \cite[\S7]{EL-Pisa}.

For $V\leq U^-_T$ as above $\cC_V$ as above, we define the {\em entropy contribution
of $V$}  to be
\begin{equation}
    \label{eq: definition of entropy contribution}
\h_{\widetilde \mu}(\widetilde T,V)=\Hh_{\widetilde \mu}(\cC_V\mid \widetilde T^{-1}\cC_V).
\end{equation}
We also need the conditional form of this definition: if $\mathcal Y$ is a $\widetilde T$-invariant $\sigma$-algebra then the \emph{entropy contribution of $V$ conditional on $\mathcal Y$} is defined to be
\begin{equation*}
h _ {\widetilde \mu} (\widetilde T, V \mid \mathcal Y) = H _ {\widetilde \mu} (\mathcal C _ V \mid {\widetilde T} ^{-1} \mathcal C _ V \vee \mathcal Y) \ptag{eq: definition of entropy contribution}
\end{equation*}
where as usual we will identify a factor $Y$ of $\widetilde X$ with the corresponding $\widetilde T$-invariant  $\sigma$-algebra $\mathcal Y$ of subsets of $\widetilde X$. Formally the conditional entropy contribution is included in the previous case, replacing $\Omega$ by $\Omega\times Y$, but notationally it will be useful to allow additional explicit conditioning. 
The following propositions shows that --- as implied by the notation --- $\h_{\widetilde \mu}(\widetilde T,V)$ does not depend on the choice of $\cP$ and $\cC_V$.

\begin{proposition}\label{prop: contribution}
Let $V\leq U^-_T$ be an $S$-linear subgroup normalized by $T$, and let
$\cC_V$ be as above. Then 
\begin{equation}\label{eq:entropygrowthrate}
    \vol(\widetilde T,V,\widetilde x)=\lim_{|N| \rightarrow \infty}\frac{1}{N}
\log{\widetilde \mu}_{\widetilde x}^V\left(T^{-N}(B_1^V(0))\right)
\end{equation}
exists a.e.\ and
$\h_{\widetilde \mu}(\widetilde T,V)=\int\vol(\widetilde T,V,\widetilde x)\operatorname{d}\!{\widetilde \mu}(\widetilde x)$. Moreover, outside a set of $\widetilde\mu$-measure zero, $\vol(\widetilde T,V,x)=0$ if and only if $\widetilde\mu_{x}^V$ is trivial.
Furthermore, $\h_{\widetilde \mu}(\widetilde T,V) \leq h_{\widetilde \mu}(\widetilde T \mid \Omega)$, with equality holding for $V=U_T^-$. In particular $h_\mu(\widetilde T\mid \Omega)>0$ if and only if $\widetilde\mu_x^{U_T^-}$ is not a.e.\ trivial. 
\end{proposition}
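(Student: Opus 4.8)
The plan is to run the standard leafwise-measure calculus of \cite[\S6--7]{EL-Pisa}. First I would record the finiteness of the entropy contribution and rewrite it as an integral of an information function. Since $\cC_V$ is $\widetilde T$-increasing and $\cC_V=\widetilde T^{-1}\cC_V\vee\cP$, one has $\h_{\widetilde\mu}(\widetilde T,V)=\Hh_{\widetilde\mu}(\cP\mid\widetilde T^{-1}\cC_V)\le\Hh_{\widetilde\mu}(\cP)<\infty$. Writing the atoms of $\cC_V$ and of $\widetilde T^{-1}\cC_V$ as $x+S_x\subseteq x+S'_x$ --- both contained in the fixed ball $B_\rho^V$, and for a.e.\ $x$ containing some ball $B_{\epsilon(x)}^V$ --- and using subordinacy together with $S'_x=T^{-1}S_{\widetilde Tx}$, one computes the information function of $\cC_V$ over $\widetilde T^{-1}\cC_V$ as $I(x)=-\log\widetilde\mu_x^{\widetilde T^{-1}\cC_V}([x]_{\cC_V})=\log\widetilde\mu_x^V(S'_x)-\log\widetilde\mu_x^V(S_x)$, so that $\h_{\widetilde\mu}(\widetilde T,V)=\int I\,\od\widetilde\mu$.

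Next I would prove existence of the limit \eqref{eq:entropygrowthrate}. Normalizing $\widetilde\mu_x^V(B_1^V(0))=1$ and setting $\phi_N(x)=\log\widetilde\mu_x^V(T^{-N}B_1^V(0))$, the equivariance formula \eqref{eq:conjugacy} gives the exact cocycle identity $\phi_{N+M}(x)=\phi_N(x)+\phi_M(\widetilde T^Nx)$ for all $N,M\in\ZZ$; in particular $\phi_N(x)=\sum_{i=0}^{N-1}\phi_1(\widetilde T^ix)$ for $N\ge1$ and $\phi_{-1}=-\phi_1\circ\widetilde T^{-1}$. Because $T$ maps $B_1^V(0)$ into, and $T^{-1}B_1^V(0)$ contains, fixed balls, $\phi_1$ is trapped between $\log\widetilde\mu_x^V(B_c^V)$ and $\log\widetilde\mu_x^V(B_{\rho_0}^V)$ for fixed $0<c<\rho_0$, and the a priori polynomial-type growth bound for leafwise measures \cite[Thm.~6.30]{EL-Pisa} makes $\log\widetilde\mu_x^V(B_\rho^V(0))$ $\widetilde\mu$-integrable for each fixed $\rho$; hence $\phi_1\in L^1(\widetilde\mu)$. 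Birkhoff's theorem then gives $\frac1N\phi_N(x)\to\mathbb E[\phi_1\mid\mathcal I](x)=:\vol(\widetilde T,V,x)\ge0$ a.e.\ as $N\to+\infty$, and a short computation with the cocycle identity turns $\frac1N\phi_N(x)$ for $N\to-\infty$ into a backward Birkhoff average converging to the same limit; changing the base ball perturbs $\phi_N$ by $O(1)$ and so is harmless, giving \eqref{eq:entropygrowthrate} as stated. To identify $\int\vol$ with $\h_{\widetilde\mu}(\widetilde T,V)$ I would use the coboundary identity $I=\phi_1+b\circ\widetilde T-b$ with $b(x):=\log\widetilde\mu_x^V(S_x)$, again a consequence of \eqref{eq:conjugacy} and $S'_x=T^{-1}S_{\widetilde Tx}$; since $b\in L^1(\widetilde\mu)$ by the same growth bounds, $\frac1N(b(\widetilde T^Nx)-b(x))\to0$ a.e., so $\mathbb E[I\mid\mathcal I]=\mathbb E[\phi_1\mid\mathcal I]=\vol(\widetilde T,V,\cdot)$, and integrating gives both $\h_{\widetilde\mu}(\widetilde T,V)=\int\vol(\widetilde T,V,\cdot)\,\od\widetilde\mu$ and the identification of $\vol$ with the limit in \eqref{eq:entropygrowthrate}. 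When $\widetilde\mu_x^V$ is trivial, $\phi_N\equiv0$, so $\vol(\widetilde T,V,x)=0$.

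For the converse direction of the triviality statement, note that $\vol(\widetilde T,V,\cdot)$ and the event $\{\widetilde\mu_x^V\text{ trivial}\}$ are both $\widetilde T$-invariant (by \eqref{eq:conjugacy}), so it suffices to work on an ergodic component on which $\widetilde\mu_x^V$ is a.e.\ nontrivial. There a.e.\ $\widetilde\mu_x^V$ charges either the complement of $B_\rho^V$ or a fixed neighbourhood of some nonzero point, whereas the atoms $[x]_{\widetilde T^{-n}\cC_V}=x+T^{-n}S_{\widetilde T^nx}\supseteq x+T^{-n}B_{\epsilon(\widetilde T^nx)}^V$ expand exponentially along a density-positive set of $n$ --- since $V<U_T^-$ is strictly contracted by $T$ while $\epsilon(\widetilde T^nx)$ stays bounded below infinitely often, by the ergodic theorem applied to $\epsilon(\cdot)>0$. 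Hence for suitable large $n$ the atom $[x]_{\widetilde T^{-n}\cC_V}$ strictly contains $[x]_{\cC_V}$ up to a set of positive $\widetilde\mu_x^V$-measure, so $\cC_V\neq\widetilde T^{-1}\cC_V$ modulo $\widetilde\mu$ and therefore $\h_{\widetilde\mu}(\widetilde T,V)=\int I\,\od\widetilde\mu>0$; being $\widetilde T$-invariant with positive integral, $\vol(\widetilde T,V,\cdot)$ is then a.e.\ positive on that component.

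Finally, if $V\le W$ are both $S$-linear and $T$-normalized inside $U_T^-$, then the $W$-saturation condition defining $\cP_W$ is stronger than the $V$-one, so $\cP_W\subseteq\cP_V$, hence $\cC_W\subseteq\cC_V$ and $\h_{\widetilde\mu}(\widetilde T,V)=\Hh_{\widetilde\mu}(\cP\mid\widetilde T^{-1}\cC_V)\le\Hh_{\widetilde\mu}(\cP\mid\widetilde T^{-1}\cC_W)=\h_{\widetilde\mu}(\widetilde T,W)$; in particular $\h_{\widetilde\mu}(\widetilde T,V)\le\h_{\widetilde\mu}(\widetilde T,U_T^-)$. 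The equality $\h_{\widetilde\mu}(\widetilde T,U_T^-)=h_{\widetilde\mu}(\widetilde T\mid\Omega)$ is the classical fact that the relative entropy of an automorphism of $X=\AA^m/\QQ^m$ is carried by its stable horospherical subgroup, holding because $\cP^{(0,\infty)}$ is, modulo $\widetilde\mu$, subordinate to $U_T^-$ and a one-sided generator for $X$ over $\mathcal B_\Omega$; for this I would cite \cite[\S7]{EL-Pisa}. The last assertion then follows by combining this with the triviality statement for $V=U_T^-$: $h_\mu(\widetilde T\mid\Omega)=\int\vol(\widetilde T,U_T^-,\cdot)\,\od\widetilde\mu$ is positive iff $\vol(\widetilde T,U_T^-,\cdot)>0$ on a positive-measure set iff $\widetilde\mu_x^{U_T^-}$ fails to be a.e.\ trivial. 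The step I expect to be the main obstacle is the integrability of $\phi_1$ and $b$ --- equivalently, the a priori control on the growth of the leafwise measures --- on which both the a.e.\ convergence and the identity $\h=\int\vol$ rest; the remaining ingredients are soft ergodic theory and the already-established structure of $\cC_V$.
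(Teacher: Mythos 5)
The paper does not give a proof of this proposition at all: it explicitly refers to \cite[\S7]{EL-Pisa} (and the Ledrappier--Young papers), with a footnote noting that the Pisa notes assume semisimplicity of the action on leaves and that ``the arguments ... can be easily modified.'' What you have written is a correct reconstruction, in outline, of exactly that argument: rewriting the entropy contribution as $\int I\,\od\widetilde\mu$ for the information function $I$ of $\cC_V$ over $\widetilde T^{-1}\cC_V$, identifying $I$ via the shapes $S_x$, $S'_x$ and the compatibility of leafwise measures with $V$-subordinate $\sigma$-algebras, exhibiting $\phi_N$ as an additive cocycle and $I$ as $\phi_1$ plus a coboundary, applying Birkhoff, and then treating the triviality and monotonicity statements. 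This is the same route as the cited reference.

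Two technical points are worth tightening. First, you correctly flag integrability as the delicate step, but $b(x)=\log\widetilde\mu_x^V(S_x)$ is not obviously in $L^1$: the a priori growth bound of \cite[Thm.~6.30]{EL-Pisa} gives an upper bound on $\widetilde\mu_x^V(B_\rho^V)$ but not a lower bound on $\widetilde\mu_x^V$ of small neighbourhoods of $0$, so the negative part of $b$ is not controlled by the cited result alone. What you actually need --- and what suffices --- is only that $b$ is finite a.e.\ (clear, since $S_x$ is bounded and contains a ball); then $\tfrac1N\bigl(b(\widetilde T^Nx)-b(x)\bigr)\to 0$ in measure by Poincar\'e recurrence, and since you already know the a.e.\ limit $\tfrac1N(\phi_N-\sum_{i<N}I\circ\widetilde T^i)$ exists, the limit must be $0$, giving $\mathbb E[I\mid\mathcal I]=\mathbb E[\phi_1\mid\mathcal I]$ without assuming $b\in L^1$. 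Second, the step $\phi_1\ge 0$ uses $T^{-1}B_1^V\supseteq B_1^V$, i.e.\ that $T$ contracts the unit ball of $V$ in one step; with nontrivial Jordan blocks (explicitly permitted in this paper, as the footnote stresses) this is not automatic for the chosen sup-norm metric. One either adapts the norm on $V$ so that $T|_V$ is a strict contraction, or replaces $T$ by $T^k$ (using additivity of the entropy contribution in $k$) and divides by $k$. This is exactly the ``easy modification'' the paper's footnote alludes to; it should be made explicit rather than implicit.
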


By the remark above, this proposition also covers the case of entropy contributions conditional on a factor. This proposition is essentially well known, and is e.g.\ heavily used 
by Ledrappier and Young in \cite{Ledrappier-Young-I,Ledrappier-Young-II} (though we are using a version of these results relative to the factor $\Omega$ of $\widetilde{X}$).
For proof we refer the reader to \cite[\S7]{EL-Pisa} where an exposition in the spirit of this paper can be found\footnote{In \cite[\S~7]{EL-Pisa} it is assumed that the acting group acts in a semisimple way on the leaves, which does not necessarily hold in our case as we are explicitly allowing actions with non-trivial Jordan form. However, the arguments of \cite[\S~7]{EL-Pisa} can be easily modified to handle this situation; we leave the details to the readers.}.

Assuming that $\mu$ is invariant and ergodic under a $\ZZ^d$-action $\widetilde{\alpha}$ 
with $\widetilde T=\widetilde \alpha^\n$ for some $\n\in\ZZ^d$ 
the value of the limit in~\eqref{eq:entropygrowthrate}
defines an invariant function for $\widetilde\alpha$, hence is a.e.\ constant, and so 
equals $h_\mu(\widetilde T, V\mid \Omega)$ for a.e.\ $x\in \widetilde{X}$. 
In particular, assuming $\widetilde\alpha$ is ergodic the following three statements are equivalent: (i) $h_\mu(\widetilde T\mid \Omega)>0$ \ (ii) $\mu_x^{U_T^-}$ is nontrivial 
a.e.\ (iii) $\mu_x^{U_T^-}$ is nontrivial on a set of positive measure.

The entropy contributions for $V < U^+_T$ (also denoted $ h _ {\widetilde \mu} (\widetilde T, V )$) are defined similarly, and satisfy that \[ h _ {\widetilde \mu} (\widetilde T, V ) =  h _ {\widetilde \mu} (\widetilde T^{-1}, V ).\]
\medskip

Let $0 \to L _ 1 \to L \to K \to 0$ be an exact sequence of finite dimensional vector space over $\QQ$ and let $0 \to Y \to X \to X _ 1 \to 0$ be the corresponding dual exact sequence of adelic solenoids. Let $T : X \to X$ be the dual map to a linear map in $\GL (L)$ fixing $L _ 1$. Then $T$ also induces a map $T _ 1: X _ 1 \to X _ 1$. Let $V _ {Y, \AA} < \AA ^ m $ be the rational subspace projecting modulo $\QQ ^ m$ to $Y$.
Let $T _ \Omega: \Omega \to \Omega$ be a continuous map on the compact metric space $\Omega$, and denote $\widetilde X = X  \times \Omega$, $\widetilde T = T \times T _ \Omega$, $\widetilde X _ 1 = X _ 1 \times \Omega$, etc. We let $\pi$ denote both the projection from $\AA^m \to \AA^m/V _ {Y, \AA}$ as well as the corresponding projection $X \to X_1$.

\begin{proposition} [cf.~{\cite[Prop 6.4]{Einsiedler-Lindenstrauss-joinings-2}} or {\cite[Prop. 3.1]{Einsiedler-Lindenstrauss-joinings}}]\label{entropy inequality proposition}
With the notations above, let $\widetilde \mu$ be a $\widetilde T$-invariant measure on $X$, let $V$ be a $S$-linear subgroup of $U _ T ^ - <\AA^m$, and let $V_1$ be a $S$-linear subgroup of $U _ {T _ 1}^{-}<\AA^m/V _ {Y, \AA} $ so that $V \leq \pi^{-1}(V_1)$. Let $\widetilde \mu _ 1 = \pi _ {*} \widetilde \mu$. Then
\begin{equation}\label{relative entropy inequality}
h _ {\widetilde \mu} (\widetilde T, V) \leq h _ {\widetilde \mu _ 1} (\widetilde T _ 1, V _ 1) + h _ {\widetilde \mu} (\widetilde T, V \cap V _ {Y, \AA})
,\end{equation}
with equality holding for $V=U_T^-$ and $V_1=U_{T _ 1} ^{-}$.
\end{proposition}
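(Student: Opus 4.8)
The plan is to reduce the statement to the two‐step structure of the filtration $V_{Y,\AA}$ and to exploit the defining relation $\cC_V=\widetilde T^{-1}\cC_V\vee\cP$ for the increasing subordinate $\sigma$-algebras, together with the basic identity $\h_{\widetilde\mu}(\widetilde T,V)=\Hh_{\widetilde\mu}(\cC_V\mid\widetilde T^{-1}\cC_V)$ from \eqref{eq: definition of entropy contribution} and Proposition \ref{prop: contribution}. First I would fix a sufficiently fine finite partition $\cP$ of $X$ with thin boundaries, and its image $\cP_1$ under $\pi$, which (after refining $\cP$ so that $\cP\succeq\pi^{-1}\cP_1$) is again sufficiently fine with thin boundaries on $X_1$; then build $\cC_V$ on $\widetilde X$ and $\cC_{V_1}$ on $\widetilde X_1$ as on p.~\pageref{page of P_V}. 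Since $V\leq\pi^{-1}(V_1)$, the $\sigma$-algebra $\pi^{-1}\cC_{V_1}$ is coarser than a $\sigma$-algebra of the same type on $\widetilde X$ built from $\pi^{-1}\cP_1\subseteq\cP$; the point is that modulo $\widetilde\mu$ we have $\pi^{-1}\cC_{V_1}\subseteq\cC_V\vee\pi^{-1}\cC_{V_1}$ trivially, and by subordinacy the atoms of $\cC_V$ split along the fibers of $\pi$ according to $\cC_{V\cap V_{Y,\AA}}$, which is the subordinate $\sigma$-algebra for the subgroup $V\cap V_{Y,\AA}$ sitting inside the fiber direction $Y$.

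The core computation is then the chain rule for conditional entropy. Writing $\mathcal Z=\pi^{-1}\cC_{V_1}$ (a $\sigma$-algebra on $\widetilde X$ coming from the factor) one has
\[
\Hh_{\widetilde\mu}(\cC_V\vee\mathcal Z\mid\widetilde T^{-1}(\cC_V\vee\mathcal Z))
=\Hh_{\widetilde\mu}(\cC_V\mid\widetilde T^{-1}\cC_V\vee\mathcal Z)
+\Hh_{\widetilde\mu}(\mathcal Z\mid\widetilde T^{-1}\mathcal Z),
\]
using that $\widetilde T^{-1}(\cC_V\vee\mathcal Z)=\widetilde T^{-1}\cC_V\vee\widetilde T^{-1}\mathcal Z$ and that, because $\mathcal Z$ is $\pi^{-1}$ of an increasing $\sigma$-algebra, $\widetilde T^{-1}\mathcal Z\subseteq\mathcal Z$ so the two conditionings combine correctly. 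The left side is an entropy contribution for a subordinate $\sigma$-algebra on $\widetilde X$ attached to a subgroup containing $V$ and lying over $V_1$, hence is $\le h_{\widetilde\mu}(\widetilde T\mid\Omega)$ and, more to the point, equals $h_{\widetilde\mu}(\widetilde T,V)+$ (the contribution of the extra directions in $V_1$ not already in the image of $V$), which one absorbs; the second summand on the right is exactly $h_{\widetilde\mu_1}(\widetilde T_1,V_1)$ by definition applied on $\widetilde X_1$. The remaining term $\Hh_{\widetilde\mu}(\cC_V\mid\widetilde T^{-1}\cC_V\vee\mathcal Z)$ is bounded above by $h_{\widetilde\mu}(\widetilde T,V\cap V_{Y,\AA})$: conditioning on $\mathcal Z=\pi^{-1}\cC_{V_1}$ identifies the $\pi$-fiber, and inside a fiber the leafwise measure $\widetilde\mu_x^V$ restricted to the relevant atom is governed, via Proposition \ref{prop: subspace} applied to $W=V\cap V_{Y,\AA}<V$, by $\widetilde\mu_x^{V\cap V_{Y,\AA}}$. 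Equality for $V=U_T^-$, $V_1=U_{T_1}^-$ follows because then all three $\sigma$-algebras are (up to the footnoted subtlety) genuinely subordinate with full entropy, so the inequalities in the ``$\le h(\widetilde T\mid\Omega)$'' step and in the Proposition \ref{prop: subspace} step become equalities — this is the usual Abramov–Rokhlin / Ledrappier–Young addition formula for the full horospherical subgroup, and one invokes the equality clause of Proposition \ref{prop: contribution}.

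The main obstacle I expect is bookkeeping around the non‐canonical lifts and the ``sufficiently fine'' condition \eqref{eq: sufficiently fine}: one must check that the lift $\widetilde P\subset\AA^m$ used to define $\cC_V$ projects cleanly to the lift $\widetilde{P_1}\subset\AA^m/V_{Y,\AA}$ used to define $\cC_{V_1}$, so that $\pi$ really intertwines the two constructions and the atoms of $\cC_V$ map onto atoms of $\cC_{V_1}$ with fibers that are atoms of the fiberwise $\sigma$-algebra for $V\cap V_{Y,\AA}$. A secondary technical point is the one flagged in the footnote to Proposition \ref{prop: contribution}: since we allow non‐semisimple $\mathbf T$ (nontrivial Jordan blocks), the ``$\cC_V$ is exactly subordinate'' statements hold only in the cruder form $x+B_\epsilon^V\subseteq[x]_{\cC_V}\subseteq x+B_\rho^V$, but this is precisely what Definition \ref{def: subordinate} and Proposition \ref{prop: contribution} are stated to tolerate, so no genuine difficulty arises — one just cannot pretend the atoms are exactly $V$-balls. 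Modulo these points the argument is the standard relativization of the entropy addition formula along an invariant subsolenoid, and I would present it in that form, citing \cite[\S7]{EL-Pisa} for the properties of $\cC_V$ and \cite[Prop 6.4]{Einsiedler-Lindenstrauss-joinings-2} for the shape of the final identity.
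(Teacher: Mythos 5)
The paper itself offers no proof here; it simply refers to \cite[Prop.~6.4]{Einsiedler-Lindenstrauss-joinings-2}, so your sketch is necessarily an independent attempt and I am assessing it on its own terms.

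Your overall strategy --- set $\mathcal Z=\pi^{-1}\cC_{V_1}$, use the relation $\cC_V=\widetilde T^{-1}\cC_V\vee\cP$, and split via the chain rule --- is in the right spirit, but the central estimate has the inequality running the wrong way. The fatal step is the claim that ``conditioning on $\mathcal Z=\pi^{-1}\cC_{V_1}$ identifies the $\pi$-fiber,'' from which you deduce $\Hh_{\widetilde\mu}(\cC_V\mid\widetilde T^{-1}\cC_V\vee\mathcal Z)\leq h_{\widetilde\mu}(\widetilde T,V\cap V_{Y,\AA})$. This is false: $\mathcal Z$ is a countably generated sub-$\sigma$-algebra of $\cB_1:=\pi^{-1}\cB_{\widetilde X_1}$, and its atoms are the preimages of $\cC_{V_1}$-atoms --- open $V_1$-plaques, not points of $\widetilde X_1$. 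Since $\mathcal Z\subseteq\cB_1$, monotonicity of conditional entropy gives
\[
\Hh_{\widetilde\mu}(\cC_V\mid\widetilde T^{-1}\cC_V\vee\mathcal Z)\;\geq\;\Hh_{\widetilde\mu}(\cC_V\mid\widetilde T^{-1}\cC_V\vee\cB_1)\;=\;h_{\widetilde\mu}(\widetilde T,V\cap V_{Y,\AA}),
\]
i.e.\ exactly the opposite of what you need. The appeal to Proposition~\ref{prop: subspace} does not repair this, since that proposition requires $\widetilde\mu_x^V$ to be \emph{supported} on the subgroup $V\cap V_{Y,\AA}$, which is not the situation at hand. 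A secondary issue is that the ``chain rule'' you display is stated as an identity, whereas the correct chain rule yields
\[
\Hh_{\widetilde\mu}(\cC_V\vee\mathcal Z\mid\widetilde T^{-1}(\cC_V\vee\mathcal Z))
=\Hh_{\widetilde\mu}(\mathcal Z\mid\widetilde T^{-1}\cC_V\vee\widetilde T^{-1}\mathcal Z)
+\Hh_{\widetilde\mu}(\cC_V\mid\mathcal Z\vee\widetilde T^{-1}\cC_V),
\]
and only after dropping the $\widetilde T^{-1}\cC_V$ in the first conditioning does one get your right-hand side, with an inequality. Combining this with the sign problem above, the two summands you obtain bound $h_{\widetilde\mu}(\widetilde T,V)$ from \emph{above} by $h_{\widetilde\mu_1}(\widetilde T_1,V_1)$ plus something that is itself only bounded below by $h_{\widetilde\mu}(\widetilde T,V\cap V_{Y,\AA})$, which is inconclusive. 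The claim that the left side ``equals $h_{\widetilde\mu}(\widetilde T,V)+$ (extra directions), which one absorbs'' is also not a precise argument --- if $\cC_V\vee\mathcal Z$ is again $V$-subordinate and increasing (as can be checked), then in fact $\Hh_{\widetilde\mu}(\cC_V\vee\mathcal Z\mid\widetilde T^{-1}(\cC_V\vee\mathcal Z))=h_{\widetilde\mu}(\widetilde T,V)$ exactly, but the resulting decomposition still does not close for the reasons above. To push a $\sigma$-algebra argument through one must condition on $\cB_1$ rather than $\mathcal Z$ in the fiber term and then control the resulting conditional mutual information by $h_{\widetilde\mu_1}(\widetilde T_1,V_1)$; alternatively one can work at the level of leafwise measures, disintegrating $\widetilde\mu_x^V$ over $\pi$ and using the growth rate from Proposition~\ref{prop: contribution}. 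As written, your sketch does not contain the needed step.
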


\begin{proof}
 See \cite[Prop.~6.4]{Einsiedler-Lindenstrauss-joinings-2} (while the setting is a bit different, the proof there works verbatim also in our setting).
\end{proof}

Note that by definition $h _ {\widetilde \mu} (\widetilde T, V) = h _ {\widetilde \mu} (\widetilde T, V\mid \Omega)$ and similarly for the other terms in \eqref{relative entropy inequality}.

\section{An adelic version of the positive entropy theorem}

We show in this section that it suffices to prove the following more 
special version of Theorem~\ref{thm: main}.

\begin{theorem}[Adelic theorem]\label{thm:main2}
Let $m\geq 1$, $d\geq 2$,
and let $\alpha$ be an adelic $\ZZ^d$-action on $X_m$
without virtually cyclic factors. 
We suppose furthermore
that the action satisfies that every adelic subgroup of $X_m$
that is invariant under the restriction of $\alpha$ to 
a finite index subgroup of $\ZZ^d$ is actually invariant under $\ZZ^d$.
Let $\mu$ be an $\alpha$-invariant and ergodic probability measure
on $X_m$. Then there exists an adelic subgroup $G<X_m$ so that $\mu$
is invariant under translation by elements of $G$
and $h_\mu(\alpha_{X/G}^\n)=0$ for all $\n\in\ZZ^d$. 
\end{theorem}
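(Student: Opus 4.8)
The plan is to reduce, by induction on $\dim X_m$, to a single \emph{invariance step} --- that whenever $\mu$ has positive entropy it is translation invariant under a nontrivial adelic subgroup --- and to establish that step along the lines of Rudolph's proof of Theorem~\ref{thm: Rudolph}, organised around the coarse Lyapunov decomposition of $\AA^m$ and the leafwise measures of $\mu$. Recall first this decomposition: combining Lemma~\ref{factorlist} with Proposition~\ref{adelicirred}, $\alpha$ is described by a filtration whose successive quotients are multiplication by elements $\zeta_1,\dots,\zeta_d\in\KK^\times$ on $\AA_\KK/\KK$ for various number fields $\KK$, and to each place $\sigma$ of each such $\KK$ we attach the Lyapunov functional $\chi_\sigma(\n)=\delta(\sigma)\log|\zeta_{\n,\sigma}|_\sigma$ on $\RR^d$; by the product formula, Proposition~\ref{prop: localprod}, these sum to zero over the places of each factor. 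Taking the primary decomposition of the $\ZZ^d$-representation on $\QQ_v^m$ at each place $v$ of $\QQ$ and grouping the resulting fine Lyapunov subspaces by positive proportionality of their functionals yields a splitting $\AA^m=\bigoplus_i V^{[i]}$ into coarse Lyapunov subgroups, each $S$-linear and preserved by every $\alpha^\n$ (which commutes with the whole action), with $U^-_{\alpha^\n}=\bigoplus_{i:\chi_{[i]}(\n)<0}V^{[i]}$ for $\n$ off the finitely many hyperplanes $\ker\chi_\sigma$; the product structure of coarse Lyapunov foliations developed by Katok and the first author is what allows the $V^{[i]}$ to be treated as genuine building blocks even though the filtration need not split globally.

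\emph{The induction.} We induct on $\dim X_m$. If $h_\mu(\alpha^\n)=0$ for all $\n$ we are done with $G=\{0\}$. Otherwise the invariance step $(\star)$ below yields a nontrivial adelic subgroup $G_0\leq X_m$ under which $\mu$ is translation invariant. Then $\bar\mu=\pi_*\mu$ on $X_m/G_0$ is $\alpha_{X_m/G_0}$-invariant and ergodic; every factor of $\alpha_{X_m/G_0}$ is a factor of $\alpha$ (so there are no virtually cyclic factors), and an adelic subgroup of $X_m/G_0$ invariant under a finite-index subgroup of $\ZZ^d$ corresponds to an adelic subgroup of $X_m$ containing $G_0$ with the same invariance, hence is $\alpha$-invariant by hypothesis; so the hypotheses persist. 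As $\dim(X_m/G_0)<\dim X_m$, induction provides an adelic $\bar G\leq X_m/G_0$ with $\bar\mu$ translation invariant under $\bar G$ and $h_{\bar\mu}(\alpha^\n_{(X_m/G_0)/\bar G})=0$ for all $\n$. Letting $G\leq X_m$ be the preimage of $\bar G$ --- an adelic subgroup containing $G_0$ --- and using that $G_0$-invariant measures on $X_m$ correspond bijectively to measures on $X_m/G_0$, $\mu$ is $G$-invariant; since $X_m/G=(X_m/G_0)/\bar G$ carries the induced action, $h_\mu(\alpha^\n_{X_m/G})=h_{\bar\mu}(\alpha^\n_{(X_m/G_0)/\bar G})=0$ for all $\n$, completing the induction. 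It remains to prove $(\star)$: if $h_\mu(\alpha^\n)>0$ for some $\n$, then $\mu$ is translation invariant under a nontrivial adelic subgroup of $X_m$.

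\emph{Proof of $(\star)$.} Proposition~\ref{prop: contribution} together with Proposition~\ref{entropy inequality proposition}, applied iteratively along the coarse Lyapunov flag (which also gives $h_\mu(\alpha^\n)=\sum_{i:\chi_{[i]}(\n)<0}h_\mu(\alpha^\n,V^{[i]})$ and the product structure of $\mu_x^{U^-_{\alpha^\n}}$ over the coarse pieces), shows that some $\mu_x^{V^{[i]}}$ is nontrivial on a set of positive measure, hence a.e.\ by ergodicity. The core claim is that then $\mu_x^{V^{[i]}}$ is, a.e., invariant under translation by a \emph{fixed} nonzero $v\in V^{[i]}$; granting this, the translation stabiliser $\Sigma=\{w\in\AA^m:\text{translation by }w\text{ preserves }\mu\}$ is a nonzero closed $\alpha$-invariant subgroup, and a structural argument --- using that $\alpha$ acts on each fine Lyapunov piece by multiplication by elements generating the number field of the corresponding $\AA$-irreducible factor $F$, and that $F$ is dense in all its completions, so that membership in $\Sigma$ of a nonzero element of a fine piece forces $\Sigma$ (after passing to its closure and $\ZZ^d$-orbit) to contain the whole factor $F$ --- exhibits a nontrivial adelic subgroup $G_0\leq\Sigma$, as required. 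To obtain the fixed translation invariance of $\mu_x^{V^{[i]}}$ we note that, by the polynomial-growth rigidity of leafwise measures (the implication \eqref{eq:affineinvariance1}$\Rightarrow$\eqref{eq:affineinvariance2}) and Proposition~\ref{prop: subspace}, it suffices to find a fixed subgroup on which $\mu_x^{V^{[i]}}$ is a.e.\ concentrated and by which it is a.e.\ invariant; this we do by a dichotomy. In the \emph{high-entropy case}, where $V^{[i]}$ contains two distinct fine Lyapunov subspaces of $X_m$, the product structure $\mu_x^{W_1\oplus W_2}\propto\mu_x^{W_1}\times\mu_x^{W_2}$ of the leafwise measures over these pieces, together with the equivariance formula \eqref{eq:conjugacy} and an ergodic-theoretic argument (a version of the high-entropy method adapted to the abelian setting), yields that $\mu_x^{V^{[i]}}$ is a.e.\ Haar on a fixed nontrivial subgroup. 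In the remaining \emph{low-entropy case} --- essentially a single fine piece attached to a place $\sigma$ of a factor $F$ --- we run Rudolph's shearing argument relative to a transverse coarse Lyapunov direction $V^{[j]}$ with $\chi_{[j]}\not\propto\chi_{[i]}$: working with a subordinate $\sigma$-algebra $\cC_{V^{[i]}}$, conditioning at two nearby generic points, and pushing by a sequence $\alpha^{\n_k}$ chosen --- using the product formula, and the shape theorem (Theorem~\ref{thm: shape}) to control the relative expansion rates --- so that the $V^{[i]}$-scale stays bounded while the transverse direction is exploited, one finds that, unless $\mu_x^{V^{[i]}}$ is already Haar, it must be invariant under a nontrivial fixed translation. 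The hypotheses enter exactly here: $d\geq2$ supplies the second direction, while the absence of virtually cyclic factors (a property stable under passing to finite-index subgroups) together with the hypothesis that $\alpha_\Lambda$-invariant adelic subgroups are $\alpha$-invariant --- so that the coarse Lyapunov structure is unaffected by the finite-index passages implicit in the argument --- guarantees that no coarse class is isolated, so a usable transverse direction always exists.

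\emph{Main obstacle.} I expect the crux to be the low-entropy shearing step, and within it the verification that $d\geq2$, the product formula, and the no-virtually-cyclic-factor hypothesis always supply a transverse coarse Lyapunov direction along which the shearing closes up --- ruling out every residual ``rank one'' configuration --- together with carrying this out uniformly over the coarse Lyapunov classes and in the presence of nontrivial Jordan blocks and non-Archimedean places.
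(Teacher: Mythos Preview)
Your overall architecture is right: reduce to showing that positive entropy forces translation invariance under a nontrivial adelic subgroup, and work with the coarse Lyapunov decomposition. Your induction and the final passage from a nonzero translation stabiliser to an adelic subgroup are close to what the paper does (though the latter requires more care than you indicate --- one has to upgrade from a single discrete translation of $\mu_x^W$ to an $S$-linear subgroup $\widetilde W_x$ via Poincar\'e recurrence and a limiting argument, then use equivariance and the factor map $x\mapsto\widetilde W_x$ to show this group is a.e.\ constant).

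The genuine gap is in the invariance step itself. Your high/low entropy dichotomy is not how the paper proceeds, and in this abelian setting the ``high entropy'' case you describe (two fine pieces inside one coarse class giving Haar via the product structure) does not obviously go through --- there are no commutators to exploit, and the product structure alone does not force a fixed translation. More importantly, your ``low entropy'' description --- choosing a sequence $\alpha^{\n_k}$ so that one scale stays bounded while a transverse direction is exploited, with Theorem~\ref{thm: shape} used ``to control relative expansion rates'' --- is not the mechanism that actually works. The paper's argument is this: fix a coarse weight $[\chi]$ with nontrivial leafwise measures on $W=W^{[\chi]}\cap V_{\base,\AA}$, and form the measurable factor $\Omega=\{[\mu_x^W]\}$ of proportionality classes of leafwise measures. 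Pick a \emph{second} coarse weight $[\chi']$ linearly independent from $[\chi]$ (its existence is Lemma~\ref{two weights Lemma}, where the no-virtually-cyclic-factor hypothesis enters). By the product structure (Theorem~\ref{thm:prod}), the $W^{[\chi']}$-leafwise measures are constant along $W^{[\chi]}$-orbits, so conditioning on $\Omega$ does not change the entropy contribution of $W^{[\chi']}$; by Theorem~\ref{thm: shape} this forces $\kappa_{\widetilde\mu,\Omega}=\kappa_\mu>0$, and hence the entropy contribution of $W^{[\chi]}$ itself \emph{relative to $X_\base\times\Omega$} is also positive. Positive entropy contribution gives recurrence along $W$-leaves, but since $\Omega$ is by construction constant along $W$-orbits, this recurrence produces $x$ and $x+v$ with $[\mu_x^W]=[\mu_{x+v}^W]$, i.e.\ $\mu_x^W+v\propto\mu_x^W$, which by \eqref{eq:affineinvariance1}--\eqref{eq:affineinvariance2} is genuine translation invariance. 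This is the content of Proposition~\ref{nexttolast} and Corollary~\ref{reallylast}, and is the heart of the proof; your sketch does not contain this idea.

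Two further omissions: you do not set up the flag $X\to X_\posfact\to X_\base$ of \S\ref{sec:standing}, which localises the positive entropy to a single $\AA$-irreducible fibre $Y_\irred$ --- this is essential both for formulating Theorem~\ref{thm: shape} and for the product structure (the ``no shearing'' Proposition~\ref{prop;noshearing} uses that $X$ is a zero-entropy extension of $X_\posfact$ to rule out Jordan blocks on the supporting subgroup). And your entropy additivity $h_\mu(\alpha^\n)=\sum_i h_\mu(\alpha^\n,V^{[i]})$ is exactly Theorem~\ref{thm:prod}, whose proof in the presence of nontrivial Jordan blocks is one of the technical contributions here and should not be taken for granted.
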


We note that the reader interested in the 
heart of the argument may skip most of this
section, which is dedicated to the reduction of Theorem \ref{thm: main}
to Theorem \ref{thm:main2}, and instead continue with Section \ref{sec:standing}. 

\subsection{Extension to adelic action}\label{ssec:extension}
Suppose that $\alpha$ is a $\ZZ^d$-action by automorphisms on a solenoid $X$
as in Theorem \ref{thm: main}. 
By definition this means that the Pontryagin dual $\widehat{X}$ is isomorphic
to a subgroup $V\subseteq\QQ^m$ for some $m\in\NN$. We may assume that $m$
is minimal, that $\widehat{X}=V$, and by
applying some linear automorphism if necessary, we may also assume
that the standard basis vectors of $\QQ^m$ belong to~$V$.
By Pontryagin duality we also have that $X$ is isomorphic to the quotient
of $X_m=\widehat{\QQ^m}=\AA^m/\QQ^m$ modulo the annihilator $K=V^\perp<X_m$ of $V$.

Moreover, for every $\n\in\ZZ^d$ 
the dual $\widehat{\alpha}^\n$ of the automorphism $\alpha^{\n}$ is an automorphism of $V\subseteq\QQ^m$.
Using the standard basis of $\QQ^m$ (contained in $V$ by the above assumption) 
we can represent this dual automorphism
by a rational matrix and extend it to linear automorphism of $\QQ^m$ 
(since $V\otimes_\QQ\QQ$ can be identified with $\QQ^m$).
This shows that the dual action extends to a linear 
representation of $\ZZ^d$ 
on $\QQ^m$. 

We take the transpose of this representation (i.e.\ 
of each of the matrices defining $\widehat{\alpha}^{\mathbf{e}_j}$ for $j=1,\ldots,d$)
to define an action $\widetilde{\alpha}$ of $\ZZ^d$ by automorphisms
on $\QQ^m$ and $\AA^m$. 
As discussed in Section \ref{adelicaction}
this defines an adelic action of $\ZZ^d$
by automorphisms of $X_m=\AA^m/\QQ^m$. 
Moreover, since $V\subseteq\QQ^m$ is invariant under $\widehat{\alpha}$
the annihilator $K=V^\perp$ is a closed invariant subgroup 
for $\widetilde{\alpha}$ and the 
induced action on $X_m/K$ is isomorphic to the original action on $X$.

Extending the above discussion the following lemma allows us to switch our attention
to the setting of an adelic $\ZZ^d$-action $\alpha$ on $X_m$ for some $m\in\NN$. 

\begin{lemma}\label{lemma:firstreduction}
 Suppose Theorem \ref{thm: main} holds for adelic actions,
 then it also holds for all actions of $\ZZ^d$ by automorphisms
 on solenoids.
\end{lemma}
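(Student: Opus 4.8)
The plan is to unwind the identifications made in Section~\ref{ssec:extension} and verify that the hypotheses of Theorem~\ref{thm: main} for a solenoid action descend to (a version of) the hypotheses of the adelic Theorem~\ref{thm:main2}, and that the conclusion pulls back. Concretely, given $\alpha$ on a solenoid $X$ with no virtually cyclic factors, I would first pass, as in the preceding paragraphs, to the adelic action $\widetilde\alpha$ of $\ZZ^d$ on $X_m=\AA^m/\QQ^m$ together with a closed $\widetilde\alpha$-invariant adelic subgroup $K<X_m$ such that the induced action on $X_m/K$ is isomorphic to $\alpha$ on $X$. The measure $\mu$ on $X$ lifts to an $\widetilde\alpha_{X_m/K}$-invariant ergodic measure on $X_m/K$; call it again $\mu$. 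Since Theorem~\ref{thm: main} is assumed for \emph{all} adelic actions, and the induced action on $X_m/K$ is itself an adelic action (on the adelic solenoid $X_m/K$, whose dual is the $\QQ$-linear subspace $V=K^\perp\subseteq\QQ^m$), one would want to simply apply the adelic case of Theorem~\ref{thm: main} directly to $(X_m/K,\widetilde\alpha_{X_m/K},\mu)$. The mild subtlety is whether ``no virtually cyclic factors'' is preserved: a virtually cyclic factor of $\widetilde\alpha_{X_m/K}$ would be a virtually cyclic factor of $\alpha$ (factors compose), so this hypothesis transfers cleanly in this direction.

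The main point of the lemma, however, is the reverse reduction path: one wants to know that it is enough to prove Theorem~\ref{thm:main2}, i.e.\ the \emph{more restrictive} adelic statement that additionally assumes (a) the action is on $X_m$ itself rather than a quotient, and (b) every adelic subgroup invariant under a finite-index subgroup of $\ZZ^d$ is already $\ZZ^d$-invariant. So after reducing Theorem~\ref{thm: main} to its adelic case (the content of the current Lemma~\ref{lemma:firstreduction}), a further reduction will later show Theorem~\ref{thm:main2} suffices. For the present lemma I only need step (a)-type bookkeeping: the conclusion of Theorem~\ref{thm: main} is a statement about a finite-index $\Lambda\subseteq\ZZ^d$, a decomposition $\mu=\frac1J(\mu_1+\dots+\mu_J)$ into mutually singular $\alpha_\Lambda$-ergodic pieces permuted by $\alpha$, invariant subgroups $G_j$ with the permutation and zero-entropy-on-$X/G_j$ properties. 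All of these notions are intrinsic to the topological group $X$ with its $\ZZ^d$-action and do not care whether $X$ is presented abstractly or as $X_m/K$; hence a decomposition produced for the adelic model $X_m/K$ transports verbatim back to $X$ under the isomorphism. The only thing to check is that the $G_j$ produced on $X_m/K$ correspond to closed invariant subgroups of $X$ (immediate, as the isomorphism is a topological group isomorphism intertwining the actions) and that ``induces zero entropy on the quotient'' is preserved under the isomorphism (also immediate, since entropy is an isomorphism invariant).

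The key steps, in order: (1) recall the construction of Section~\ref{ssec:extension} producing $X_m$, the adelic action $\widetilde\alpha$, and the invariant adelic subgroup $K$ with $X\cong X_m/K$ equivariantly; (2) observe that the induced action of $\widetilde\alpha$ on $X_m/K$ is again an adelic action, on the adelic solenoid with dual $V=K^\perp$; (3) check the hypothesis transfer: $\alpha$ has no virtually cyclic factors $\iff$ $\widetilde\alpha_{X_m/K}$ has no virtually cyclic factors (both directions follow because factors of factors are factors and $X\cong X_m/K$); (4) lift $\mu$ to $X_m/K$, apply the assumed adelic case of Theorem~\ref{thm: main} there to get $\Lambda$, the $\mu_j$, and the $G_j$; (5) push the data back through the isomorphism $X_m/K\xrightarrow{\sim} X$ and verify properties (1)--(4) of Theorem~\ref{thm: main} are preserved, using only that the isomorphism is a topological group isomorphism intertwining the $\ZZ^d$-actions, together with the fact that measure-theoretic entropy and ergodicity are isomorphism invariants. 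I expect the only mildly delicate point — and hence the ``main obstacle'', though it is quite minor — to be confirming that the factor-map notion of ``virtually cyclic factor'' used in the hypothesis of Theorem~\ref{thm: main} is the algebraic/topological one and behaves well under the passage $X\leftrightarrow X_m/K$; once that bookkeeping is in place the rest is a routine transport of structure.
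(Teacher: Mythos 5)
The proposal has a genuine gap at its central step. You claim that ``the induced action on $X_m/K$ is itself an adelic action (on the adelic solenoid $X_m/K$, whose dual is the $\QQ$-linear subspace $V=K^\perp\subseteq\QQ^m$).'' This is false in general: by the paper's definition of solenoid, $\widehat X=V$ is only a \emph{subgroup} of $\QQ^m$, not a $\QQ$-linear subspace, so $K=V^\perp$ is a compact subgroup of $X_m$ that is not an adelic subgroup, and $X_m/K\cong X$ is not an adelic solenoid. The simplest example is $X=\TT$: then $m=1$, $V=\ZZ\subsetneq\QQ$, $K=\ZZ^\perp$ is a nontrivial compact subgroup of $\AA/\QQ$, and $X_1/K\cong\TT$ is certainly not of the form $\AA^{m'}/\QQ^{m'}$. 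Since $X_m/K\cong X$ by construction anyway, your plan of ``applying the adelic case of Theorem~\ref{thm: main} to $(X_m/K,\widetilde\alpha_{X_m/K},\mu)$'' is circular: it would just mean applying the theorem directly to $X$, which is exactly what we do not yet have.

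The paper resolves this by going \emph{up} to $X_m$ rather than staying on $X_m/K$: one extends $\mu$ to a probability measure $\mu_K$ on $X_m$ that is $K$-invariant and projects to $\mu$ modulo $K$ (i.e.\ puts Haar measure on the compact $K$-fibers), observes this $\mu_K$ is $\alpha$-invariant by uniqueness, passes to an ergodic component $\tilde\mu$ of $\mu_K$ (which still projects to $\mu$ by ergodicity of $\mu$), applies the adelic case of Theorem~\ref{thm: main} to $\tilde\mu$ on the genuine adelic solenoid $X_m$, and then pushes the resulting decomposition and subgroups $G_j$ forward through $X_m\to X_m/K\cong X$, collecting equal pushforwards if needed so the pieces remain mutually singular. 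Your steps (3)--(5) --- transport of the hypotheses and conclusions across the equivariant isomorphism $X_m/K\cong X$ --- are correct but trivial bookkeeping; the substance, which you are missing, is the construction of the lifted measure $\mu_K$ on $X_m$ and the verification that its ergodic components project correctly.
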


\begin{proof}
 Let $\alpha$ be a $\ZZ^d$-action by automorphisms on a solenoid $X$
 and let $\mu$ be an $\alpha$-invariant and ergodic probability measure on $X$.
 Applying the above discussion we can construct an adelic action 
 (again denoted by $\alpha$)
 on $X_m$ for some $m\geq 1$ and an invariant compact subgroup $K$
 such that the action on $X_m/K$ is isomorphic to the original action. 

Next we can define a probability measure $\mu_K$ on $X_m$ that is invariant
under $K$ and modulo $K$ equals $\mu$.
This describes $\mu_K$ uniquely. By invariance of $K$ under $\alpha$
and uniqueness this measure is also $\alpha$-invariant.
If it is not ergodic with respect to $\alpha$ we may consider
an ergodic component $\tilde{\mu}$ of $\mu_K$. 
Due to ergodicity of $\mu$ almost surely the ergodic components
will project to $\mu$. Let $\tilde\mu$ be one such ergodic component.

By our assumption (in Lemma~\ref{lemma:firstreduction}) 
we know that Theorem~\ref{thm: main} already holds for $\tilde\mu$.
In other words there exists a finite index subgroup $\Lambda<\ZZ^d$
and a decomposition $\tilde\mu=\frac1J(\tilde\mu_1+\cdots+\tilde\mu_J)$
of $\tilde\mu$ into $\alpha_\Lambda$-invariant and ergodic probability measures,
and there exist closed $\alpha_\Lambda$-invariant subgroup $G_j<X_m$
so that $\tilde\mu_j$ is invariant under translation by elements of $G_j$
for $j=1,\ldots,J$. Moreover, the entropy 
of $\alpha^\n_{X_m/\tilde G_j}$ with $\n\in\Lambda$
with respect to $\tilde\mu_j$ vanishes for all $j=1,\ldots,J$. 
Taking the quotient of $X_m$ by the invariant subgroup $K$ all of these
statements become the corresponding statements for the push forwards $\mu_j$
of $\tilde\mu_j$ under the quotient map $X_m\rightarrow X_m/K\cong X$. 
We also note that the $\alpha_\Lambda$-ergodic components of $\mu$
are either equal or singular to each other. Hence, if $\mu_j$
equals $\mu_k$ for some $j\neq k$ we may simply collect equal terms
and would again obtain a decomposition into mutually singular measures (of necessary
equal weight due to ergodicity with respect to $\alpha$). 
Together we obtain the conclusions of Theorem \ref{thm: main}
for the original measure $\mu$. This gives the lemma.
\end{proof}

\subsection{Choosing a good finite index subgroup $\Lambda<\ZZ^d$}

\begin{lemma}\label{irredequaltot} 
  Let $m,d\geq 1$ and let $\alpha$ be an adelic $\ZZ^d$-action on $X_m$.
 Then there exists a finite index subgroup $\Lambda<\ZZ^d$ with the 
 following property: Applying Lemma~\ref{factorlist} to the restriction
 $\alpha_\Lambda$ of $\alpha$ to $\Lambda$ we obtain finitely many
 $\AA$-irreducible adelic actions. Then each one of them remains $\AA$-irreducible
 if we restrict $\alpha_\Lambda$ further to a finite index 
 subgroup $\Lambda'<\Lambda$. 
 Moreover, $\Lambda$ may be chosen so that an adelic subgroup $Y<X$ is invariant under $\alpha_\Lambda$
 if and only if it is invariant under $\alpha_{\Lambda'}$ for a finite
 index subgroup $\Lambda'<\Lambda$. 
\end{lemma}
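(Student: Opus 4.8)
The plan is to produce the subgroup $\Lambda$ by intersecting two finite-index subgroups, one handling each assertion, and then observe that a common refinement still works. For the first assertion, recall that an adelic $\ZZ^d$-action $\beta$ on an adelic solenoid corresponds by Pontryagin duality to a linear representation $\widehat\beta$ of $\ZZ^d$ on a $\QQ$-vector space $W$, and $\AA$-irreducibility of $\beta$ is exactly $\QQ$-irreducibility of $\widehat\beta$. So the question becomes: given a finitely-generated abelian group $\ZZ^d$ acting $\QQ$-irreducibly on a finite-dimensional $\QQ$-vector space $W$, can one pass to a finite-index subgroup so that the restriction stays irreducible under every further finite-index restriction? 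By Proposition~\ref{adelicirred} applied to the commutative image, $\QQ[\widehat\beta(\ZZ^d)]$ is a number field $\KK$ and $W\cong\KK$ with $\ZZ^d$ acting by multiplication by $\zeta_1,\dots,\zeta_d\in\KK^\times$; the image of $\ZZ^d$ in $\KK^\times$ is a finitely generated subgroup $\Xi$. The restriction to a finite-index $\Lambda'$ fails to be irreducible precisely when the subfield $\QQ[\Xi']$ generated by the image $\Xi'$ of $\Lambda'$ is a proper subfield of $\KK$. Since there are only finitely many intermediate fields $\QQ\subseteq\KK'\subsetneq\KK$, and for each such $\KK'$ the preimage in $\ZZ^d$ of $\Xi\cap\KK'^\times\cap(\text{something})$ — more precisely, the set of $\bn$ with $\zeta_\bn\in\KK'$ — is a subgroup whose rank we must control, one checks that choosing $\Lambda$ to be the intersection over all proper intermediate fields $\KK'$ of (a finite-index subgroup of) $\ZZ^d$ ensures $\QQ[\Xi_\Lambda]=\KK$ already; here the key point is that the rank of $\Lambda\cap\{\bn:\zeta_\bn\in\KK'\}$ is strictly smaller than $d$ once we have chosen $\Lambda$ maximally transverse to each $\KK'$, so no further finite-index restriction can shrink the generated field. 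Doing this simultaneously for each of the finitely many $\AA$-irreducible factors $Y_j/Y_{j-1}$ produced by Lemma~\ref{factorlist} and intersecting gives a single $\Lambda$ with the first property.

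For the second assertion, I want a finite-index $\Lambda$ such that $\alpha_\Lambda$-invariance of an adelic subgroup forces $\alpha_{\Lambda'}$-invariance for all finite-index $\Lambda'<\Lambda$, i.e.\ the lattice of adelic $\alpha_\Lambda$-invariant subgroups equals the lattice of $\alpha_{\Lambda'}$-invariant ones. Dualizing via Lemma~\ref{lem:adelic}, adelic subgroups correspond to $\QQ$-subspaces of $\QQ^m$, and we are asking that the set of $\widehat\alpha(\Lambda)$-invariant subspaces coincide with the set of $\widehat\alpha(\Lambda')$-invariant subspaces. The collection of $\QQ$-subspaces of $\QQ^m$ invariant under a fixed commuting family is governed by the $\QQ$-algebra the family generates inside $\mathrm{End}(\QQ^m)$; passing from $\Lambda$ to $\Lambda'$ can only shrink this algebra, hence can only enlarge the invariant-subspace lattice, so it suffices to choose $\Lambda$ so that $\QQ[\widehat\alpha(\Lambda)]=\QQ[\widehat\alpha(\Lambda')]$ for all finite-index $\Lambda'$. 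Using the decomposition from Lemma~\ref{factorlist} and working block by block, on each $\AA$-irreducible factor the relevant algebra is the number field $\KK_j$, which is already stabilized by the first part of the argument; the off-diagonal blocks (the $\mathrm{Ext}$ data between factors, measuring how $Y_j$ sits inside $X_m$) are $\QQ$-linear data on which $\Lambda$ acts through a finitely generated abelian group, and one uses the same ``there are only finitely many sub-objects, choose $\Lambda$ transverse to all of them'' mechanism: there are only finitely many candidate invariant subspaces up to the $\KK_j$-module structure — actually infinitely many if some $\KK_j$ coincide, but they are then permuted/scaled in a way controlled by a finitely generated group, so again after finite-index restriction the generated algebra stabilizes. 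Intersecting with the $\Lambda$ from the first part yields the desired subgroup.

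The main obstacle I anticipate is the bookkeeping in the second assertion when several $\AA$-irreducible factors are isomorphic (or become isomorphic after restriction): then the lattice of invariant adelic subgroups is genuinely infinite (it contains ``diagonal'' subgroups parametrized by isomorphisms between the factors), and one must argue that even though there are infinitely many invariant subspaces, the property of ``being invariant'' is the same for $\Lambda$ and for all $\Lambda'<\Lambda$. The clean way to phrase this is algebra-theoretic: let $R_\Lambda=\QQ[\widehat\alpha(\Lambda)]\subseteq\mathrm{End}_\QQ(\QQ^m)$ and note $R_{\Lambda'}\subseteq R_\Lambda$, with the invariant-subspace lattice depending only on the algebra; since $\dim_\QQ R_{\Lambda'}$ is a nonincreasing, integer-valued function as we shrink $\Lambda'$, it stabilizes, and any $\Lambda$ realizing the minimum works — and such $\Lambda$ exists with finite index since $R_{\ZZ^d}$ is finite-dimensional and each $R_{\Lambda'}\supseteq R_{\Lambda''}$ for $\Lambda''=\Lambda'\cap(\text{witnessing generators})$. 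The remaining care is to check that minimizing $\dim_\QQ R_\Lambda$ is compatible with simultaneously arranging the first property; since both are ``pass to a further finite-index subgroup'' conditions and the class of finite-index subgroups is closed under intersection, this compatibility is automatic, and the proof concludes by taking $\Lambda$ to be the intersection of the finitely many finite-index subgroups produced along the way.
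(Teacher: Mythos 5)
Your cleanest formulation --- minimize $\dim_\QQ R_\Lambda$ over finite-index subgroups $\Lambda$, where $R_\Lambda = \QQ[\widehat{\alpha}(\Lambda)] \subseteq \operatorname{End}_\QQ(\QQ^m)$, and observe that any minimizer has $R_{\Lambda'}=R_\Lambda$ for every finite-index $\Lambda'<\Lambda$ since $R_{\Lambda'}\subseteq R_\Lambda$ with equal dimension --- is correct and is a genuinely different argument from the paper's for the second assertion. Because a $\QQ$-subspace of $\QQ^m$ is $\widehat{\alpha}(\Lambda)$-invariant exactly when it is an $R_\Lambda$-submodule (and by Lemma~\ref{lem:adelic} adelic subgroups of $X_m$ dualize to $\QQ$-subspaces), the equality $R_{\Lambda'}=R_\Lambda$ delivers \emph{both} assertions at once: the lattice of invariant subspaces (hence of adelic subgroups) is unchanged, so in particular each quotient $V_i/V_{i-1}$ from Lemma~\ref{factorlist} has no new invariant subspaces under $\widehat{\alpha}(\Lambda')$ and stays irreducible. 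You treat the two assertions separately and then intersect, but the single minimization already handles both, so that discussion can be dropped. The paper instead stabilizes a field on each $\AA$-irreducible block for the first assertion, and for the second sets $M$ to be the l.c.m.\ of orders of roots of unity in a Galois closure of the $\KK_j$, passes to $\Lambda_1=M\Lambda$, splits $\widehat{\alpha}=\widehat{\alpha}_{\diag}\widehat{\alpha}_{\uni}$, disposes of the unipotent part by Zariski density, and derives a contradiction from a root of unity of order not dividing $M$. The paper's route is more explicit about the index; yours is shorter and purely dimension-theoretic.

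There is, however, a concrete error in your first paragraph: you claim that choosing $\Lambda$ ``maximally transverse'' to each subgroup $\{\bn : \zeta_\bn\in\KK'\}$ ensures $\QQ[\Xi_\Lambda]=\KK$. This is false whenever $\{\bn:\zeta_\bn\in\KK'\}$ already has finite index in $\ZZ^d$, in which case every finite-index $\Lambda$ meets it in a rank-$d$ sublattice and the generated field genuinely drops. The simplest instance is $d=1$, $\KK=\QQ(i)$, $\zeta_1=i$: here $\{n:i^n\in\QQ\}=2\ZZ$ has finite index, and for any finite-index $\Lambda<\ZZ$ the minimal stable field is $\QQ$, not $\KK$. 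What one can (and the paper does) arrange is that the generated field merely \emph{stabilizes} under further finite-index restriction, not that it equals $\KK$; your later $R_\Lambda$-minimization gives exactly this and supersedes the transversality discussion.
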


For the proof of the lemma and some of the following arguments we first recall
the Jordan decomposition. Given a matrix $A\in\GL_m(\QQ)$ there exist matrices $D,U\in\GL_m(\QQ)$ so that $A=DU=UD$, 
$D$ is semisimple (i.e.\ is diagonalizable over $\overline\QQ$),
and $U$ is unipotent (i.e.\ has only $1$ as eigenvalue). This decomposition is
unique and if $A$ commutes with a matrix $B$, then $D$ and $U$ as above also commute with $B$.
Moreover, a subspace $V<\QQ$ is invariant under $A$ if and only if $V$ is invariant under both $D$ and $U$.

\label{page of Jordan}
By applying this to each of the matrices $\alpha^{\mathbf{e}_i}$ for $i=1,\ldots,d$
we obtain two representations of $\ZZ^d$ on $\QQ^m$: The first representation
$\alpha_{\diag}$ is semisimple, and the 
second $\alpha_{\uni}$ is by unipotent matrices,
the two representations commute, and we have
$\alpha^\n=\alpha_{\diag}^\n\alpha_{\uni}^\n$
for all $\n\in\ZZ^d$.

\begin{proof}[Proof of Lemma \ref{irredequaltot}]
 We first consider an $\AA$-irreducible action $\alpha$. 
 As the proof of Lemma \ref{adelicirred} shows $\alpha$ 
 corresponds in this case to a global field $\KK$ 
 generated by $d$ elements $\zeta_1,\ldots,\zeta_d$
 (obtained directly from the matrix representations of $\widetilde\alpha^{\mathbf{e}_j}$).
 Restricting the action to a finite index subgroup results in replacing $\zeta_1,\ldots,\zeta_d$ by $d$ monomial expressions $\xi_1,\ldots,\xi_d$ 
 (corresponding to a basis of $\Lambda<\ZZ^d$)
 in the numbers $\zeta_1,\ldots,\zeta_d$. This in turn
 may result in $\xi_1,\ldots,\xi_d$ generating instead of $\KK$ 
 a subfield $\LL$ of $\KK$. 
 In this case the  $\AA$-irreducible representation of $\ZZ^d$ on $\KK$ obtained in the 
 proof of Lemma \ref{adelicirred} becomes, when restricted to $\Lambda$, isomorphic
 to a direct sum of $[\KK:\LL]$ many copies of 
 the  $\AA$-irreducible representation defined by multiplication
 by $\xi_1,\ldots,\xi_d$ on $\LL$. If this indeed happens we 
 may choose $\Lambda$ so that $\LL$
 is minimal in dimension. Hence for any finite index subgroup $\Lambda'<\Lambda$
 the monomial expressions in the variables $\xi_1,\ldots,\xi_d$
 corresponding to a basis of $\Lambda'$ will still generate
 the same field $\LL$ and so the $[\KK:\LL]$ many  $\AA$-irreducible representations
 for the restriction to $\Lambda$ will remain irreducible for the 
 restriction to $\Lambda'$. This proves the first part
 of the lemma in the $\AA$-irreducible case.
 
 Let now $\alpha$ be a general adelic action. 
Let 
 \begin{equation*}
  V_0=\{0\}<V_1<V_2<\cdots<V_r=\QQ^m
 \end{equation*}
 be as in Lemma~\ref{factorlist}, equation \eqref{eq:partialflag}, with the action induced by $\widehat \alpha$ on $V_i/V_{i-1}$ irreducible over $\QQ$,  and let $\KK_i$ be the corresponding finite extension of $\QQ$ as in Proposition~\ref{adelicirred} for $V_i/V_{i-1}$ (or more precisely, for the dual adelic solenoid) for $i=1,\ldots,r$.
 Applying the above discussion on each irreducible quotient by passing to a finite index subgroup $\Lambda < \ZZ^d$ we may assume that these quotients remain irreducible even if we pass to a further finite index subgroup of $\Lambda$,  establishing the first part
 of the lemma. 
 
 Now let $M$ be the least common multiple of the orders of 
 the (finitely many) roots of unity in some finite degree Galois extension of $\QQ$ containing the fields 
 $\KK_1,\ldots,\KK_r$.  We claim that if $\Lambda_1 = M \Lambda$ 
 and if $V < \QQ ^ m $ is invariant under $\widehat{\alpha}(\Lambda')$ 
 for some $\Lambda' < \Lambda_1$ then $V$ is invariant 
 under $\widehat{\alpha} (\Lambda _ 1)$. 

Indeed, as discussed above, for any $\Lambda ' \leq \ZZ ^ d$, the space $V=Y^\perp$ is $\widehat{\alpha} (\Lambda ')$-invariant if and only if it is invariant under both $\widehat{\alpha}_{\diag} (\Lambda ') $ and $\widehat{\alpha}_{\uni} (\Lambda ')$.

Since the map $\mathbf n \mapsto \widehat{\alpha}_{\uni} (\mathbf n)$ is polynomial, and since any finite index subgroup of $\ZZ ^ d$ is Zariski dense in affine $d$-dimensional space, a space $V$ is $\widehat{\alpha}_{\uni} (\Lambda ')$-invariant if and only if it is $\widehat{\alpha}_{\uni} (\ZZ ^ d)$-invariant (hence in particular $\widehat{\alpha}_{\uni} (\Lambda _1)$-invariant).

Suppose $V$ is  $\widehat{\alpha}_{\diag} (\Lambda ') $  invariant but not $\widehat{\alpha}_{\diag} (\Lambda _1) $ invariant. Let $\mathbf n = M \mathbf n ' \in \Lambda _ 1$ so that $\widehat{\alpha}_{\diag} (\mathbf n)$ does not fix $V$. Since $\widehat{\alpha}_{\diag} (\Lambda ') $ leaves $V$ invariant it follows that there is some $k  \in \NN$ so that $\widehat{\alpha}_{\diag} (k\mathbf n) $ leaves $V$ invariant.

By Proposition~\ref{adelicirred}, for every $i$ the action induced by $\alpha (\mathbf n ')$ on 
each $V_i/V_{i-1}$ can be identified with multiplication by some $\xi  \in \KK_i$ on $\KK_i$.
If $\widehat{\alpha}_{\diag} (\mathbf n)=\widehat{\alpha}_{\diag} (\mathbf n')^M$ does not fix $V$ but $\widehat{\alpha}_{\diag} (k \mathbf n)$ does this implies that there is an $j$ and $\xi'\in\KK_j$ as well as embeddings $\sigma,\sigma'$ of $\KK_i$ and $\KK_j$ respectively into $\CC$ so that $\sigma(\xi)^ M \neq \sigma'(\xi') ^ M$ but $\sigma(\xi) ^ {k M} = \sigma (\xi') ^ {k M}$. Then $\sigma(\xi) \sigma' (\xi')  ^{-1}$ is an element of the compositum of $\sigma(\KK_i)$, $\sigma'(\KK_j)$ that is a root of unity of order not dividing $M$ --- a contradiction.
\end{proof}

In particular Lemma \ref{irredequaltot} shows that it is possible
to restrict any adelic action to a finite index subgroup so that each 
of the $\AA$-irreducible adelic actions associated to its restriction 
are in fact totally $\AA$-irreducible.

\subsection{Reduction to Theorem \ref{thm:main2}}

Using the above preparations we are now ready to explain the following reduction
step.

\begin{proof}[Proof of Theorem \ref{thm: main} assuming Theorem \ref{thm:main2}]
 By Lemma \ref{lemma:firstreduction} it suffices to consider adelic actions
 for the proof of Theorem \ref{thm: main}. So let $d\geq 2$,
 let $\alpha$ be an adelic $\ZZ^d$-action without virtually cyclic factors,
 and let $\mu$ be an $\alpha$-invariant and ergodic probability measure. 
 
 By Lemma \ref{irredequaltot} there exists a finite index subgroup $\Lambda<\ZZ^d$
 so that the restriction of $\alpha$ to $\Lambda$ satisfies the assumptions to
 Theorem \ref{thm:main2}. Note however, that the measure $\mu$ might not be 
 ergodic with respect to $\alpha_\Lambda$. Hence we may have to apply the 
 ergodic decomposition. Since $\Lambda$ has finite index in $\ZZ^d$,
 this ergodic decomposition simply takes the form 
 \[
  \mu=\frac1J\left(\mu_1+\cdots+\mu_J\right),
 \]
 where the probability measures $\mu_j$ are $\alpha_\Lambda$-invariant 
 and ergodic for $j=1,\ldots,J$. Since $\mu$ is invariant under the full action $\alpha$,
 we also have that for every $\n\in\ZZ^d$ and every index $j\in\{1,\ldots,J\}$
 there exists an index $k$ with $\alpha^\n_*\mu_j=\mu_k$. Since ergodic measure
 are either equal or singular to each other we may also assume that the measure $\mu_1,\ldots,\mu_J$
 are mutually singular to each other. By ergodicity of $\mu$ with respect to $\alpha$
 we also have that for every pair of indices $j,k$ there exists some $\n\in\ZZ^d$
 so that $\alpha^\n\mu_j=\mu_k$. 
 
 We now apply Theorem \ref{thm:main2} to $\mu_1$ and the restriction $\alpha_\Lambda$. Therefore
 there exists a closed subgroup $G_1<X_m$ so that $\mu_1$ is invariant under translation by 
 elements in $G_1$ and for any $\n\in\Lambda$ we have $h_{\mu_1}(\alpha^\n_{X_m/G_1})=0$. 
 Applying the above transitivity claim we obtain the theorem.
\end{proof}

\subsection{Standing assumptions for the proof of Theorem \ref{thm:main2}}\label{sec:standing}
Since we have shown that Theorem \ref{thm:main2} implies Theorem \ref{thm: main} our aim for the
the next sections is to show the former. Hence we will assume that $\alpha$ is an adelic $\ZZ^d$-action
on $X_m$ satisfying the assumptions of Theorem \ref{thm:main2}. Furthermore we assume
that $\mu$ is an $\alpha$-invariant and ergodic probability measure. 

Suppose that $\mu$ is translation invariant under an adelic subgroup $Y<X_m$,
then by invariance of $\mu$ under $\alpha$ it is also translation invariant
under $\alpha^\n(Y)$. Taking the closed subgroup generated by these, it follows
that there exists a maximal adelic subgroup $Y<X_m$ so that $\mu$ is invariant
under translation by elements of $Y$ and moreover that $Y$ is $\alpha$-invariant.
We may replace $X_m$
by $X_m/Y$ for this maximal $\alpha$-invariant adelic subgroup $Y$
and consider the push forward of $\mu$ under the canonical 
projection map $X_m\rightarrow X_m/Y$. If this new measure has zero entropy,
Theorem~\ref{thm:main2} already holds for this action. Hence we may and will assume
for the proof of Theorem \ref{thm:main2} that
\begin{enumerate}
    \item[(i)] $\mu$ is not invariant under any adelic subgroup, and
    \item[(ii)] there exists some $\n\in\ZZ^d$ so that $h_\mu(\alpha^\n)>0$.
\end{enumerate}
Our aim is to derive a contradiction from these assumptions.

To be able to apply the method introduced in \cite{EL-ERA03} (relying on 
arithmetic properties of $\AA$-irreducible actions) we start by applying Lemma \ref{factorlist}
to the adelic action $\alpha$ on $X_m$. We now think of the chain of invariant subgroups
as defining a chain of factor maps 
\begin{align*}
 X=X_{(0)}=X_m&\rightarrow X_{(1)}=X/Y_1\rightarrow\cdots\\
 &\rightarrow X_{(j)}=X/Y_{j}\rightarrow X_{(j+1)}=X/Y_{j+1}\rightarrow\cdots
 \rightarrow X_{(r)}=\{0\}. 
\end{align*}
Applying the Rokhlin entropy addition formula inductively to these factors
we have
\[
 h_\mu(\alpha^\n)=\sum_{j=0}^{r-1}h_\mu(\alpha^\n_{X_{(j)}}\mid X_{(j+1)})
\]
for all $\n\in\ZZ^d$,
where we write $\mu$ for the invariant measure on all of the factors,
write $\alpha^\n_{X_{(j)}}$ for the induced action on the factor $X_{(j)}$,
and write $h_\mu(\alpha^\n_{X_{(j)}}\mid X_{(j+1)})$ for the conditional 
entropy of $\alpha^\n_{X_{(j)}}$ conditioned on the next factor $X_{(j+1)}$. 

By assumption (ii) above there exists some $\n\in\ZZ^d$
so that $h_\mu(\alpha^\n)>0$. Hence we may choose the minimal $s\in\{0,\ldots,r-1\}$
so that $h_\mu(\alpha^\n_{X_{(s)}}\mid X_{(s+1)})>0$ for some $\n\in\ZZ^d$. 
We define $Y_\base=Y_{s+1}$, 
$X_{\base}=X_{(s+1)}$, and will always consider conditional entropy
over the factor $X_{\base}$. 
We also define $Y_\posfact=Y_s$ (contained in $Y_\base$) and $X_\posfact=X/Y_\posfact$
(which factors on $X_\base$).
In this sense the factor $X_{\posfact}$
will be important for us as it is a `positive entropy extension' of $X_{\base}$
and at the same time an `adelic extension with $\AA$-irreducible fibers'. 
In fact by choice of $s$ there exists some $\n\in\ZZ^d$ so 
that $h_\mu(\alpha^\n_{X_{\posfact}}\mid X_{\base})>0$,   we 
have $X_{\posfact}=X/Y_{\posfact}$, $X_{\base}=X/Y_\base$, and
that the action induced on the fibers $Y_{\irred}=Y_\base/Y_{\posfact}$
is (totally) $\AA$-irreducible.
Finally as we have choosen $s$ minimally
the original system $X$ is a zero entropy extension of $X_{\posfact}$
in the sense that $\h_\mu(\alpha^\n\mid X_{\posfact})=0$ for all $\n\in\ZZ^d$.

To summarize the factor maps
\begin{equation}\label{x base equation}
X\rightarrow X_{\posfact}\rightarrow X_{\base} =X_{\posfact}/Y_{\irred}
\end{equation}
describe the original action as a zero-entropy extension
of $X_{\posfact}=X/Y_\posfact$, and $X_{\base}=X/Y_\base$ as the quotient of $X_{\posfact}$ 
by an $\alpha$-invariant $\AA$-irreducible adelic subgroup $Y_{\irred} < X_{\posfact}$ 
so that $X_{\posfact}$ is a positive entropy extension
over $X_{\base}$. More precisely, 
\[
 h_\mu(\alpha^{\mathbf n} \mid X_{\posfact})=0 \qquad\text{for all $\n\in\ZZ^d$}
\]
 but 
 \[
 h_\mu(\alpha^{\mathbf n}_{X _{\posfact}}\mid X_{\base})>0 \qquad\text{for some $\n \in \ZZ^d$.}
\]

To the exact sequence $0 \to Y _{\irred} \to X _{\posfact} \to X _{\base}  \to 0$ there is attached an exact sequence of $\QQ$-vector spaces $0 \to L _{\base}\to L _{\posfact} \to L _{\irred}  \to 0$ where 
$L_{\base}=\widehat{X_{\base}}$, $L_{\posfact}=\widehat{X_{\posfact}}$, and $L _{\irred}=\widehat{Y_{\irred}}$ can be identified as $\QQ$-vector space with a number field $\KK$.
Viewing $X _{\posfact}$ as a quotient of $\AA ^{m_1}$ amounts to choosing 
a $\QQ$-basis $v _ 1, \dots, v _ {m_1}$ for the vector space $L_\posfact^*$.

Identifying $\KK$ as a $\QQ$-vector space with its dual using the trace form, we have an embedding $\KK \to L _{\posfact} ^{*}$, and we will always choose our $\QQ$-basis so that $v _ 1, \dots, v _ {[\KK: \QQ]}$ will be a basis for $\KK < L ^{*} _{\posfact}$.

\section{A bound on the entropy contribution}\label{sec: bound}

In this section we prove the following theorem.

\begin{theorem}[cf.~{\cite[Thm.~4.1]{EL-ERA03}}]\label{thm:bound-contribution} Let $m, d \geq 1$, and let $\alpha$ be a $\ZZ ^ d$-action on an adelic solenoid $X = \AA ^ m / \QQ ^ m$. We also let $\alpha$ denote the corresponding action on~$\AA ^ m$. Let $Y _{\irred} \leq X$ be an $\alpha$-invariant $\AA$-irreducible adelic subspace, and set $X _{\base} = X / Y _{\irred}$. Let $\alpha _ \Omega$ be a $\ZZ ^ d$-action on a compact metric space $\Omega$, $\widetilde \alpha = \alpha \times \alpha _ \Omega$, and let $\widetilde \mu$ be a $\widetilde \alpha$-invariant measure on $\widetilde X = X \times \Omega$. Fix $\mathbf n \in \ZZ ^ d$, and let $V < U ^ - _ {\mathbf n} < \AA ^ m$ be a closed $\alpha$-invariant subspace. Let $V _ {\irred, \AA} < \AA ^ m$ be the rational subspace projecting modulo $\QQ ^ m$ to $Y _{\irred}$.
Then
\begin{equation}\label{eq: entropy inequality}
h _ {\widetilde \mu} (\widetilde \alpha ^ {\mathbf n}, V \mid X _{\base} \times \Omega) \leq \frac {h _ \lambda (\alpha _{\irred} ^ {\mathbf n}, V \cap V _{\irred,\AA}) }{ h _ \lambda (\alpha ^ {\mathbf n} _{\irred})} \cdot h _ {\widetilde \mu} (\widetilde \alpha ^ {\mathbf n} \mid X _ \base \times \Omega)
.\end{equation}
\end{theorem}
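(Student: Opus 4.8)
The plan is to exploit the extra structure of the $\AA$-irreducible fiber $Y_{\irred}$, namely the product formula (Proposition~\ref{prop: localprod}), to turn the a priori entropy inequality of Proposition~\ref{entropy inequality proposition} into the sharp quantitative bound \eqref{eq: entropy inequality}. First I would reduce to the case $V = V\cap V_{\irred,\AA}$, i.e.\ $V \leq V_{\irred,\AA}$: by Proposition~\ref{entropy inequality proposition} applied to the exact sequence $0 \to Y_{\irred}\to X\to X_{\base}\to 0$, with $V_1 = \pi(V) < U^-_{\mathbf n, X_{\base}}$, we get $h_{\widetilde\mu}(\widetilde\alpha^{\mathbf n}, V\mid X_{\base}\times\Omega) \leq h_{\widetilde\mu_1}(\widetilde\alpha_1^{\mathbf n}, V_1\mid\Omega) + h_{\widetilde\mu}(\widetilde\alpha^{\mathbf n}, V\cap V_{\irred,\AA}\mid X_{\base}\times\Omega)$, but since the leafwise measures for the factor acting on $X_{\base}$ are what we are conditioning on, the conditional contribution over $X_{\base}\times\Omega$ of any subgroup of $U^-$ kills the $X_{\base}$-part, so the first term on the right should vanish (this needs to be checked carefully — see below). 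Thus it suffices to prove the bound for $W := V\cap V_{\irred,\AA}$, a closed $\alpha$-invariant subspace of $U^-_{\mathbf n}\cap V_{\irred,\AA}$, living inside the irreducible fiber.

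Now comes the heart of the matter. On the fiber $Y_{\irred} = \AA_\KK/\KK$, with the action given by multiplication by $\zeta_{\mathbf n}$, the horospherical subgroup $U^-_{\mathbf n}\cap V_{\irred,\AA}$ decomposes as a product over those places $\sigma$ of $\KK$ for which $|\zeta_{\mathbf n}|_\sigma < 1$ (contracted) of the local components $\KK_\sigma$; and any $\alpha$-invariant closed subspace $W$ inside it is a sub-product over a subset $\Sigma$ of these places (invariance forces it to be a union of full local factors, because $\zeta_{\mathbf n}$-invariant subspaces of $\KK_\sigma$ over which the action is scalar are just $\{0\}$ or $\KK_\sigma$, once one knows the subspace is in addition $\KK$-rational — more precisely one uses that $W$ rational means $\widehat W$ is a $\QQ$-subspace hence a $\KK$-ideal, hence $0$ or all of $\KK$, and then re-examines which local factors survive). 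For $W = U^-_{\mathbf n}\cap V_{\irred,\AA}$ itself (all contracted places), Proposition~\ref{prop: contribution} gives $h_\lambda(\alpha_{\irred}^{\mathbf n}, W) = h_\lambda(\alpha_{\irred}^{\mathbf n}) = \sum_{\sigma:\,|\zeta_{\mathbf n}|_\sigma<1} \delta(\sigma)\log|\zeta_{\mathbf n}|_\sigma^{-1}$, and by the product formula this also equals $\sum_{\sigma:\,|\zeta_{\mathbf n}|_\sigma>1}\delta(\sigma)\log|\zeta_{\mathbf n}|_\sigma$. For a general invariant $W$ corresponding to a subset of contracted places, $h_\lambda(\alpha_{\irred}^{\mathbf n}, W) = \sum_{\sigma\in\Sigma}\delta(\sigma)\log|\zeta_{\mathbf n}|_\sigma^{-1}$, so the ratio $h_\lambda(\alpha_{\irred}^{\mathbf n}, W)/h_\lambda(\alpha^{\mathbf n}_{\irred})$ is the fraction of the total expansion coming from the places in $\Sigma$.

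The remaining, and genuinely substantive, step is to prove $h_{\widetilde\mu}(\widetilde\alpha^{\mathbf n}, W\mid X_{\base}\times\Omega) \leq \big(h_\lambda(\alpha_{\irred}^{\mathbf n}, W)/h_\lambda(\alpha^{\mathbf n}_{\irred})\big)\cdot h_{\widetilde\mu}(\widetilde\alpha^{\mathbf n}\mid X_{\base}\times\Omega)$. The idea is to interpolate: replace $\mathbf n$ by $N\mathbf n$ and let $N\to\infty$, and compare the growth rate of the leafwise measure $\widetilde\mu_x^W$ under $T^{-N}$, where $T = \widetilde\alpha^{\mathbf n}$, against its growth under the full unstable $U^-_{\mathbf n}$. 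The key point (this is where the product formula really bites, and where the argument goes beyond the soft inequality of Proposition~\ref{entropy inequality proposition}) is that the leafwise measures on the $\AA$-irreducible fiber, being conditional measures for an action by multiplication on $\AA_\KK/\KK$, cannot be concentrated on a proper $\Sigma$-sub-product in a way that beats the ``fair share'' $h_\lambda(\alpha_{\irred}^{\mathbf n}, W)/h_\lambda(\alpha^{\mathbf n}_{\irred})$ — one shows, via the compatibility and equivariance formulas \eqref{eq:shifting}, \eqref{eq:conjugacy} together with the polynomial-growth bound, that the $\vol$ along $W$ is at most the ratio times the $\vol$ along $U^-_{\mathbf n}$, integrating this against $\widetilde\mu$ and using Proposition~\ref{prop: contribution} to convert $\vol$-integrals back into entropy contributions, and the ``equality for $V = U^-_T$'' clauses of Propositions~\ref{prop: contribution} and~\ref{entropy inequality proposition} to identify $\int \vol(\widetilde\alpha^{\mathbf n}, U^-_{\mathbf n}\cap V_{\irred,\AA}, x)\,d\widetilde\mu = h_{\widetilde\mu}(\widetilde\alpha^{\mathbf n}\mid X_{\base}\times\Omega)$. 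I expect this last comparison — controlling the conditional entropy contribution of a partial sub-product of places against the total, uniformly in the (unknown) measure $\widetilde\mu$ — to be the main obstacle, and the place where the arithmetic rigidity of the irreducible fiber is indispensable; the earlier reduction steps and the place-counting bookkeeping are routine by comparison.
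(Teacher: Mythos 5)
Your reduction to $V \leq V_{\irred,\AA}$ and your identification of the ratio
$\kappa = h_\lambda(\alpha_\irred^{\mathbf n}, V\cap V_{\irred,\AA})/h_\lambda(\alpha_\irred^{\mathbf n})$
are both correct, and indeed both steps appear in the paper's proof (the paper does the first reduction slightly more directly by observing that for a $V$-subordinate $\sigma$-algebra $\cC_V$, the $\sigma$-algebra $\cC_V \vee \cB_\base$ is subordinate to $V\cap V_{\irred,\AA}$, so the two conditional entropy contributions literally coincide; your route via Proposition~\ref{entropy inequality proposition} needs care with where $X_\base$ sits but can be made to work). Your bookkeeping identifying $\alpha$-invariant subspaces of $V_{\irred,\AA}\cap U^-_{\mathbf n}$ with sub-products over a subset $S'_\KK$ of the contracted places of $\KK$, and the expression for $\kappa$ via the product formula, are exactly what appears in the paper.

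However, your proposal leaves the genuinely substantive step unresolved, and the vague mechanism you sketch does not match the way the product formula is actually deployed. You describe a pointwise comparison of $\vol$ along $W$ against $\vol$ along $U^-_{\mathbf n}$, to be obtained ``via the compatibility and equivariance formulas together with the polynomial-growth bound,'' and then acknowledge that you don't see how to close the argument. That is a real gap: those soft tools by themselves do not yield the sharp constant $\kappa$. The paper's actual mechanism is quite different and more concrete. One fixes a sufficiently fine partition $\cP$ with thin boundaries and forms the $V$-subordinate $\sigma$-algebra $\cC_V$ with $\cC_V = \cP^{(0,j-1)}\vee\cC_V^j$. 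The crucial claim is that already $\cP^{(0,\ell)}\vee\cC_V^j$ has the same atoms as $\cC_V$ (outside a set of exponentially small measure), where $\ell = \lceil\kappa j\rceil$. To prove this, one must show that if two points lie in a common atom of $\cP^{(0,\ell)}\vee\cB_\base$ and of $\cC_V^j$ then they differ by an element of $\phi_\AA(\KK)$ which is bounded by $\eta$ at all places in $S'_\KK$. The product formula enters here precisely: an element $\xi\in\KK^\times$ that is of size $\lesssim|\zeta_\bn|_\sigma^{-j}$ at places $\sigma\in S'_\KK$, of size $\lesssim 1$ at $S^-_\KK\setminus S'_\KK$, of size $\lesssim |\zeta_\bn|_\sigma^{-\ell}$ at $S_\KK\setminus S^-_\KK$, and bounded at the remaining places cannot exist, because multiplying all the local bounds gives, after an application of the product formula to $\zeta_\bn$, the contradiction
\[
0 < j\Bigl(\sum_{\sigma\in S'_\KK}\delta(\sigma)\log\tfrac{1}{|\zeta_\bn|_\sigma}\Bigr)
     - \ell\Bigl(\sum_{\sigma\in S^-_\KK}\delta(\sigma)\log\tfrac{1}{|\zeta_\bn|_\sigma}\Bigr),
\]
which fails for $\ell\geq\kappa j$. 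In other words the product formula is used as a Diophantine statement about the lattice $\KK$ in $\AA_\KK$: no nonzero rational element can simultaneously be that small at that many places. Once the atoms coincide outside an exponentially small bad set $X_j^\complement$, the conditional entropy $H_{\widetilde\mu}(\cP^{(0,j-1)}\mid\cC_V^j\vee\cB_\base\vee\cB_\Omega)$ is bounded by $H_{\widetilde\mu}(\cP^{(0,\ell)}\mid X_\base\times\Omega)$ plus error terms that vanish as $j\to\infty$, giving $\eqref{eq: entropy inequality}$. Your instinct that the arithmetic rigidity of the irreducible fiber is the indispensable ingredient is right, but the proposal does not exhibit the actual mechanism — it restates the conclusion rather than proving it.
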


Notice that this estimate is sharp for a product measure ${\widetilde \mu}= \lambda \times \nu$ with $\lambda$ being the Haar measure on $X$.
Our treatment here follows closely in content (if not in notation) that of \cite[\S4]{EL-ERA03}.
A special case of this theorem appeared in \cite [Thm.~2.4]{ Lindenstrauss-p-adic}.

\begin{proof}
 We first note that
\begin{equation*}
h _ {\widetilde \mu} (\widetilde \alpha ^ \mathbf n, V \cap V _ {\irred, \AA} \mid X _{\base} \times \Omega) = h _ {\widetilde \mu} (\widetilde \alpha ^ \mathbf n, V \mid X _{\base} \times \Omega)
.\end{equation*}
Indeed, if $\pi: X \to X _{\base}$ is the natural projection, then for any $x \in X$ we have that
\begin{equation*}
(x + V) \cap \pi ^{-1} \circ \pi (x) = x + (V \cap V _ {\irred, \AA} )
,\end{equation*}
hence if $\mathcal{C} _ V$ is a decreasing $V$-subordinate $\sigma$-algebra of subsets of $X$, then $\mathcal{C} _ V \vee \mathcal{B} _{\base}$ is a decreasing $\sigma$-algebra subordinate to $V \cap V _ {\irred, \AA}$ (with $\mathcal{B} _{\base}$ denoting the $\sigma$ algebra of Borel subsets of $X _{\base}$, or more precisely the image under $\pi ^{-1}$ of this $\sigma$-algebra in $X$).
Therefore applying \peqref{eq: definition of entropy contribution} twice, once for the $V$-subordinate $\sigma$-algebra $\mathcal{C} _ V$ and once for the $V \cap V _ {\irred, \AA}$-subordinate $\sigma$-algebra $\mathcal{C} _ V \vee \mathcal B _{\base}$ we get
\begin{align*}
h _ {\widetilde \mu} (\widetilde \alpha ^ \n, V \mid X _{\base} \times \Omega)& = H _ {\widetilde \mu} (\mathcal{C} _ V \mid \widetilde \alpha ^ {- \n} \mathcal{C} _ V \vee \mathcal B _ {{\base}} \vee \mathcal B _ {\Omega}) \\
& = H _ {\widetilde \mu} (\mathcal{C} _ V \vee \mathcal{B} _ {{\base}}  \mid \widetilde \alpha ^ {- \n} \mathcal{C} _ V \vee \mathcal B _ {\base} \vee \mathcal B _ {\Omega}) \\
& = h _ {\widetilde \mu} (\widetilde \alpha ^ \n, V \cap V _ {\irred, \AA} \mid X _{\base} \times \Omega)
.\end{align*}
Since the right hand side of \eqref{eq: entropy inequality} depends only on $V \cap V _ {\irred, \AA}$, we may (and will, for the remainder of the proof) assume that $V \leq U_\bn^- \cap  V _ {\irred, \AA}$. We may also assume $V$ is a proper subgroup of $U_\bn^- \cap  V _ {\irred, \AA}$ since for $V=U_\bn^- \cap  V _ {\irred, \AA}$ by first applying the above discussion and then using the second part of Proposition~\ref{prop: contribution} twice
\begin{gather*}
  h _ {\widetilde \mu} (\widetilde \alpha ^ \n,U_\bn^- \cap  V _ {\irred, \AA}\mid X _{\base} \times \Omega) = h _ {\widetilde \mu} (\widetilde \alpha ^ \n,U_\bn^- \mid X _{\base} \times \Omega) = h _ {\widetilde \mu} (\widetilde \alpha ^ {\mathbf n} \mid X _ \base \times \Omega),   \\
 {h _ \lambda (\alpha _{\irred} ^ {\mathbf n}, U_\bn^- \cap V _{\irred,\AA}) }={ h _ \lambda (\alpha ^ {\mathbf n} _{\irred})}   ,
\end{gather*}
establishing \eqref{eq: entropy inequality} in this case.

\medskip

Let the rank of $V _ {\irred, \AA}$ as a free $\AA$-module be $k$. Since $Y _{\irred}$ is $\alpha$-invariant and $\AA$-irreducible,
by Proposition~\ref{adelicirred}, there is a global field $ \KK$ with $[\KK: \QQ] = k$, an injective hopmomorphism of $\QQ$-vector spaces $\phi:\KK \to \QQ ^ m$, and $d$ nonzero elements $\zeta _ 1, \dots, \zeta _ d \in  \KK ^ \times$  so that $\AA \otimes \phi (\KK) = V _ {\irred, \AA}$ and so that for any $\n = (n _ 1, \dots, n _ d)$ and $\xi \in \AA _ \KK = \AA \otimes \KK$
\begin{equation*}
\alpha ^ \mathbf n .\phi _ \AA (\xi) = \phi _ \AA (\zeta_\n \xi) \qquad \zeta_\n:=\zeta _ 1 ^ {n _ 1} \dots \zeta _ d ^ {n _ d}
\end{equation*}
with $\phi _ \AA$ the isomorphism of $\AA$-modules $ \AA_ \KK   \to V _ {\irred, \AA}$ induced from $\phi$.

Fix $\n \in \ZZ ^ d$.  Then $U ^ - _ {\n} $ is $S$-linear for a finite set $S$ of places of $\QQ$ (including $\infty$) as in \eqref{eq:define S}. Let $S_\KK$ be the (finite) set of places of $\KK$ lying over the places $S$ of $\QQ$. The $\AA$-irreducibility of the action of $\alpha$ on $Y _{\irred} \cong \AA _ \KK / \KK$ implies that every $\alpha$-invariant subspace $V \leq V _ {\irred, \AA} \cap U ^ - _ {\n}$ has the form
\begin{equation}\label{V and Greek phi}
V = \phi _ \AA \left (\prod_ {\sigma\in S ' _ \KK} \KK _ \sigma \right)
\end{equation}
with $S ' _ \KK \subseteq S_\KK$.
Similarly,
\[  V _ {\irred, \AA} \cap U ^ - _ {\n} = \phi _ \AA  \left (\prod_ {\sigma\in S ^- _ \KK} \KK _ \sigma \right)
\end{equation*}
with $S ^ - _ \KK \subseteq S_\KK$ a finite set of places of $\KK$ with $S'_\KK \subset S ^- _ \KK$.
It follows from the relation between entropy contribution and leafwise measures in Proposition~\ref{prop: contribution} that
\begin{align*}
h _ \lambda (\alpha _{\irred} ^ {\mathbf n}, V) & = \sum_ {\sigma \in S ' _ \KK} \delta _ \sigma  \log 1/\absolute {\zeta _ \n} _ \sigma \\
h _ \lambda (\alpha ^ {\mathbf n} _{\irred})& = h _ \lambda (\alpha _{\irred} ^ {\mathbf n}, U_\n^- \cap V _ {\irred, \AA}) = \sum _{\sigma \in S ^ - _ \KK} \delta _ \sigma  \log 1/\absolute {\zeta _ \n} _ \sigma;
\end{align*}
note that by definition of $U_\n^-$ and $\zeta _ \n$
we have that $\absolute {\zeta _ \n} _ \sigma < 1$ for every $\sigma \in S ^ - _ \KK$. Let
\[
\kappa=
\frac {\sum_ {\sigma \in S '_\KK} \delta _ \sigma  \log \absolute {\zeta_\n} _ \sigma}{ \sum_ {\sigma \in S_{\KK}^-} \delta _ \sigma \log \absolute {\zeta_\n} _ \sigma} = \frac {h _ \lambda (\alpha _{\irred} ^ {\mathbf n}, V ) }{ h _ \lambda (\alpha ^ {\mathbf n} _{\irred})} <1.
\]
Let $\cP$ be a sufficiently fine finite partition with small boundaries of $X$
as in \eqref{eq: bound-on-boundary} and \eqref{eq: sufficiently fine} (indeed, we will take it to be even finer), let $\cP_V$ be a corresponding $\sigma$-algebra of subsets of $X \times\Omega$  as in p.~\pageref{page of P_V}, and let $\cC_V = \bigcup_{i\geq 0} \widetilde\alpha^{-i\n}\cP_V$ be as in
\S\ref{sec:increasing sigma algebra}. Then $\cC_V$ and $\cP_V$ are both subordinate to $V$, and $\cC_V$ is in addition decreasing with respect to $\widetilde\alpha^{\n}$. Moreover,
\begin{equation}\label{decreasing equation}
\mathcal{C} _ V = \widetilde \alpha ^ {-\n} \mathcal{C} _ V \vee \mathcal{P} = \widetilde \alpha ^ {-\n} \mathcal{C} _ V \vee \mathcal{P} _ V
.\end{equation}
Setting $\widetilde T = \widetilde \alpha ^ \mathbf n$, it follows from \eqref{decreasing equation} that for any $j\in \NN$, \[\mathcal{C} _ V = \mathcal{P} ^ {(0, j-1)} \vee \mathcal{C} _ V  ^ j,\] where we recall that $\mathcal{C} _ V  ^ j = \widetilde T ^ {- j} \mathcal{C} _ V$ and $\mathcal{P} ^ {(0, j -1)} = \bigvee_ {0 \leq i \leq j -1} \widetilde T ^ {- i} \mathcal{P}$.
The key point in the proof of Theorem~\ref{thm:bound-contribution} is that \emph{for $\ell =\lceil \kappa j \rceil$, the atoms of $\mathcal{P} ^ {(0, \ell)} \vee \mathcal{C} _ V ^ j$ are already very close to being equal to the atoms of $\mathcal{C} _ V = \mathcal{P} ^ {(0, j-1)} \vee \mathcal{C} _ V ^ j$} (recall that $\kappa<1$). In some simple cases (e.g.\ that considered in \cite{Lindenstrauss-p-adic}) these $\sigma$-algebras literally coincide, though in general there may be a small disparity. What we now proceed to show (cf.~\cite[Lem.~4.2]{EL-ERA03}) is that there is a set $X_j$ with $\mu (X \setminus X_j ) \leq \exp (- c j)$ for appropriate $c > 0$ so that
\begin{equation}\label{main point equation}
[x]_{\mathcal{P} ^ {(0, \ell)}} \cap [x]_{\mathcal{C} _ V ^ j} = [x] _ {\mathcal{C} _ V} \qquad \text{for any $x \in X _ j$}
.\end{equation}

Since $\mathcal{C} _ V ^ j$ is $V$-subordinate for $V$ as in \eqref{V and Greek phi}, there is some $B \subset \prod_ {\sigma \in S ' _ \KK} \KK _ \sigma$ (depending on $x$ and $j$) so that
\begin{equation*}
[x] _ {\mathcal{C} _ V ^ j} = x + \phi _ \AA (B);
\end{equation*}
moreover for any $\eta > 0$, by choosing $\mathcal{P}$ sufficiently fine depending on $\eta$, one can ensure that
\begin{equation*}
B \subset \prod_ {\sigma \in S ' _ \KK} \left\{ t  \in \KK _ \sigma: \absolute {t} _ \sigma \leq \eta \absolute {\zeta _ \n}_\sigma ^ {-j} \right\}
.\end{equation*}
Notice that $[x] _ {\mathcal{P} ^ {(0, \ell)} \vee \mathcal B _{\base}} \subset x + V _ {\irred, \AA}$ hence there is some $D \subset \AA _ \KK$ so that
\begin{equation*}
[x] _ {\mathcal{P} ^ {(0, \ell)} \vee \mathcal B _{\base}} = x + \phi _ \AA (D)
.\end{equation*}
If the partition $\mathcal{P}$ was chosen to be sufficiently fine (again, depending on the parameter $\eta > 0$ introduced above) we may assume that
\begin{equation*}
D \subset \prod_ {\sigma \in S _ \KK} \left\{ t \in \KK _ \sigma: \absolute t _ \sigma \leq \eta \min(1,\absolute {\zeta _  \n} _ \sigma ^ {- \ell}) \right\} \times \prod_ {\sigma \not \in S _ \KK} \mathcal{O} _ {\KK, \sigma}
.\end{equation*}
Note that $\absolute{\zeta _  \n} _ \sigma<1$ for every $\sigma\in S'_\KK$ but may be $<1$, $=1$ or $>1$ for $\sigma\in S_\KK$; however by choice of $S$ (hence of $S_\KK$) \ $\absolute{\zeta _  \n} _ \sigma=1$ for $\sigma\not\in S_\KK$.
Since $ \QQ ^ m \cap V _ {\irred, \AA} = \phi _ \AA (\KK) $, and using the fact that $\mathcal B _{\base} \subseteq \mathcal{C} _ V^j$ (modulo $\widetilde \mu$), we see that
\begin{align*}
    [x]_{\mathcal{P} ^ {(0, \ell)}} \cap [x]_{\mathcal{C} _ V ^ j} &= [x]_{\mathcal{P} ^ {(0, \ell)}\vee\cB_\base} \cap [x]_{\mathcal{C} _ V ^ j} \\
    &= x + \phi_\AA \left(\bigcup_{\xi\in\KK}(B\cap(D+\xi))\right).
\end{align*}
However, if $B \cap (D + \xi) \neq \emptyset$, i.e.~$\xi \in B - D$ then
\begin{equation*}
\absolute {\xi} _ \sigma \leq\begin{cases}  2 \eta \absolute {\zeta _ \n} _ \sigma ^ {- j}& \text{if $\sigma \in S ' _ \KK$} \\
2 \eta & \text{if $\sigma \in S ^-_ \KK \setminus S ' _ \KK$} \\
2 \eta  \absolute {\zeta _ \n} _ \sigma ^ {- \ell} & \text{if $\sigma \in S _ \KK \setminus S ^- _ \KK$} \\
1& \text{otherwise.}
\end{cases}
\end{equation*}
By Proposition~\ref{prop: localprod}, if moreover $\xi \in \KK ^ \times$ and $\eta$ was chosen small enough ($<1/2$),
\begin{equation}\label{big product equation}
1 = \prod_ {\text{places $\sigma$ of $\KK$}} \absolute \xi _ \sigma ^{\delta(\sigma)}
< \prod_ {\sigma \in S ' _ \KK} \absolute {\zeta _ \n} _ \sigma ^ {-\delta(\sigma)j} \times \!\!\!\prod_ {\sigma \in S _ \KK \setminus S ^ - _ \KK} \absolute {\zeta _ \n} _ \sigma ^ {-\delta(\sigma)\ell}
.\end{equation}
Applying Proposition~\ref{prop: localprod} to $\zeta_\n$,
\[
1=\prod_ {\text{places $\sigma$ of $\KK$}} \absolute{ \zeta_\n} _ \sigma ^{\delta(\sigma)} = \prod_ {\sigma \in S ^ - _ \KK} \absolute{ \zeta_\n} _ \sigma ^{\delta(\sigma)} \times \prod_ {\sigma \in S _ \KK \setminus S ^ - _ \KK} \absolute{ \zeta_\n} _ \sigma ^{\delta(\sigma)}
\]
hence $\prod_ {\sigma \in S ^ - _ \KK} \absolute{ \zeta_\n} _ \sigma ^{\delta(\sigma)} = \prod_ {\sigma \in S _ \KK \setminus S ^ - _ \KK} \absolute{ \zeta_\n} _ \sigma ^{-\delta(\sigma)}$.
Thus \eqref{big product equation} implies
\begin{equation*}
0 < j \left (\sum_ {\sigma \in S ' _ \KK} \delta (\sigma) \log 1/\absolute {\zeta _ \n} _ \sigma \right) - \ell \left (\sum_ {\sigma \in S ^- _ \KK} \delta (\sigma) \log 1/\absolute {\zeta _ \n} _ \sigma \right)
.\end{equation*}
But this contradicts the definition of $\kappa$ and $\ell \geq \kappa j$.

\medskip

Thus we obtain the important conclusion that if $\ell =\lceil \kappa j \rceil$,
\begin{equation*}
[x]_{\mathcal{P} ^ {(0, \ell)}} \cap [x]_{\mathcal{C} _ V ^ j} = x + \phi _ \AA (B ')
\end{equation*}
where
\begin{equation*}
B ' = B \cap D \subset  \prod_ {\sigma \in S ' _ \KK} \left\{ t  \in \KK _ \sigma: \absolute {t} _ \sigma \leq \eta \right\}
.\end{equation*}
It follows that \[[x]_{\mathcal{P} ^ {(0, \ell)}} \cap [x]_{\mathcal{C} _ V ^ j} = [x]_{\mathcal{P} ^ {(0, j-1)}} \cap [x]_{\mathcal{C} _ V ^ j} = [x] _ {\mathcal{C} _ V}\] unless there is a $\ell < \ell ' < j$ so that
\begin{equation*}
\widetilde T ^ {\ell '} (x+ \phi _ \AA (B ')) \not\subset [x]_{\mathcal  {P}}
.\end{equation*}
If $s = \max_ {\sigma \in S ' _ \KK} \absolute {\zeta _ \n} _ \sigma < 1$ then $\widetilde T ^ {\ell '} (x+ \phi _ \AA (B ')) = \widetilde T ^ {\ell '} (x)+ \phi _ \AA (B '')$ where
\[
B '' \subset  \prod_ {\sigma \in S ' _ \KK} \left\{ t  \in \KK _ \sigma: \absolute {t} _ \sigma \leq \eta s^{\ell'} \right\}
.
\]
By \eqref{eq: bound-on-boundary}, the set of such $x$ has $\widetilde \mu$ measure which decays exponentially in $\ell '$, hence in $j$, establishing \eqref{main point equation}.
\medskip

Once \eqref{main point equation} has been established, establishing \eqref{eq: entropy inequality} is easy. Indeed,  if $\mathcal{A} = \left\{ X _ j, X _ j^ \complement\right\}$ for $X _ j$ as in \eqref{main point equation} then for $j$ large and $\ell=\lceil \kappa j \rceil$
\begin{align}
h _ {\widetilde \mu} (\widetilde \alpha ^ {\mathbf n}, V \mid X _{\base} \times \Omega) & = j^{-1} H _ {\widetilde \mu} (\cC_V^{(0,j-1)} \mid \cC_V^j \vee \cB _{\base} \vee \cB_\Omega) \notag\\
& = j^{-1} H _ {\widetilde \mu} (\cP^{(0,j-1)} \mid \cC_V^j \vee \cB _{\base} \vee \cB_\Omega ) \notag\\
& = j^{-1} \Bigl(  H _ {\widetilde \mu} (\cP^{(0,\ell)} \mid \cC_V^j \vee \cB _{\base} \vee \cB_\Omega ) +\notag\\
&\hspace{2cm}   +H _ {\widetilde \mu} (\cP^{(0,j-1)} \mid \cP^{(0,\ell)}\vee \cC_V^j \vee \cB _{\base} \vee \cB_\Omega )\Bigr )\notag\\
& \leq j^{-1}  \Bigl( H _ {\widetilde \mu} (\cP^{(0,\ell)}\mid X_\base \times \Omega) + H_{\widetilde\mu}(\cA) + \label{bound for entropy}\\
&\hspace{2cm} +H _ {\widetilde \mu} (\cP^{(0,j-1)} \mid \cP^{(0,\ell)}\vee \cC_V^j \vee \cB _{\base} \vee \cB_\Omega \vee \cA ) \Bigr )\notag.
\end{align}
By definition
\begin{multline*}
H _ {\widetilde \mu} (\cP^{(0,j-1)} \mid \cP^{(0,\ell)}\vee \cC_V^j \vee \cB _{\base} \vee \cB_\Omega \vee \cA ) =\\
\widetilde \mu (X _ j) H _ {\widetilde \mu | X_j} (\cP^{(0,j-1)} \mid \cP^{(0,\ell)}\vee \cC_V^j \vee \cB _{\base} \vee \cB_\Omega) \\
+ \widetilde \mu (X _ j ^\complement) H _ {\widetilde \mu | X_j ^\complement} (\cP^{(0,j-1)} \mid \cP^{(0,\ell)}\vee \cC_V^j \vee \cB _{\base} \vee \cB_\Omega)
.\end{multline*}
On $X_j$, the atom $[x]_{\cP^{(0,j-1)}} \cap [x]_{\cC_V^j} = [x]_{\cP^{(0,\ell)}} \cap [x]_{\cC_V^j}$ by \eqref{main point equation}, hence
\[
H _ {\widetilde \mu | X_j} (\cP^{(0,j-1)} \mid \cP^{(0,\ell)}\vee \cC_V^j \vee \cB _{\base} \vee \cB_\Omega) =0;
\]
On $X _ j ^\complement$ we use the trivial bound
\[
H _ {\widetilde \mu | X_j ^\complement} (\cP^{(0,j-1)} \mid \cP^{(0,\ell)}\vee \cC_V^j \vee \cB _{\base} \vee \cB_\Omega) \leq j \log (\#\cP).
\]
Plugging these back in \eqref{bound for entropy} and using $\widetilde\mu(X_j^\complement)\to 0$ as $j \to \infty$ we see that \[
\eqref{bound for entropy} \to \kappa h _ {\widetilde \mu} (\widetilde \alpha ^ {\mathbf n} \mid X _{\base} \times \Omega) \qquad\text{ as $j\to\infty$,}
\]
establishing \eqref{eq: entropy inequality}.
\end{proof}

\section{Coarse Lyapunov subgroups and the product structure}\label{sec:productstructure}

A crucial property of the leafwise measures
for our argument is their product structure for the coarse Lyapunov subgroups as 
obtained by the first named author and Anatole Katok 
\cite{Einsiedler-Katok} (see also \cite{Lindenstrauss-Quantum, Einsiedler-Katok-II}). 
For an introduction of the product structure we also recommend \cite[\S~8]{EL-Pisa}. 
However, all of these papers assumed that the $\ZZ^d$-action under consideration is semisimple. In our case we do not make this assumption, so our action is given by 
$\alpha=\alpha_{\diag}\alpha_{\uni}$, with both $\alpha_{\diag}$ and $\alpha_\uni$ are defined over $\QQ$, i.e.\ can be thought of as homomorphisms from $\zd$ to $\GL_m(\QQ)$; cf.\ p.~\pageref{page of Jordan}.
The purpose of this section
is to recall the relevant notions and overcome the problems
arising from the lack of semisimplicity in the cases of interest.

\subsection{Lyapunov subgroups over $\QQ_\sigma$}
Fix $\sigma$ a place of $\QQ$, i.e.\ either $\sigma=\infty$ or $\sigma=p$ a prime.  
We consider the linear
maps $\alpha^\n$ for $\n\in\ZZ^d$ on $\QQ_\sigma^m$ and are mostly interested
in the asymptotic behavior of its elements with respect to the norm defined by
\[
 \|v\|_\sigma=\max_{j=1,\ldots,m}|v_j|_\sigma
\]
for all $v\in\QQ_\sigma^m$. We will also consider this behaviour restricted to an $\alpha$-invariant $\QQ_\sigma$-linear subspace $V$ (even when not explicitly stated, $V$ will always be assumed to be $\alpha$-invariant). 
For this we will initially ignore $\alpha_{\uni}$ (with polynomial behavior) 
and focus on $\alpha_{\diag}$ (with exponential behavior). 

Since $\alpha_{\diag}$ is semisimple, $\QQ^m_\sigma$ (as well as any $\alpha$-invariant subspace $V$) is a direct sum of
$\QQ_\sigma$-irreducible linear subspaces.
On each of these irreducible subspaces the action of $\alpha_{\diag}$
is isomorphic to the action defined by multiplication by $d$ elements 
$\zeta_1,\ldots,\zeta_d\in\KK$ on a local field $\KK$ (similar to the discussion in the proof of Proposition~\ref{adelicirred}). 
Recall that we extend the the norm on $\QQ_\sigma$ to $\KK$; this extended norm is denoted by $|\cdot|_\sigma$. We
refer to the linear functional $\chi:\ZZ^d\to\RR$ given by
\[
 \chi : (n_1,\dots,n_d)\mapsto \sum_i n_i\log|\zeta_i|_\sigma\in\RR
\]
as the \emph{Lyapunov weight} associated to the invariant subspace $\KK$, and denote the pairing of a functional $\chi$ and a vector $\bn \in \ZZ^d$ (or $\RR^d$) by $\chi\cdot\bn$. 
We note that for a vector $v$ in this subspace we have
\begin{equation}\label{eq:asymptoticformula}
 \|\alpha_{\diag}^\n v\|_\sigma\asymp e^{\chi\cdot \n}\|v\|_\sigma 
\end{equation}
for all $\n\in\ZZ^d$, where we write $\asymp$ to indicate that we can bound
each of the two terms by a multiple of the other. Here the implicit constants
only depend on the action and not on the vector $v$ or on $\n$. We will call $\KK$
an $\QQ_\sigma$-\emph{irreducible eigenspace} (for $\alpha_{\diag}$) with Lyapunov weight $\chi$.

It follows that $\QQ_\sigma^m$ (resp.\ $V$) is isomorphic to a finite direct
product of local fields $\KK$ extending $\QQ_\sigma$ so that the 
linear maps $\alpha_{\diag}^{\mathbf{e}_1},\ldots,\alpha_{\diag}^{\mathbf{e}_d}$ 
are written in diagonal form using this isomorphism. 
Moreover, we obtain in this way finitely many Lyapunov weights 
arising from the action on $\QQ_\sigma^m$. 
Note however, that these Lyapunov weights are all functionals into $\RR$
(independent of the place $\sigma$). 

\subsection{Coarse Lyapunov weights and subgroups}
We now apply the above for each place $\sigma$ of $\QQ$. 
Let $S$ be a finite set of places containing $\infty$ so that 
$\alpha_{\diag}^{\mathbf{e}_1},\ldots,\alpha_{\diag}^{\mathbf{e}_d}$ all belong
to $\operatorname{GL}_m(\ZZ_\sigma)$ for $\sigma\notin S$.
For $\sigma\notin S$ the only Lyapunov weight for the action of $\alpha_\diag$ on $\QQ_\sigma$ is the zero weight.
Hence by varying $\sigma$ over 
all places of $\QQ$
we only obtain finitely many nonzero Lyapunov weights. 

We say that two nonzero Lyapunov weights $\chi,\chi'$ 
(possibly arising
from different places of $\QQ$) are equivalent
if there exists some $t>0$ so that $\chi'=t\chi$. We will denote the equivalence
class of a nonzero Laypunov weight $\chi$ by $[\chi]$ and will refer to $[\chi]$
as a coarse Lyapunov weight.

For a coarse Lyapunov weight $[\chi]$ we define the 
\emph{coarse Lyapunov subgroup} $W^{[\chi]}$
to be the $S$-linear subspace defined as the subgroup generated
by all irreducible eigenspaces with Lyapunov 
weight $\chi'$ equivalent to $\chi$. 
Alternatively we can use \eqref{eq:asymptoticformula}
to see that the
coarse Lyapunov subgroup could also be defined as an intersection of stable horospherical 
subgroups for $\alpha_{\diag}$, namely
\[
W^{[\chi]}=\bigcap_{\n\in\ZZ^d: \chi\cdot\n<0}U^-_{\alpha_{\diag}^\n};
\]
this relation to the horospherical 
subgroups is the reason
for the dynamical importance of the coarse Lyapunov subgroups.

Because of the polynomial nature of $\alpha_{\uni}$ we have that $U^-_{\alpha_{\diag}^\n}=U^-_{\alpha^\n}$
for all $\n\in\ZZ^d$ and hence $W^{[\chi]}$ can be defined
directly in terms of the action $\alpha$ by
\[
W^{[\chi]}=\bigcap_{\n\in\ZZ: \chi\cdot \n<0}U^-_{\alpha^\n}.
\]
In particular,
$W^{[\chi]}$ is invariant under $\alpha^\n$ 
for all $\n\in\ZZ^d$.

Conversely, any stable horosphericle group is a product of coarse Lyapunov subgroups: indeed, 
 for any $\n\in\ZZ^d$ we have  
\begin{equation}\label{eq:coarsedecomp}
U^-_{\alpha^\n}=U^-_{\alpha_\diag^\n}=\bigoplus_{[\chi]: \chi\cdot\n<0}W^{[\chi]},
\end{equation}
where the direct sum runs over all the coarse Lyapunov subspaces $W^{[\chi]}$
satisfying that $\chi\cdot\n<0$. 

Given an $\alpha$-invariant $S$-linear subgroup $V<U^-_{\alpha^\n}$
we define $V^{[\chi]}=V\cap W^{[\chi]}$ for any nonzero coarse Lyapunov weight 
$[\chi]$. These also satisfy that $V$ is the direct
sum of $V^{[\chi]}$ for all $[\chi]$ with $\chi\cdot\n<0$.

\subsection{Product structure of leafwise measures}
\label{secthm:prod}

Recall that we assume that $\mu$ is as in
Theorem \ref{thm:main2}. Let
 $\Omega$ be an arbitrary compact metric space as in \S\ref{sec:leafwiseXtilde}
 equipped with an action of $\zd$,
 let $\widetilde{X}=X\times\Omega$, and consider an invariant probability
 measure $\widetilde{\mu}$ on $\widetilde{X}$ projecting to $\mu$. 

\begin{theorem}[Entropy and Product Structure]\label{thm:prod}
 Let $X \to X_\posfact\to X_\base$ with $X_\posfact=X/Y_\posfact$ and $X_\base=X/Y_\base$ be as in \eqref{x base equation} and
 let $V_{\base,\AA}<\AA^m$ be the rational $\alpha$-invariant subspace
 so that $Y_\base$ is the image of $V_{\base,\AA}$ modulo $\QQ^m$. 
 Let $\n_0\in\zd$. 
 Then the leafwise measure on $\widetilde{X}$ for 
 \[
 V^-_{\n_0}=V_{\base,\AA}\cap U^-_{\alpha^{\n_0}}
 \]
 is up to proportionality the product of the leafwise measures
 for its coarse Lyapunov subgroups $V^{[\chi]}=V^-_{\n_0}\cap W^{[\chi]}$, i.e.
 \[
  {\widetilde{\mu}}_x^{V^-_{\n_0}}\propto\prod_{[\chi]: (\chi\cdot\n_0)<0}{\widetilde{\mu}}_x^{V^{[\chi]}}
 \]
 for a.e.\ $x\in \widetilde{X}$. 
 In particular, the relative entropy of $\alpha^{\n_0}$ conditional
 on the factor $X_\base\times\Omega$ is equal to the sum of the 
 entropy contributions
 of these coarse Lyapunov subgroups,
 i.e.
 \begin{equation}\label{eq: entropyaddition}
  h_{\widetilde{\mu}}\bigl(\widetilde{\alpha}^{\n_0}\mid X_\base\times\Omega\bigr)=
  h_{\widetilde{\mu}}\bigl(\widetilde{\alpha}^{\n_0},V^-_{{\n_0}}\bigr)
  =\sum_{[\chi]: \chi\cdot\n_0<0}h_{\widetilde{\mu}}\bigl(\widetilde{\alpha}^{\n_0},V^{[\chi]}\bigr).
  \end{equation}
\end{theorem}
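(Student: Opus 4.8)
The plan is to first establish the product structure of the leafwise measures for $V^-_{\n_0}$ --- which we regard as the adaptation, to the possibly non-semisimple action under consideration, of the product structure theorem for coarse Lyapunov foliations of \cite{Einsiedler-Katok} (see also \cite[\S8]{EL-Pisa} and \cite{Lindenstrauss-Quantum,Einsiedler-Katok-II}) --- and then to read off the entropy identity \eqref{eq: entropyaddition} from it together with Proposition~\ref{prop: contribution}. The first thing to record is the relevant linear algebra. Since $Y_\base$ is $\alpha$-invariant, $V_{\base,\AA}$ is an $\alpha$-invariant $\QQ$-subspace of $\AA^m$, hence invariant under the semisimple part $\alpha_\diag$, and so a direct sum of $\QQ_\sigma$-irreducible eigenspaces of $\alpha_\diag$ over the places $\sigma$ of $\QQ$. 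Combining this with \eqref{eq:coarsedecomp} gives a finite direct sum decomposition
\[
 V^-_{\n_0}=V_{\base,\AA}\cap U^-_{\alpha^{\n_0}}=\bigoplus_{[\chi]:\ \chi\cdot\n_0<0}V^{[\chi]},\qquad V^{[\chi]}=V_{\base,\AA}\cap W^{[\chi]},
\]
into $S$-linear subgroups, each invariant under $\alpha^\n$ for every $\n\in\ZZ^d$ (recall each $W^{[\chi]}$ has this property) and each preserved by $\alpha^{-N\n_0}$. Since $\alpha_\uni$ grows only polynomially, the subgroups $W^{[\chi]}$, and hence the $V^{[\chi]}$, are unchanged if one replaces $\alpha$ by $\alpha_\diag$ in their definition.

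For the product structure I would invoke the argument of \cite{Einsiedler-Katok} applied to the family of coarse Lyapunov subgroups $\{V^{[\chi]}:\chi\cdot\n_0<0\}$. Its two inputs are: (i) for distinct coarse Lyapunov weights $[\chi]\ne[\chi']$ there is an $\n\in\ZZ^d$ with $\chi\cdot\n<0<\chi'\cdot\n$, so that $V^{[\chi]}\subseteq W^{[\chi]}\subseteq U^-_{\alpha^{\n}}$ and $V^{[\chi']}\subseteq W^{[\chi']}\subseteq U^+_{\alpha^{\n}}$ with both subgroups normalized by $\alpha^{\n}$; and (ii) the a priori polynomial-like growth bounds for leafwise measures recalled in \S\ref{sec: leaf-entropy}, in particular the passage from \eqref{eq:affineinvariance1} to \eqref{eq:affineinvariance2}, together with Proposition~\ref{prop: subspace}. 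Input (i) is a statement purely about the Lyapunov functionals and is insensitive to the Jordan form of $\alpha$, while input (ii) is available verbatim in the present setting. The high-entropy / recurrence argument of \cite{Einsiedler-Katok} then yields
\[
 \widetilde\mu_x^{V^-_{\n_0}}\ \propto\ \prod_{[\chi]:\ \chi\cdot\n_0<0}\widetilde\mu_x^{V^{[\chi]}}\qquad\text{for a.e.\ }x\in\widetilde X.
\]

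Granting this, \eqref{eq: entropyaddition} follows quickly. The identity $h_{\widetilde\mu}(\widetilde\alpha^{\n_0}\mid X_\base\times\Omega)=h_{\widetilde\mu}(\widetilde\alpha^{\n_0},V^-_{\n_0})$ is the equality case of Proposition~\ref{entropy inequality proposition} --- with $T=\alpha^{\n_0}$, $T_1$ the induced map on $X_\base$, $V=U^-_T$, $V_1=U^-_{T_1}$, and $V_{Y,\AA}=V_{\base,\AA}$, so that $V\cap V_{Y,\AA}=V^-_{\n_0}$ --- combined with the equality case of Proposition~\ref{prop: contribution} (on $X$ and on $X_\base$) and the Rokhlin entropy addition formula for the tower $\Omega\subseteq X_\base\times\Omega\subseteq X\times\Omega$. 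For the second equality, Proposition~\ref{prop: contribution} expresses $h_{\widetilde\mu}(\widetilde\alpha^{\n_0},\cdot)$ through the volume growth rate \eqref{eq:entropygrowthrate}; since the unit ball $B_1^{V^-_{\n_0}}$ is comparable, up to a fixed multiplicative factor, to the product of the balls $B_1^{V^{[\chi]}}$, and $\alpha^{-N\n_0}$ respects the direct sum, the product structure gives
\[
 \widetilde\mu_x^{V^-_{\n_0}}\bigl(\alpha^{-N\n_0}B_1^{V^-_{\n_0}}\bigr)\ \asymp\ \prod_{[\chi]:\ \chi\cdot\n_0<0}\widetilde\mu_x^{V^{[\chi]}}\bigl(\alpha^{-N\n_0}B_1^{V^{[\chi]}}\bigr)
\]
with implied constants independent of $N$. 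Dividing the logarithm by $N$ and letting $N\to\infty$ shows $\vol(\widetilde\alpha^{\n_0},V^-_{\n_0},x)=\sum_{[\chi]:\ \chi\cdot\n_0<0}\vol(\widetilde\alpha^{\n_0},V^{[\chi]},x)$ for a.e.\ $x$, and integrating against $\widetilde\mu$ yields \eqref{eq: entropyaddition}.

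The main obstacle is the product structure step. The proof in \cite{Einsiedler-Katok} is carried out under a semisimplicity hypothesis on the $\ZZ^d$-action, and one has to verify that its recurrence arguments are not disturbed by a non-trivial unipotent part $\alpha_\uni$. As noted above, $\alpha_\uni$ affects neither the horospherical subgroups --- hence neither the coarse Lyapunov decomposition nor the separation property (i) --- nor the polynomial-like growth estimates for leafwise measures, so the argument adapts with only cosmetic changes; supplying these details is precisely the purpose of the present section.
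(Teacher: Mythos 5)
There is a genuine gap at the product structure step. You assert that the Einsiedler--Katok product structure argument ``adapts with only cosmetic changes'' because the unipotent part $\alpha_\uni$ affects neither the coarse Lyapunov decomposition nor the polynomial-like growth bounds. But the inputs (i) and (ii) you list are not the whole story. The recurrence argument underlying \cite[Thm.~8.2]{Einsiedler-Katok-II} also requires that for $\bm$ in (or near) $\ker\chi$, the restriction of $\alpha^{\bm}$ to $W^{[\chi]}$ behaves in a precompact/isometric fashion --- in the semisimple case this is automatic, but a non-trivial Jordan block of $\alpha_\uni^{\bm}$ on $W^{[\chi]}\cap\RR^m$ for $\bm\in\ker\chi$ produces polynomial drift along the leaves of $W^{[\chi]}$ and destroys the recurrence step. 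Indeed the paper explicitly warns that its proof ``does not extend the product structure to other cases with Jordan blocks'': it is the standing assumption that $X$ is a zero-entropy extension of $X_\posfact$ that is used, via the logarithmic-sequence construction of Lemmas~\ref{lemma:logsequence}--\ref{lemma:log2seq}, to prove the ``no shearing'' Proposition~\ref{prop;noshearing}, i.e.\ that $\alpha_\uni^{\bm}$ acts trivially on $P^{[\chi]}\cap\RR^m$ for $\bm\in\ker\chi$ where $P$ is the supporting subgroup. Only after passing to $P$ (via Proposition~\ref{prop: subspace}), establishing this real no-shearing statement, and separately using that $\alpha_\uni(\ZZ^d)$ has compact closure on the $p$-adic parts of $P^{[\chi]}$, does the inductive argument of \cite{Einsiedler-Katok-II} go through. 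Your proposal omits this entire chain, which is the genuine content of the section.

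The entropy bookkeeping at the end is fine: you derive the first identity in \eqref{eq: entropyaddition} from the equality case of Proposition~\ref{entropy inequality proposition} combined with the Rokhlin addition formula (the paper instead conditions the computation of \cite[Thm.~7.6]{EL-Pisa} on $X_\base\times\Omega$ directly, a cosmetically different but equivalent route), and the volume-growth argument via \eqref{eq:entropygrowthrate} for the second identity is essentially the paper's.
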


We note that our proof relies on the assumption that $X$ is a 
zero entropy extension of $X_\posfact$, and does not extend
the product structure to other cases with Jordan blocks. In fact
our setup is used to show that
Jordan blocks of $\alpha^\n$
cannot appear within the subspace $V^{[\chi]}\cap\RR^m$
and for ``$\n$ in the kernel of $\chi$'' (in the sense described below). 
Moreover, in the non-Archimedean 
parts of $V_{\n_0}^-$ we use a different argument. 

\medskip
For any $\alpha$-invariant $S$-linear
subgroup $V$ 
we let $P<V$ 
be the minimal $S$-linear $\alpha$-invariant subspace so that
\[
 {\widetilde{\mu}}_x^{V}\bigl(V\setminus P\bigr)=0
\]
for almost every $x\in\widetilde{X}$. 
We will refer to $P$ as the \emph{supporting subgroup} of $V$.
We note that $\mu_x^V=\mu_x^P$ a.s.\ 
and if $W<P$ is another $S$-linear $\alpha$-invariant subspace with $W\neq P$, then  $\mu_x^V(W)=0$ a.s.
This follows from \cite[Lemma 5.2]{Einsiedler-Lindenstrauss-joinings-2}
(as we restrict here to $\alpha$-invariant subspaces the assumption of 
class $\mathcal{A}'$ can easily be avoided in the proof of that lemma).\label{explaining lemma from joinings paper}

We say that $[\chi]$ is an \emph{exposed coarse Lyapunov weight} for 
an $\alpha$-invariant $S$-linear subgroup $V<U^-_{\alpha^\n}$
if $V^{[\chi]}$ is nontrivial, $V=V^{[\chi]}+V'$ for a sum $V'$
of coarse Lyapunov subgroups, and there exists some $\n'\in\RR^d$
with $\chi\cdot\n'=0$ and $\chi'\cdot\n'<0$
for all Lyapunov weights $\chi'$ of $V'$. 

We also recall that $\alpha_\uni^\n$ is polynomial map from $\ZZ^d \to \GL_m(\QQ)$ and so extends
to a homomorphism from $\RR^d$ to $\GL_m(\RR)$ that will also be denoted by $\alpha_\uni$.

\subsection{No shearing and proving the product structure}\label{sec:noshearargument}

The folllowing lemma stands in stark
contrast to the non-Archimedean case
where $\alpha_\uni$ takes values in a 
compact group.

\begin{lemma}[Existence of logarithmic sequence 
for real Jordan blocks]\label{lemma:logsequence}
Let $\alpha$ be a representation
of $\ZZ^d$ on a real vector space $P$. 
Suppose that $\chi$ is a nonzero Lyapunov weight
for $\alpha$ and that all other Lyapunov
weights are equivalent to $\chi$.
Suppose that $\alpha_\uni^{\bm}$ is nontrivial
for some $\bm\in\RR^d$ with $\chi\cdot \bm=0$. 
Then there exists a sequence $\n_k\in\ZZ^d$
so that $\alpha^{\n_k}\in\operatorname{End}(P)$ 
converges to a non-zero non-invertible linear 
map $L\in\operatorname{End}(P)$.
\end{lemma}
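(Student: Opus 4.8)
The plan is to exploit the tension between the polynomial growth of $\alpha_\uni$ along the directions on which $\chi$ vanishes and the exponential behaviour of $\alpha_\diag$ dictated by $\chi$, and then carefully balance them along a well-chosen sequence. First I would decompose $\alpha=\alpha_\diag\alpha_\uni$ as on p.~\pageref{page of Jordan}, extended to homomorphisms $\RR^d\to\GL(P)$. Since all Lyapunov weights of $\alpha$ are equivalent to the fixed nonzero $\chi$, after passing to a real Jordan form we may write, in a suitable basis, $\alpha_\diag^\n = e^{\chi\cdot\n}R^\n$ where $R^\n$ lies in a compact group (a product of rotations — this uses that $\chi$ controls the \emph{modulus} of every eigenvalue, the remaining freedom being bounded), while $\alpha_\uni^\n$ is unipotent and polynomial in $\n$. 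Fix $\bm\in\RR^d$ with $\chi\cdot\bm=0$ and $\alpha_\uni^{\bm}\neq\Id$; by the polynomial structure $t\mapsto\alpha_\uni^{t\bm}=\exp(tN)$ for a nonzero nilpotent $N\in\operatorname{End}(P)$, so $\|\alpha_\uni^{t\bm}\|$ grows like a nonconstant polynomial in $t$ as $t\to\infty$.

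The second step is the selection of $\n_k\in\ZZ^d$. I would pick a direction $\bv\in\RR^d$ with $\chi\cdot\bv<0$ (strictly contracting) and consider $\n_k$ approximately of the form $k\bm + c_k\bv$ where $c_k\to\infty$ is chosen to grow like $\log$ of the polynomial factor picked up from the $k\bm$ part; concretely, if $\|\alpha_\uni^{k\bm}\|\asymp k^{\ell}$ with $\ell\geq 1$, set $c_k$ so that $e^{\chi\cdot(c_k\bv)}\cdot k^{\ell}$ stays bounded away from $0$ and $\infty$. One must take $\n_k\in\ZZ^d$, so there will be a rounding step; since $\bm$ and $\bv$ need not have rational coordinates one should instead work in the subspace they span and use that the \emph{values} $\chi\cdot\n$ and the polynomial $\alpha_\uni^\n$ vary controllably under bounded perturbation, so rounding changes the norms by bounded multiplicative factors and $\alpha_\uni$ by a bounded amount. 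Along such $\n_k$ the operator $\alpha^{\n_k}=e^{\chi\cdot\n_k}R^{\n_k}\alpha_\uni^{\n_k}$ has norm bounded above and, thanks to the chosen balance, the ``top'' of the Jordan structure of $\alpha_\uni^{\n_k}$ does not vanish — so the sequence $\alpha^{\n_k}$ is bounded in $\operatorname{End}(P)$ but its determinant $e^{(\dim P)(\chi\cdot\n_k)}$ (the contribution of $R$ and $\alpha_\uni$ to the determinant is $1$) tends to $0$ because $\chi\cdot\n_k\to-\infty$. Passing to a subsequence, $\alpha^{\n_k}\to L$ with $\det L=0$, so $L$ is non-invertible, and nontriviality of $L$ follows because the balancing keeps the largest-growing Jordan coordinate from decaying to $0$; if necessary one rescales the choice of $c_k$ slightly to land a nonzero limit.

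The main obstacle I expect is the simultaneous bookkeeping of three effects along $\n_k$: the exponential factor $e^{\chi\cdot\n_k}$, the bounded ``rotation'' $R^{\n_k}$, and the matrix-valued polynomial $\alpha_\uni^{\n_k}$, all while keeping $\n_k$ integral. The cleanest route is probably to reduce to the line $\RR\bm+\RR\bv$, use the Jordan/graded structure to see $\alpha_\uni^{s\bm+t\bv}$ as a polynomial in $(s,t)$ with matrix coefficients whose top-degree part in $s$ is nonzero, and then choose $s_k=k$, $t_k\approx \frac{\ell\log k}{|\chi\cdot\bv|}$, rounding to nearby integers and absorbing the errors; the compactness of the $R$-part then yields a convergent subsequence. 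Verifying that the limit $L$ is genuinely nonzero (and not accidentally killed by the rotation) is the delicate point and may require choosing $\bv$ and a further subsequence so that $R^{\n_k}$ converges as well, after which $L = (\lim R^{\n_k})\cdot(\text{nonzero nilpotent-type limit})\neq 0$.
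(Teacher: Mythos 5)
Your plan follows the same broad strategy as the paper's proof: pick $\bm\in\ker\chi$ with $\alpha_\uni^{\bm}$ nontrivial, pick a $\chi$-contracting direction $\n_-$, walk along $k\bm+t_k\n_-$ with $t_k$ chosen so the polynomial blow-up of $\alpha_\uni^{k\bm}$ is cancelled by exponential decay, use compactness to extract a limit, and round to $\ZZ^d$ at the end. Two details are handled differently, and one of them is a genuine gap.

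The gap is the factorization $\alpha_\diag^{\n}=e^{\chi\cdot\n}R^{\n}$ with $R^{\n}$ in a compact group. The hypothesis of the lemma only demands that all Lyapunov weights be \emph{equivalent} to $\chi$, i.e.\ of the form $t\chi$ for various $t>0$, not literally equal to $\chi$. If $P$ has eigenvectors with moduli $e^{\chi\cdot\n}$ and $e^{2\chi\cdot\n}$, say, then $\alpha_\diag^{\n}e^{-\chi\cdot\n}$ has unbounded eigenvalues and does not live in any compact group. The paper's device for this is the auxiliary representation $\alpha_{|\cdot|}$, which replaces each eigenvalue $\lambda$ of $\alpha_\diag$ by $|\lambda|$ \emph{eigenvector by eigenvector}; then $\alpha_\diag^{\n}\alpha_{|\cdot|}^{-\n}$ has all eigenvalues of modulus one and genuinely lies in a compact subgroup of $\GL(P)$, and $\alpha_{|\cdot|}$ absorbs all the decay rates simultaneously. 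You should replace your uniform factor $e^{\chi\cdot\n}$ by something like $\alpha_{|\cdot|}^{\n}$ to cover the actual generality of the statement.

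The second difference is the normalization. You propose to choose $c_k\approx\frac{\ell\log k}{|\chi\cdot\bv|}$ by matching a crude asymptotic $\|\alpha_\uni^{k\bm}\|\asymp k^\ell$, and then you acknowledge you may need to "rescale the choice of $c_k$ slightly to land a nonzero limit." That last step is exactly where the work lives, and it is easier to make it precise the way the paper does: choose $t_k$ by the intermediate value theorem so that $\|\alpha_\uni^{\n_k}\alpha_{|\cdot|}^{\n_k}\|=1$ exactly (Hilbert--Schmidt norm); then any subsequential limit automatically has HS norm one, hence is nonzero, with no further tweaking. Your determinant argument for non-invertibility does work (along your sequence $\chi\cdot\n_k\to-\infty$, so all eigenvalue moduli $e^{t\chi\cdot\n_k}$, $t>0$, go to zero), and is a valid alternative to the paper's observation that a common eigenvector of $\alpha$ is sent to zero because $\alpha_\uni^{k\bm}$ fixes it while $\alpha_\uni^{t_k\n_-}\alpha_{|\cdot|}^{t_k\n_-}$ contracts it.
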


We will refer to the sequence $\n_k$ as a logarithmic
sequence for $P$ since it can be defined using
the logarithm map in easy special cases.

\begin{proof}[Proof of Lemma \ref{lemma:logsequence}]
Let $\alpha_{|\cdot|}$ be the representation of $\ZZ^d$ on $P$,
which has every eigenvector of $\alpha_\diag$ with 
eigenvalue $\lambda$ also as eigenvector but
with eigenvalue $|\lambda|$. 
Note that $\alpha_{|\cdot|}$ extends continuously
to all $\n\in\RR^d$. Moreover
for $\n\in\ZZ^d$ the map $\alpha^{\n}_\diag\alpha_{|\cdot|}^{-\n}$
belongs to a fixed compact subgroup of $\GL(P)$.
Using nearest integer vectors
we see that it suffices to construct a sequence $\n_k\in\RR^d$
so that $\alpha_\uni^{\n_k}\alpha_{|\cdot|}^{\n_k}$
converges to a non-zero non-invertible linear map $L$.

For this we apply our assumption
and pick a direction $\bm\in\RR^d$ with $\chi\cdot \bm=0$
so that $\alpha^{\bm}_\uni$ nontrivial. Also let $\n_-\in\RR^d$
with $\chi\cdot \n_-<0$. As a non-zero real polynomial, $\alpha_\uni^{k\bm}$ diverges as $k\to\infty$ and $\alpha_{|\cdot|}^{t \n_-}\to 0$ as $t\to \infty$. 
Also note that $\alpha_{|\cdot|}^{t \n_-}\to 0$ converges
exponentially fast while $\alpha_\uni^{t\n_-}$ can only diverge polynomially fast
as $t\to \infty$. 

This shows that for each sufficiently large $k\in\NN$ we can
define $\n_k=k \bm+t_k \n_-$,
where $t_k>0$ is chosen (using the intermediate value 
theorem) minimally so that the Hilbert-Schmidt norms satisfy
\[
 \|\alpha_{\uni}^{\n_k}\alpha_{|\cdot|}^{\n_k}\|
 =\|\alpha_\uni^{k\bm+t_k\n_-}\alpha_{|\cdot|}^{t_k \n_-}\|=1.
\]
Since $\alpha_\uni^{k\bm}$ diverges as $k\to\infty$
we see that also $t_k\to\infty$. If now $v\in P$
is a common eigenvector for $\alpha$, then $\alpha_\uni^{k\bm}v=v$
and so
\[
 \alpha_{\uni}^{\n_k}\alpha_{|\cdot|}^{\n_k}v
 =\alpha_\uni^{t_k\n_-}\alpha_{|\cdot|}^{t_k \n_-}v\to 0
\]
as $k\to\infty$ (since $\alpha_\uni^{t_k\n_-}$
is polynomial and $\alpha_{|\cdot|}^{t_k \n_-}$
contracts $P$ at exponential rate).

From this and compactness
of the unit ball in finite dimensions it follows
that there exists a converging subsequence 
of $\alpha_{\uni}^{\n_k}\alpha_{|\cdot|}^{\n_k}$
whose limit is non-zero and non-invertible.
As indicated in the beginning of the proof
a subsequence of the integer vectors closest to 
$\n_k\in\RR^d$ will satisfy the conclusions
of the lemma. 
\end{proof}

We need the following upgrade to the above concerning
exposed coarse Lyapunov weights:

\begin{lemma}[Properties of logarithmic sequence for adelic action]\label{lemma:log2seq}
Let $\alpha$ be a linear representation of $\ZZ^d$
on $\QQ^m$ defining an adelic action on $X_m$. 
Let $\n_-\in\ZZ^d\setminus\{0\}$ and $V<U^-_{\alpha^{\n_-}}$
be an $S$-linear subspace. Let $[\chi]$
be an exposed  coarse Lyapunov weight $[\chi]$ of $V$.
Suppose that there exists some $\bm\in\RR^d$ with $\chi\cdot \bm=0$
so that $(\alpha_\uni|_{V\cap\RR^m})^{\bm}$ 
is nontrivial. 
Then there exists a sequence $\n_k\in\ZZ^d$ 
so that $(\alpha|_V)^{\n_k}$ converges 
(uniformly within compact subsets) to 
a non-zero map $L\in\operatorname{End}(V)$ 
that vanishes on all non-Archimedean
subspaces, vanishes on all coarse Lyapunov 
subgroups $V^{[\chi']}$ with $[\chi']\neq[\chi]$, 
and whose restriction to $V^{[\chi]}\cap\RR^m$ is non-invertible.
\end{lemma}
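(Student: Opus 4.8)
The plan is to run the construction in the proof of Lemma~\ref{lemma:logsequence}, but to choose the diagonal direction and the contracting direction so carefully that the limiting endomorphism acquires a kernel \emph{inside} $V^{[\chi]}\cap\RR^m$ while simultaneously annihilating every other constituent of $V$. Write $V=V^{[\chi]}\oplus V'$, where $V'$ is the sum of the remaining coarse Lyapunov subgroups (this is where the exposedness of $[\chi]$ is used). First I would fix a vector $\bm\in\RR^d$ with: (a) $\chi\cdot\bm=0$; (b) $(\alpha_\uni|_{V^{[\chi]}\cap\RR^m})^{\bm}$ nontrivial; and (c) $\chi'\cdot\bm<0$ for every Lyapunov weight $\chi'$ occurring in $V'$. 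For this, start from the $\bm_0$ provided by the hypothesis (with $\chi\cdot\bm_0=0$ and $\alpha_\uni^{\bm_0}$ nontrivial on $V\cap\RR^m$; since $\alpha_\uni$ respects the coarse Lyapunov decomposition we use this on the summand $V^{[\chi]}\cap\RR^m$ relevant to the exposed weight) and from the direction $\n'$ furnished by exposedness, so that $\chi\cdot\n'=0$ and $\chi'\cdot\n'<0$ on $V'$, and set $\bm=\bm_0+\lambda\n'$. Then (a) persists and, for $\lambda$ large, (c) holds; and since the nilpotent generator $M_\bm$ of $\alpha_\uni^\bm$ on $V^{[\chi]}\cap\RR^m$ depends linearly on $\lambda$, it is nonzero for all but finitely many $\lambda$, so (b) is preserved for a suitable large~$\lambda$.

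Next, using $\n_-\in\ZZ^d$ from the hypothesis --- recall $V<U^-_{\alpha^{\n_-}}$, so $\chi''\cdot\n_-<0$ for \emph{every} Lyapunov weight $\chi''$ appearing in $V$ --- I set, exactly as in the proof of Lemma~\ref{lemma:logsequence},
\[
 \n_k=\bigl\lfloor k\bm+t_k\n_-\bigr\rceil\in\ZZ^d,
\]
where $\lfloor\cdot\rceil$ denotes a nearest integer vector and $t_k>0$ is chosen minimally (by the intermediate value theorem) so that the Hilbert--Schmidt norm of $(\alpha_\uni\alpha_{|\cdot|})^{k\bm+t_k\n_-}$ restricted to $V^{[\chi]}\cap\RR^m$ equals $1$. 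Since $\alpha_{|\cdot|}^{k\bm}$ is the identity on $V^{[\chi]}\cap\RR^m$ by (a), while $\alpha_\uni^{k\bm}$ grows polynomially in $k$ by (b), and $\alpha^{t\n_-}$ contracts $V^{[\chi]}\cap\RR^m$ exponentially in $t$, the $t_k$ are finite and $t_k\to\infty$ (on the scale $t_k=O(\log k)$). Correcting by the angular part of $\alpha_\diag$ --- which lies in a fixed compact group, as in Lemma~\ref{lemma:logsequence} --- one gets $\|(\alpha|_{V^{[\chi]}\cap\RR^m})^{\n_k}\|\asymp 1$; passing to a subsequence we may assume $(\alpha|_V)^{\n_k}$ converges, uniformly on compacta, to some $L\in\operatorname{End}(V)$.

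It then remains to check the three assertions, in each case tracking the Lyapunov weight $\chi''$ of a vector $v$ through $\chi''\cdot\n_k=k(\chi''\cdot\bm)+t_k(\chi''\cdot\n_-)+O(1)$. On $V^{[\chi]}\cap\RR^m$ the limit $L$ is nonzero since the norm is pinned at $\asymp1$; and, exactly as in Lemma~\ref{lemma:logsequence}, for a common eigenvector $v$ of $\alpha$ (working over $\CC$) one has $\alpha^{\n_k}v=\alpha_\diag^{\n_k}v$ of modulus $e^{\chi''\cdot\n_k}=e^{t_k(\chi''\cdot\n_-)+O(1)}\to0$ (using $\chi''\cdot\bm=0$), so $Lv=0$; since (b) forces the common eigenvectors to span a proper subspace, $L|_{V^{[\chi]}\cap\RR^m}$ is non-invertible. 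On any non-Archimedean coordinate $V_\sigma$ the unipotent part lies in a compact group, so $\|\alpha^{\n_k}v\|_\sigma\asymp e^{\chi''\cdot\n_k}\|v\|_\sigma$; for $v\in V^{[\chi]}\cap V_\sigma$ this is $e^{t_k(\chi''\cdot\n_-)+O(1)}\|v\|_\sigma\to0$, and for $v\in V'\cap V_\sigma$ it is $\le e^{k(\chi''\cdot\bm)+O(1)}\|v\|_\sigma\to0$ by (c); hence $L$ vanishes on all non-Archimedean subspaces. Finally, on the Archimedean part of any $V^{[\chi']}$ with $[\chi']\ne[\chi]$, the polynomial factor from $\alpha_\uni$ is at most polynomial in $|\n_k|\asymp k$ and is swamped by the exponential contraction $e^{k(\chi'\cdot\bm)}$ afforded by (c), so $L$ vanishes there as well. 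Thus $L\ne0$ with exactly the stated vanishing pattern and non-invertibility, and replacing $\n_k$ by $-\n_k$ nowhere enters.

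The main obstacle is the bookkeeping of competing growth rates: $\n_k$ must drift along $\bm$ fast enough to excite the real Jordan block inside $V^{[\chi]}\cap\RR^m$, yet the compensating drift $t_k\n_-$ must be small enough (only logarithmic in $k$) to leave the normalization on $V^{[\chi]}\cap\RR^m$ intact while still being \emph{unboundedly} large --- so that it can kill the non-Archimedean part of $V^{[\chi]}$, on which the $\bm$-drift is merely isometric. The companion difficulty is engineering the single direction $\bm$ to be strictly contracting on \emph{all} of $V'$ without destroying the unipotent nontriviality on $V^{[\chi]}\cap\RR^m$; this is precisely the genericity-in-$\lambda$ step of the first paragraph. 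Everything else is a routine adaptation of the argument for Lemma~\ref{lemma:logsequence}.
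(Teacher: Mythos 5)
Your proof follows essentially the same plan as the paper's: choose $\bm\in\chi^\perp$ that strictly contracts every coarse Lyapunov direction of $V$ inequivalent to $[\chi]$ while keeping a nontrivial Jordan block on $V^{[\chi]}\cap\RR^m$, form the nearest-integer sequence $\n_k$ near $k\bm+t_k\n_-$ with $t_k\to\infty$ chosen to normalize the norm on $V^{[\chi]}\cap\RR^m$, and then check the vanishing and non-invertibility claims componentwise exactly as in Lemma~\ref{lemma:logsequence}. The only structural difference is in how you locate $\bm$: you perturb along the line $\bm_0+\lambda\n'$ and appeal to genericity in $\lambda$, whereas the paper observes that the exposedness condition is satisfied on a nonempty open subset of $\chi^\perp$ and that the kernel of $\n\in\chi^\perp\mapsto\alpha_\uni^\n|_{V\cap\RR^m}$ is a proper linear subspace, hence has empty interior; these are interchangeable.

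One step of yours is not actually a proof: the parenthetical ``since $\alpha_\uni$ respects the coarse Lyapunov decomposition we use this on the summand $V^{[\chi]}\cap\RR^m$'' does not establish that $\alpha_\uni^{\bm_0}$ is nontrivial on $V^{[\chi]}\cap\RR^m$. Block-diagonality of $\alpha_\uni$ only guarantees nontriviality on \emph{some} coarse Lyapunov summand, which a priori could be a $V^{[\chi']}$ with $[\chi']\neq[\chi]$; in that case $M_{\bm_0}|_{V^{[\chi]}\cap\RR^m}$ vanishes, your linear-in-$\lambda$ family could be identically zero, and the asserted non-invertibility of $L|_{V^{[\chi]}\cap\RR^m}$ would genuinely fail (on the archimedean piece of $V^{[\chi]}$ one would then be taking a limit of elements of a compact subgroup of $\GL$, which is invertible). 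To be fair, the paper's lemma statement carries the same wording (``$V\cap\RR^m$'') and its proof likewise only removes the kernel taken on $V\cap\RR^m$; but when the lemma is invoked in Proposition~\ref{prop;noshearing} the hypothesis actually supplied is nontriviality on $P^{[\chi]}\cap\RR^m$. So the right fix is to read (or restate) the hypothesis as nontriviality of $\alpha_\uni^{\bm_0}$ on $V^{[\chi]}\cap\RR^m$, not to try to derive the stronger fact from the weaker one.
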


\begin{proof}
 This actually follows by the same argument as 
 Lemma \ref{lemma:logsequence} after choosing $\bm\in\RR^d$
 correctly. Indeed we first note that the kernel of the homomorphism
 $\n\in\chi^\perp\mapsto\alpha_\uni^\n\in\GL(V\cap\RR^m)$
 is a proper subspace $K<\chi^\perp$, 
 and hence our first constraint on $\bm$
 is simply $\bm\in\chi^\perp\setminus K$.

 By definition $[\chi]$ is an exposed coarse
 Lyapunov weight for $V$ if there exists some $\bm\in\chi^\perp$ with $\chi'\cdot \bm<0$
 for all Lyapunov weights $\chi'$ of $V$ inequivalent
 to~$\chi$. 
 The latter condition is clearly satisfied
 by all elements of an open subset of $\chi^\perp$.
 Hence we can find $\bm\in\chi^\perp\setminus K$ 
 with $\bm\cdot\chi'<0$
 for all Lyapunov weights $\chi'$ of $V$ inequivalent
 to $\chi$. 
 
 Using this $\bm$ together with 
 $\n_-$ as in the assumptions
 of the lemma we now go again through the construction
 in the proof of Lemma \ref{lemma:logsequence}.
 Let $\n_k'\in\ZZ^d$ be a nearest integer approximation
 of $k\bm$ and let $\n_k$ be the nearest integer approximation of $k\bm+t_k\n_-$. 
 For all non-Archimedean subspaces $V_\sigma$ we note that $\alpha_\uni(\ZZ^d)|_{V_\sigma}$ 
has compact closure.
 Hence the restriction $\alpha^{\n_k'}|_{V_\sigma}$
 belongs to a compact subset of $\operatorname{Hom}(V_\sigma)$,
 which implies together with $\n_-$ contracting $V$
 and $t_k\to\infty$
 that the restriction $\alpha^{\n_k}|_{V_\sigma}$
 converges to zero. Similarly,
 our choice of $\bm$ implies
 that for a coarse Lyapunov weight $[\chi']\neq[\chi]$
 both $\alpha^{\n_k'}|_{V^{[\chi']}}$ and 
 $\alpha^{\n_k}|_{V^{[\chi']}}$ converge to the trivial
 map. Finally our proof 
 of Lemma \ref{lemma:logsequence}
 ensures the claimed properties of 
 the limit map $L|_{V_\infty^{[\chi]}}$. 
\end{proof}

We will now combine the above 
with the setup of Section \ref{sec:standing}
to prove a restriction
concerning the supporting subgroups (cf.~\S\ref{secthm:prod}). 

\begin{proposition}[No shearing on 
supporting subgroup]\label{prop;noshearing}
 Let $P$ be the supporting
 subgroup of 
 $V^-_{\n_0}=V_{\base,\AA}\cap U^-_{\alpha^{\n_0}}$
 (with $V_{\base,\AA}$ as defined in Theorem~\ref{thm:prod}).
 Let $\chi$ be an exposed 
 Lyapunov weight of $P$. 
 Then $(\alpha_\uni|_{P^{[\chi]}\cap\RR^m})^{\n}$ 
  is trivial for all $\n\in\RR^d$ with $\chi\cdot\n=0$. 
\end{proposition}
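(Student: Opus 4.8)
The strategy is to argue by contradiction, combining the logarithmic sequence of Lemma \ref{lemma:log2seq} with the entropy bound of Theorem \ref{thm:bound-contribution} and the zero-entropy hypothesis of the standing setup (Section \ref{sec:standing}). Suppose the proposition fails, so that for some exposed Lyapunov weight $\chi$ of $P$ there is an $\bm\in\RR^d$ with $\chi\cdot\bm=0$ for which $(\alpha_\uni|_{P^{[\chi]}\cap\RR^m})^{\bm}$ is nontrivial. Then Lemma \ref{lemma:log2seq}, applied to $V=P\le U^-_{\alpha^{\n_0}}$ (note $P$ is $S$-linear and $\alpha$-invariant), produces a sequence $\n_k\in\ZZ^d$ so that $(\alpha|_P)^{\n_k}$ converges uniformly on compact sets to a nonzero map $L\in\operatorname{End}(P)$ which kills every non-Archimedean part of $P$, kills every $P^{[\chi']}$ with $[\chi']\ne[\chi]$, and is non-invertible on $P^{[\chi]}\cap\RR^m$. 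In particular the image $P' := L(P)$ is a proper $\alpha$-invariant $S$-linear subspace of $P$, indeed a proper subspace of $P^{[\chi]}\cap\RR^m$.

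Next I would exploit the equivariance of leafwise measures together with the convergence $\alpha^{\n_k}\to L$ to push the leafwise measures $\widetilde\mu_x^P$ onto $P'$. Concretely: by the equivariance formula \eqref{eq:conjugacy} we have $\widetilde\mu_{\widetilde\alpha^{\n_k}x}^{P}\propto (\alpha^{\n_k})_*(\widetilde\mu_x^{P})$ for a.e.\ $x$. Using the polynomial-like growth control on leafwise measures (the function $f_V$ of \cite[Thm.~6.30]{EL-Pisa}) together with a Poincaré-recurrence / compactness argument along the sequence $\n_k$ (this is the standard ``drift argument'' of the area), one concludes that for a.e.\ $x$ the measure $\widetilde\mu_x^{P}$ must actually be supported on $L^{-1}(0)+P' = \ker L + P'$ — or, after combining with the fact that $0\in\operatorname{supp}\widetilde\mu_x^P$ a.s.\ and with the minimality in the definition of the supporting subgroup, that $\widetilde\mu_x^P$ is supported on a proper $\alpha$-invariant $S$-linear subspace of $P$. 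But that contradicts the definition of $P$ as the \emph{supporting} subgroup (cf.\ the discussion on p.~\pageref{explaining lemma from joinings paper}: any proper $\alpha$-invariant $S$-linear subspace is $\widetilde\mu_x^P$-null a.s.), \emph{unless} in fact $L(P)$ fills out enough of $P$ — which is exactly the assertion, via non-invertibility of $L$ on $P^{[\chi]}\cap\RR^m$, that we have excluded. This is the delicate part of the argument, and there are two ways it can go: either one gets the contradiction with minimality of $P$ directly, or — more in line with the paper's Theorem \ref{thm:prod} — one needs to first pass to the factor $X_\base$ and use that $X\to X_\posfact$ is a \emph{zero-entropy} extension to rule out the Jordan block.

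The cleanest route, and the one I expect the authors take, is the entropy one. Because $\chi\cdot\n=0$ can be perturbed, choose $\n$ near $\bm$ with $\chi\cdot\n<0$ small and all other exposed weights $\chi'$ of $P$ negative on $\n$; then Theorem \ref{thm:prod}'s product structure (or rather, the part of it already available) together with Theorem \ref{thm:bound-contribution} gives that the entropy contribution $h_{\widetilde\mu}(\widetilde\alpha^\n, P^{[\chi]}\mid X_\base\times\Omega)$ is controlled by $\kappa\cdot h_{\widetilde\mu}(\widetilde\alpha^\n\mid X_\base\times\Omega)$ with $\kappa=\kappa(\n)\to 0$ as $\n\to\bm$ (since $\kappa$ is a ratio of logarithmic expansion rates and the rate along $P^{[\chi]}$ at $\bm$ is governed only by $\alpha_\uni$, which contributes nothing exponentially). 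Meanwhile the logarithmic sequence $\n_k$ forces the leafwise measures for $P^{[\chi]}$ to collapse onto $P':=L(P^{[\chi]}\cap\RR^m)\subsetneq P^{[\chi]}\cap\RR^m$, so $P^{[\chi]}$ is \emph{not} its own supporting subgroup — contradicting the fact that $P$ (and hence each $P^{[\chi]}=P\cap W^{[\chi]}$) is supporting. The main obstacle, and where I would spend the most care, is making the collapse-onto-$P'$ step rigorous: one must run the recurrence argument along $\n_k$ while keeping track of the non-Archimedean coordinates (on which $L$ vanishes, so those coordinates of the leafwise measure get pushed to $0$) and invoking Proposition \ref{prop: subspace} to identify $\widetilde\mu_x^{P'}\propto\widetilde\mu_x^{P^{[\chi]}}$, all uniformly in $x$ over a conull set — this is precisely the point where the uniform (compact-set) convergence in Lemma \ref{lemma:log2seq} and the a priori growth bound on leafwise measures are both essential.
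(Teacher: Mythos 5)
Your plan correctly identifies the main tools — contradiction via Lemma~\ref{lemma:log2seq}, a recurrence/drift argument along the logarithmic sequence on a Lusin-compact set, and the zero-entropy hypothesis for $X\to X_\posfact$ — but it misses the single algebraic observation that makes the argument close.

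The paper's key step is not that the leafwise measures $\widetilde\mu_x^P$ collapse onto a proper subspace of $P$ (your ``Route 1''), and it is not an entropy ratio bound via Theorem~\ref{thm:bound-contribution} (your ``Route 2''; note that invoking Theorem~\ref{thm:prod}'s product structure here would be circular, since Theorem~\ref{thm:prod} is proved using the present proposition). Rather, the crucial point is to identify \emph{where} the limit map $L$ sends $P^{[\chi]}\cap\RR^m$. Because $Y_\irred=Y_\base/Y_\posfact$ is $\AA$-irreducible, the induced representation on $V_{\base,\AA}/V_{\posfact,\AA}$ is semisimple; hence $\alpha_\uni^\bm$ acts there trivially, which unwinds to $\alpha_\uni^\bm(v)\in v + (W^{[\chi]}\cap V_{\posfact,\AA})$ for $v\in P^{[\chi]}\cap\RR^m$. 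This forces $L(P^{[\chi]}\cap\RR^m)\subseteq V^{[\chi]}:=W^{[\chi]}\cap V_{\posfact,\AA}$, i.e.\ the shear escapes entirely into the $Y_\posfact$-direction. Only now does the zero-entropy hypothesis enter: since $X\to X_\posfact$ is a zero-entropy extension, $\widetilde\mu_x^{V^{[\chi]}}$ is trivial a.e., so there is a conull set $X'$ with the separation property that $x,x+w\in X'$, $w\in V^{[\chi]}$ forces $w=0$. The recurrence argument on a compact $K\subset X'$ with $\widetilde\mu(K)>0.99$ then yields two limit points $y_0,y_1\in K$ with $y_1=y_0+Lv$ and $Lv\in V^{[\chi]}\setminus\{0\}$ (here the role of the supporting-subgroup property of $P$ is merely to guarantee $\widetilde\mu_x^P(\ker L)=0$, so that $v\notin\ker L$ can be found in a single $\mathcal A$-atom), giving the contradiction. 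Your proposal acknowledges that the zero-entropy extension ``should rule out the Jordan block'' but never locates $L$'s image in $V_{\posfact,\AA}$, which is precisely the bridge from the geometric drift computation to the measure-theoretic hypothesis; without it, neither of your two routes closes.
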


\noindent
Here we identified $\RR^m$ with the corresponding subgroup of $\AA^m$, and hence $P^{[\chi]}\cap\RR^m$
 is the maximal real subspace
 of the supporting subgroup~$P$.
 
\begin{proof}
 We suppose in contradiction that
 $(\alpha_\uni|_{P^{[\chi]}\cap\RR^m})^{\bm}$
 is nontrivial for some $\bm\in\RR^d$ with $\chi\cdot\bm=0$.
 Applying Lemma \ref{lemma:log2seq} we find
 a logarithmic sequence $\n_k$ and the limit
 $L\in\operatorname{End}(P)$ of $(\alpha|_P)^{\n_k}$.
 
 Recall that $Y_\base<X$ is the adelic subgroup
 so that $X_\base=X/Y_\base$ and 
 that $V_{\base,\AA}$ is the rational subspace of $\AA^m$
 so that $Y_\base$ is the image of $V_{\base,\AA}$ modulo $\QQ^m$. 
 Recall also that $Y_\posfact<Y_\base$ is the adelic subgroup
 so that $X_\posfact=X/Y_\posfact$ and let $V_{\posfact,\AA}<V_{\base,\AA}$ be the
 rational subspace so that $Y_\posfact$ is the image of $V_{\posfact,\AA}$ modulo $\QQ^m$.
 
 By construction
 we have that the action on $Y_\base/Y_\posfact$ is $\AA$-irreducible,
 or equivalently that the linear representation of $\ZZ^d$
 on $V_{\base,\AA}/V_{\posfact,\AA}$ (defined over $\QQ$) is irreducible over $\QQ$. In particular,
 this representation is semisimple. 
 Unfolding the definitions and restricting to $P^{[\chi]}\cap\RR^m$
 it follows that 
 for any $v \in P^{[\chi]}\cap\RR^m$
 \[
 \alpha_\uni^\bm (v) \in v + V^{[\chi]}
 \]
  where $V^{[\chi]}=W^{[\chi]}\cap V_{\posfact,\AA}$. 
  Combining this information with the construction of the logarithmic
 sequence and its limit $L$ it follows that
 \[
   L(P^{[\chi]}\cap\RR^m)\subseteq V^{[\chi]}.
 \]

 Next recall from \eqref{x base equation} that $X$ is a zero entropy
 extension over  $X_\posfact=X/Y_\posfact$, i.e.\ 
 $h_{\widetilde{\mu}}(\alpha^\n\mid X_\posfact\times\Omega)\leq h_\mu(\alpha^n\mid X_\posfact)=0$
 for all $\n\in\ZZ^d$.
 This implies by the relationship between the leafwise measures
 and entropy that the leafwise measures $\widetilde{\mu}_x^{V^{[\chi]}}$
 must be trivial a.e.\ --- indeed otherwise
 there would be positive entropy contribution for the
 relative entropy over the factor $X_\posfact\times\Omega$.
 By the compatibility property \eqref{eq:shifting}
 it follows that there exists a set $X'\subseteq\widetilde{X}=X\times\Omega$ of full measure
 so that $x,\ x+w\in X'$ for some $w\in V^{[\chi]}$ implies $w=0$. 
 Using regularity of the Borel probability 
 measure we choose some compact $K\subseteq X'$ of measure ${\widetilde{\mu}}(K)>0.99$. 
 
 Our aim in the proof is to use the logarithmic sequence $\n_k$ and
 the limit $L$ to derive a contradiction to the properties of $K$.
 For this let $\mathcal{A}$ be a $\sigma$-algebra 
 that is subordinate to $P$.
 We define
 \[
  X_k=\left\{x\in \alpha^{-\n_k}K\mid {\widetilde{\mu}}_x^{\mathcal{A}}(\alpha^{-\n_k}K)>0.9\right\}
 \]
 and note that ${\widetilde{\mu}}(X_k)>0.89$. By the Lemma of Fatou (applied for the probability measure ${\widetilde{\mu}}$)
 it follows that
 \[
   {\widetilde{\mu}}\bigl(\textstyle{\limsup_{k\to\infty}}X_k\bigr)>0.89.
 \]
 Hence there exists some $x_0$ and some subsequence $\n_k'$
 of $\n_k$ so that $\alpha^{\n_k'}x_0\in K$
 and ${\widetilde{\mu}}_{x_0}^{\mathcal{A}}(\alpha^{-\n_k'}K)>0.9$.
 Applying Lemma of Fatou again -- but this time for the probability
 measure ${\widetilde{\mu}}_{x_0}^{\mathcal{A}}$ -- we obtain
 \begin{equation}\label{eq:probarg}  {\widetilde{\mu}}_{x_0}^{\mathcal{A}}\left(\textstyle{\limsup_{k\to\infty}}
            \alpha^{-\n_k'}K\right)>0.9.
 \end{equation}
 
 Also recall that $P$ is the supporting subgroup of $V_{\n_0}^-$
 and that $\ker L<P$ is a proper $S$-linear $\alpha$-invariant
 subgroup. Using \cite[Lemma 5.2]{Einsiedler-Lindenstrauss-joinings-2} (cf.\ p.~\pageref{explaining lemma from joinings paper} above) 
 this implies that ${\widetilde{\mu}}_x^P(\ker L)=0$, which
 in turns implies ${\widetilde{\mu}}_x^{\mathcal{A}}(x+\ker L)=0$ almost surely.
 We may assume that our $x_0$ constructed above has this property.
 Hence there exists some 
 $x_1\in[x_0]_{\mathcal{A}}\setminus(x_0+\ker L)$ 
 and another subsequence $\n_k''$ of $\n_k'$ so that
 $\alpha^{\n_k''}x_1\in K$. 
 
 To summarize we have found a subsequence $\n_k''$
 of the logarithmic sequence $\n_k$, some $x_0\in X$, and 
 some $x_1=x_0+v\in[x]_{\mathcal{A}}$ with $v\in P\setminus\ker L$ 
 so that $\alpha^{\n_k''}x_j\in K$ for $j=0,1$.
 Using compactness of $K$ we can find yet another subsequence 
 of $\n_k''$ so that $\alpha^{\n_k''}x_0 \to y_0$ and $\alpha^{\n_k''}x_1 \to y_1$
 with $y_0,y_1\in K$. Moreover, 
 since $\alpha^{\n_k}v\rightarrow Lv$ as $k\to\infty$
 we also have $y_1=y_0+Lv$ with $Lv\in V^{[\chi]}\setminus\{0\}$
 by the properties of $L$. However, this contradicts
 the properties of $K\subset X'$ and so concludes the proof.
\end{proof}

\begin{proof}[Proof of Theorem \ref{thm:prod}]
 In view of Proposition~\ref{prop;noshearing},
 the product structure of the leafwise measure follows
 from \cite[Thm.~8.2]{Einsiedler-Katok-II} (or more precisely
 its proof). 
 
 For this we first recall that if $P<V_{\n_0}^-$ is the corresponding supporting subgroup then by Proposition~\ref{prop: subspace}, the leafwise measure $\mu_x^{V_{\smash{\n_0}}^-}$
 coincides with $\mu_x^{P}$ for $\mu$-a.e.~$x$.
 Next we recall that by Proposition \ref{prop;noshearing} we have for any nontrivial 
 coarse Lyapunov weight $[\chi]$ that $\alpha_\uni^\bm|_{P^{[\chi]}\cap \RR^m}$
 is trivial for any $\bm \in \ker\chi$.
 We also note that $\alpha_\uni(\zd)$ restricted to 
 the $p$-adic subspaces $P^{[\chi]}\cap\QQ_p^m$ of the coarse Lyapunov 
 subspace $P^{[\chi]}$
 has compact closure in $\GL(P^{[\chi]}\cap\QQ_p^m)$.
 Hence we may assume that the metric on $P^{[\chi]}\cap\QQ_p^m$ is
 invariant under $\alpha_\uni(\ZZ^d)$.
 With these two observations the inductive argument for
 \cite[Thm.~8.2]{Einsiedler-Katok-II} applies and proves 
 the product structure for ${\widetilde{\mu}}_x^{V_{\smash{\n_0}}^-}={\widetilde{\mu}}_x^P$. 
 
 The product structure implies
 now quite directly using \eqref{eq:entropygrowthrate}
 that the entropy contribution for $V_{\n_0}^-$
 equals the sum of the entropy contributions of its coarse Lyapunov
 subgroups $V^{[\chi]}$, hence the second equality in \eqref{eq: entropyaddition}. 
 
 The first equality in \eqref{eq: entropyaddition}, i.e.\ the fact that $h(\widetilde\alpha^{\n_0},V^-_{\n_0})$
 equals the conditional entropy of $\widetilde\alpha^{\n_0}$ over the 
 factor $X_\base\times\Omega$ follows e.g.\ 
 from the proof of \cite[Thm.~7.6]{EL-Pisa}. Indeed by conditioning
 the calculation there on the factor $X_\base\times\Omega$
 the leafwise measure on the full stable horospherical is automatically
 supported on $V^-_{\n_0}$, since a displacement by some element
 of the stable horospherical not belonging to $V_{\base,\AA}$ would
 change the point within $X_\base$. 
\end{proof}

\section{Proof of Theorem \ref{thm:main2}}

\subsection{Rigidity of the entropy function}
\label{sec: entropy-function} 

In the following we will again consider entropy contributions 
for various coarse Lyapunov subgroups
with varying definitions of the second factor $\Omega$ in the framework of 
Section \ref{sec: leaf-entropy}. 
Consistent with our notations so far, we will use e.g.\ 
$h_{\widetilde{\mu}}(\widetilde\alpha^\n,W^{[\chi]}\mid X_\base\times\Omega)$
for the entropy contribution of a coarse Lyapunov subgroup $W^{[\chi]}$
on $X\times\Omega$ and similarly for other factors and foliations.

We now establish the following identity regarding the relation between the entropies of 
individual elements of the action. This identity is central to our approach.

\begin{theorem}\label{thm: shape}
Let $\alpha$ be a $\zd$-action on $X=X_m$ without cyclic factors as in Theorem \ref{thm:main2}. 
Let $\mu$ be an $\alpha$-invariant probability measure, 
let $X_\posfact,\ X_\base,\ Y_\irred$ be as \eqref{x base equation}
and denote the Haar measure on $Y_\irred$ by $\lambda$.
Moreover, let $\Omega$
be a compact metric space equipped with an action of $\zd$, let
$\widetilde{\alpha}$ be the corresponding $\zd$-action on $X\times\Omega$,
and let $\widetilde{\mu}$ be an invariant measure on $X\times\Omega$ projecting to $\mu$. 
Then there exists a
constant $\kappa_{\widetilde{\mu},\Omega}>0$ with
\begin{equation}\label{entropy identity}
 \h_{\widetilde\mu}(\widetilde\alpha^\n, W^{[\chi]}\mid X_\base\times\Omega)
=\kappa_{\widetilde{\mu},\Omega}\h_\lambda(\alpha_{Y_\irred}^\n, W^{[\chi]})   
\end{equation}
for every $\n \in\zd$. 
\end{theorem}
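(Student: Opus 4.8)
The plan is to combine the upper bound of Theorem~\ref{thm:bound-contribution} with the product structure of Theorem~\ref{thm:prod}, summing over coarse Lyapunov weights to upgrade a family of inequalities to equalities, and then to pin down the resulting proportionality constant. First I would reduce to the essential case. Fix $\n\in\zd$ and a coarse Lyapunov weight $[\chi]$. If $\chi\cdot\n\ge 0$ then $W^{[\chi]}\cap U^-_{\alpha^\n}=\{0\}$ and both sides of \eqref{entropy identity} vanish, so assume $\chi\cdot\n<0$. Conditioned on the factor $X_\base\times\Omega$ the leafwise measure along $W^{[\chi]}$ is supported inside $V_{\base,\AA}$ --- a displacement outside $V_{\base,\AA}$ would move the $X_\base$-coordinate (cf.\ the end of the proof of Theorem~\ref{thm:prod}) --- so $\h_{\widetilde\mu}(\widetilde\alpha^\n,W^{[\chi]}\mid X_\base\times\Omega)=\h_{\widetilde\mu}(\widetilde\alpha^\n,W^{[\chi]}\cap V_{\base,\AA}\mid X_\base\times\Omega)$. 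If $[\chi]$ is not a coarse weight of $Y_\irred$ then, since the coarse Lyapunov decomposition is compatible with the $\alpha$-invariant filtration $V_{\posfact,\AA}\subseteq V_{\base,\AA}$, one has $W^{[\chi]}\cap V_{\base,\AA}\subseteq V_{\posfact,\AA}$, whose leafwise measures are a.e.\ trivial because $X\to X_\posfact$ is a zero-entropy extension (Proposition~\ref{prop: contribution}); thus both sides of \eqref{entropy identity} vanish. So from now on $[\chi]$ is a coarse weight of $Y_\irred$ and $\chi\cdot\n<0$.

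Next, apply Theorem~\ref{thm:bound-contribution} with $V=W^{[\chi]}\cap V_{\base,\AA}\le U^-_{\alpha^\n}$, noting $V\cap V_{\irred,\AA}=W^{[\chi]}\cap V_{\irred,\AA}$:
\[
\h_{\widetilde\mu}(\widetilde\alpha^\n,W^{[\chi]}\mid X_\base\times\Omega)\ \le\ \frac{\h_\lambda(\alpha^\n_{\irred},W^{[\chi]}\cap V_{\irred,\AA})}{\h_\lambda(\alpha^\n_{\irred})}\,\h_{\widetilde\mu}(\widetilde\alpha^\n\mid X_\base\times\Omega).
\]
By the explicit description of the leafwise measures of the Haar measure $\lambda$ on $Y_\irred$ together with the product formula (Proposition~\ref{prop: localprod}), $\h_\lambda(\alpha_{Y_\irred}^\n,W^{[\chi]})=D_{[\chi]}\,(-\chi\cdot\n)$ for a constant $D_{[\chi]}>0$ depending only on $[\chi]$, and summing over all coarse weights $[\chi']$ of $Y_\irred$ with $\chi'\cdot\n<0$ recovers $\h_\lambda(\alpha^\n_{\irred})$. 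Summing the displayed inequality over these $[\chi']$, the left-hand side sums to $\h_{\widetilde\mu}(\widetilde\alpha^\n\mid X_\base\times\Omega)$ by \eqref{eq: entropyaddition} (using the identifications above, which are the same as in the proof of Theorem~\ref{thm:prod}), and the right-hand side sums to the same quantity. Since inequalities $a_i\le b_i$ with $\sum a_i=\sum b_i$ are forced to be equalities, we obtain
\[
\h_{\widetilde\mu}(\widetilde\alpha^\n,W^{[\chi]}\mid X_\base\times\Omega)=c(\n)\,\h_\lambda(\alpha_{Y_\irred}^\n,W^{[\chi]}),\qquad c(\n):=\frac{\h_{\widetilde\mu}(\widetilde\alpha^\n\mid X_\base\times\Omega)}{\h_\lambda(\alpha^\n_{\irred})},
\]
valid whenever $\h_\lambda(\alpha^\n_{\irred})>0$ (otherwise both sides of \eqref{entropy identity} vanish, by the reductions).

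It remains to show $c(\n)$ does not depend on $\n$. For a coarse Lyapunov subgroup the function $\n\mapsto\h_{\widetilde\mu}(\widetilde\alpha^\n,W^{[\chi]}\mid X_\base\times\Omega)$ is additive, hence linear, on the open cone $\{\chi\cdot\n<0\}$ and vanishes on $\chi^\perp$; here the absence of unipotent shearing on the supporting subgroup proved in \S\ref{sec:productstructure} (Proposition~\ref{prop;noshearing}) is what makes the standard coarse-Lyapunov additivity argument go through despite non-semisimplicity (cf.\ \cite[\S8]{EL-Pisa}). Consequently $\h_{\widetilde\mu}(\widetilde\alpha^\n,W^{[\chi]}\mid X_\base\times\Omega)=E_{[\chi]}\,(-\chi\cdot\n)$ on $\{\chi\cdot\n\le 0\}$ for some $E_{[\chi]}\ge 0$, and comparison with the previous display gives $c(\n)=E_{[\chi]}/D_{[\chi]}=:\kappa_{[\chi]}$ for all $\n$ in the open half-space $H_{[\chi]}=\{\chi\cdot\n<0\}$. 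Also, because $\widetilde\alpha^\n$ is invertible and $X_\base\times\Omega$ is $\widetilde\alpha$-invariant, $\h_{\widetilde\mu}(\widetilde\alpha^\n\mid X_\base\times\Omega)=\h_{\widetilde\mu}(\widetilde\alpha^{-\n}\mid X_\base\times\Omega)$ and similarly for $\alpha_{\irred}$, so $c(-\n)=c(\n)$. Given two coarse weights $[\chi],[\chi']$ of $Y_\irred$: if $\chi'$ is not a negative scalar multiple of $\chi$ then $H_{[\chi]}\cap H_{[\chi']}\ne\emptyset$ (here $d\ge 2$ is used), so $\kappa_{[\chi]}=\kappa_{[\chi']}$; if $\chi'=-t\chi$ with $t>0$ then $-\n\in H_{[\chi']}$ whenever $\n\in H_{[\chi]}$, so $\kappa_{[\chi]}=c(\n)=c(-\n)=\kappa_{[\chi']}$. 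Thus all $\kappa_{[\chi]}$ agree with a common value $\kappa_{\widetilde\mu,\Omega}$, and \eqref{entropy identity} holds for every $\n$ and every coarse weight $[\chi]$. Finally $\kappa_{\widetilde\mu,\Omega}>0$: if $\h_{\widetilde\mu}(\widetilde\alpha^\n\mid X_\base\times\Omega)>0$ for some $\n$ this is automatic, and if that relative entropy vanishes for all $\n$ then both sides of \eqref{entropy identity} are identically zero and we may take $\kappa_{\widetilde\mu,\Omega}=1$.

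The step I expect to be the main obstacle is the linearity of $\n\mapsto\h_{\widetilde\mu}(\widetilde\alpha^\n,W^{[\chi]}\mid X_\base\times\Omega)$ on a Weyl chamber --- equivalently, the additivity of the entropy contribution of a single coarse Lyapunov subgroup for elements of the action that contract it coherently. Without this one only obtains that $c$ is constant along rays, which is not enough to conclude; and it is exactly at this point that the non-semisimplicity of $\alpha$ must be handled, via Proposition~\ref{prop;noshearing} to exclude shearing inside the supporting subgroup. Everything else is bookkeeping with Theorems~\ref{thm:bound-contribution} and~\ref{thm:prod} and with the product formula Proposition~\ref{prop: localprod}.
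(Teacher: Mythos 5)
Your proposal follows the same overall strategy as the paper --- apply the bound of Theorem~\ref{thm:bound-contribution}, sum over coarse Lyapunov weights using the product structure Theorem~\ref{thm:prod} to upgrade the family of inequalities to equalities, and then show the resulting ratio is independent of both $\n$ and $[\chi]$ --- but there is an imprecision in how the bound is invoked, and you overestimate where the genuine difficulty lies.

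The issue with the application of Theorem~\ref{thm:bound-contribution}: that theorem requires $X_\base = X/Y_\irred$ with $Y_\irred$ an $\AA$-irreducible \emph{subgroup} of $X$, but in the setting of Theorem~\ref{thm: shape} we have $X_\base = X/Y_\base$ with $Y_\base$ not $\AA$-irreducible, and $Y_\irred = Y_\base/Y_\posfact$ is only a subquotient. Your use of ``$V_{\irred,\AA}$'' as if it were a rational subspace of $\AA^m$ is the notational symptom of this confusion. The correct route, as in the paper, is to first pass to the factor $X_\posfact = X/Y_\posfact$, which is where $Y_\irred$ lives as an actual $\AA$-irreducible subgroup and where Theorem~\ref{thm:bound-contribution} applies with $X_\base = X_\posfact/Y_\irred$. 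The transfer of entropy contributions from $X$ to $X_\posfact$ (the paper's \eqref{eq:entropyeqinprooftimes2}) is a separate step that uses Proposition~\ref{entropy inequality proposition}, the vanishing from Lemma~\ref{lemma:entropycontvanishes}, and the summing argument with Theorem~\ref{thm:prod} applied on \emph{both} $X$ and $X_\posfact$. You do have the key ingredients (you note that $W^{[\chi]}\cap V_{\posfact,\AA}$ carries no entropy, which is Lemma~\ref{lemma:entropycontvanishes}) but you treat the passage as if it were automatic; it is not, and needs to be folded into a single chain of inequalities $a_{[\chi]}\le b_{[\chi]}\le c_{[\chi]}$ before the summation forces equality throughout.

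On the final step you flag as ``the main obstacle'': the paper handles this much more simply than you propose. Lemma~\ref{lem: linear-contribution} establishes $\h_{\widetilde\mu}(\widetilde\alpha^\n,W^{[\chi]}\mid X_\base\times\Omega)=c\,|\chi\cdot\n|$ using only two elementary facts --- homogeneity $\h(\widetilde\alpha^{k\n},\cdot)=k\h(\widetilde\alpha^\n,\cdot)$ from Proposition~\ref{prop: contribution}, and monotonicity of the entropy contribution in $\chi\cdot\n$. Neither requires the coarse-Lyapunov additivity you invoke, nor Proposition~\ref{prop;noshearing}, and neither is affected by non-semisimplicity; the no-shearing proposition is needed for the product structure in Theorem~\ref{thm:prod}, not for Lemma~\ref{lem: linear-contribution}. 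Once Lemma~\ref{lem: linear-contribution} is in hand, the paper concludes by picking a single generic $\n_0$ (with $\chi\cdot\n_0\ne 0$ for all $\chi$) and using $\kappa_{\n_0}=\kappa_{-\n_0}$; this is cleaner than your comparison of intersecting half-spaces, though both work. Your observation on how to handle the possibility $h_{\widetilde\mu}(\widetilde\alpha^\n\mid X_\base\times\Omega)\equiv 0$ (so that any positive $\kappa$ works trivially) is a reasonable way to make the claimed positivity precise; the paper in fact establishes positivity in the application via Proposition~\ref{nexttolast}.
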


\noindent
We recall that we use $\lambda$ to denote the Haar measure on the appropriate adelic quotient (that should hopefully be clear from the context; e.g.\ in~\eqref{entropy identity}, \ $\lambda$ is the Haar measure on $Y_\irred$).
 While the proof of Theorem~\ref{thm: shape} is much more complicated than in the case considered by Rudolph, this theorem plays a similar role in our proof as \cite[Thm.~3.7]{Rudolph-2-and-3} did in Rudolph's proof in \cite{Rudolph-2-and-3}.

As a first step towards the theorem we consider just one coarse Lyapunov subgroup.
Note that unlike Theorem~\ref{thm: shape} which uses in an essential way the irreducibility of $Y _{\irred}$, 
the next lemma only uses the fact that $W ^ {[\chi]}$ is a coarse Lyapunov group.

\begin{lemma}\label{lem: linear-contribution}
Using the same notation as in Theorem \ref{thm: shape},
let $[\chi]$ be a coarse Lyapunov weight for $Y_\irred$.
Then there exists some $\kappa_{\widetilde{\mu},\Omega,[\chi]} \geq 0$ with
\[
\h_{\widetilde\mu}(\widetilde\alpha^\n, W^{[\chi]}\mid X_\base\times\Omega)
=\kappa_{\widetilde{\mu},\Omega,[\chi]}\h_\lambda(\alpha_{Y_\irred}^\n, W^{[\chi]})
\]
for every $\n \in\zd$.
\end{lemma}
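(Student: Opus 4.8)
The plan is to show that, as a function of $\n \in \zd$, each side of the claimed identity is a fixed non-negative multiple of $(\chi\cdot\n)_- := \max(0,-\chi\cdot\n)$, with the multiple on the right strictly positive; the constant $\kappa_{\widetilde\mu,\Omega,[\chi]}$ is then the ratio of the two multiples. If $\chi\cdot\n \geq 0$ then $W^{[\chi]}\cap U^-_{\alpha^\n}=\{0\}$ and both sides vanish, so I may assume $\chi\cdot\n<0$, so that $W^{[\chi]}\subseteq U^-_{\alpha^\n}$. For the right hand side I would use that $Y_\irred \cong \AA_\KK/\KK$ for a number field $\KK$, that over each place $\sigma$ of $\KK$ the diagonal part of the action is multiplication by $\zeta_{\n,\sigma}$, and that $\log|\zeta_\n|_\sigma = c_\sigma\,(\chi\cdot\n)$ for some $c_\sigma>0$ whenever the Lyapunov weight of $\KK_\sigma$ lies in $[\chi]$; hence, either directly or via Proposition~\ref{prop: contribution} (as $\lambda$ is a Haar measure), $\h_\lambda(\alpha^\n_{Y_\irred},W^{[\chi]}) = C_{[\chi]}\,(\chi\cdot\n)_-$ where $C_{[\chi]}=\sum_\sigma\delta_\sigma c_\sigma$, the sum over those places $\sigma$ of $\KK$ whose Lyapunov weight lies in $[\chi]$. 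This $C_{[\chi]}$ is strictly positive precisely because $[\chi]$ is a coarse Lyapunov weight for $Y_\irred$.

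For the left hand side I would apply the conditional form of Proposition~\ref{prop: contribution} (working in the extension of $X$ whose second factor is $\Omega\times X_\base$) to write $\h_{\widetilde\mu}(\widetilde\alpha^\n,W^{[\chi]}\mid X_\base\times\Omega) = \int \vol(\widetilde\alpha^\n,W^{[\chi]},x)\,\od\widetilde\mu(x)$, with $\vol$ as in \eqref{eq:entropygrowthrate}; in this extended system the leafwise measure $\widetilde\mu_x^{W^{[\chi]}}$ is supported on $W^{[\chi]}\cap V_{\base,\AA}$ (with $V_{\base,\AA}$ as in Theorem~\ref{thm:prod}), since a displacement in $W^{[\chi]}$ outside $V_{\base,\AA}$ moves the $X_\base$-coordinate. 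The crux is then the identity $\vol(\widetilde\alpha^\n,W^{[\chi]},x) = D(x)\,(-\chi\cdot\n)$ for a.e.\ $x$ and all $\n$ with $\chi\cdot\n<0$, where $D\geq 0$ is measurable and $\widetilde\mu$-integrable. Granting this, integrating and setting $\kappa_{\widetilde\mu,\Omega,[\chi]} := \bigl(\int D\,\od\widetilde\mu\bigr)/C_{[\chi]} \geq 0$ yields the lemma (the identity holds for $\chi\cdot\n<0$, and trivially otherwise).

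To prove the crux I would use three inputs. First, $\vol(\widetilde\alpha^\n,W^{[\chi]},x)$ depends on $\n$ only through $\chi\cdot\n$: if $\chi\cdot\n=\chi\cdot\n'<0$ then $\alpha^{\n-\n'}$ restricted to $W^{[\chi]}$ has all eigenvalues of absolute value $1$ at every place, so $\{\alpha^{N(\n-\n')}|_{W^{[\chi]}}\}_{N\geq 0}$, as well as its inverse, is bounded at the non-Archimedean places and grows at most polynomially in $N$ at the Archimedean one --- the only effect of the unipotent part $\alpha_\uni$; since $\alpha^{-\n'}$ expands $W^{[\chi]}$ at an exponential rate, writing $\alpha^{-N\n}=\alpha^{-N\n'}\circ\alpha^{N(\n'-\n)}$ one checks that $\alpha^{-N\n}B_1^{W^{[\chi]}}$ is sandwiched between $\alpha^{-(N-q(N))\n'}B_1^{W^{[\chi]}}$ and $\alpha^{-(N+q(N))\n'}B_1^{W^{[\chi]}}$ for some $q(N)=O(\log N)$, whence $\vol(\widetilde\alpha^\n,W^{[\chi]},x)=\vol(\widetilde\alpha^{\n'},W^{[\chi]},x)$. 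Second, $\vol(\widetilde\alpha^{N\n},W^{[\chi]},x)=N\,\vol(\widetilde\alpha^\n,W^{[\chi]},x)$ for $N\in\NN$, immediate from the definition. Third, writing $g(t,x)$ for the common value of $\vol$ over all $\n$ with $-\chi\cdot\n=t$, the map $t\mapsto g(t,x)$ is non-decreasing (larger $t$ gives larger sets). These combine by an elementary argument: for $t_1,t_2$ in the value set $\{-\chi\cdot\n:\chi\cdot\n<0\}$ and a rational $M/N$ with $t_1/t_2\leq M/N<t_1/t_2+\epsilon$, one has $Nt_1\leq Mt_2$, so $N g(t_1,x)=g(Nt_1,x)\leq g(Mt_2,x)=M g(t_2,x)$; letting $\epsilon\to 0$ and using symmetry gives that $g(t,x)/t$ is independent of $t$, which is the crux with $D(x)=g(t_0,x)/t_0$ for a fixed $t_0$ (measurable as an a.e.\ limit of measurable functions, and integrable since $\int g(t_0,\cdot)\,\od\widetilde\mu$ is a finite entropy contribution).

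I expect the first of these inputs to require the most care: the assertion that inside a single coarse Lyapunov subgroup the only relevant contraction exponent is $\chi\cdot\n$, uniformly over the non-semisimple directions, is exactly where the Jordan-form phenomena discussed in \S\ref{sec:productstructure} must be dealt with, though here only the soft fact that $\alpha_\uni$ contributes sub-exponential distortion is needed. An alternative that bypasses the $\vol$-calculus is to construct a single countably generated $\sigma$-algebra subordinate to $W^{[\chi]}\cap V_{\base,\AA}$ and decreasing under both $\widetilde\alpha^\n$ and $\widetilde\alpha^{\n'}$ (namely the $\sigma$-algebra $\cP_V$ of p.~\pageref{page of P_V}, refined along the semigroup generated by $\n$ and $\n'$); this makes $\n\mapsto\h_{\widetilde\mu}(\widetilde\alpha^\n,W^{[\chi]}\mid X_\base\times\Omega)$ additive on the semigroup $\{\n\in\zd:\chi\cdot\n<0\}$, and a non-negative additive function on that semigroup, monotone along the semigroup order, is forced to be a multiple of $\n\mapsto(\chi\cdot\n)_-$ by the same rational-approximation argument.
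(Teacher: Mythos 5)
Your proposal follows the same route as the paper's own proof: on the cone $\{\chi\cdot\n<0\}$ one uses homogeneity $\h(\widetilde\alpha^{k\n},\cdot)=k\,\h(\widetilde\alpha^\n,\cdot)$, monotonicity in $|\chi\cdot\n|$, and the fact that within a single coarse Lyapunov subgroup the contraction rate is governed only by $\chi\cdot\n$, and then a rational-approximation argument forces linearity in $|\chi\cdot\n|$; the paper states this compactly at the level of the integrated contribution $\h$ (``in conjunction with \eqref{eq: N-times} this implies elementarily\dots''), whereas you unpack the same three ingredients at the pointwise level of $\vol(\widetilde\alpha^\n,W^{[\chi]},x)$, including the sandwich estimate controlling the unipotent distortion. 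The additional explicit computation of $\h_\lambda(\alpha^\n_{Y_\irred},W^{[\chi]})$ via the local norms, and the alternative via a $\sigma$-algebra decreasing under a whole subsemigroup, are both fine and consistent with the paper's machinery.

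One slip: your reduction says that for $\chi\cdot\n\geq 0$ both sides vanish, and you model both sides as multiples of $(\chi\cdot\n)_-$. For $\chi\cdot\n>0$ this is not the paper's convention --- $W^{[\chi]}<U^+_{\alpha^\n}$ and the entropy contribution is defined by $\h_{\widetilde\mu}(\widetilde T,V)=\h_{\widetilde\mu}(\widetilde T^{-1},V)$, so both sides are proportional to $|\chi\cdot\n|$ rather than $(\chi\cdot\n)_-$ and are generally nonzero. The fix is immediate (apply the symmetry $\n\mapsto-\n$ and note that the Haar-measure side enjoys the same symmetry); only the case $\chi\cdot\n=0$ is the degenerate one where both sides vanish. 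With that correction the argument is complete.
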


\begin{proof} 
We first note that for $\mathbf n\in\zd$ with $\chi\cdot\n<0$ and $k\in\NN$,
Proposition~\ref{prop: contribution} implies 
\begin{equation} \label{eq: N-times}
\h_{\widetilde\mu}(\widetilde\alpha^{k\n}, W^{[\chi]}\mid X_\base\times\Omega)
=k \h_{\widetilde\mu}(\widetilde\alpha^\n, W^{[\chi]}\mid X_\base\times\Omega).
\end{equation} 
Moreover, since $W^{[\chi]}$ is a coarse Lyapunov subgroup, 
for all $\n,\bm\in\zd$ with $\chi\cdot\n < \chi\cdot\bm<0$, we have that
\[
\h_{\widetilde\mu}(\widetilde\alpha^{\n}, W^{[\chi]}\mid X_\base\times\Omega)
 \geq 
\h_{\widetilde\mu}(\widetilde\alpha^{\bm}, W^{[\chi]}\mid X_\base\times\Omega).
\]
In conjunction with \eqref{eq: N-times} this implies elementarily that there is a constant $c\geq 0$ depending only on $\tilde \mu$,
$W^{[\chi]}$, and $\alpha$ so that
that $\h_{\widetilde \mu}(\widetilde \alpha^\n, W^{[\chi]})=c
|\chi\cdot\n|$ for all $\n\in\zd$. 

Next notice that for similar reasons $\h_{\lambda}(\alpha^\n, W^{[\chi]})$
is given by a similar formula for a constant $c_\lambda\geq 0$. 
As $[\chi]$ is assumed to be a coarse Lyapunov weight for $Y_\irred$ we have $c_\lambda>0$
and obtain the lemma with $\kappa_{\widetilde{\mu},\Omega,[\chi]}=c/c_\lambda$.
\end{proof}

\begin{lemma}\label{lemma:entropycontvanishes}
 We again use the notation in Theorem \ref{thm: shape}. 
 For any coarse Lyapunov weight $[\chi]$ of $X$
 we have
 \[
  h_{\widetilde{\mu}}(\widetilde\alpha^\n, W^{[\chi]}\cap V_{\posfact,\AA}\mid X_\base\times\Omega)=0,
 \]
 where $V_{\posfact,\AA}<\AA^m$ is the rationally defined subspace so that $Y_\posfact$ is the image of $V_{\posfact,\AA}$ modulo $\QQ^m$.
 Moreover, if $[\chi]$ is \emph{not} a coarse Lyapunov weight for $Y_\irred$ 
 then
 \[
  \h_{\widetilde\mu}(\alpha^\n, W^{[\chi]}\mid X_\base\times\Omega)=0.
 \]
\end{lemma}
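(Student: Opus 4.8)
The first statement follows almost directly from the zero entropy extension property established in Section \ref{sec:standing}. Recall that \eqref{x base equation} tells us $X$ is a zero entropy extension of $X_\posfact$, i.e.\ $h_{\widetilde\mu}(\alpha^\n\mid X_\posfact\times\Omega)\leq h_\mu(\alpha^\n\mid X_\posfact)=0$ for all $\n\in\zd$. The relevant point is that $W^{[\chi]}\cap V_{\posfact,\AA}$ is a (closed, $\alpha$-invariant, $S$-linear) subspace of the kernel $V_{\posfact,\AA}$ of the projection $X\to X_\posfact$, so by Proposition~\ref{entropy inequality proposition} (or directly by the monotonicity of the entropy contribution in the subgroup, together with the fact that $h_{\widetilde\mu}(\widetilde\alpha^\n, V_{\posfact,\AA}\mid\Omega)=0$ since leafwise measures along fibers of a zero-entropy extension are trivial) we get $h_{\widetilde{\mu}}(\widetilde\alpha^\n, W^{[\chi]}\cap V_{\posfact,\AA}\mid X_\base\times\Omega)=0$. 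More carefully: by Proposition~\ref{prop: contribution} this entropy contribution vanishes iff $\widetilde\mu_x^{W^{[\chi]}\cap V_{\posfact,\AA}}$ is a.e.\ trivial, and this follows from triviality of $\widetilde\mu_x^{V_{\posfact,\AA}}$ together with Proposition~\ref{prop: subspace}, since $W^{[\chi]}\cap V_{\posfact,\AA}<V_{\posfact,\AA}$.

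For the second statement, suppose $[\chi]$ is not a coarse Lyapunov weight for $Y_\irred$. The plan is to split $W^{[\chi]}$ along the exact sequence $0\to Y_\irred\to X_\posfact\to X_\base\to 0$, or rather along the inclusion $V_{\posfact,\AA}<V_{\base,\AA}<\AA^m$. Choose $\n$ with $\chi\cdot\n<0$ (the case $\chi\cdot\n\geq 0$ being trivial since then the stable horospherical contribution of $W^{[\chi]}$ is zero for $\widetilde\alpha^\n$; and the general case reduces to this by Lemma~\ref{lem: linear-contribution}'s argument, or by replacing $\n$ with $-\n$). Apply Proposition~\ref{entropy inequality proposition} with the triple of solenoids $X_\posfact\to X_\base$ and $Y=Y_\irred$, with the subgroup $V=W^{[\chi]}\cap U^-_{\alpha^\n}\cap V_{\posfact,\AA}$ inside $\AA^m$ modulo the fiber and its image $V_1$ in the quotient. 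The image of $W^{[\chi]}$ in $\AA^m/V_{\irred,\AA}$ lies in the coarse Lyapunov subgroup with weight $[\chi]$ for the induced action $\alpha_{X_\base}$; but since $\alpha^\n_{X_\base}$ has zero conditional entropy over $X_\base$ only in the range where $s$ was chosen minimal --- wait, more precisely the key input is the choice of $s$ in Section~\ref{sec:standing}: the action on $Y_\irred$ carries \emph{all} of the positive entropy of the extension $X_\posfact\to X_\base$. So the plan is: write $h_{\widetilde\mu}(\widetilde\alpha^\n,W^{[\chi]}\mid X_\base\times\Omega)$ and bound it above by $h_{\widetilde\mu_1}(\widetilde\alpha^\n_{X_\base}, \pi(W^{[\chi]})\mid X_\base\times\Omega)+h_{\widetilde\mu}(\widetilde\alpha^\n,W^{[\chi]}\cap V_{\irred,\AA}\mid X_\base\times\Omega)$ via Proposition~\ref{entropy inequality proposition}; the first term is zero because it is an entropy contribution for the action on $X_\base$ conditioned on $X_\base$ itself (the foliation is inside the fiber, which is a point in $X_\base$), and the second term is handled by Theorem~\ref{thm:bound-contribution}: since $[\chi]$ is not a Lyapunov weight of $Y_\irred$, $W^{[\chi]}\cap V_{\irred,\AA}=\{0\}$, so $h_\lambda(\alpha^\n_\irred, W^{[\chi]}\cap V_{\irred,\AA})=0$, whence the right-hand side of \eqref{eq: entropy inequality} vanishes.

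The main obstacle, as I see it, is bookkeeping: making sure the subgroup $W^{[\chi]}$ (which lives in $\AA^m$ and is a priori spread across the whole solenoid, not just $Y_\irred$) interacts correctly with the projection $X\to X_\base$, and in particular that one may reduce to the sub-case where the foliating subgroup lies in $V_{\base,\AA}$. This reduction uses the general principle (already exploited at the start of the proof of Theorem~\ref{thm:bound-contribution}) that replacing $V$ by $V\cap V_{\base,\AA}$ does not change the conditional entropy contribution over $X_\base\times\Omega$, since atoms of a $V$-subordinate $\sigma$-algebra refined by $\cB_\base$ are contained in $V\cap V_{\base,\AA}$-cosets. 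Once that reduction is in place, one further intersects with $V_{\posfact,\AA}$: writing $W^{[\chi]}\cap V_{\base,\AA}$, its part inside $V_{\posfact,\AA}$ contributes zero by the first statement of the lemma, and modulo $V_{\posfact,\AA}$ it maps into $W^{[\chi]}_{\irred}$ which is trivial by hypothesis; combining via the equality case / inequality of Proposition~\ref{entropy inequality proposition} (applied to the exact sequence $0\to Y_\irred\to X_\posfact\to X_\base\to 0$) gives that $h_{\widetilde\mu}(\widetilde\alpha^\n,W^{[\chi]}\cap V_{\base,\AA}\mid X_\base\times\Omega)\leq 0+0$, completing the proof.
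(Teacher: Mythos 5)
Your argument for the first part is in the right spirit, but the alternative route you sketch via Proposition~\ref{prop: subspace} has a technical snag: the leafwise measure system $\widetilde\mu_x^{V_{\posfact,\AA}}$ is not defined in the framework of \S\ref{sec:leafwiseXtilde}, since $V_{\posfact,\AA}\cap\QQ^m$ is a nontrivial rational subspace, not $\{0\}$. (It is fine for $W^{[\chi]}\cap V_{\posfact,\AA}$, since that lies inside a stable horospherical subgroup which meets $\QQ^m$ trivially, but then you have nothing to apply Proposition~\ref{prop: subspace} to.) The paper instead argues directly at the level of $\sigma$-algebras: because every atom of a $W^{[\chi]}\cap V_{\posfact,\AA}$-subordinate $\sigma$-algebra is contained in a single $Y_\posfact$-coset, conditioning on $\cB_\base$ is the same as conditioning on $\cB_\posfact$, and the result follows from the zero-entropy-extension property $h_\mu(\alpha^\n\mid X_\posfact)=0$. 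Your parenthetical appeal to ``leafwise measures along fibers of a zero entropy extension are trivial'' is essentially this, but it needs to be phrased for $W^{[\chi]}\cap V_{\posfact,\AA}$ rather than for $V_{\posfact,\AA}$.

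For the second part you reach the right conclusion, but via a considerably longer route than the paper. You invoke Proposition~\ref{entropy inequality proposition} and Theorem~\ref{thm:bound-contribution}, yet in your own last paragraph you note the key point: since $[\chi]$ is not a Lyapunov weight of $Y_\irred$, the image of $W^{[\chi]}\cap V_{\base,\AA}$ in $V_{\base,\AA}/V_{\posfact,\AA}$ is trivial, i.e.\ $W^{[\chi]}\cap V_{\base,\AA}=W^{[\chi]}\cap V_{\posfact,\AA}$. Once you have this purely group-theoretic identity, the first part of the lemma (together with the observation that replacing $W^{[\chi]}$ by $W^{[\chi]}\cap V_{\base,\AA}$ does not change the conditional entropy contribution over $X_\base\times\Omega$, as in the start of Theorem~\ref{thm:bound-contribution}'s proof) immediately gives the vanishing. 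That is exactly what the paper does, and it sidesteps Proposition~\ref{entropy inequality proposition} and Theorem~\ref{thm:bound-contribution} altogether. Your detour through those results is not wrong, but it introduces overhead — one has to be careful that the objects in those statements live over $X_\posfact$ rather than $X$, requiring another layer of bookkeeping that your plan glosses over — and it obscures the simple group-theoretic heart of the argument, which you already identified.
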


\begin{proof}
 We claim for the entropy contributions
 for $W^{[\chi]}\cap V_{\posfact,\AA}$ that
 \begin{equation}\label{eq:anotherdoubleconditioning}
       h_{\widetilde{\mu}}(\widetilde\alpha^\n, W^{[\chi]}\cap V_{\posfact,\AA}\mid X_\base\times\Omega)
  = h_{\widetilde{\mu}}(\widetilde\alpha^\n, W^{[\chi]}\cap V_{\posfact,\AA}\mid X_\posfact\times\Omega).
 \end{equation}
  To see this recall that $Y_\posfact<Y_\base$, and correspondingly $X_\base=X/Y_\base$ is a factor of $X_\posfact=X/Y_{\posfact}$. By \peqref{eq: definition of entropy contribution}, the conditional entropy contribution
  \[
  h_{\widetilde{\mu}}(\widetilde\alpha^\n, W^{[\chi]}\cap V_{\posfact,\AA}\mid X_\base\times\Omega)
  \]
  is given by
\[
h_{\widetilde{\mu}}(\widetilde\alpha^\n, W^{[\chi]}\cap V_{\posfact,\AA}\mid X_\base\times\Omega) = H _ {\widetilde \mu} (\mathcal{C} \mid \widetilde\alpha^{-\n} \mathcal{C}  \vee \mathcal{B} _ {{\base}} \vee \mathcal B _ {\Omega})
\]
for $\mathcal C$ a ${W^{[\chi]}\cap V_{\posfact,\AA}}$-subordinate $\sigma$-algebra for $X\times\Omega$ (and $\mathcal{B} _ {{\base}}$ and $\mathcal B _ {\Omega}$ the $\sigma$-algebras of Borel measurable sets on $X_\base$ and $\Omega$ respectively).  However, since each atom of $\mathcal C$ is contained in a single orbit of $Y_\posfact$, its image under the projection from $X$ to $X _{\posfact} = X / Y _{\posfact}$ consists of a single point, so modulo $\widetilde \mu$,
\[
\widetilde\alpha^{-\n} \mathcal{C}  \vee \mathcal{B} _ \base = \widetilde\alpha^{-\n} \mathcal{C}  \vee \mathcal{B} _ { \posfact}
\]
(with $\cB_\posfact$ the Borel $\sigma$-algebra on $X_\posfact$). It follows that
\begin{align*}
 h_{\widetilde{\mu}}(\widetilde\alpha^\n, W^{[\chi]}\cap V_{\posfact,\AA}\mid X_\base\times\Omega) &= H _ {\widetilde \mu} (\mathcal{C} \mid \widetilde\alpha^{-\n} \mathcal{C}  \vee \mathcal{B} _ \base \vee \mathcal B _ {\Omega})   \\
 &=H _ {\widetilde \mu} (\mathcal{C} \mid \widetilde\alpha^{-\n} \mathcal{C}  \vee \mathcal{B} _ \posfact \vee \mathcal B _ {\Omega})\\
 &=h_{\widetilde{\mu}}(\widetilde\alpha^\n, W^{[\chi]}\cap V_{\posfact,\AA}\mid X_\posfact\times\Omega)
\end{align*}
as claimed.

 Using \eqref{eq:anotherdoubleconditioning} 
 and the relation between entropy contributions and entropy in Proposition \ref{prop: contribution}
 we now obtain 
 \begin{align*}
  h_{\widetilde{\mu}}(\widetilde\alpha^\n, W^{[\chi]}\cap V_{\posfact,\AA}\mid X_\base\times\Omega)
  &\leq  h_{\widetilde{\mu}}(\widetilde\alpha^\n\mid X_\posfact\times\Omega) \\
  &\leq  h_{\mu}(\alpha_{X}^\n\mid X_\posfact)=0,
 \end{align*}
where the last inequality follows from the choice of $X _{\posfact}$ in \S\ref{sec:standing}.

So suppose now that $[\chi]$ is not a coarse Lyapunov weight for $Y_\irred$.
Let $V_{\base,\AA}<\AA^m$ be the rational subspace corresponding to $Y_\base$.
Since we consider entropy contribution conditional on $X_\base\times\Omega$ we may use
the above argument again and 
replace the coarse Lyapunov subgroup $W^{[\chi]}$ with intersection $W^{[\chi]}\cap V_{\base,\AA}$.
However, since $[\chi]$ is not a coarse Lyapunov
weight for $Y_\irred=Y_\base/Y_\posfact$ it follows 
that $W^{[\chi]}\cap V_{\base,\AA}=W^{[\chi]}\cap V_{\posfact,\AA}$.
Now the first part of lemma implies that the entropy contribution vanishes. Since
the entropy contribution for the Haar measure~$\lambda$ on $Y_\irred$ vanishes too
this proves the lemma.
\end{proof}

\begin{proof}[Proof of Theorem \ref{thm: shape}]
 By the Abromov-Rokhlin entropy addition formula we have
 \[
  h_{\widetilde{\mu}}(\widetilde\alpha^\n\mid X_\base\times\Omega)=
  h_{\widetilde{\mu}}(\widetilde\alpha^\n\mid X_\posfact\times\Omega)+
  h_{\widetilde{\mu}}(\alpha_{X_\posfact\times\Omega}^\n\mid X_\base\times\Omega)
 \]
 for all $\n\in\zd$.
 However, by properties of $X_\posfact=X/Y_\posfact$ in \eqref{x base equation}
 we have
 \begin{equation}\label{zero entropy equation}
        h_{\widetilde{\mu}}(\widetilde\alpha^\n\mid X_\posfact\times\Omega)
   \leq h_{{\mu}}(\alpha^\n\mid X_\posfact)=0.
 \end{equation}
Therefore, 
  \begin{equation}\label{eq:entropyequalityinproof}
        h_{\widetilde{\mu}}(\widetilde\alpha^\n\mid X_\base\times\Omega)=
  h_{\widetilde{\mu}}(\alpha_{X_\posfact\times\Omega}^\n\mid X_\base\times\Omega)\qquad\text{for all $\n\in\zd$.}
  \end{equation}

 We claim that the entropy contributions for all coarse Lyapunov subgroups
 $W^{[\chi]}$ satisfy a similar equation, namely
  \begin{equation}\label{eq:entropyeqinprooftimes2}
  h_{\widetilde{\mu}}(\widetilde\alpha^\n,W^{[\chi]}\mid X_\base\times\Omega)=
  h_{\widetilde{\mu}}(\alpha_{X_\posfact\times\Omega}^\n,W_\posfact^{[\chi]}\mid X_\base\times\Omega)
 \end{equation}
 for all $\n\in\zd$ with $\chi\cdot\n<0$, where $W^{[\chi]}$ and $W_\posfact^{[\chi]}$ denote the coarse Lyapunov
 subgroups for $X$ and $X_\posfact$ respectively.
 Applying Proposition~\ref{entropy inequality proposition} to $W^{[\chi]}$ and $W_\posfact^{[\chi]}$ (and with $X_\base\times \Omega$ playing the role of $\Omega$ in Proposition~\ref{entropy inequality proposition})
we conclude that
\begin{multline*}
 h_{\widetilde{\mu}}(\widetilde\alpha^\n,W^{[\chi]}\mid X_\base\times \Omega)\leq h_{\widetilde{\mu}}(\alpha_{X_\posfact\times\Omega}^\n,W_\posfact^{[\chi]}\mid X_\base\times \Omega) \\
+ h_{\widetilde\mu}(\widetilde\alpha^\n,W^{[\chi]}\cap V_{\posfact,\AA}\mid X_\base\times \Omega)
\end{multline*}
with $V_{\posfact,\AA} < \AA^m$ as in Lemma \ref{lemma:entropycontvanishes}. 
By Lemma \ref{lemma:entropycontvanishes},  \[h_{\widetilde{\mu}}(\widetilde\alpha^\n, W^{[\chi]}\cap V_{\posfact,\AA}\mid X_\base\times\Omega)=0,\]
hence
 \begin{equation}\label{ineqtobeimproved}
    h_{\widetilde{\mu}}(\widetilde\alpha^\n,W^{[\chi]}\mid X_\base\times\Omega)\leq
  h_{\widetilde{\mu}}(\alpha_{X_\posfact\times\Omega}^\n,W_\posfact^{[\chi]}\mid X_\base\times\Omega).
 \end{equation}
 
 Now fix some $\n\in\zd$ and take the sum over all coarse Lyapunov weights $[\chi]$
 with $(\chi\cdot\n)<0$. By the second claim in Theorem \ref{thm:prod} (applied both to $X$
 and to $X_\posfact$)  this leads to an inequality between the two
 terms in \eqref{eq:entropyequalityinproof}. However, since in \eqref{eq:entropyequalityinproof} equality holds, equality for the entropy contributions in \eqref{ineqtobeimproved} must hold
 as well. Varying $\n\in\zd$
 gives \eqref{eq:entropyeqinprooftimes2} for all coarse Lyapunov weights $[\chi]$.
 
 Next we are going to combine Theorem \ref{thm:bound-contribution} and Theorem \ref{thm:prod}. By  Theorem \ref{thm:bound-contribution}
\begin{equation}\label{eq:crucialineqbeforedone}
       h_{\tilde{\mu}}(\alpha^\n_{X_\posfact\times\Omega},W_\posfact^{[\chi]}\mid X_\base\times\Omega)\leq 
  \frac{h_{\tilde{\mu}}(\alpha^\n|X_\base\times\Omega)}{h_\lambda(\alpha_{Y_\irred}^\n)}
  h_{\lambda}(\alpha^\n_{Y_\irred},W_{Y_\irred}^{[\chi]}).
 \end{equation}
 Set $\kappa_{\n,\widetilde{\mu},\Omega} = h_{\tilde{\mu}}(\alpha^\n|X_\base\times\Omega)/h_\lambda(\alpha_{Y_\irred}^\n)$; note that it
 does not depend on the coarse Lyapunov weight $[\chi]$.
 Taking the sum over all coarse Lyapunov weights $[\chi]$ with $\chi\cdot\n<0$
 gives by \eqref{eq: entropyaddition} of Theorem~\ref{thm:prod} on the left hand side the conditional entropy $h_{\tilde{\mu}}(\alpha^\n|X_\base\times\Omega)$
 and on the right hand side we obtain
 $
 \kappa_{\n,\widetilde{\mu},\Omega}h_\lambda(\alpha_{Y_\irred}^\n)$, which in view of the definition of $\kappa_{\n,\widetilde{\mu},\Omega}$ also equals
$ h_{\tilde{\mu}}(\alpha^\n|X_\base\times\Omega)$.
 This shows that in fact
 \begin{equation*}\ptag{eq:crucialineqbeforedone}
       h_{\tilde{\mu}}(\alpha^\n_{X_\posfact\times\Omega},W_\posfact^{[\chi]}\mid X_\base\times\Omega)=\kappa_{\n,\widetilde{\mu},\Omega}\,
  h_{\lambda}(\alpha^\n_{Y_\irred},W_{Y_\irred}^{[\chi]}).
 \end{equation*}
  for all coarse Lyapunov weights
 $[\chi]$ with $\chi\cdot\n<0$.  
 
 We now choose $\n_0$ so that $\chi\cdot\n_0\neq 0$ for all coarse Lyapunov weights $[\chi]$.
 Since
 \[
\kappa_{\n_0,\widetilde{\mu},\Omega}=\kappa_{-\n_0,\widetilde{\mu},\Omega}.
 \]
 equation \peqref{eq:crucialineqbeforedone} together with \eqref{eq:entropyeqinprooftimes2}, 
implies that
 \[
   h_{\widetilde{\mu}}(\widetilde\alpha^{\n_0},W^{[\chi]}\mid X_\base\times\Omega)  
  =\kappa_{\n_0,\widetilde{\mu},\Omega}
  h_{\lambda}(\alpha^{\n_0}_{Y_\irred},W^{[\chi]})
 \]
 for all coarse Lyapunov weights $[\chi]$.
 It follows that the constants appearing in Lemma \ref{lem: linear-contribution}, that may depend on $[\chi]$ but not on $\n_0$,
 agree with $\kappa_{\n_0,\widetilde{\mu},\Omega}$, that may depend on $\n_0$ but not on $[\chi]$. This gives the theorem.
\end{proof}

\subsection{Rigidity of the entropy function implies invariance}\label{sec: rigidity implies invariance}

We are now ready to prove Theorem \ref{thm:main2}.
As before we work
with the setup explained in \S\ref{sec:standing}, specifically \eqref{x base equation}: $X_\posfact=X/Y_\posfact$
is a factor of $X$ so that $h_\mu(\alpha^\n|X_\posfact)=0$ for all $\n \in \ZZ^d$, \ $Y_\irred$ is an $\alpha$-invariant $\AA$-irreducible subgroup of $X_\posfact$, and $X_\base=X/Y_\base=X_\posfact/Y_\irred$ satisfies that for some $\n\in\ZZ^d$ we have that $h_\mu(\alpha^\n_{X_\posfact}|X_\base)>0$. 

Applying Theorem \ref{thm: shape}
with $\Omega$ being the trivial factor we obtain a constant $\kappa_\mu>0$ 
so that $h_\mu(\alpha^\n|X_\base)=\kappa_\mu h_\lambda(\alpha_{Y_\irred}^\n) >0$ for all $\n \in \ZZ^d$.

Next we choose a coarse Lyapunov weight $[\chi]$ of $Y_\irred$
and consider it as a coarse Lyapunov weight for $X$. 
Let $V_{\base,\AA}<\AA^m$ be the rational $\alpha$-invariant subspace
so that $Y_\base$ is the image of $V_{\base,\AA}$ modulo $\QQ$. 
We also set $W=W^{[\chi]}\cap V_{\base,\AA}$. Let $f _ W$ be a positive function on $W$ which is integrable with respect to $\mu _ x ^ W$ for every $x$ in a set of full measure as in \S\ref{sec:leafwiseXtilde}. We take
\[
\Omega = \left\{ [\nu]: \text{$\nu$ is a locally finite measure on $W$ with $\int f _ W \,d \nu < \infty$} \right\},
\]
where $[\nu]$ denotes the equivalence class of $\nu$ in the space of locally finite measures with respect to proportionality. One can equip $\Omega$ with the structure of a compact metric space in a standard way.
The map (defined for a.e.\ $x \in X$) that takes $x \in X$ and maps it to the proportionality class of its leafwise measure $[\mu_x^W]$ is, by \eqref{eq:conjugacy}, a factor map of the $\ZZ ^ d$-action $\alpha$ on $X$ to the action of $\ZZ ^ d$ on elements of~$\Omega$ 
by pushforward with respect to the linear action corresponding to $\alpha$ on $W$.
Taking the product of $\Omega$ with $X_\base$ we get a factor of $X$ 
and we apply Theorem~\ref{thm: shape} once
more over this factor $X_\base \times \Omega$ to obtain a constant $\kappa_{\widetilde{\mu},\Omega}\geq 0$ so that
\begin{equation}
    h_{\widetilde{\mu}}(\alpha^\n,W^{[\chi]}\mid X_\base \times\Omega)=
  \kappa_{\widetilde{\mu},\Omega}h_\lambda(\alpha_{Y_\irred}^\n,W^{[\chi]})>0 
\end{equation}
for all coarse Lyapunov weights $[\chi]$ and $\bn \in \ZZ^d$.

\begin{lemma}\label{two weights Lemma}
$Y_\irred$ has at least two linearly independent Lyapunov weights
\end{lemma}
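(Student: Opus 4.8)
\emph{The plan is to argue by contradiction.} Suppose that all Lyapunov weights of $Y_\irred$ span a one-dimensional subspace of $(\RR^d)^{*}$. I would first observe that they cannot all be zero: if $\h_\lambda(\alpha^\n_{Y_\irred})=0$ for every $\n$, then Theorem~\ref{thm: shape} applied with trivial $\Omega$ gives $h_\mu(\alpha^\n\mid X_\base)=\kappa_\mu\h_\lambda(\alpha^\n_{Y_\irred})=0$ for all $\n$, contradicting the positive entropy arranged in the present setup. So there is a nonzero functional $\chi_0$ with every weight a real multiple of $\chi_0$. Using Proposition~\ref{adelicirred} I identify $Y_\irred$ with $\AA_\KK/\KK$, with $\alpha^\n$ acting as multiplication by $\zeta_\n=\zeta_1^{n_1}\cdots\zeta_d^{n_d}$, so that the weights are exactly the functionals $\n\mapsto\log\absolute{\zeta_\n}_\sigma$ as $\sigma$ ranges over the places of $\KK$.

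The next, arithmetic, step is to show that collinearity of these functionals forces the multiplicative group $\Gamma=\langle\zeta_1,\dots,\zeta_d\rangle\le\KK^{\times}$ to have rank at most one modulo torsion. Consider the logarithmic embedding $L_\Gamma\colon\Gamma\to\RR^{V_\KK}$, $\gamma\mapsto(\log\absolute{\gamma}_\sigma)_\sigma$, whose kernel is precisely the group of roots of unity of $\Gamma$ (an algebraic number all of whose absolute values are $1$ is a root of unity). The assumption that all weights are multiples of $\chi_0$ says exactly that $L_\Gamma(\Gamma)$ lies on a single line through the origin. I then argue that any two elements $\gamma,\gamma'\in\Gamma$ with $L_\Gamma(\gamma),L_\Gamma(\gamma')\neq 0$ must be commensurable: at a finite place $\log\absolute{\gamma}_\sigma$ is a rational multiple of $\log p$, so if some finite-place coordinate of $L_\Gamma(\gamma)$ is nonzero the proportionality ratio is rational; otherwise $\gamma$ and $\gamma'$ are units of $\mathcal O_\KK$, and then $L_\Gamma(\gamma),L_\Gamma(\gamma')$ are two nonzero collinear vectors of the discrete Dirichlet unit lattice, which again forces a rational ratio (an irrational ratio would make $\ZZ L_\Gamma(\gamma)+\ZZ L_\Gamma(\gamma')$ dense on the line). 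Hence $\Gamma/\mathrm{tors}$ embeds in $\QQ$, has rank $\le 1$, so $\Gamma$ has a finite-index cyclic subgroup; since the $\ZZ^d$-action on $Y_\irred$ factors through $\ZZ^d\to\Gamma$, this action is virtually cyclic.

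Finally I would realise $Y_\irred$ as an algebraic factor of $\alpha$, so as to contradict the standing hypothesis that $\alpha$ has no virtually cyclic factors. Write $L=\widehat X$ and recall $\widehat{Y_\irred}=L_\irred=L_\posfact/L_\base$. Since multiplication by the $\zeta_\n$ on the field $\KK$ is semisimple, $\alpha_\uni$ is trivial on $L_\irred$ and $L_\irred$ is $\alpha_\diag$-irreducible; as $\alpha_\diag$ is semisimple, $[L_\irred]$ occurs as an $\alpha_\diag$-Jordan--Hölder factor of $L$ and hence the corresponding $\alpha_\diag$-isotypic component $L_{[L_\irred]}\le L$ is nonzero and $\alpha$-invariant. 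The $\alpha_\diag$-commutant of $L_{[L_\irred]}$ is a matrix algebra over $\KK$, inside which the commuting unipotent operators $\alpha_\uni^\n$ have a common fixed $\KK$-line $\ell\le L_{[L_\irred]}$; this $\ell$ is $\alpha$-invariant with $\alpha|_\ell\cong\alpha|_{L_\irred}$ as $\ZZ^d$-modules, so $\widehat\ell\cong Y_\irred$ is a nontrivial algebraic factor of $X$ carrying a virtually cyclic action — a contradiction. Therefore $Y_\irred$ has at least two linearly independent Lyapunov weights. I expect the last paragraph to be the main obstacle: producing an honest $\alpha$-invariant subspace of $L$ realising the subquotient $L_\irred$ genuinely requires combining the semisimplicity of $\alpha_\diag$ with the unipotence of $\alpha_\uni$ (a copy of $L_\irred$ is visible in $L$ only as an $\alpha_\diag$-module, and one still has to select one that is also $\alpha_\uni$-stable), whereas the arithmetic step is essentially Dirichlet's unit theorem together with rationality of local valuations.
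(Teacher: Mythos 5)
Your argument is correct and takes the same overall route as the paper's proof: reduce via Proposition~\ref{adelicirred} to multiplication by $\zeta_\n\in\KK^\times$ on $\AA_\KK/\KK$, use discreteness of the finite-place valuations together with Dirichlet's unit theorem and Kronecker's theorem to show that collinearity of the Lyapunov weights forces the action on $Y_\irred$ to be virtually cyclic, and then contradict the no-virtually-cyclic-factors hypothesis by exhibiting a factor of $X$ isomorphic to $Y_\irred$. The arithmetic step is repackaged but equivalent: the paper looks at integer vectors $\n$ with $|\chi'\cdot\n|<\epsilon$ for all weights $\chi'$ and shows $\zeta_\n$ must be a root of unity with $\chi'\cdot\n=0$, whereas you analyse the logarithmic embedding of the whole group $\Gamma=\langle\zeta_1,\dots,\zeta_d\rangle$ and show $\Gamma/\mathrm{tors}$ has rank at most one; both come down to the same discreteness inputs. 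Where you genuinely add content is the final step. The paper dispatches it in one sentence (``By the Jordan decomposition over $\QQ$, $X$ has a factor isomorphic to $Y_\irred$''), which conceals a real point: $L_\irred$ is a priori only a subquotient of $L=\widehat X$, and one must produce an honest $\alpha$-invariant $\QQ$-subspace realising it. Your construction via the $\alpha_\diag$-isotypic component, the identification of the commutant with a matrix algebra over $\KK$, and a common fixed $\KK$-line for the commuting unipotents $\alpha_\uni^\n$ (a Kolchin-type step) correctly fills this in, and you were right to flag it as the delicate part. Finally, your use of Theorem~\ref{thm: shape} to exclude the case of all weights vanishing is a valid shortcut; it is also recorded implicitly in the paragraph preceding the lemma, and is strictly speaking even unnecessary, since the all-zero-weight case makes all $\zeta_\n$ roots of unity and hence directly produces a virtually cyclic factor.
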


\begin{proof}
  Since $Y_\irred$ is irreducible we may apply Proposition \ref{adelicirred}
 and describe the action on $Y_\irred$ using a global field $\KK$ and its elements.
 Also recall that the eigenspaces for $Y_\irred$ correspond to the completions of $\KK$. 
 
 Suppose in contradiction that $\alpha$ has no two linearly
 independent Lyapunov weights. Then every nonzero Lyapunov weight
 must be a multiple of $\chi$. We now define the hyperplane $H<\RR^d$ 
 as the kernel of $\chi$. 
 Suppose $\n\in\zd$ is close to $H$, i.e.\ satisfies
 $\chi'\cdot\n\in (-\epsilon,\epsilon)$ for all Lyapunov weights $\chi'$
 of $Y_\irred$ and some $\epsilon>0$ to be determined later. 
 For the algebraic number $\zeta_\n$ corresponding to $\n$ 
 this becomes the inequality 
 \begin{equation}\label{zeta inequality}
     e^{-\epsilon}<|\zeta_{\n}|_\sigma<e^\epsilon
 \end{equation}
 for all places $\sigma$ of $\KK$ (and $|\zeta_{\n}|_\sigma=1$ for places $\sigma$ lying over finite primes $p$ not in $S$, with $S$ as in \eqref{eq:define S}. 
 However, for small enough $\epsilon>0$ it follows from~\eqref{zeta inequality} applied to all finite places
 that $\zeta_\n$ must be an
 algebraic unit. Applying~\eqref{zeta inequality} also to all infinite places we get that this unit satisfies that all its real and complex embeddings have absolute
 value close to one. It follows from Dirichlet's unit theorem that
 $\zeta$ must be a root of unity and $\n\in H$. However, this implies that the action of $\alpha(\ZZ^d)$ on $Y_\irred$ is
 virtually cyclic. By the Jordan decomposition over $\QQ$, $X$ has a factor isomorphic to $Y_\irred$; the fact that $\alpha(\ZZ^d)$ on $Y_\irred$ is
 virtually cyclic now contradicts our standing assumption that $X$ has no virtually cyclic factors.
\end{proof}

\begin{proposition}\label{nexttolast}
 In fact $\kappa_{\widetilde{\mu},\Omega}=\kappa_\mu$ (hence $\kappa_{\widetilde{\mu},\Omega}>0$).
\end{proposition}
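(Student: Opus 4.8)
The plan is to determine $\kappa_{\widetilde\mu,\Omega}$ by testing it against a coarse Lyapunov weight \emph{other} than the weight $[\chi]$ that was used to build $\Omega$. Recall that $\Omega$ was constructed from $W=W^{[\chi]}\cap V_{\base,\AA}$ and that $\widetilde\mu$ is the graph joining of $\mu$ with the factor map $p\colon x\mapsto[\mu_x^{W}]$. By Lemma~\ref{two weights Lemma} we may fix a Lyapunov weight $\chi'$ of $Y_\irred$ that is linearly independent from $\chi$; then $[\chi']\neq[\chi]$ and the set of $\n\in\zd$ with $\chi\cdot\n<0$ and $\chi'\cdot\n<0$ is a nonempty open cone. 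Fix such an $\n$. Since $[\chi']$ is a genuine coarse Lyapunov weight of $Y_\irred$, the computation in the proof of Lemma~\ref{lem: linear-contribution} gives $h_\lambda(\alpha_{Y_\irred}^{\n},W^{[\chi']})>0$. Applying Theorem~\ref{thm: shape} once to $\widetilde\mu$ over $X_\base\times\Omega$ and once to $\mu$ over $X_\base$ (trivial second factor),
\begin{gather*}
\kappa_{\widetilde\mu,\Omega}\,h_\lambda(\alpha_{Y_\irred}^{\n},W^{[\chi']})=h_{\widetilde\mu}(\widetilde\alpha^{\n},W^{[\chi']}\mid X_\base\times\Omega),\\
\kappa_\mu\,h_\lambda(\alpha_{Y_\irred}^{\n},W^{[\chi']})=h_\mu(\alpha^{\n},W^{[\chi']}\mid X_\base),
\end{gather*}
so it is enough to show that the two right-hand sides agree: dividing by $h_\lambda(\alpha_{Y_\irred}^{\n},W^{[\chi']})>0$ then forces $\kappa_{\widetilde\mu,\Omega}=\kappa_\mu$.

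To compare the two entropy contributions, first run the double-conditioning reduction already used above (as at the beginning of the proof of Theorem~\ref{thm:bound-contribution} and in the proof of Lemma~\ref{lemma:entropycontvanishes}): conditionally on $X_\base=X/Y_\base$ one may replace $W^{[\chi']}$ by $V^{[\chi']}:=W^{[\chi']}\cap V_{\base,\AA}$ in both contributions. It therefore suffices to prove that $\widetilde\mu_{\widetilde x}^{V^{[\chi']}}\propto\mu_x^{V^{[\chi']}}$ for $\widetilde\mu$-a.e.\ $\widetilde x=(x,p(x))$, since then equality of the exponential growth rates in Proposition~\ref{prop: contribution} gives equality of the contributions. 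With $\n$ as above one has $W=V^{[\chi]}=V^-_{\n}\cap W^{[\chi]}$ and $V^{[\chi']}=V^-_{\n}\cap W^{[\chi']}$, so both are among the factors in the product decomposition of $\mu_x^{V^-_{\n}}$ provided by Theorem~\ref{thm:prod} (applied to $\mu$, i.e.\ with trivial second factor). The product structure, combined with the shifting relation~\eqref{eq:shifting}, implies that translating $x$ by an element $v\in V^{[\chi']}$ alters only the $[\chi']$-factor of $\mu_x^{V^-_{\n}}$ and leaves $\mu_x^{V^{[\chi]}}=\mu_x^{W}$ unchanged up to proportionality --- this is exactly the place where $[\chi']\neq[\chi]$ is used. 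Hence $p(x+v)=p(x)$ for $\mu$-a.e.\ $x$ and $\mu_x^{V^{[\chi']}}$-a.e.\ $v$, so the $V^{[\chi']}$-leaf through $\widetilde x$ in $\widetilde X$ lies, up to a $\mu_x^{V^{[\chi']}}$-null set, inside the graph of $p$; on that graph the projection $\pi_X$ is an isomorphism of $(\widetilde X,\widetilde\mu)$ onto $(X,\mu)$ carrying $V^{[\chi']}$-leaves to $V^{[\chi']}$-leaves, and therefore $\widetilde\mu_{\widetilde x}^{V^{[\chi']}}\propto\mu_x^{V^{[\chi']}}$.

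Putting the pieces together, $h_{\widetilde\mu}(\widetilde\alpha^{\n},W^{[\chi']}\mid X_\base\times\Omega)=h_\mu(\alpha^{\n},W^{[\chi']}\mid X_\base)$, whence $\kappa_{\widetilde\mu,\Omega}=\kappa_\mu$; and since $\kappa_\mu>0$ was already established, also $\kappa_{\widetilde\mu,\Omega}>0$.

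The crux --- and the step I expect to require the most care --- is the middle paragraph: the assertion that the factor $\Omega$, which records only the leafwise measure along the $[\chi]$-direction, is invisible to the $[\chi']$-entropy contribution. This rests squarely on the product structure of leafwise measures for coarse Lyapunov subgroups (Theorem~\ref{thm:prod}, and hence on Section~\ref{sec:productstructure} together with the standing assumption that $X$ is a zero-entropy extension of $X_\posfact$), now invoked for the plain measure $\mu$; in particular one must verify that the identity $p(x+v)=p(x)$ holds for $\mu_x^{V^{[\chi']}}$-a.e.\ $v$, not merely for Haar-a.e.\ $v$, which is again part of what the product structure delivers (the product decomposition holding on a conull $V^{[\chi']}$-saturated set). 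The other essential input is Lemma~\ref{two weights Lemma}: without a second, inequivalent Lyapunov weight of $Y_\irred$ --- which is where the absence of virtually cyclic factors enters --- there would be no transverse coarse Lyapunov direction to exploit.
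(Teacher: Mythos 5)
Your proposal is correct and follows essentially the same route as the paper's proof: both fix a second, inequivalent Lyapunov weight $\chi'$ via Lemma~\ref{two weights Lemma}, invoke the product structure of Theorem~\ref{thm:prod} to show that translation along $W^{[\chi']}$ leaves the proportionality class $[\mu_x^{W^{[\chi]}}]$ unchanged (so the factor map to $\Omega$ is constant on $W^{[\chi']}$-leaf pieces), conclude that the $W^{[\chi']}$-leafwise measures on $X$ and $X\times\Omega$ coincide, and finish by comparing the two instances of Theorem~\ref{thm: shape}. Your additional remarks on the double-conditioning reduction to $V^{[\chi']}$ and on the shifting relation are fine elaborations of the same argument.
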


\begin{proof}
 Let $\chi'$ be a Lyapunov weight of $Y_\irred$
 that is linearly independent to $\chi$. The existence of $\chi'$ follows from Lemma~\ref{two weights Lemma}. Choose some $\n\in\zd$ 
 so that $\chi\cdot\n<0$ and $\chi'\cdot\n<0$. 
 The product structure of the leafwise measures in Theorem \ref{thm:prod} 
 (see also \cite[Cor.~8.6]{EL-Pisa})
 now implies that for a set $X'\subset X$ of full measure
 we have the following property: for any $x\in X'$ and any $w'\in W^{[\chi']}$
 with $x+w'\in X'$ we have $\left [\mu_{x+w'}^{W^{[\chi]}}\right ]=\left [\mu_{x}^{W^{[\chi]}}\right ]$.
 For a $\sigma$-algebra $\mathcal{A}$ subordinate to $W^{[\chi']}$ this means that,
 on the complement of a null set, all points in a given atom of $\mathcal{A}$ are contained
 in the same fiber of the factor map from $X$ to $\Omega$.
 In other words the map $x \mapsto \left (x,\left [\mu_x^{W^{[\chi]}}\right ]\right )$ from $X$ to $X\times\Omega$ maps $\sigma$-algebras
 subordinate to $W^{[\chi']}$ on $X$ to $\sigma$-algebras subordinate to $W^{[\chi']}$ on $X\times\Omega$.
 From this it follows that the leafwise measures for $X$ and for $X\times\Omega$
 with respect to the subgroup $W^{[\chi']}$ agree. In particular
 we have 
 \[
  h_{\widetilde\mu}(\alpha^\n,W^{[\chi']}\mid X_\base\times\Omega)
  = h_\mu(\alpha^\n, W^{[\chi']}\mid X_\base),
 \]
 which together with Theorem \ref{thm: shape} proves the proposition.  
\end{proof}

We continue working under the assumptions stated at the beginning of \S\ref{sec: rigidity implies invariance}.

\begin{corollary}\label{reallylast}
For any subset $X''\subset X$ of full measure 
there exist $x\in X''$ and a nonzero $v\in W$
with $x+v\in X''$ and $\mu_x^{W}\propto\mu_{x+v}^{W}$.
\end{corollary}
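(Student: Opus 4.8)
The plan is to deduce the corollary \emph{directly} from the positivity of the entropy contribution of $W$ over the factor $X_\base\times\Omega$, exploiting that $\Omega$ was built precisely so that $x\mapsto\phi(x):=[\mu^W_x]$ is a factor map. First I fix $\n\in\zd$ with $\chi\cdot\n<0$ (possible since $\chi\neq 0$), so that $W^{[\chi]}\subseteq U^-_{\alpha^\n}$ and hence $W=W^{[\chi]}\cap V_{\base,\AA}\subseteq U^-_{\alpha^\n}$. Exactly as in the opening reduction in the proof of Theorem~\ref{thm:bound-contribution} (intersecting a $W^{[\chi]}$-subordinate $\sigma$-algebra with the pullback of $\mathcal B_\base$), conditionally on $X_\base$ one may replace $W^{[\chi]}$ by $W$, so that $h_{\widetilde\mu}(\widetilde\alpha^\n,W\mid X_\base\times\Omega)=h_{\widetilde\mu}(\widetilde\alpha^\n,W^{[\chi]}\mid X_\base\times\Omega)=\kappa_\mu\,h_\lambda(\alpha^\n_{Y_\irred},W^{[\chi]})$ by the displayed identity preceding Lemma~\ref{two weights Lemma} together with Proposition~\ref{nexttolast}; this is strictly positive because $[\chi]$ is a coarse Lyapunov weight of $Y_\irred$ (so the constant $c_\lambda$ of Lemma~\ref{lem: linear-contribution} is positive and $\chi\cdot\n\neq 0$). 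By Proposition~\ref{prop: contribution} — in the conditional form obtained by adjoining the factor $X_\base$ to $\Omega$, which is legitimate since $W\le V_{\base,\AA}$ acts trivially on the $X_\base$-coordinate — it follows that the leafwise measure $\widetilde\mu^W_{(x,\phi(x))}$ on $\widetilde X=X\times\Omega$ is nontrivial on a set of positive $\widetilde\mu$-measure.

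Next I use the graph structure of $\widetilde\mu$: being the join of $\mu$ with the factor $X_\base\times\Omega$, it is supported on $\Gamma=\{(x,\phi(x)):x\in\operatorname{dom}\phi\}$, whose complement is $\widetilde\mu$-null, and $W$ translates only the first coordinate, so $(x,\phi(x))+v=(x+v,\phi(x))$ for $v\in W$. By the standard property of leafwise measures that $\widetilde\mu$-null sets carry zero $\widetilde\mu^W_{\widetilde x}$-mass for $\widetilde\mu$-a.e.\ $\widetilde x$ (disintegrate $\widetilde\mu$ along an exhausting family of $W$-subordinate $\sigma$-algebras, cf.\ \cite[\S7]{EL-Pisa}), applied to $\widetilde X\setminus\Gamma$, for $\mu$-a.e.\ $x$ the measure $\widetilde\mu^W_{(x,\phi(x))}$ gives zero mass to $\{v\in W:\ x+v\notin\operatorname{dom}\phi\ \text{or}\ \phi(x+v)\neq\phi(x)\}$, i.e.\ it is concentrated on $\{v\in W:\ \mu^W_{x+v}\propto\mu^W_x\}$; applying the same property to $X''$ (note $\widetilde\mu(X''\times\Omega)=\mu(X'')=1$) it also gives zero mass to $\{v:\ x+v\notin X''\}$.

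Combining the two steps: choosing $x$ in the intersection of the positive-measure set on which $\widetilde\mu^W_{(x,\phi(x))}$ is nontrivial, the full-measure set on which the conclusions of the previous paragraph hold, and $X''$, the measure $\widetilde\mu^W_{(x,\phi(x))}$ is a nontrivial locally finite measure on $W$ concentrated on $\{v\in W:\ x+v\in X''\ \text{and}\ \mu^W_{x+v}\propto\mu^W_x\}$. Since a nontrivial leafwise measure is not a multiple of the Dirac mass at $0$, its support contains some $v\neq 0$, which lies in this set; together with $x\in X''$ this is the required pair. The step I expect to be most delicate is the ``Fubini for leafwise measures'' invoked in the second paragraph — upgrading the statement for a single subordinate $\sigma$-algebra (where the conclusion concerns only the atom $S_{\widetilde x}$) to one valid on all of $W$, while accommodating that $\phi$ is defined only almost everywhere; the rest is a direct assembly of Propositions~\ref{prop: contribution} and~\ref{nexttolast} with the construction of $\Omega$.
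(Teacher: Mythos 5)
Your proof is essentially correct and follows the same route as the paper: positive entropy contribution for $W$ over $X_\base\times\Omega$ (via Theorem~\ref{thm: shape}, Proposition~\ref{nexttolast}, and the $W^{[\chi]}\leftrightarrow W$ replacement from the proof of Theorem~\ref{thm:bound-contribution}), then Proposition~\ref{prop: contribution} to get nontrivial leafwise measures a.e., then the graph structure of $\widetilde\mu$ to translate back from $\widetilde X$ to $X$. The paper's proof is slightly terser, asserting that positive entropy contribution forces $\widetilde\mu_x^W(\{0\})=0$ and then using a $W$-subordinate $\sigma$-algebra directly; your route through ``leafwise measures vanish on null sets'' is a legitimate alternative and, as you anticipated, that step is standard (take an exhausting sequence of subordinate $\sigma$-algebras and intersect the resulting conull sets).

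There is one small slip in the final sentence: you infer from nontriviality that the \emph{support} of $\nu=\widetilde\mu^W_{(x,\phi(x))}$ contains some $v\neq 0$, and then assert this $v$ ``lies in this set'' $E=\{v\in W:\ x+v\in X''\text{ and }\mu^W_{x+v}\propto\mu^W_x\}$. That does not follow: the support is contained in $\overline{E}$, not in $E$. The correct (and shorter) deduction is measure-theoretic rather than topological: since $\nu\not\propto\delta_0$ we have $\nu(W\setminus\{0\})>0$, and since $\nu(W\setminus E)=0$ it follows that $\nu(E\setminus\{0\})>0$, hence $E\setminus\{0\}\neq\emptyset$. That gives the desired pair without any reference to $\operatorname{supp}(\nu)$.
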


\begin{proof}
 By Proposition \ref{nexttolast} we have $\kappa_{\widetilde{\mu},\Omega}>0$.
 We now apply this to the coarse Lyapunov weight $[\chi]$ 
 and the subgroup $W=W^{[\chi]}\cap V_{\base,\AA}$ that was used to define
 the factor $\Omega$.
 Choose $\n\in\zd$ with $\chi\cdot\n<0$.
 It now follows from the definition of $\kappa_{\widetilde{\mu},\Omega}$
 in Theorem \ref{thm: shape} that
 \[
 h_{\widetilde{\mu}}(\alpha^\n,W^{[\chi]}\mid X_\base \times\Omega)=
  \kappa_{\widetilde{\mu},\Omega}h_\lambda(\alpha_{Y_\irred}^\n,W^{[\chi]})>0.
 \]
  We also note that $h_{\widetilde{\mu}}(\alpha^\n,W\mid X_\base \times\Omega)=h_{\widetilde{\mu}}(\alpha^\n,W^{[\chi]}\mid X_\base \times\Omega)$ (c.f.\ e.g.\ the first lines in the proof of Theorem~\ref{thm:bound-contribution}).
 We note that positive entropy contribution 
 as in \eqref{eq:entropygrowthrate} shows in particular that the leafwise
 measure $\widetilde\mu_x^W$ gives zero mass to $0\in W$. 
 
 By the characterizing properties of leafwise measures in terms
 of $W$-subordinate $\sigma$-algebras the fact that the leafwise
 measure $\widetilde\mu_x^W$ gives zero mass to $0\in W$ 
 implies that for any subset $X''\subset\widetilde{X}$ of full measure for $\widetilde{\mu}$ 
 \begin{equation}\label{recurrence equation}
     \text{there exist $x\in X''$
 and $v\in W\setminus\{0\}$ so that $x+v\in X''$}.
 \end{equation} 
 
 Since the action of $W$ 
 on $\widetilde{X}=X\times\Omega$
 was defined to be trivial on $\Omega$ and $\widetilde{\mu}$ was defined as the push
 forward of $\mu$ under the map  $x\mapsto(x,[\mu_x^W])$, equation \eqref{recurrence equation} translates to the following statement:
 for every subset $X''\subset X$ of full measure for $\mu$ there exists $x\in X''$
 and $v\in W\setminus\{0\}$ with $x+v\in X''$ and $[\mu_x^W]=[\mu_{x+v}^W]$. However, 
 this is precisely the claim in the corollary.
\end{proof}

\begin{proof}[Proof of Theorem \ref{thm:main2}]
We first show how the statement in Corollary \ref{reallylast}
implies invariance of $\mu$ under translation by all elements of a nontrivial adelic subgroup.
We note that by \eqref{eq:shifting}
the claim in Corollary \ref{reallylast} amounts to saying
that for any set $X''\subseteq X'$ of full measure
there exists some $x\in X''$ and $v\in W\setminus\{0\}$
with $\mu_x^W+v\propto\mu_x^W$. Moreover, by our discussion
concerning \eqref{eq:affineinvariance1}--\eqref{eq:affineinvariance2}
there exists $X''$ of full measure so that
for any $x\in X''$ there is some $v\in W$ with 
$\mu_x^W+v=\mu_{x}^W$
(see also \cite[Lemma 5.10]{Einsiedler-Katok-II}). 
Hence Corollary \ref{reallylast}
implies that for all $x\in X''$ the 
closed subgroup 
\[
 W_x=\bigl\{v\in W: \mu_x^W+v=\mu_{x}^W\bigr\}
\]
is nontrivial. We define $\widetilde{W}_x$ as the 
maximal $S$-linear subgroup of $W_x$. We will show
below (using the equivariance formula \eqref{eq:conjugacy} and Poincar\`e 
recurrence) that almost surely $\widetilde{W}_x$ is nontrivial. 
In fact \cite[Prop.~6.2]{Einsiedler-Katok-II}
shows that $W_x=\widetilde{W}_x$ is itself $S$-linear, 
at least if the action is semisimple. 

We define the dimension $D_x$ of $\widetilde{W}_x$ as the sum 
of the dimensions of the maximal subspaces over $\QQ_\sigma$
contained in $\widetilde{W}_x$ for all $\sigma\in S$. 
Even though $\widetilde{W}_x$ may not be normalized
by $\alpha$ the equivariance formula \eqref{eq:conjugacy}
implies that both $W_x$ and $\widetilde{W}_x$
are equivariant for the action. Hence the dimension 
of $\widetilde{W}_x$ is invariant under $\alpha$. 
Therefore $D_x$ is constant (say equal to $D$) for a.e.\ $x$. 

We claim that Corollary \ref{reallylast} implies almost surely 
that $\widetilde{W}_x$ is nontrivial, 
or equivalently that $D_x \geq 1$ a.s. 
For this we apply Luzin's theorem
and let $K\subset X'$ be a compact subset of measure close
to $1$, on which all almost
sure properties of the leafwise measures hold, 
and on which the map $x\in K\mapsto\mu_x^W$ is continuous.
To obtain the almost sure conclusion
we apply the following argument for an increasing sequence of 
such Luzin sets that cover almost all of the space.

By Poincar\'e recurrence we see that for almost every $x\in K$ 
there exists two increasing sequences $n^-_k,n^+_k\in\NN$
with $T^{-n^-_k}x, T^{n^+_k}x\in K$ converging to $x$ as $k\to\infty$. 
Suppose now $v\in W_x\setminus\{0\}$ for one such $x$ so 
that $\mu_x^W+v=\mu_x^W$.
If $v$ has a nontrivial real component,
we are going to use the sequence $n^+_k$. Indeed applying 
\eqref{eq:conjugacy} for these powers we see that
the leafwise measure at $T^{n^+_k}x$
has translation invariance under $\ZZ (T^{n^+_k}v)$.
Note that $T^{n^+_k}v$ converges 
to $0$ as $k\to\infty$. However, since the unit ball in $\RR^m$ is compact
we may choose a subsequence and assume in addition that 
the direction of $T^{n^+_k}v$ converges in projective space
to $\RR \widetilde{v}$ for some $\widetilde{v}\neq 0$. 
Note that this implies that the subgroups $\ZZ (T^{n^+_k}v)$
converge in the Chabauty topology to $\RR\widetilde{v}$. 
Combined
with continuity of the leafwise measures restricted to $K$
this now implies that $\mu_x^W$ is invariant under translation 
by $\RR\widetilde{v}$, which implies $\widetilde{v}\in\widetilde{W}_x$ and $D_x\geq 1$
as desired.

So suppose now $v$ has trivial real component and let us write $v_p$
for the $p$-adic component of $v$ for all $p\in S$.
In this case the invariance of $\mu_x^W$ under $\ZZ v$
implies invariance under $\overline{\ZZ v}$, which by the 
Chinese Remainder Theorem is the product of the compact 
subgroups $\ZZ_p v_p$ for all primes $p\in S$. To obtain a
nontrivial $S$-linear subgroup we fix a prime $p\in S$
with $v_p\neq 0$ and  are going to use the sequence $n^-_k$.
Indeed as above $T^{-n^-_k}x\in K$ has invariance under $\ZZ_p T^{-n^-_k}v_p$, where $T^{-n^-_k}v_p$ diverges to infinity
but projectively converges to some $\QQ_p \widetilde{v}$. By continuity
of the leafwise measures on $K$ this again implies $\widetilde{v}\in\widetilde{W}_x$ 
and $D_x\geq 1$ as desired.

We define the (measurable) factor map
\[
 \phi:x\mapsto \widetilde{W}_x,
\]
from $X$ to the space  $\mathcal F$ of closed subgroups of $W$
in the Chabauty topology.
We also decompose $\mu$ over $\phi$, i.e.\ we consider 
the conditional measures $\mu_x^{\phi^{-1}\mathcal{B}_{\mathcal{F}}}$.
By the compatibility condition \eqref{eq:shifting} and the definition 
of $\widetilde{W}_x$ we have $\phi(x)=\phi(x+v)$ whenever 
$v\in W$ and both $x,x+v$ belong
to $X''$. In particular this shows, for a $W$-subordinate
$\sigma$-algebra $\mathcal A$, that $\mathcal A$ 
contains $\phi^{-1}\mathcal{B}_{\mathcal{F}}$ modulo null sets. Hence, 
after fixing a choice of the conditional measures $\mu_x^{\mathcal{A}}$
we have that the (doubly) conditional measure $\left(\mu_{x_0}^{\phi^{-1}\mathcal{B}_{\mathcal{F}}}\right)^{\mathcal A}_{x}$ agrees with $\mu_x^{\mathcal{A}}$ for $\mu_{x_0}^{\phi^{-1}\mathcal{B}_{\mathcal{F}}}$-almost every $x$ and $\mu$-almost every $x_0$.
This in turn implies for the leaf-wise measures of $\mu$ 
and $\mu_{x_0}^{\phi^{-1}\mathcal{B}_{\mathcal{F}}}$ with respect to $W$
by the characterizing properties that
\begin{equation}\label{eq: conditional and leafwise}
  \bigl(\mu_{x_0}^{\phi^{-1}\mathcal{B}_{\mathcal{F}}}\bigr)_x^W=\mu_x^W
\end{equation}
for $\mu_{x_0}^{\phi^{-1}\mathcal{B}_{\mathcal{F}}}$-almost every~$x$
and for $\mu$-almost every $x_0$. 

Fix some $x_0$. 
Then essentially by definition of the factor $\phi$ we have that $\widetilde W_x=\widetilde W_{x_0}$
for $\mu_{x_0}^{\phi^{-1}\mathcal \mathcal{B}_{\mathcal{F}}}$-almost every $x$. In other words, by definition of $\widetilde W_x$ and \eqref{eq: conditional and leafwise}, we have that for $\mu_{x_0}^{\phi^{-1}\mathcal \mathcal{B}_{\mathcal{F}}}$-almost every $x$, the leafwise measures $\bigl(\mu_{x_0}^{\phi^{-1}\mathcal{B}_{\mathcal{F}}}\bigr)_x^W$ are invariant under the group $\widetilde W_{x_0}$, hence
by the standard properties of leaf-wise measures that
$\mu_{x_0}^{\phi^{-1}\mathcal \mathcal{B}_{\mathcal{F}}}$ is itself
invariant under the action of $\widetilde{W}_{x_0}$ by translations.
Note that, as far as we know at this point, the group $\widetilde{W}_x$
may depend on $x$, and hence we have not established the invariance of $\mu$ itself under any translations yet. 

As $X=\AA^m/\QQ^m$ is an abelian group and $\QQ^m$ acts trivially
this implies that $\mu_{x_0}^{\phi^{-1}\mathcal \mathcal{B}_{\mathcal{F}}}$
is in fact invariant under the closure $G_x$
of $\widetilde{W}_{x_0}+\QQ^m$ in $X$. 
Since $\widetilde{W}_{x_0}$, as an $S$-linear subgroup, is invariant under
multiplication by $\QQ$, we have that its anihilator $G_x^\perp$ in the Pontryagin dual $\QQ^m$ to $X$ is a vector space over $\QQ$. Hence $G_x$ is an adelic subgroup of $X$.

The equivariance formula \eqref{eq:conjugacy}
implies a similar equivariance formula for $W_x$, for $\widetilde{W}_x$,
and hence also for $G_x$.
In other words, $x\mapsto G_x$ is a (measurable) factor map for $\alpha$
with values in the countable set of all adelic subspaces of $X_m$.
Hence there exists an adelic subspace $G$ so that $G_x=G$
on a set of positive measure. By Poincar\'e recurrence we may conclude that
there is a finite index subgroup $\Lambda$ of $\ZZ^d$ so that $\alpha(\Lambda)$ normalizes $G$.
However, the assumption in Theorem \ref{thm:main2}
now implies that $G$ is actually normalized
by $\ZZ^d$. Ergodicity under $\alpha$ 
now implies that $G_x=G$ almost surely. Therefore, $\mu$
is invariant under $G$. 
\end{proof}

\section{Disjointness}\label{sec: disj} 
We will deduce in this section rigidity of joinings and in particular prove
Corollary \ref{thm: joining}.
We say a measure on a solenoid $X$ is {\em homogeneous} if it is the $Y$-invariant measure on a
coset $Y+y$ of a closed subgroup $Y<X$.

\begin{proposition}\label{pro: joining}
 Let $d,r\geq 2$ and let $\alpha_i$ be $\zd$-actions on solenoids $X_i$ (equipped with Haar measure $\lambda_{X_i}$)
 with no virtually-cyclic factors for $i=1,\ldots,r$.
 Suppose there exists a nontrivial joining $\mu$ between
 $\alpha_i$ for $i=1,\ldots,r$.
 Then there exists a finite index subgroup $\Lambda\subseteq\ZZ^d$
 such that there exists also a homogeneous nontrivial joining
 $\lambda_G$ between $\alpha_{i,\Lambda}$ for $i=1,\ldots,r$.
In fact, the subgroup $G$ can be chosen to be any of the groups in the conclusion of Theorem~\ref{thm: main}
 when applied to an ergodic component of the positive entropy measure $\mu$ on $X_1\times\dots\times X_r$.
\end{proposition}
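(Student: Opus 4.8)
The strategy is to apply Theorem~\ref{thm: main} to the joining $\mu$ viewed as a measure on the product solenoid $X = X_1 \times \cdots \times X_r$ under the product action $\alpha = \alpha_1 \times \cdots \times \alpha_r$, and to track carefully what the various pieces of the structure theorem mean for a measure that has all $\lambda_{X_i}$ as marginals. First I would check that $\alpha$ has no virtually cyclic factors: any such factor would, after passing to a further finite index subgroup, have to be a factor of one of the coordinate actions $\alpha_i$ (here one uses that a virtually cyclic quotient of a product action, being an action on a solenoid, dualizes to a rational linear representation, and an irreducible constituent of the product representation lives inside one of the factor representations); this contradicts the hypothesis that no $\alpha_i$ has virtually cyclic factors. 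Next, since $\mu$ is assumed nontrivial (i.e.\ not the product $\lambda_{X_1} \times \cdots \times \lambda_{X_r}$) but all its one-coordinate marginals are Haar, $\mu$ cannot be Haar on all of $X$ either---so $\mu \neq \lambda_X$---and hence $\mu$ must have \emph{positive entropy} for some $\alpha^{\mathbf n}$: otherwise, by Theorem~\ref{thm: main} applied to an ergodic component, that component would be invariant under translation by a subgroup $G_j$ and induce a zero-entropy measure on $X/G_j$; combining the translation-invariance over all ergodic components and using that the marginals are Haar forces the whole thing to be Haar, a contradiction. (I would need to phrase this last implication carefully, but it is the familiar ``a joining with Haar marginals and zero entropy that is built from homogeneous pieces must be homogeneous, and homogeneity plus full marginals close to the whole product is what we want anyway'' dichotomy.)

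Now apply Theorem~\ref{thm: main} to an ergodic component $\mu'$ of $\mu$ (with respect to $\alpha$): we obtain a finite index $\Lambda \subseteq \ZZ^d$, a decomposition $\mu' = \frac{1}{J}(\mu'_1 + \cdots + \mu'_J)$ into $\alpha_\Lambda$-ergodic mutually singular pieces, $\alpha_\Lambda$-invariant closed subgroups $G_j < X$ with $\mu'_j$ invariant under translation by $G_j$, the permutation property $\alpha^{\mathbf n}_* \mu'_j = \mu'_k$, $\alpha^{\mathbf n}(G_j) = G_k$, and zero entropy of the induced action on each $X/G_j$. The claim to extract is that some $G_j$ is a nontrivial subgroup whose $G_j$-invariant Haar measure $\lambda_{G_j}$ (suitably translated) is a joining of the $\alpha_{i,\Lambda}$. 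For this I would argue: (a) $G_j$ is nontrivial for some $j$---if every $G_j$ were trivial, then every $\mu'_j$ (hence $\mu'$, hence $\mu$) would be a zero-entropy measure, contradicting positive entropy established above; (b) the projection $\pi_i(\mu'_j)$ to each $X_i$ is a homogeneous measure (translate of Haar on some closed subgroup $H_{i,j} < X_i$) which is $\alpha_{i,\Lambda}$-invariant, and by the permutation property the $H_{i,j}$ get permuted and so generate an $\alpha_{i,\Lambda'}$-invariant subgroup for a further finite-index $\Lambda'$; since $\mu$ has full (Haar) marginals, averaging the pieces forces $\pi_i(G_j) = X_i$ for at least one choice of $j$ after passing to a finite index subgroup, i.e.\ the projection $G_j \to X_i$ is onto---this is what makes $\lambda_{G_j}$ (translated) a \emph{joining}. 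I would then replace $\Lambda$ by the smaller finite index subgroup $\Lambda'$ on which $G := G_j$ is actually invariant (not merely permuted with the other $G_k$'s), and take as the homogeneous joining the translate $\lambda_G$ of the Haar measure on the coset $G + y$ supporting $\mu'_j$; its marginals are the $\lambda_{X_i}$ because $G$ surjects onto each $X_i$ and the coset projects onto $X_i$. Nontriviality of $\lambda_G$ as a joining (i.e.\ $G + y \neq X$, or more precisely $\lambda_G \neq \prod \lambda_{X_i}$) follows because $G$ is a \emph{proper} subgroup of $X$: if $G$ were all of $X$ then $\mu'_j = \lambda_X$ and then $\mu = \lambda_X$, contradicting nontriviality of $\mu$.

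The final bookkeeping is to deduce ``mutual disjointness fails'' in the clean form: having found a nontrivial homogeneous joining $\lambda_G$ of the $\alpha_{i,\Lambda}$ with $G$ a proper $\alpha_\Lambda$-invariant subgroup of $X = \prod X_i$ surjecting onto each coordinate, I would observe that $G^\perp < \widehat{X} = \bigoplus \widehat{X_i}$ is a nonzero rational $\alpha$-invariant subspace meeting each $\widehat{X_i}$ trivially (by surjectivity of the projections), and project $G^\perp$ to a pair of coordinates $i \neq j$ to produce a common nontrivial algebraic factor $Y$ of $\alpha_{i,\Lambda}$ and $\alpha_{j,\Lambda}$---exactly as in the statement of Corollary~\ref{thm: joining}. (For the last sentence of the proposition I just note that the whole argument used an arbitrary ergodic component of $\mu$ and $G$ was produced directly by Theorem~\ref{thm: main} applied to it.)

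\textbf{Main obstacle.} The delicate point is step (b): arguing that after passing to a finite index subgroup one of the translation subgroups $G_j$ can be taken to \emph{surject onto every coordinate} $X_i$. The subtlety is that a priori the decomposition $\mu' = \frac1J\sum \mu'_j$ permutes the $G_j$, and the marginal condition $\pi_i(\mu) = \lambda_{X_i}$ only says the \emph{average} $\frac1J \sum_j \pi_i(\mu'_j)$ is Haar on $X_i$. One must combine the permutation action of $\alpha$ on $\{G_j\}$, Poincaré recurrence / finite-index reduction (as used already in the proof of Theorem~\ref{thm:main2} to turn ``$G$ normalized by a finite index subgroup'' into genuine invariance), and the fact that the $\pi_i(\mu'_j)$ are homogeneous whose translation subgroups $H_{i,j}$ get permuted, to conclude that the union $\bigcup_j H_{i,j}$ generates all of $X_i$ and then---by a slightly careful argument using that each $\mu'_j$ is an ergodic component and Haar marginals---that a single well-chosen $G_j$ already projects onto each $X_i$. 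This is the step where I would spend the most care; everything else is an application of Theorem~\ref{thm: main} plus routine duality.
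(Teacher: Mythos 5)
Your overall framework is right (pass to an ergodic component, apply Theorem~\ref{thm: main} to the product system, and show that the translation-invariance subgroup $G$ projects onto each coordinate), but the proof of the crucial surjectivity claim $\pi_i(G)=X_i$ is where your proposal stalls---and you say so yourself. The paper's argument here is much simpler than the route you sketch, and your route as stated has a genuine gap.

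You try to show that each marginal $\pi_i(\mu'_j)$ is a homogeneous measure and then wrangle with how the $G_j$'s are permuted, using Poincar\'e recurrence and a ``well-chosen $j$''. There is no reason a priori that $\pi_i(\mu'_j)$ is homogeneous, and the permutation bookkeeping you describe does not close the argument. The paper instead uses two facts you do not invoke. First, since $\alpha_i$ has no virtually cyclic factors, the \emph{Haar measure $\lambda_{X_i}$ is ergodic} under $\alpha_{i,\Lambda}$. Combined with the identity $\lambda_{X_i}=\frac1J\sum_j (\pi_i)_*\mu_j$, where each $(\pi_i)_*\mu_j$ is $\alpha_{i,\Lambda}$-invariant, uniqueness of the ergodic decomposition forces $(\pi_i)_*\mu_j=\lambda_{X_i}$ for \emph{every} $j$---so each $\mu_j$, not merely their average, has Haar marginals on every $X_i$. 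No homogeneity of the marginals or permutation argument is needed. Second, the surjectivity $\pi_i(G)=X_i$ follows from an entropy comparison: $Y:=X_i/\pi_i(G)$ is a factor of $X/G$, so by Theorem~\ref{thm: main}(4) the action on $Y$ has zero entropy with respect to the measure induced by $\mu_1$; but that induced measure equals $\lambda_Y$ (since $(\pi_i)_*\mu_1=\lambda_{X_i}$), and Haar measure on a nontrivial solenoid with a non-virtually-cyclic action has positive entropy. Hence $Y=\{0\}$, i.e.\ $\pi_i(G)=X_i$.

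Your preliminary discussion about establishing positive entropy of $\mu$ before applying Theorem~\ref{thm: main} is unnecessary: the theorem applies regardless of entropy, and nontriviality of $\lambda_G$ falls out at the end from $G\neq X$ (which holds because $G=X$ would force $\mu_1=\lambda_X$ and hence $\mu=\lambda_X$). You should replace the murky middle of your argument with the ergodicity-of-Haar observation and the entropy comparison on $X_i/\pi_i(G)$; the rest of your outline (setting up the product system, checking no virtually cyclic factors, passing to an ergodic component, taking $G$ proper) is in the right spirit.
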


\begin{proof}
 Let $X=X_1\times\dots\times X_r$
 and $\alpha=\alpha_1\times\cdots\times\alpha_r$ be the product
 group and action, and let $\mu$ be a nontrivial joining.
 Without loss of generality we can assume $\mu$ is $\alpha$-ergodic
 (since a.e.\ ergodic component of a joining is again a joining).
 We apply Theorem \ref{thm: main} and obtain a finite index subgroup $\Lambda\subset\zd$
 and an $\alpha_\Lambda$-invariant closed subgroup $G=G_1\subset
 X$. We claim that the Haar measure of $G$ is an homogeneous
 nontrivial joining for the $\Lambda$-action.

 To see this, let $\mu_j$ for $j=1,\ldots,J$ be as in Theorem \ref{thm: main} and let
 $\pi_i:X\rightarrow X_i$ for $i=1,\ldots,r$ be the coordinate projection map.
 First notice, that the Haar measure of $G$ cannot be the trivial
 joining. If it were, it would follow that $G=X$,
 $\mu_1=\lambda_X$, $\mu_j=(\alpha^\n)_*\mu_1=\lambda_X$ for all $j$ and some $\n$ (that depends on $j$),
 and so $\mu=\lambda_X$ would be the trivial joining. To show the claim,
 we only need to prove that $\pi_i(G)=X_j$ for $i=1,\ldots,r$.

 Fix some $i$. Clearly $\tilde\lambda_{j}=(\pi_i)_*\mu_j$
 defines a measure on $X_i$ that is invariant under $\alpha_{i,\Lambda}$.
 By assumption $\lambda_{X_i}=\frac{1}{M}(\tilde\lambda_{1}+\cdots+\tilde\lambda_{J})$.
 Note that $\alpha_{i,\Lambda}$ acts ergodically on $X_i$ with respect to $\lambda_{X_i}$ by
 the assumption that there are no virtually-cyclic factors. Therefore,
 $\tilde\lambda_{j}=\lambda_{X_i}$ for all $j$, in other words $\mu_1$ is a joining.
 Consider now the group $Y=X_i/\pi_i(G)\cong X/(G+\ker\pi_i)$
 endowed with the measure $\nu$ induced by $\mu_1$. However, by the above
 the projection of $\mu_1$ to $X_i$ is
 $\lambda_{X_i}$, and so $\nu=\lambda_Y$. Since $Y$ is a
 factor of $X/G$, the entropy (with respect to $\nu$)
 of every element on $Y$ of the action must vanish.
 However, the action on $X_i$
 contains elements with completely positive entropy. Therefore,
 $Y=\{0\}$ and $\pi_j(G)=X_i$ for all $i$.
\end{proof}

\begin{proof}[Proof of Corollary \ref{thm: joining}, simpler case]
 Assume first that $\alpha_1$ and $\alpha_2$ are
totally irreducible not virtually-cyclic  actions and let
 $\alpha=\alpha_1\times\alpha_2$.
 Suppose $\alpha_{1,\Lambda}$ and $\alpha_{2,\Lambda}$
 are algebraically weakly isomorphic for some finite index subgroup
 $\Lambda\subset\zd$. So there exists a finite-to-one algebraic factor map
 $\varphi:X_1\rightarrow X_2$ for the two subactions. Clearly, the
 graph $G$
 of $\varphi$ is $\alpha_\Lambda$-invariant and so is its Haar
 measure $\lambda_G$.
 The average $\mu$ over the elements $\alpha^\n_*\lambda_G$
 in the (finite) orbit of $\lambda_G$ under the action of $\alpha$
 defines a nontrivial joining between $\alpha_1$ and $\alpha_2$.

 Let $\mu$ be a joining between $\alpha_1$ and $\alpha_2$. We have
 to show that either $\mu=\lambda_{X_1}\times\lambda_{X_2}$ is the trivial
 joining, or find a finite index subgroup $\Lambda\subset\zd$
 such that $\alpha_{1,\Lambda}$ and $\alpha_{2,\Lambda}$ are algebraically weakly isomorphic.
 Assume that $\mu$ is not the trivial joining, then there exists a
 finite index subgroup $\Lambda$ and a homogeneous nontrivial joining $\lambda_G$ between
 $\alpha_{1,\Lambda}$ and $\alpha_{2,\Lambda}$ by Proposition
 \ref{pro: joining}. Here $G\subset X$ is a proper closed subgroup with
 $\pi_1(G)=X_1$ and $\pi_2(G)=X_2$.

 Next we study the factors $X'_1=X_1/\{x_1:(x_1,0)\in G\}$
  and $X'_2=X_2/\{x_2:(0,x_2)\in G\}$.
 Suppose $X'_1$ is trivial, then
 $X_1\times\{0\}\subset G$ and $\pi_2(G)=X_2$ implies that
 $G=X_1\times X_2$, a contradiction to $G$ being proper.

 So assume now $X'_1$ and $X'_2$ are nontrivial, and let
 $G'\subset X'_1\times X'_2$ be the subgroup defined by $G$.
 Then $G'\cap \bigl(X'_1\times\{0\}\bigr)=G'\cap\bigl(\{0\}\times X'_2\bigr)=\{0\}$,
 so $G'$ is the graph of an isomorphism between $X_1'$ and $X_2'$.
 Since $G'$ is closed, compactness shows the
 isomorphism is continuous.
 Since all of the above were invariant subgroups, it follows that
 $\alpha_{1,\Lambda}$ and $\alpha_{2,\Lambda}$ have a common
 nontrivial factor $X_1'\cong X_2'$.
 By assumption $\alpha_{1,\Lambda}$ and $\alpha_{2,\Lambda}$ are irreducible,
 so the kernel of the above factor maps must be finite. Let $N$ be the order
 of the kernel, then multiplication by $N$ defines a factor map $ \psi_N$ from
 $X_1$ to $X_1$ that can be extended to a factor map $ \psi_N$ from $X_1'$ to $X_1$,
 i.e.\ the two actions on $X_1$ and $X_1'$ are weakly algebraically isomorphic.
\end{proof}

\begin{proof}[Proof of Corollary \ref{thm: joining}, general case]
 Let $X=\prod_{=1}^r X_j$
 and $\alpha=\alpha_1\times\cdots\times\alpha_r$ be the product
 group and action.
 Suppose $j\neq k\in\{1,\ldots,r\}$ and
 $\alpha_{j,\Lambda}$ and $\alpha_{k,\Lambda}$
 have a common nontrivial factor $\beta$ on a solenoid $Y$,
 where $\Lambda\subset\zd$ is a finite index subgroup. Let
 $\varphi_j:X_j\rightarrow Y$ and $\varphi_k:X_k\rightarrow Y$
 be the corresponding group homomorphisms. Then $G=\{x\in
 X:\varphi_j(x_j)=\varphi_k(x_k)\}$ is a nontrivial closed
 $\alpha_\Lambda$-invariant subgroup of $X$ such that
 $\pi_i(G)=X_i$ for $i=1,\ldots,r$. The Haar measure
 $\lambda_G$ on $G$ has finite orbit under $\alpha$, and the average $\mu$
 over the elements in the (finite) orbit of $\lambda_G$ is a nontrivial joining.

 Suppose now that $\mu$ is a nontrivial joining
 between $\alpha_i$ for $i=1,\ldots,r$, and apply
 Proposition \ref{pro: joining}.
 We obtain a finite index subgroup $\Lambda\subset\zd$
 and an $\alpha_\Lambda$-invariant proper closed subgroup $G<
 X$ that satisfies $\pi_i(G)=X_i$ for $i=1,\ldots,r$.

 Next we factor $X_i$ by the subgroup
 $$H_i=\pi_i\Bigl( G \cap \bigl(\{0\}^{i-1}\times X_i\times \{0\}^{r-i}\bigr)\Bigr)$$
 to get $X_i'=X_i/H_i$ for $i=1,\ldots,r$ and the factor $X'=\prod_iX_i'$ of $X$.
 If $X'=\{0\}$, then $G=X$ which contradicts $G$ being a proper subgroup.

 So assume that $X'$ is nontrivial, and therefore infinite.
 Let $G'< X'$ be the image of $G$.
 Clearly $\pi_i(G')=X_i'$ for $i=1,\ldots,r$. Let $i$ be minimal such that
 $H=G'\cap Z_i$ is infinite,
 where $Z_i=X_1'\times\cdots\times X_i'\times\{0\}^{r-i}$.
 Then the kernel $\{x\in H:x_i=0\}$ of $\pi_i|_{H}$
 is finite, and so $\pi_i(H)$ is an infinite closed $\alpha_\Lambda$-invariant subgroup of
 $X_i'$. Let $\beta$ be an irreducible component of $\alpha_\Lambda|_H$.
 Since $\pi_i|_H$ is finite-to-one, $\beta$ is also an irreducible component of
 $\alpha_{i,\Lambda}|_{\pi_i(H)}$. Since an irreducible component
 of a subgroup is also an irreducible component of the whole
 group, we see that $\alpha_{\Lambda}|_H$ and
 $\alpha_{i,\Lambda}$ share $\beta$ as irreducible component.
 By construction of $X'$ we have
 $\{z\in H: z_k=0$ for all $k\neq i\}=\{0\}$. Therefore $H$ is
 isomorphic to a subgroup of $X_1'\times\cdots\times X_{i-1}'$.
 We conclude that there exists $j<i$ such that
 $\alpha_{i,\Lambda}$ and $\alpha_{j,\Lambda}$ have $\beta$ as a
 common factor.
\end{proof}

\section{Invariant \texorpdfstring{$\sigma$}{sigma}-algebras and measurable factors}\label{sec: inv_algebra}

In addition to the characterization of factors stated in Corollary \ref{thm: algebra},
we prove in this section a generalization of the isomorphism rigidity \cite{Katok-Katok-Schmidt} for
higher rank actions. Indeed we characterize when two actions by automorphisms of
solenoids have a common measurable factor.

Suppose $\alpha_1,\alpha_2 $ are $\zd $-actions by automorphisms
of the solenoids $X_1$ and $X_2$. Let $\Gamma_1$ respectively
$\Gamma_2$ be finite groups of affine automorphisms of $X_1$ and
$X_2$ that are normalized by the respective actions. We say that the two
factors of $X_1 $ and $X_2 $ arising from $\Gamma_1$ respectively
$\Gamma_2$ are {\em isomorphic} if there exists an affine isomorphism
$\Phi:X_1\rightarrow X_2$ such that
$\Phi\circ\alpha_1^\n\circ\Gamma_1=\alpha_2^\n\circ\Gamma_2\circ\Phi$
for all $\n\in\zd$.

We claim that this is essentially the only way common
measurable factors of higher rank actions on solenoids can arise.

\begin{corollary}[Classification of common factors]\label{theorem about common factors}
 Let $d\geq 2$ and let $\alpha_1,\alpha_2$ be $\zd $-actions by automorphisms
 of the solenoids $X_1$ and $X_2$ without virtually cyclic factors.
 Suppose $\alpha_1 $ and $\alpha_2$ have a common
 measurable factor. Then there exist
 closed invariant subgroups $X'_1\subseteq X_1$
 and $X_2'\subseteq X_2$, finite groups $\Gamma_1$ of affine automorphisms of
 $X_1/X_1'$ 
 and $\Gamma_2$ of affine automorphisms of $X_2/X_2'$ that are normalized by the
 corresponding actions such that this common measurable factor
 can be described alternatively as the factor of $X_1/X_1'$ by
 the orbit equivalence relation of $\Gamma_1$, or similarly
 using $X_2/X_2'$ and $\Gamma_2$.
 Moreover, the isomorphism between these two realizations
 of the factor is algebraic in the following sense:
 there exists an affine isomorphism
 $\Phi:X_1/X_1'\rightarrow X_2/X_2'$ such that
\begin{equation}\label{factor classification equation}
      \Phi\circ\alpha^\n_{X_1/X_1'}\circ\Gamma_1=
    \alpha^\n_{X_2/X_2'}\circ\Gamma_2\circ\Phi
\end{equation}
 for all $\n\in\zd$.
\end{corollary}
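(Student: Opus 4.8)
The plan is to reduce the statement about a common measurable factor of $\alpha_1$ and $\alpha_2$ to an application of Corollary \ref{thm: algebra} on the product system $X_1 \times X_2$. First I would encode a common measurable factor by the relatively independent joining: if $(Z, \mathcal Z, \kappa)$ is a common measurable factor of $(X_1, \lambda_{X_1})$ and $(X_2, \lambda_{X_2})$, with factor maps $\psi_j : X_j \to Z$, form the measure $\mu$ on $X_1 \times X_2$ which is the relatively independent joining of $\lambda_{X_1}$ and $\lambda_{X_2}$ over $Z$. This is an $\alpha_1 \times \alpha_2$-invariant probability measure on $X_1 \times X_2$ projecting to $\lambda_{X_j}$ in each coordinate, and it is the trivial joining precisely when the common factor $Z$ is trivial; so we may assume $\mu$ is nontrivial. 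Passing to an ergodic component if necessary (and noting, as in the proof of Proposition \ref{pro: joining}, that an ergodic component of a joining between ergodic actions is again a joining, because each $\alpha_{j}$ acts ergodically on $X_j$ by the no-virtually-cyclic-factor hypothesis), we may assume $\mu$ is ergodic.

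Next I would apply Theorem \ref{thm: main} to $\mu$ on the solenoid $X_1 \times X_2$ (which has no virtually cyclic factors, since neither $\alpha_1$ nor $\alpha_2$ does, and a virtually cyclic factor of the product would be one of a coordinate): we obtain a finite index subgroup $\Lambda \subseteq \zd$, a decomposition $\mu = \frac1J(\mu_1 + \cdots + \mu_J)$ into mutually singular $\alpha_\Lambda$-ergodic pieces, and closed $\alpha_\Lambda$-invariant subgroups $G_j \leq X_1 \times X_2$ under which $\mu_j$ is translation invariant, with $\mu_j$ inducing a zero-entropy measure on $(X_1\times X_2)/G_j$. The heart of the argument is then to run the disjointness-style analysis of the projections $\pi_i(G_j)$ exactly as in the proofs of Proposition \ref{pro: joining} and Corollary \ref{thm: joining}: since $\mu_1$ still projects to $\lambda_{X_i}$ (using ergodicity of $\alpha_{i,\Lambda}$ on $X_i$ to conclude all $(\pi_i)_*\mu_j = \lambda_{X_i}$), and since the induced action on the factor $(X_1 \times X_2)/G_1$ has zero entropy while $\alpha_i$ has a completely-positive-entropy element, one forces $\pi_i(G_1) = X_i$ for $i = 1,2$. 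Then the quotients $X_i' := X_i / \pi_i\bigl(G_1 \cap (X_i \times \{0\})\bigr)$ (taking coordinates appropriately) are such that $G_1$ descends to a graph-like subgroup $G_1' \leq X_1'\times X_2'$ with trivial intersection with each axis, and I would combine this with Corollary \ref{thm: algebra} applied to the $\alpha$-invariant $\sigma$-algebra $\mathcal A = (\psi_1 \times \psi_2)^{-1}\mathcal Z$ on $X_1 \times X_2$ — equivalently to the $\sigma$-algebra generated by $G_1$-cosets, averaged over the finite $\alpha$-orbit of $G_1$ — to realize the common factor, on the $X_1$-side, as $(X_1/X_1')/\Gamma_1$ for a finite affine group $\Gamma_1$ normalized by $\alpha_{X_1/X_1'}$, and symmetrically on the $X_2$-side. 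The finite group $\Gamma_i$ arises because $\mu$ was a \emph{convex combination} over the finite $\alpha$-orbit of $\lambda_{G_1}$: the $J$ translation subgroups $G_1,\dots,G_J$ are permuted by $\alpha$, and the fibers of the common-factor map are unions of finitely many $G_1$-cosets, yielding precisely the orbit equivalence relation of a finite affine group.

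Finally, to get \eqref{factor classification equation}, I would observe that the graph structure of $G_1'$ gives a \emph{continuous} affine isomorphism $\Phi : X_1'\to X_2'$ (closedness of $G_1'$ plus compactness, as in the simpler case of Corollary \ref{thm: joining}), and that the $\alpha_\Lambda$-invariance of $G_1$, together with the equivariance of $x\mapsto \widetilde W_x$-type data from Theorem \ref{thm: main}, upgrades to $\alpha_{\zd}$-invariance of the whole configuration using the Poincaré-recurrence/finite-orbit argument and ergodicity, so that $\Phi$ intertwines $\alpha^\n_{X_1/X_1'}\Gamma_1$ with $\alpha^\n_{X_2/X_2'}\Gamma_2$ for every $\n \in \zd$. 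I expect the main obstacle to be the careful bookkeeping at the step where one passes from the single subgroup $G_1$ furnished by Theorem \ref{thm: main} to the finite affine group $\Gamma_i$: one must check that the $\sigma$-algebra of the common factor really is generated (mod $\lambda_{X_i}$) by the $\Gamma_i$-orbit equivalence relation on $X_i/X_i'$ and not something coarser, which requires matching the relatively independent joining's fiber structure against the $G_j$-coset partition — this is where Corollary \ref{thm: algebra} does the essential work and where the compatibility between the two realizations (via $\Phi$) has to be extracted cleanly.
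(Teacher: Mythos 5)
Your scaffolding is right --- the paper does use the relatively independent joining, Theorem~\ref{thm: main} on an ergodic component, the entropy argument to force $\pi_i(G)=X_i$, and Corollary~\ref{thm: algebra} to produce $\Gamma_i$ --- but several of the steps that carry the real weight are either missing or not the paper's, and I don't think they work as stated. The most consequential gap is the construction of $X_i'$: you propose $X_i' = \pi_i\bigl(G_1\cap(X_i\times\{0\})\bigr)$ (axis intersections of a single translation group coming from Theorem~\ref{thm: main}), whereas the paper defines $X_i'$ intrinsically as the translation stabilizer $\{x':\psi_i(t)=\psi_i(t+x')\text{ for a.e.\ }t\}$ (Lemma~\ref{construction of X_1'}), normalizes it to be trivial \emph{before} touching Theorem~\ref{thm: main}, and only then proves (Lemma~\ref{second invariance}) that every such $G$ is a graph. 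Your $H_i:=\pi_i(G_1\cap(X_i\times\{0\}))$ is contained in the translation stabilizer, but there is no reason for equality --- it depends on which ergodic component and which $\mu_j$ one picks --- and if $H_i$ is strictly smaller, then $\psi_i$ does not have finite fibers on $X_i/H_i$, so no finite affine group $\Gamma_i$ can realize the factor there. Related to this, you apply Corollary~\ref{thm: algebra} to the product $X_1\times X_2$ with a $\sigma$-algebra $(\psi_1\times\psi_2)^{-1}\cB_Y$; this yields a subgroup of $X_1\times X_2$ and a finite affine group on the quotient of the \emph{product}, which is not the same data as a $\Gamma_i$ on each $X_i/X_i'$ --- the paper instead applies Corollary~\ref{thm: algebra} separately to each $(X_i,\psi_i^{-1}\cB_Y)$.

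Second, your explanation of why $\Gamma_i$ is finite is not the correct mechanism. You attribute finiteness to $\mu$ being a convex combination over the finite orbit $G_1,\dots,G_J$; but $J$ counts the $\alpha_\Lambda$-ergodic pieces of \emph{one} ergodic component of $\nu$, while $\nu$ itself has potentially countably many ergodic components, and Lemma~\ref{structure} only proves $\nu=\sum_j a_j\lambda_{\Phi_j}$ with countably many summands. Finiteness in Corollary~\ref{thm: algebra} comes from a different argument: computing $(\lambda_X)_x^{\cA}=\sum_{\gamma\in\Gamma}a_\gamma\delta_{\gamma(x)}$ from the decomposition of the \emph{self}-joining of $X$, then comparing this with $(\lambda_X)_{\gamma_0 x}^{\cA}$ (which must agree a.e.\ since $\gamma_0$ preserves the atoms of $\cA$) to force all $a_\gamma$ equal, hence finitely many. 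Finally, you gesture at \eqref{factor classification equation} but don't give an argument; the paper's derivation ($\Phi^{-1}\circ\gamma_2\circ\Phi\in\Gamma_1$ for $\gamma_2\in\Gamma_2$, then use symmetry) relies precisely on the characterization $\Gamma_i=\{\gamma:\psi_i\circ\gamma=\psi_i\text{ a.e.}\}$, which your route has not established. So while the high-level plan matches the paper, you should (i) define $X_i'$ as the translation stabilizer and normalize it away first, (ii) apply Corollary~\ref{thm: algebra} to each $X_i$ separately, and (iii) import the conditional-measure comparison for finiteness rather than appealing to the finiteness of $J$.
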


Note that we have more rigidity for the factors then we had for joinings in Corollary \ref{thm: joining}. In particular, there is no need to consider finite index subactions in order to classify when two $\zd$ actions on tori have a common factor; we illustrate this point with the following example.

\begin{example}\label{example:twisted}
Let $\alpha_1 $ be the $\ZZ^2$-action by automorphisms of the
solenoid $X_1$ dual to ${\ZZ[1/2,1/3]}$ generated by multiplication by $2$ and by $3$ on $X_1$.\footnote{I.e.\ the $\alpha_1$ action on $X_1$ is the the invertible extension of the $\times 2, \times 3$ action on $\TT$.} We define a $\ZZ^2$-action $\alpha_2 $ on
$X_2=X_1^2 $ by
\begin{eqnarray*}
\alpha_2^{\be_1}(x_1,x_2)&=&(-2x_2,2x_1),\\
\alpha_2^{\be_2}(x_1,x_2)&=&(3x_1,3x_2)\mbox{ for }(x_1,x_2)\in X_2.
\end{eqnarray*}
Then $\alpha_2^{4\be_1}(x_1,x_2)=(16x_1,16x_2)$ for $(x_1,x_2)\in
X_2$ and so the restriction $\alpha_{2,\Lambda}$ of $\alpha_2$ to
$\Lambda=(4\ZZ)\times\ZZ$ is identical to the action
$\alpha_{1,\Lambda}\times\alpha_{1,\Lambda}$ on $X_2=X_1^2$. By
Theorem~\ref{thm: joining} $\alpha_1$ and $\alpha_2 $ are not
disjoint. In fact, let
\[
 Z=\{(x_1,(x_1,x_2):x_1,x_2\in X\}\subseteq X_1\times X_2.
\]
Then $\pi_1(Z)=X_1$ and $\pi_2(Z)=X_2$. Therefore, the Haar
measure $m_Z$ of $Z$ satisfies $(\pi_1)_*m_Z=m_{X_1}$ and
$(\pi_2)_*m_Z=m_{X_2}$. Since $Z$ is only invariant
under $(\alpha_1\times\alpha_2)_\Lambda$, $m_Z$ is not a
joining. However, it is easy to check that
$\mu=\frac{1}{4}\sum_{j=0}^3(\alpha_1\times\alpha_2)^{j\be_1}m_Z$
is a nontrivial joining between $\alpha_1 $ and $\alpha_2 $.

We note that $\alpha_1$ and $\alpha_2$ are both irreducible
actions.
Suppose now that $\alpha_1 $ and $\alpha_2 $ have a common
measurable factor. By Corollary~\ref{theorem about common factors}
there exist closed invariant subgroups $X'_1\subseteq X_1$ and
$X_2'\subseteq X_2$ such that $X_1/X'_1$ and $X_2/X_2'$ are
isomorphic as groups. By irreducibility $X'_1$ and $X_2'$ are
either finite or everything. However, if $X'_1$ and $X_2'$ are
finite, then $X_1/X_1'$ is still one dimensional while $X_2/X_2'$
is still two dimensional, and so these groups cannot be
isomorphic. Therefore, $X'_1=X_1$, $X_2'=X_2$, and the common
measurable factor has to be the trivial factor.
\end{example}

Before we start with the proofs of Corollary \ref{thm:
algebra} and Corollary \ref{theorem about common factors} we recall some basic
facts about conditional measures and the
construction of the relatively independent joining.

\subsection{The relatively independent joining}
Let $\alpha_1,\alpha_2,\beta$ be $\zd$-actions on the
standard Borel probability spaces $(X_1,\cB_1,m_1)$, $(X_2,\cB_2,m_2)$, and $(Y,\cB_Y,\rho)$
respectively, and let $ \psi_1:X_1\rightarrow Y$ and
$ \psi_2:X_2\rightarrow Y$ be factor maps.  Then
$\cA_1= \psi_1^{-1}\cB_Y$ and $\cA_2= \psi_2^{-1}\cB_Y$ are
invariant $\sigma $-algebras with conditional measures
$m_{1,x}^{\cA_1}$ for $x\in X_1$ and $m_{2,x}^{\cA_2}$
for $x\in X_2$. By the basic properties of conditional measures, there is some null set $N_1 \subset X_1$ so that
$m_{1,x}^{\cA_1}=m_{1,x'}^{\cA_1}$ for every $x,x'\in
X_1\setminus N_1$ with $ \psi_1(x)= \psi_1(x')$, and so we can remove a nullset from
$X_1$ and $ Y $ and define
$m_{ \psi^{-1}_1y}=m_{1,x}^{\cA_1} $ for
$x\in \psi^{-1}_1y$. We do this similarly for $\cA_2$ to define
$m_{ \psi^{-1}_2y} $.  The relatively independent joining
$m_1\times_{(Y,\rho)}m_2$ of $m_1 $ and
$m_2 $ over the common factor $(Y,\rho)$ is defined by
\begin{equation}\label{definition of joining}
 m_1\times_{(Y,\rho)}m_2=
   \int_Y m_{ \psi^{-1}_1y}\times m_{ \psi^{-1}_2y}\od\!\rho(y).
\end{equation}
It is well known (and easy to verify directly) that $m_1\times_{(Y,\rho)}m_2 $ projects to
$m_1$ on $X_1$ and to $m_2$ on $X_2$ and that $m_1\times_{(Y,\rho)}m_2 $ is invariant under
$\alpha_1\times\alpha_2$ (hence is a joining between
$\alpha_1$ and $\alpha_2 $).  Furthermore, the relatively independent joining $ m_1\times_{(Y,\rho)}m_2$ gives full measure to the set
\begin{equation}
    \label{definition of D_Y}
    D_{Y}=\{(x_1,x_2) : \psi_1(x_1)=\psi_2(x_2)\}\subset X_1 \times X_2
\end{equation}
and moreover
\[
   \psi_1^{-1}C\times X_2,\  X_1\times  \psi_2^{-1}C,\mbox{ and }
  \psi_1^{-1}C\times  \psi_2^{-1}C
\]
are equal up to a $m_1\times_{(Y,\rho)}m_2$-nullset for any $C\in\cB_Y$.

\subsection{Proofs of Corollary \ref{thm:
algebra} and Corollary \ref{theorem about common factors}}

Let $\alpha_1,\alpha_2, X_1, X_2$ be as in Corollary~\ref{theorem
about common factors}, and suppose the $\zd$-action $\beta$ on
$(Y,\cB_Y,\rho)$ is a common factor of $\alpha_1 $ and $\alpha_2$. Let
$ \psi_1$ and $ \psi_2$ be the corresponding factor maps,
$\cA_1= \psi_1^{-1}\cB_Y$ and $\cA_2= \psi_2^{-1}\cB_Y$ the
corresponding invariant $\sigma$-algebras, and let
$\nu=\lambda_{X_1}\times_{(Y,\rho)}\lambda_{X_2}$ be the
relatively independent joining.

The main idea for the proof is to use Theorem \ref{thm: main} to
study $\nu$. In the following example we see how the algebraic
construction of the factor is encoded in the relatively
independent joining, and that the latter does not have to be
ergodic.

\begin{example} 
 Let $\alpha $ be a $\zd$-action on a solenoid $X $,
 and let $\cA=\cB_X^{\{\Id,-\Id\}}$ be the
 $\alpha$-invariant $\sigma$-algebra of measurable subsets $A$ satisfying $A=-A$.
 Then the relatively independent joining
 $\nu=\lambda_X\times_\cA \lambda_X$ of $\lambda_X$
 over the factor described by $\cA$
 is $\nu=\frac{1}{2}(\lambda_D+\lambda_{D_-})$,
 where $D=\{(x,x):x\in X\}$ and $D_-=\{(x,-x):x\in X\}$.
 The ergodic components of $\nu$ are
 $\lambda_D$ and $\lambda_{D_-}$.
\end{example}

This example shows that for the relatively independent joining
over a factor we have to take ergodic components in order to
apply Theorem \ref{thm: main}. However, in general we also
have to take ergodic components as in Theorem \ref{thm: main}
with respect to a finite index subgroup to
obtain measures invariant under translation by elements of certain
subgroups.

\begin{lemma}\label{construction of X_1'}
The set
\[
 X_1'=\{x'\in X_1: \psi_1(t)= \psi_1(t+x')
  \text{ for $\lambda_{X_1}$-a.e.~$t\in X_1$}\}
\]
is a closed $\alpha$-invariant subgroup of $X_1$. Furthermore,
$ \psi_1$ descends (on the complement of a nullset) to a well-defined
factor map from $X_1/X_1'$ to $Y$.
\end{lemma}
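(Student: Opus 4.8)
The plan is to verify each assertion in Lemma~\ref{construction of X_1'} directly from the definition of $X_1'$, using only that $\psi_1$ is a measurable factor map (so $\psi_1\circ\alpha_1^\n = \beta^\n\circ\psi_1$ $\lambda_{X_1}$-a.e.) together with the translation-invariance of the Haar measure $\lambda_{X_1}$.

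First I would show $X_1'$ is a subgroup. Given $x',x''\in X_1'$, by definition there are conull sets on which $\psi_1(t)=\psi_1(t+x')$ and $\psi_1(s)=\psi_1(s+x'')$; since $\lambda_{X_1}$ is translation invariant, the set of $t$ with $\psi_1(t+x')=\psi_1(t+x'+x'')$ is also conull, so chaining gives $\psi_1(t)=\psi_1(t+x'+x'')$ a.e., i.e.\ $x'+x''\in X_1'$. Replacing $t$ by $t-x'$ in the defining relation and again invoking translation-invariance gives $-x'\in X_1'$, and $0\in X_1'$ trivially. For $\alpha$-invariance, take $\n\in\zd$ and $x'\in X_1'$; then for a.e.\ $t$, writing $t=\alpha_1^\n s$ (valid since $\alpha_1^\n$ preserves $\lambda_{X_1}$ up to a scalar which is $1$ for an automorphism of a compact group, hence pushes conull sets to conull sets), we have $\psi_1(\alpha_1^\n s) = \beta^\n\psi_1(s) = \beta^\n\psi_1(s+x') = \psi_1(\alpha_1^\n(s+x')) = \psi_1(\alpha_1^\n s + \alpha_1^\n x')$, so $\alpha_1^\n x'\in X_1'$.

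Next I would show $X_1'$ is closed. The natural approach: consider the measurable map $F\colon X_1\times X_1 \to Y\times Y$, $F(x',t)=(\psi_1(t),\psi_1(t+x'))$, and note $x'\in X_1'$ iff $F(x',\cdot)$ lands in the diagonal $\lambda_{X_1}$-a.e. One clean way is to fix a bounded metric $\rho_Y$ on $Y$ (inducing its Borel structure) and define $g(x') = \int_{X_1}\rho_Y(\psi_1(t),\psi_1(t+x'))\,d\lambda_{X_1}(t)$; this is well-defined and, by dominated convergence together with continuity of translation $x'\mapsto (t\mapsto t+x')$ in $L^1$, the function $g$ is continuous on $X_1$ — indeed $x'_k\to x'$ implies $\psi_1(\cdot + x'_k)\to\psi_1(\cdot+x')$ in measure (translation is continuous on $L^0(X_1)$ for compact groups), hence a subsequence converges a.e., and boundedness gives $g(x'_k)\to g(x')$ along that subsequence; applying this to every subsequence gives full convergence. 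Then $X_1' = g^{-1}(\{0\})$ is closed. Alternatively, one can argue that $X_1'$ is the stabilizer of the factor $\sigma$-algebra $\cA_1$ under the (continuous) translation action of $X_1$ on measures, hence closed; I would pick whichever is shortest to write.

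Finally, for the descent statement: by definition of $X_1'$, for each $x'\in X_1'$ we have $\psi_1(t+x')=\psi_1(t)$ for $\lambda_{X_1}$-a.e.\ $t$. Since $X_1'$ is a closed (hence second-countable) subgroup, pick a countable dense set $\{x_k'\}\subset X_1'$; there is a single conull $t$-set on which $\psi_1(t+x_k')=\psi_1(t)$ for all $k$ simultaneously. By Fubini, for $\lambda_{X_1}$-a.e.\ $t$ this holds for $\lambda_{X_1'}$-a.e.\ $x'\in X_1'$, i.e.\ $\psi_1$ is a.e.\ constant on $\lambda_{X_1'}$-a.e.\ coset $t+X_1'$; collapsing these null sets and using the measure isomorphism $X_1\cong (X_1/X_1')\times X_1'$ (with product Haar measure), $\psi_1$ factors through $X_1\to X_1/X_1'$ modulo a nullset, and $\alpha$-invariance of $X_1'$ makes this a factor map of $\zd$-actions. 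The main obstacle I anticipate is the closedness of $X_1'$: it requires knowing that translation on $X_1$ acts continuously on the relevant space of measurable functions (or on the space of $\sigma$-algebras), which is standard for compact metrizable groups but needs to be stated carefully; the subgroup and descent parts are routine manipulations with conull sets and Fubini.
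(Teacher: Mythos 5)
Your proof is correct, and the overall structure (subgroup by chaining a.e.\ identities, $\alpha$-invariance from the factor-map relation, descent via Fubini) matches the paper's. The one place you diverge is the closedness argument: you build an explicit function $g(x')=\int_{X_1}\rho_Y(\psi_1(t),\psi_1(t+x'))\,d\lambda_{X_1}(t)$ from a compatible bounded metric $\rho_Y$ on $Y$ and show $g$ is continuous, so $X_1'=g^{-1}(0)$ is closed. The paper instead first rewrites
\[
X_1'=\bigl\{x'\in X_1:\lambda_{X_1}\bigl(A\vartriangle(A+x')\bigr)=0\ \text{for all } A\in\cA_1\bigr\}
\]
(using that $\cA_1=\psi_1^{-1}\cB_Y$ is countably generated), and then quotes the standard continuity of $x'\mapsto\lambda_{X_1}(A\vartriangle(A+x'))$, which simultaneously gives that $X_1'$ is a closed subgroup. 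This is essentially the ``stabilizer of the factor $\sigma$-algebra'' alternative you mention at the end. Both routes rest on the same underlying fact --- continuity of translation on $(X_1,\lambda_{X_1})$ in the $L^0$/measure-algebra topology --- but the paper's version avoids choosing a metric on $Y$ and packages the subgroup property and closedness together in one step, whereas your metric-integral formulation is more explicit. Also worth noting: the paper dispatches $\alpha$-invariance in one line (``$\cA_1$ is an invariant $\sigma$-algebra''), which is the same content as your change-of-variables computation.
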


\begin{proof}
Since $\cA_1= \psi_1^{-1}\cB_Y$ is countably generated, it is
easily seen that 
\[
X_1=\{x'\in X_1: \lambda_{X_1}(A \vartriangle (A+x'))=0\quad\text{for all $A\in\cA_1$}\}.
\]
Furthermore,
it is well known that $\lambda_{X_1}(A\vartriangle (A+x'))$
depends continuously on $x'\in X$ for any $A\in\cB_{X_1}$. From
this it follows that $X_1'$ is a closed subgroup.  Moreover,
$X_1'$ is $\alpha_1$-invariant since $\cA_1$ is invariant (equivalently, since $ \psi_1$ is a
factor map).

Note that $ \psi_1(t)= \psi_1(t+x')$ for
$\lambda_{X_1}\times \lambda_{X_1'}$-a.e.\ $(t,x')\in X_1\times
X_1'$ by definition of $X_1'$ and Fubini's theorem. Therefore, for
$ \lambda_{X_1}$-a.e.\ $t\in X_1$ we know that
$ \psi_1(t)= \psi_1(t+x')$ for $ \lambda_{X_1'}$-a.e.\ $x'\in X_1'$.
Therefore, $ \psi_1(t)$ is (outside some nullset) independent of
the representative $t\in X_1$ of the coset $t+X_1'$ which implies
the second part of the lemma.
\end{proof}

By Lemma \ref{construction of X_1'} we can replace $X_1$ by
$X_1/X_1'$ and similarly $X_2$ by $X_2/X_2'$ for the remainder of
the proof. Hence we assume that
\begin{equation}\label{no translation}
 X'_1=\{x'\in X_1: \psi_1(t)= \psi_1(t+x')
  \text{ for $\lambda_{X_1}$-a.e.~$t\in X_1$}\}=\{0\}
\end{equation}
and similarly for $X_2$, and will show the existence of finite groups
$\Gamma_1$ and $\Gamma_2$ of affine automorphisms of $X_1$ and $X_2$, and the
existence of the affine isomorphism $\Phi:X_1\rightarrow X_2$ satisfying \eqref{factor classification equation}.

Since Theorem \ref{thm: main} assumes ergodicity, we need to study
the ergodic components of the relatively independent joining. The
following easy consequence
of the definition of the ergodic decomposition
gives all the properties we will need for the ergodic components.

\begin{lemma}\label{ergodic component}
 Almost every ergodic component $\mu$ of the relatively
 independent joining $\nu$ of $X_1$ and $X_2$ over $Y$ is still a joining between
 $\alpha_1$ and $\alpha_2$ that satisfies $\mu(D_Y)=1$ where $D_Y$
 is defined as in \eqref{definition of D_Y}.
\end{lemma}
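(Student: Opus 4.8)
The plan is to unwind the definition of the relatively independent joining $\nu$ and its ergodic decomposition. First I would recall that $\nu = \lambda_{X_1}\times_{(Y,\rho)}\lambda_{X_2}$ is by construction a joining between $\alpha_1$ and $\alpha_2$ — this was noted when the relatively independent joining was introduced — and that $\nu(D_Y)=1$ where $D_Y=\{(x_1,x_2):\psi_1(x_1)=\psi_2(x_2)\}$, which was also recorded there. So the point of the lemma is entirely that these two properties \emph{descend} to almost every ergodic component of $\nu$.

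For the second property this is immediate: since $D_Y$ is a $\nu$-conull set, writing the ergodic decomposition $\nu=\int\mu\,d\tau(\mu)$ over the space of ergodic $\alpha_1\times\alpha_2$-invariant measures, we have $\int \mu(D_Y)\,d\tau(\mu)=\nu(D_Y)=1$ with $\mu(D_Y)\le 1$ for every $\mu$, hence $\mu(D_Y)=1$ for $\tau$-a.e.\ $\mu$. (One should note $D_Y$ is a Borel, indeed closed-up-to-null, set so this is legitimate.) For the first property, observe that $(\pi_1)_*\nu=\lambda_{X_1}$, so $\int (\pi_1)_*\mu\,d\tau(\mu)=\lambda_{X_1}$; but $\lambda_{X_1}$ is the Haar measure on the solenoid $X_1$, and since $\alpha_1$ has no virtually cyclic factors it acts ergodically with respect to $\lambda_{X_1}$ (the same remark used in the proof of Proposition~\ref{pro: joining}). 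An ergodic measure cannot be written as a nontrivial average of distinct invariant measures, and more precisely: if a measure $m$ is ergodic for a group action and $m=\int \nu_s\,d\sigma(s)$ with each $\nu_s$ invariant, then $\nu_s=m$ for $\sigma$-a.e.\ $s$. Applying this with $m=\lambda_{X_1}$ gives $(\pi_1)_*\mu=\lambda_{X_1}$ for $\tau$-a.e.\ $\mu$; symmetrically $(\pi_2)_*\mu=\lambda_{X_2}$ for $\tau$-a.e.\ $\mu$. Intersecting the three conull sets of ergodic components, we conclude that $\tau$-a.e.\ $\mu$ is a joining of $\alpha_1$ and $\alpha_2$ (i.e.\ projects correctly) and satisfies $\mu(D_Y)=1$.

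The only mild subtlety — and the closest thing to an obstacle — is the measurability and well-definedness of the ergodic decomposition for the $\zd$-action $\alpha_1\times\alpha_2$ on the product of the standard Borel probability spaces, together with the fact that pushing forward under $\pi_i$ and evaluating on the fixed Borel set $D_Y$ are measurable operations on the space of measures; these are standard facts about ergodic decompositions on standard Borel spaces and I would simply cite them. One should also remark that $X_1\times X_2$ with $\alpha_1\times\alpha_2$ is again an action by automorphisms on a solenoid, so that ``ergodic component'' is unambiguous here. With these routine points in place the proof is the three-line argument above.
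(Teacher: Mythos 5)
The paper does not spell out a proof of this lemma --- it is asserted as an ``easy consequence of the definition of the ergodic decomposition'' --- so there is no written argument to compare against. Your proof is correct and gives exactly the intended content: $\mu(D_Y)=1$ a.e.\ follows from integrating the indicator of the $\nu$-conull Borel set $D_Y$ against the decomposing measure, and the marginal conditions $(\pi_i)_*\mu=\lambda_{X_i}$ a.e.\ follow from the fact that the Haar measures $\lambda_{X_i}$ are $\alpha_i$-ergodic (by the no-virtually-cyclic-factors hypothesis, as used in the proof of Proposition~\ref{pro: joining}), combined with the standard fact that an ergodic measure admits no nontrivial barycenter representation by invariant measures. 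Your remark on measurability of the map $\mu\mapsto(\pi_i)_*\mu$ and of evaluation on a fixed Borel set is the right thing to note; all of this is standard on standard Borel spaces, and your invocation of it is appropriate.
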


In the next lemma we analyze what translation invariance for a
measure $\mu$ as above tells us about the factor maps.

\begin{lemma}\label{second invariance}
Let $\mu $ be a measure on $X_1\times X_2 $ that projects to the Haar measure
$ \lambda_{X_1}$ respectively $ \lambda_{X_2}$ and satisfies
$\mu(D_Y)=1$ where $D_Y$
 is defined in \eqref{definition of D_Y}. Suppose $\mu $ is translation invariant under elements
of $G\subset X_1\times X_2 $ and that $G$ projects surjectively to
$X_1$ and $X_2$.
 Then $G$ is the graph of a group isomorphism $\phi:X_1\rightarrow X_2$.
 Moreover, there exists some $w_\phi\in X_2$
 such that the affine isomorphism $\Phi(x)=\phi(x)+w_\phi$
 satisfies
 \begin{equation}\label{definition of Phi}
   \psi_1(x)= \psi_2(\Phi(x))\mbox{ for a.e.\ }x\in X_1
 \end{equation}
 and that $\mu $ is the Haar measure of the graph of $\Phi$.
 The element $w_\phi\in X_2$ is uniquely determined by $\phi$
 and \eqref{definition of Phi}.
\end{lemma}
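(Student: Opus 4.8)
The plan is to extract the isomorphism $\phi$ from the group $G$ by a direct structural analysis, then pin down the affine translation part using the constraint $\mu(D_Y)=1$, and finally identify $\mu$ as Haar measure on the graph. First I would show $G$ is a graph: let $K_1=\{x_1:(x_1,0)\in G\}$ and $K_2=\{x_2:(0,x_2)\in G\}$ be the two ``kernels''. Because $\mu$ is $G$-invariant, if $(x_1,0)\in G$ then $\mu$ is invariant under the $X_1$-translation $(x_1,0)$; combined with $\mu(D_Y)=1$ this forces $\psi_1(t)=\psi_1(t+x_1)$ for $\lambda_{X_1}$-a.e.\ $t$, so $x_1\in X_1'=\{0\}$ by our standing reduction \eqref{no translation}. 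Hence $K_1=\{0\}$, and symmetrically $K_2=\{0\}$. Since $G$ projects onto $X_1$ with trivial kernel $K_1$, and onto $X_2$ with trivial kernel $K_2$, the group $G$ is simultaneously the graph of a continuous surjective homomorphism $\phi:X_1\to X_2$ and of $\phi^{-1}$; continuity of $\phi$ and $\phi^{-1}$ follows from compactness (a continuous bijective homomorphism of compact groups is a homeomorphism, and $G$ closed gives the graph is closed hence $\phi$ is continuous). Thus $G$ is the graph of a group isomorphism $\phi:X_1\to X_2$.

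Next I would produce the translation $w_\phi$. Since $\mu$ is $G$-invariant, projects to $\lambda_{X_1}$, and $G$ is the graph of $\phi$, the conditional measures of $\mu$ given the first coordinate are almost surely single points, i.e.\ $\mu$ is supported on the graph of a measurable map $x_1\mapsto\Psi(x_1)$; by $G$-invariance $\Psi(x_1+x_1')=\Psi(x_1)+\phi(x_1')$ for a.e.\ $(x_1,x_1')$, which is the cocycle/quasi-morphism identity forcing $\Psi(x_1)=\phi(x_1)+w_\phi$ for a constant $w_\phi\in X_2$ (a measurable solution of $\Psi(x+y)-\Psi(x)=\phi(y)$ on a compact abelian group differs from the homomorphism $\phi$ by a constant). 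Then $\mu$ is the pushforward of $\lambda_{X_1}$ under $x\mapsto(x,\phi(x)+w_\phi)$, which is exactly the Haar measure on the graph of the affine isomorphism $\Phi(x)=\phi(x)+w_\phi$.

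Now I would verify \eqref{definition of Phi}. Since $\mu(D_Y)=1$ and $\mu$ is the graph measure of $\Phi$, for $\lambda_{X_1}$-a.e.\ $x$ the pair $(x,\Phi(x))$ lies in $D_Y$, i.e.\ $\psi_1(x)=\psi_2(\Phi(x))$; this is precisely \eqref{definition of Phi}. Finally, uniqueness of $w_\phi$: if $\Phi'(x)=\phi(x)+w_\phi'$ also satisfies \eqref{definition of Phi}, then $\psi_2(\phi(x)+w_\phi)=\psi_2(\phi(x)+w_\phi')$ for a.e.\ $x$; substituting $x$ by $\phi^{-1}$ of a generic point and using that $\phi$ is a measure isomorphism, we get $\psi_2(t)=\psi_2(t+(w_\phi-w_\phi'))$ for $\lambda_{X_2}$-a.e.\ $t$, so $w_\phi-w_\phi'\in X_2'=\{0\}$ by the analog of \eqref{no translation} for $X_2$, giving $w_\phi=w_\phi'$.

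\textbf{Main obstacle.} The step I expect to require the most care is deducing $K_1=\{0\}$ cleanly: one must argue that $G$-invariance of $\mu$ together with $\mu(D_Y)=1$ genuinely forces $\psi_1$-a.e.\ invariance (not merely invariance of $\mu$ as a measure on $X_1\times X_2$), which uses that $D_Y$ is exactly the fiber product over $Y$ and that the projection of $\mu$ to $X_1$ is Haar, so that a translation fixing $\mu$ must act trivially on the factor $\sigma$-algebra $\cA_1=\psi_1^{-1}\cB_Y$. The rest is standard structure theory of compact abelian groups and cocycle rigidity.
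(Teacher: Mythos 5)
Your step for showing that $G$ is a graph is essentially the paper's argument: both use $\mu(D_Y)=1$ together with $G$-invariance to conclude that $(x',0)\in G$ forces $x'\in X_1'=\{0\}$ and symmetrically for $K_2$; your additional remark that $G$ closed plus compactness gives continuity of $\phi$ is a correct and useful supplement.

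However, there is a genuine gap in your second step. You assert: ``Since $\mu$ is $G$-invariant, projects to $\lambda_{X_1}$, and $G$ is the graph of $\phi$, the conditional measures of $\mu$ given the first coordinate are almost surely single points.'' This does not follow from those hypotheses. The product Haar measure $\lambda_{X_1}\times\lambda_{X_2}$ is invariant under translation by every element of $X_1\times X_2$ (in particular by $G$), projects to $\lambda_{X_1}$ and $\lambda_{X_2}$, and yet its conditionals on the first coordinate are $\lambda_{X_2}$, not point masses. The extra constraints you do have available --- namely $\mu(D_Y)=1$ and the standing reduction $X_1'=X_2'=\{0\}$ --- are not invoked at that point of your argument; you only use $\mu(D_Y)=1$ afterwards, to verify \eqref{definition of Phi}, by which time you have already assumed the point-mass structure. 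So the logical order as written does not close.

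What $G$-invariance and the projection to $\lambda_{X_1}$ actually give you is weaker but still useful: conjugating by the automorphism $\Theta\colon(x_1,x_2)\mapsto(x_1,x_2-\phi(x_1))$ turns $G$-invariance into $X_1\times\{0\}$-invariance, so $\Theta_*\mu=\lambda_{X_1}\times\theta$ for some probability measure $\theta$ on $X_2$, i.e. $\mu=\int_{X_2}\lambda_{\Phi_w}\,d\theta(w)$ is a $\theta$-mixture of Haar measures on graphs of the affine maps $\Phi_w(x)=\phi(x)+w$. Only now does $\mu(D_Y)=1$ enter: it forces $\lambda_{\Phi_w}(D_Y)=1$ for $\theta$-a.e.~$w$, hence every such $w$ satisfies \eqref{definition of Phi}, and your uniqueness argument (which you can and should establish first, and which is correct) then shows $\theta$ is a point mass. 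The paper sidesteps this reorganization entirely by running a single Fubini argument on the $G$-translates of $D_Y$, producing $w_\phi$ directly and simultaneously showing $\mu$-a.e.\ $(z_1,z_2)$ lies on the graph of $\Phi$; this avoids having to know in advance that the conditionals are point masses. Your cocycle-rigidity step is not wrong per se, but it is applied to a map $\Psi$ whose existence you have not justified, and once the mixture picture is in hand it is no longer needed --- the translates $\Phi_w$ already differ from $\phi$ by constants by construction.
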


\begin{proof}
 Suppose $(x',0)\in G$, \  $C\in\mathcal{B}_Y$, and $A= \psi_1^{-1}C\in\cA_1$.
 Then $A\times X_2= X_1\times \psi_2^{-1}C$ (modulo $\mu$) and
 \begin{align*}
   (A+x')\times X_2&=A\times X_2+(x',0)\\
   &=X_1\times \psi_2^{-1}C+(x',0)&&\text{(modulo $\mu$)}\\
   &=X_1\times \psi_2^{-1}C\\
   &=A\times X_2&&\text{(modulo $\mu$)},
 \end{align*}
 where we used invariance of $\mu$ under translation by
 $(x',0)$ in the transition from the first to the second line.
 It follows that
 \[
   \lambda_{X_1}\bigl((A+x')\vartriangle A\bigr)=
   \mu\left (\left ((A+x')\times X_2\right )\vartriangle \left (A\times X_2\right ) \right )=0
 \]
 for any $A \in \cA_1$ and so $x'\in X_1'$. Therefore, $x'=0$ by assumption
 \eqref{no translation}. The same holds for elements
 of the form $(0,x')\in G$. Together with our assumption
 that $G$ projects onto $X_1$ and onto $X_2$ this implies that
 $G$ is the graph of a group isomorphisms $\phi:X_1\rightarrow
 X_2$.

 We show next that $w_\phi$ is uniquely determined. So suppose
 $w,w'\in X_2$ are such that \eqref{definition of Phi} holds 
 independently of whether $\Phi$ is defined using $w$ or using $w'$.
 Let $v=\phi^{-1}(w-w')$. Then
 \[
   \psi_1(x)= \psi_2\bigl(\phi(x)+w\bigr)=
      \psi_2\bigl(\phi(x+v)+w'\bigr)= \psi_1(x+v)
 \]
 for $ \lambda_{X_1}$-a.e.\ $x\in X$. Therefore, $v\in X_1=\{0\}$
 (by \eqref{no translation} again) and $w=w'$.

 It remains to show the existence of $w_\phi$ and that
 $\mu$ is the Haar measure of the graph of $\Phi$.
 Since $\mu $ is invariant under translation by elements of $G$
 and since $\mu(D_Y)=1$, it follows that
 \[
   \mu\bigl(D_Y-(x,\phi(x))\bigr)=1\qquad\text{for every $x\in X_1$}.
 \]
 In other words, we know for $\mu\times \lambda_{X_1}$-a.e.\
 $((z_1,z_2),x)\in (X_1\times X_2)\times X_1$ that
 $(z_1,z_2)+(x,\phi(x))\in D_Y$. By Fubini's theorem
 this shows for $\mu$-a.e.\ $(z_1,z_2)\in X_1\times X_2$
 that
 \[
  (z_1,z_2)+(x,\phi(x))\in D_Y\qquad\text{for $\lambda_{X_1}$-a.e.\ $x\in X_1$.}
 \]
 However, by definition of $D_Y$ this is equivalent to
 \[
   \psi_1(z_1+x)= \psi_2\bigl(z_2+\phi(x)\bigr)\mbox{ for $\lambda_{X_1}$-a.e.\ }x\in X_1.
 \]
 We define $w=z_2-\phi(z_1)$, then \eqref{definition of Phi}
 follows for $\Phi(x)=\phi(x)+w$. However, by uniqueness
 $w=w_\phi$ is independent of $(z_1,z_2)$. Therefore,
 $z_2=\phi(z_1)+w_\phi$ and $(z_1,z_2)$
 belong to the graph of $\Phi(x)=\phi(x)+w_\phi$.
 This holds for $\mu$-a.e.\ $(z_1,z_2)$, and together
 with invariance of $\mu$ under translation by elements of $G$
 it follows that $\mu$ is the Haar measure of the graph of $\Phi$.
\end{proof}

Finally, we can describe the structure of the relatively
independent joining.

\begin{lemma}\label{structure}
The relatively independent joining is a convex combination
\[
 \nu=\sum_{j\in J}a_j\lambda_{\Phi_j}\mbox{ with }a_j>0
\]
of at most countably many Haar measures $\lambda_{\Phi_j}$ on graphs of affine
isomorphisms $\Phi_j$ that satisfy \eqref{definition of Phi}.
\end{lemma}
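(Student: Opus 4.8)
The plan is to decompose $\nu$ into $\alpha$-ergodic components, to analyze each component using Theorem~\ref{thm: main}, and to recognize the resulting translation-invariant ergodic pieces as Haar measures on graphs via Lemma~\ref{second invariance}; the countability asserted in the statement then comes for free. By Lemma~\ref{ergodic component}, for a.e.\ ergodic component $\mu$ in the $\ZZ^d$-ergodic decomposition $\nu=\int\mu\,d\tau(\mu)$ (with $\alpha=\alpha_1\times\alpha_2$) the measure $\mu$ is again a joining of $\alpha_1$ and $\alpha_2$ with $\mu(D_Y)=1$. I will show that every such $\mu$ is a \emph{finite} convex combination $\mu=\tfrac1J\sum_{j=1}^J\lambda_{\Phi_j}$ of Haar measures on graphs of affine isomorphisms $\Phi_j:X_1\to X_2$ satisfying \eqref{definition of Phi}, where the $\Phi_j$ are distinct. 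Granting this, set for each affine isomorphism $\Phi$ satisfying \eqref{definition of Phi} the number $a_\Phi=\int(\text{coefficient of }\lambda_\Phi\text{ in }\mu)\,d\tau(\mu)\ge 0$ (a routine measurability check legitimizes this integral). Then $\nu\ge a_\Phi\lambda_\Phi$ as measures; moreover the graphs of two distinct such $\Phi$ are incomparable closed subgroups of $X_1\times X_2$ (each is mapped bijectively onto $X_1$ by the first projection, so neither can contain the other), whence the $\lambda_\Phi$ are pairwise mutually singular (the closed coset supporting $\lambda_{\Phi'}$ meets the one supporting $\lambda_\Phi$ in a set of $\lambda_\Phi$-measure zero). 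Consequently $\sum_\Phi a_\Phi\le\nu(X_1\times X_2)=1$, so only countably many $a_\Phi$ are positive, and comparing total masses gives $\nu=\sum_{j\in J}a_j\lambda_{\Phi_j}$ with $a_j>0$ and $J$ countable, as claimed.

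It remains to prove the finite-convex-combination claim for a fixed ergodic component $\mu$ as above. Apply Theorem~\ref{thm: main} to $\mu$: there are a finite index subgroup $\Lambda\subseteq\ZZ^d$ and a decomposition $\mu=\tfrac1J(\mu_1+\dots+\mu_J)$ into mutually singular $\alpha_\Lambda$-ergodic measures, each $\mu_j$ invariant under translation by a closed $\alpha_\Lambda$-invariant subgroup $G_j\subseteq X_1\times X_2$ and with $h_{\mu_j}(\alpha^{\mathbf n}_{(X_1\times X_2)/G_j})=0$ for all $\mathbf n\in\Lambda$. Since $\mu$ is a joining, $(\pi_i)_*\mu=\lambda_{X_i}$; as $\alpha_i$ has no virtually cyclic factors, $\alpha_{i,\Lambda}$ is ergodic with respect to $\lambda_{X_i}$ (as in Proposition~\ref{pro: joining}), so $\lambda_{X_i}$ is an extreme point of the simplex of $\alpha_{i,\Lambda}$-invariant measures, forcing $(\pi_i)_*\mu_j=\lambda_{X_i}$ for every $j$; in particular $\mu_j(D_Y)=1$. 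Next, $(\pi_i)_*\mu_j=\lambda_{X_i}$ is invariant under $\pi_i(G_j)$, so its image on $X_i/\pi_i(G_j)$ is the Haar measure; but $X_i/\pi_i(G_j)$ is a factor of $(X_1\times X_2)/G_j$, on which the $\Lambda$-action has zero entropy for $\mu_j$, whereas --- exactly as in the proof of Proposition~\ref{pro: joining} --- $\alpha_{i,\Lambda}$ on $X_i$ with Haar measure has an element of completely positive entropy. Therefore $X_i/\pi_i(G_j)$ is trivial, i.e.\ $\pi_i(G_j)=X_i$ for $i=1,2$.

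Now each $\mu_j$ meets all hypotheses of Lemma~\ref{second invariance} with $G=G_j$: it projects to $\lambda_{X_1}$ and $\lambda_{X_2}$, gives full measure to $D_Y$, is $G_j$-translation invariant, and $G_j$ projects onto $X_1$ and $X_2$. Hence $G_j$ is the graph of a topological group isomorphism $\phi_j:X_1\to X_2$, there is a unique $w_{\phi_j}\in X_2$ so that $\Phi_j(x)=\phi_j(x)+w_{\phi_j}$ satisfies \eqref{definition of Phi}, and $\mu_j=\lambda_{\Phi_j}$. Thus $\mu=\tfrac1J\sum_{j=1}^J\lambda_{\Phi_j}$, completing the argument. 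The step I expect to be most delicate is proving $\pi_i(G_j)=X_i$: this is where one must descend from $\mu$ to its $\alpha_\Lambda$-ergodic components $\mu_j$ and simultaneously exploit the no-virtually-cyclic-factors hypothesis --- both for the ergodicity of $\alpha_{i,\Lambda}$, which yields $(\pi_i)_*\mu_j=\lambda_{X_i}$, and for completely positive entropy on $X_i$ --- against the zero-entropy conclusion of Theorem~\ref{thm: main}. Once the projections are known to be surjective, Lemma~\ref{second invariance} delivers the conclusion immediately.
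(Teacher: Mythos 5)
Your reduction to ergodic components, the use of Theorem~\ref{thm: main}, the surjectivity argument for $\pi_i(G_j)$, and the application of Lemma~\ref{second invariance} all match the paper's proof and are correct. The gap is in the final countability/reassembly step. You define $a_\Phi=\nu(\text{graph of }\Phi)$, show via mutual singularity that $\sum_\Phi a_\Phi\le 1$ so that only countably many $a_\Phi$ are positive, and then assert that ``comparing total masses gives $\nu=\sum_j a_j\lambda_{\Phi_j}$.'' This last step does not follow: you have shown $\nu\ge\sum_j a_j\lambda_{\Phi_j}$ and $\sum_j a_j\le 1$, but nothing forces $\sum_j a_j=1$. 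If the set of affine isomorphisms $\Phi$ arising over the ergodic decomposition were uncountable, the measure $\rho$ on that set (obtained by disintegrating $\nu=\int\lambda_\Phi\,d\rho(\Phi)$) could perfectly well be non-atomic; in that case $a_\Phi=0$ for every $\Phi$, and your argument produces $\nu\ge 0$ and nothing more. The interchange of the (possibly uncountable) sum over $\Phi$ with the integral over $\tau$, which is what you would need to conclude $\sum_\Phi a_\Phi=1$, is invalid precisely because the index set is not yet known to be countable.

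The paper closes this gap with an \emph{a priori} algebraic observation: there are only countably many continuous group isomorphisms $\phi:X_1\to X_2$, because $\phi$ is determined by its Pontryagin dual $\hat\phi:\widehat{X_2}\to\widehat{X_1}$, which is the restriction of a $\QQ$-linear map between finite-dimensional $\QQ$-vector spaces $\QQ^{n_2}\supseteq\widehat{X_2}$ and $\QQ^{n_1}\supseteq\widehat{X_1}$, and there are only countably many such maps. Combined with the uniqueness of $w_\phi$ from Lemma~\ref{second invariance} and the ergodicity of $\mu$, this shows there are at most countably many ergodic components, each a finite convex combination of Haar measures on graphs, from which the lemma follows immediately. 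You should insert this duality argument in place of the total-mass comparison.
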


\begin{proof}
 Let $\mu $ be an ergodic component of $\nu$ as in
 Lemma \ref{ergodic component}. By Theorem~\ref{thm: main}
 there exist $\mu_1,\ldots,\mu_M$ and $G_1,\ldots,G_M$
 such that $\mu=\frac{1}{M}\sum_{i=1}^M \mu_i$ and
 $\mu_i$ is invariant under translation by elements of $G_i$.
 By Proposition \ref{pro: joining} each $G_i$
 projects surjectively to $X_1$ and $X_2$.
 Therefore, Lemma \ref{second invariance} shows that
 $\mu_i$ is the Haar measure $ \lambda_{\Phi}$
 of the graph of an affine isomorphism $\Phi(x)=\phi(x)+w_\phi$
 that satisfies  \eqref{definition of Phi}.
 
 We claim that there
 are at most countably many group isomorphisms
 $\phi:X_1\rightarrow X_2$.
 Since $\phi$ uniquely determines $w_\phi$, $\Phi$, $\mu_i$,
 and by ergodicity also $\mu$, the above claim implies that
 there are at most countably many ergodic components $\mu$
 each of which is a convex combinations of Haar measures.

 To prove the claim, it is enough to notice that
 every $\phi$ as above is uniquely determined by its dual
 $\hat\phi:\hat X_2 \rightarrow\hat X_1$ that is the restriction
 of a $\QQ$-linear map from $\QQ^{n_2}\supseteq\hat X_2$
 to $\QQ^{n_1}\supseteq\hat X_1$.
\end{proof}

\begin{proof}[Proof of Corollary \ref{thm: algebra}]
 It is well known that every invariant $\sigma $-algebra
 $\cA $ can be realized as $ \psi^{-1}\cB_Y$ for some
 factor map $ \psi:X\rightarrow Y$ and some $\zd$-action
 on a standard Borel probability space $(Y,\cB_Y,\rho)$.
 Recall that by Lemma \ref{construction of X_1'} we may assume
 that \eqref{no translation} holds.

 Let $\nu $ be the relatively independent joining of
 $\lambda_X$ and $\lambda_X$ over $(Y,\rho)$.
 Let
 \begin{multline}\label{definition of Gamma}
  \Gamma=\Bigl\{\gamma:\gamma \mbox{ is an affine automorphism }\\
  \mbox{ such that } \psi(\gamma(x))= \psi(x)\mbox{ for $\lambda_X$-a.e.\ }x\in X\Bigr\}.
 \end{multline}
 Then $\Gamma$ is a group normalized by $\alpha $
 that is at most countable (see proof of Lemma~\ref{structure})
 and satisfies
 \[
  \nu=\sum_{\gamma\in\Gamma}a_\gamma \lambda_\gamma\qquad\text{with $a_\gamma\geq 0$}
 \]
 by Lemma \ref{structure}. 
 From the construction
 \eqref{definition of joining} of the relatively independent joining
 it follows
that the conditional measures of $\nu $ with respect to the
$\sigma $-algebra $\cC=\cB_X\times\{\emptyset,X\}$ are
\[
 \nu_{(x,x')}^\cC=\delta_{x}\times (\lambda_{X})_x^\cA\qquad\text{for $\nu$-a.e.\ $(x,x')\in X\times X$.}
\]
However, from the above decomposition of $\nu$ it is also easy to
calculate the conditional measures of $\nu$ with respect to $\cC$,
which shows that
\[
 (\lambda_{X})_x^\cA=\sum_{\gamma\in\Gamma}a_\gamma\delta_{\gamma(x)}
\qquad\text{$\lambda_X$-a.e.}
\]
Since every $\gamma_0\in\Gamma$ preserves $ \lambda_X$ and $\cA$, it
follows that $(\lambda_{X})_{\gamma_0 x}^\cA=(\gamma_0)_*(\lambda_{X})_x^\cA$ a.e., and hence
\[
  (\lambda_{X})_{\gamma_0 x}^\cA=
 (\gamma_0)_*\sum_{\gamma\in\Gamma}a_\gamma\delta_{\gamma(x)}
 =\sum_{\gamma\in\Gamma}a_\gamma\delta_{\gamma_0(\gamma(x))}\mbox{ a.e.}
\]
However, by definition $\gamma_0$ also preserves a.e.\ atom of
$\cA$ and so $(\lambda_{X})_{\gamma_0 x}^\cA=(\lambda_{X})_x^\cA$ a.e., hence
\[
 (\lambda_{X})_{\gamma_0x}^\cA=\sum_{\gamma\in\Gamma}a_\gamma\delta_{\gamma(x)}
\qquad\text{$\lambda_X$-a.e.}
\]
By comparing the above two displayed formul\ae\ we conclude that $\Gamma$ is finite, and all the coefficients 
$a_{\gamma_0}$ are equal to each other, that is to say
\[
   (\lambda_{X})_x^\cA=\frac{1}{|\Gamma|}\sum_{\gamma\in\Gamma}\delta_{\gamma(x)}.
\]
Since the conditional measures determine the $\sigma $-algebra
(modulo $ \lambda_X$), the corollary follows.
\end{proof}

\begin{proof}[Proof of Corollary \ref{theorem about common factors}]
 We already constructed $X_1'$ and $X_2' $ in Lemma \ref{construction of X_1'}.
 Applying Corollary \ref{thm: algebra} we find $\Gamma_1$
 and $\Gamma_2 $.  Finally, let $\Phi:X_1\rightarrow X_2$
 be an affine isomorphisms as in \eqref{definition of Phi}
 that exists by Lemma \ref{structure}. Then $\Phi^{-1}\circ\gamma_2\circ\Phi$
 belongs to $\Gamma_1$ (defined as in \eqref{definition of Gamma})
 for any $\gamma_2\in\Gamma_2$. By symmetry $\Gamma_2\circ\Phi=\Phi\circ\Gamma_1$
 which concludes the proof.
\end{proof}

\bibliographystyle{amsplain}

\begin{thebibliography}{10}

\bibitem{Berend-invariant-tori}
Daniel Berend, \emph{Multi-invariant sets on tori}, Trans. Amer.
Math. Soc.
  \textbf{280} (1983), no.~2, 509--532. \MR{85b:11064}

\bibitem{Berend-invariant-groups}
\bysame, \emph{Multi-invariant sets on compact abelian groups},
Trans. Amer.
  Math. Soc. \textbf{286} (1984), no.~2, 505--535. \MR{86e:22009}

\bibitem{Einsiedler-Katok}
Manfred Einsiedler and Anatole Katok, \emph{Invariant measures on
{$G \backslash \Gamma$} for split simple {L}ie groups {$G$}}, Comm.
Pure Appl. Math. {\bf 56} (2003), no.~8, 1184--1221.

\bibitem{Einsiedler-bad-measures}
Manfred Einsiedler.
\newblock Invariant subsets and invariant measures for irreducible actions on
  zero-dimensional groups.
\newblock {\em Bull. London Math. Soc.}, 36(3):321--331, 2004.

\bibitem{Einsiedler-Katok-II}
\bysame, \emph{Rigidity of measures—the high entropy case and non-commuting foliations},
Probability in mathematics, 
Israel J.~Math.~\textbf{148} (2005), 169--238.

\bibitem{Einsiedler-Lind}
Manfred Einsiedler and Doug Lind, \emph{Algebraic {${\mathbb Z}\sp
d$}-actions
  of entropy rank one},
 Trans. Amer. Math. Soc.  \textbf{356}  (2004),  no. 5, 1799--1831.
		
\bibitem{EL-ERA03}
Manfred Einsiedler and Elon Lindenstrauss, \emph{Rigidity properties of $\mathbb{Z}^d$-actions on tori and solenoids}, Electron. Res. Announc. Amer. Math. Soc. {\bf 9} (2003), 99--110.



\bibitem{Einsiedler-Lindenstrauss-joinings}
\bysame,
\newblock Joinings of higher-rank diagonalizable actions on locally homogeneous
  spaces.
\newblock {\em Duke Math. J.}, 138(2):203--232, 2007.

\bibitem{Einsiedler-Lindenstrauss-low-entropy}
\bysame,
\newblock On measures invariant under diagonalizable actions --- the rank one
  case and the general low entropy method.
\newblock {\em Journal of Modern Dynamics}, 2(1):83--128, 2008.

\bibitem{EL-Pisa}
\bysame,
\emph{Diagonal actions on locally homogeneous spaces},
Homogeneous flows, moduli spaces and arithmetic, 155--241, Clay Math. Proc., {\bf 10}, Amer. Math. Soc., Providence, RI, 2010.


\bibitem{Einsiedler-Lindenstrauss-joinings-2}
\bysame,
\newblock Joinings of higher rank torus actions on homogeneous spaces.
\newblock 
 Publ. Math. Inst. Hautes Études Sci.  \textbf{129}  (2019), 83--127.


\bibitem{Einsiedler-Schmidt}
Manfred Einsiedler and Klaus Schmidt, \emph{Irreducibility,
homoclinic points
  and adjoint actions of algebraic {$\mathbb{Z}\sp d$}-actions of rank one},
Dynamics and randomness (Santiago, 2000), Nonlinear Phenom.
Complex Systems  \textbf{7} (2002), 95--124.


\bibitem{Feldman-generalization}
J.~Feldman, \emph{A generalization of a result of {R}. {L}yons
about measures
  on {$[0,1)$}}, Israel J. Math. \textbf{81} (1993), no.~3, 281--287.
  \MR{95f:28020}

\bibitem{Furstenberg-disjointness-1967}
Harry Furstenberg, \emph{Disjointness in ergodic theory, minimal
sets, and a
  problem in {D}iophantine approximation}, Math. Systems Theory \textbf{1}
  (1967), 1--49. \MR{35 \#4369}

\bibitem{Host-normal-numbers}
Bernard Host, \emph{Nombres normaux, entropie, translations},
Israel J. Math.
  \textbf{91} (1995), no.~1-3, 419--428. \MR{96g:11092}

\bibitem{Hu-commuting-diffeomorphisms}
Hu~Yi Hu, \emph{Some ergodic properties of commuting
diffeomorphisms}, Ergodic
  Theory Dynam. Systems \textbf{13} (1993), no.~1, 73--100. \MR{94b:58061}

\bibitem{Johnson-invariant-measures}
Aimee S.~A. Johnson, \emph{Measures on the circle invariant under
  multiplication by a nonlacunary subsemigroup of the integers}, Israel J.
  Math. \textbf{77} (1992), no.~1-2, 211--240. \MR{93m:28019}

\bibitem{Kalinin-Spatzier}
Boris Kalinin and Ralf Spatzier, \emph{Rigidity of the measurable structure for algebraic actions of higher-rank Abelian groups},
 Ergodic Theory Dynam. Systems  \textbf{25}  (2005),  no. 1, 175--200.


\bibitem{Kalinin-Katok-Seattle}
Boris Kalinin and Anatole Katok, \emph{Invariant measures for
actions of higher
  rank abelian groups}, Smooth ergodic theory and its applications (Seattle,
  WA, 1999), Proc. Sympos. Pure Math., vol.~69, Amer. Math. Soc., Providence,
  RI, 2001, pp.~593--637. \MR{2002i:37035}

\bibitem{Kalinin-Katok}
\bysame, \emph{Measurable rigidity and disjointness for {$\mathbb
Z\sp k$}
  actions by toral automorphisms}, Ergodic Theory Dynam. Systems \textbf{22}
  (2002), no.~2, 507--523. \MR{1 898 802}

\bibitem{Katok-Katok-Schmidt}
Anatole Katok, Svetlana Katok, and Klaus Schmidt, \emph{Rigidity
of measurable
  structure for {${\ZZ}\sp d$}-actions by automorphisms of a torus},
  Comment. Math. Helv. \textbf{77} (2002), no.~4, 718--745. \MR{2003h:37007}

\bibitem{Katok-Spatzier}
A.~Katok and R.~J. Spatzier, \emph{Invariant measures for
higher-rank
  hyperbolic abelian actions}, Ergodic Theory Dynam. Systems \textbf{16}
  (1996), no.~4, 751--778. \MR{97d:58116}

\bibitem{Katok-Spatzier-corrections}
\bysame, \emph{Corrections to: ``{I}nvariant measures for
higher-rank
  hyperbolic abelian actions'' [{E}rgodic {T}heory {D}ynam. {S}ystems {\bf 16}
  (1996), no. 4, 751--778; {MR} 97d:58116]}, Ergodic Theory Dynam. Systems
  \textbf{18} (1998), no.~2, 503--507. \MR{99c:58093}

\bibitem{Kitchens-Schmidt}
Bruce Kitchens and Klaus Schmidt.
\newblock Markov subgroups of {$({\bf Z}/2{\bf Z})\sp {{\bf Z}\sp 2}$}.
\newblock In {\em Symbolic dynamics and its applications (New Haven, CT,
  1991)}, volume 135 of {\em Contemp. Math.}, pages 265--283. Amer. Math. Soc.,
  Providence, RI, 1992.

\bibitem{Ledrappier-Young-I}
F.~Ledrappier and L.-S. Young, \emph{The metric entropy of
diffeomorphisms.
  {I}. {C}haracterization of measures satisfying {P}esin's entropy formula},
  Ann. of Math. (2) \textbf{122} (1985), no.~3, 509--539. \MR{87i:58101a}

\bibitem{Ledrappier-Young-II}
\bysame, \emph{The metric entropy of diffeomorphisms. {II}.
{R}elations between
  entropy, exponents and dimension}, Ann. of Math. (2) \textbf{122} (1985),
  no.~3, 540--574. \MR{87i:58101b}

\bibitem{Lindenstrauss-p-adic}
Elon Lindenstrauss, \emph{{$p$}-adic foliation and
equidistribution}, Israel J.
  Math. \textbf{122} (2001), 29--42. \MR{2002c:28031}

\bibitem{Lindenstrauss-Quantum}
\bysame, \emph{Invariant measures and arithmetic quantum unique
ergodicity}, Ann. of Math. (2) 163 (2006), no. 1, 165–219. 


\bibitem{Lindenstrauss-Meiri-Peres}
Elon Lindenstrauss, David Meiri, and Yuval Peres, \emph{Entropy of
convolutions
  on the circle}, Ann. of Math. (2) \textbf{149} (1999), no.~3, 871--904.
  \MR{2001a:28019}

\bibitem{Lindenstrauss-Wang}
Elon Lindenstrauss and Zhiren Wang,
\emph{Topological self-joinings of Cartan actions by toral automorphisms},
Duke Math. J.
\textbf{161} (2012), no.~7, 1305--1350.
\MR{2922376}

\bibitem{Lyons-2-and-3}
Russell Lyons, \emph{On measures simultaneously {$2$}- and
{$3$}-invariant},
  Israel J. Math. \textbf{61} (1988), no.~2, 219--224. \MR{89e:28031}

\bibitem{Manners-pyjama}
  Freddie Manners, \emph{A solution to the pyjama problem},
 Invent. Math.  \textbf{202}  (2015),  no. 1, 239--270. \MR{3402799}
 
\bibitem{Parry-2-3}
William Parry, \emph{Squaring and cubing the circle---{R}udolph's
theorem},
  Ergodic theory of {${\mathbb Z}\sp d$} actions (Warwick, 1993--1994), London
  Math. Soc. Lecture Note Ser., vol. 228, Cambridge Univ. Press, Cambridge,
  1996, pp.~177--183. \MR{97h:28009}

\bibitem{Rudolph-2-and-3}
Daniel~J. Rudolph, \emph{{$\times 2$} and {$\times 3$} invariant
measures and
  entropy}, Ergodic Theory Dynam. Systems \textbf{10} (1990), no.~2, 395--406.
  \MR{91g:28026}

\bibitem{Schmidt-book}
Klaus Schmidt, \emph{Dynamical systems of algebraic origin},
Progress in
  Mathematics, vol. 128, Birkh\"auser Verlag, Basel, 1995. \MR{97c:28041}

\bibitem{Wang-nonhyperbolic}
Zhiren Wang, \emph{Rigidity of commutative non-hyperbolic actions by toral
   automorphisms}, Ergodic Theory Dynam. Systems \textbf{32} (2012),
no.~5, 1752--1782, \MR{2974218}

\bibitem{Wang-nilmanifolds}
Zhiren Wang.
\newblock Multi-invariant measures and subsets on nilmanifolds.
\newblock {\em J. Anal. Math.}, 135(1):123--183, 2018. \MR{3827347}

\bibitem{Weil}
 Andr\'{e} Weil,
\emph{Basic number theory},
{Die Grundlehren der mathematischen Wissenschaften, Band 144},
{Springer-Verlag New York, Inc., New York},
{1967},
{xviii+294}.

\end{thebibliography}

\def\cprime{$'$} \providecommand{\bysame}{\leavevmode\hbox
to3em{\hrulefill}\thinspace}
\providecommand{\MR}{\relax\ifhmode\unskip\space\fi MR }
\providecommand{\MRhref}[2]{
  \href{http://www.ams.org/mathscinet-getitem?mr=#1}{#2}
} \providecommand{\href}[2]{#2}

\end{document}